\newtheorem{lemma}{Lemma}[section]
\newtheorem{theorem}{Theorem}[section]
\newtheorem{corollary}{Corollary}[section]
\newtheorem{definition}{Definition}[section]
\newtheorem{remark}{Remark}[section]
\newtheorem{hypothesis}{Hypothesis}[section]
\numberwithin{equation}{section}
\newcommand{\op}[1]{\operatorname{\text{\rm #1}}}
\renewcommand\div{\op{div}}
\newcommand{\footremember}[2]{%
    \footnote{#2}
    \newcounter{#1}
    \setcounter{#1}{\value{footnote}}%
}
\title{Existence and regularity results for the penalized thin obstacle problem with variable coefficients}
\author{%
Donatella Danielli\footremember{DD}{Department of Mathematics, Purdue University. Email: danielli@math.purdue.edu}%
\and Brian Krummel\footremember{BK}{Department of Mathematics, Purdue University. Email: bkrummel@purdue.edu}%
}
\date{}
\begin{document}

\maketitle

\begin{abstract}
In this paper we give a comprehensive treatment of a two-penalty boundary obstacle problem for a divergence form elliptic operator, motivated by applications to fluid dynamics and thermics. Specifically, we prove existence, uniqueness and optimal regularity of solutions, and establish structural properties of the free boundary. The proofs are based on tailor-made monotonicity formulas of Almgren, Weiss, and Monneau-type, combined with the classical theory of oblique derivative problems.
\end{abstract}

\section{Introduction}

The focus of this paper is  the study of regularity of solutions and the structure of the free boundary in a penalized boundary obstacle problem of interest in thermics, fluid mechanics, and electricity. To set the stage, we let $\Omega \subset \mathbb{R}^n$ be a bounded Lipschitz domain, and $\Gamma$ be a relatively open subset of $\partial \Omega$.   We will denote by $A=[a^{ij}]_{i,j=1\dots,n}$, with $a^{ij} \in L^{\infty}(\Omega)$,  a symmetric matrix satisfying the uniform ellipticity condition
\begin{equation} \label{ellipticity hyp}
	\lambda \,|\xi|^2 \leq \sum_{i,j=1}^n a^{ij}(x) \,\xi_i \,\xi_j \leq \Lambda \,|\xi|^2
\end{equation}
for all $x \in \Omega$ and $\xi \in \mathbb{R}^n$, and some constants $0 < \lambda \leq \Lambda < \infty$.  Finally, we let $1 < p < \infty$ be a constant, $k_+,k_- \in L^1(\Gamma)$ with $k_+,k_- \geq 0$ on $\Gamma$, and $h \in L^{\infty}(\Gamma)$. We are interested in studying the oblique derivative problem
\begin{align}
	&D_i (a^{ij} D_j u) = 0 \qquad \text{ in } \Omega, \nonumber \\
	&a^{ij} D_j u \,\nu_i = -k_+ ((u-h)^+)^{p-1} + k_- ((u-h)^-)^{p-1}\quad \text{ on } \Gamma . \label{main ptop}
\end{align}
Here $v^+=\max\{v,0\}$, $v^-=-\min\{v,0\}\geq 0$ and $\nu = (\nu_1,\nu_2,\ldots,\nu_n)$ is the outward unit normal to $\Omega$ with respect to the standard Euclidian metric. We say that $u \in W^{1,2}(\Omega)$ is a \emph{weak solution} to \eqref{main ptop} if
\begin{equation} \label{main weak ptop}
	\int_{\Omega} a^{ij} D_j u \,D_i \zeta + \int_{\Gamma} (k_+ ((u-h)^+)^{p-1} - k_- ((u-h)^-)^{p-1}) \,\zeta = 0
\end{equation}
for all $\zeta \in W^{1,2}(\Omega)$ with $\int_{\Gamma} (k_+ + k_-) \,|\zeta|^p < \infty$ and $\zeta = 0$ near $\partial \Omega \setminus \Gamma$.

The study of the model \eqref{main ptop} is motivated by applications to fluid dynamics and temperature control problems, which we now briefly describe. Following \cite[Section 2.2.2]{DL}, when considering the process of \emph{osmosis through semi-permeable walls} the region $\Omega$ consists of a porous medium occupied by a  viscous fluid which is only slightly compressible, with its field of pressure denoted by $u(x)$. We assume that the portion  $\Gamma$ of $\partial\Omega$ consists of a semi-permeable membrane of finite  thickness, i.e. the fluid can freely enter in $\Omega$, but the outflow of fluid is prevented. Combining the law of conservation of mass with Darcy's law, one finds that the equilibrium configuration for $u$ satisfies the equation
$$
\Delta u =f\mbox{ in }\Omega,
$$
where $f=f(x)$ is a given function. When a fluid pressure $h(x)$, for $x\in\Gamma$, is applied to $\Gamma$ on the outside of $\Omega$, one of two cases holds:
$
h(x)<u(x,t),\mbox{ or } h(x)\geq u(x,t).
$
In the former, the semi-permeable wall prevents the fluid from leaving $\Omega$, so that the flux  is null and thus
\begin{equation}\label{case1}
\frac{\partial u}{\partial \nu}=0.
\end{equation}
In the latter case, the fluid enters $\Omega$. It is reasonable to assume the outflow to be proportional to the difference in pressure, so that
\begin{equation}\label{case2}
-\frac{\partial u}{\partial \nu} =k(u-h),
\end{equation}
where $k>0$ measures the conductivity of the wall. Combining  \eqref{case1} and \eqref{case2}, we obtain the boundary condition
\begin{equation}\label{bdry}
\frac{\partial u}{\partial \nu} =k(u-h)^-\mbox{ on }\Gamma.
\end{equation}
In our problem \eqref{main ptop}, we allow  for fluid flow to occur both into and out of $\Omega$ with different permeability functions $k_+$ and $k_-$ (not necessarily constant), under the assumption that  the flux in each direction is proportional to a power of the pressure.

An alternate interpretation of the model \eqref{main ptop} is as a \emph{boundary temperature control problem}.  We assume that a continuous medium occupies the region $\Omega$ in $\mathbb{R}^n$, with boundary $\Gamma$ and outer unit normal $\nu$. Given a reference temperature $h(x)$, for $x\in \Gamma$, it is required that the temperature at the boundary $u(x)$ deviates as little as possible from $h(x)$. To this end, thermostatic controls are placed on the boundary to inject an appropriate heat flux when necessary. The controls are regulated as follows:

(i) If $u(x,t)=h(x)$, no correction is needed and therefore the heat flux is null (i.e., \eqref{case1} holds).

(ii) If $u(x,t)\neq h(x)$, a quantity of heat proportional to the difference between $u(x,t)$ and $h(x)$ is injected.

\noindent
We can thus write the boundary condition as
$
-\frac{\partial u}{\partial \nu}=\Phi(u),
$
where
\begin{equation*}
\Phi(u)=\begin{cases}
k_-(u-h)\qquad&\text{ if } u< h\\
0&\text{ if }u=h\\
k_+(u-h)\qquad&\text{ if } u> h
\end{cases}
\end{equation*}
which is precisely the condition in \eqref{main ptop} when $p=2$. For further details, we refer to  \cite[Section 2.3.1]{DL}, see also  \cite{AC1}  for the limiting case $k_-=0$ and $k_+=+\infty$,  and \cite{ALP15} for the case ${a}^{ij}=\delta^{ij}$, $p=1$, and $h=0$ in \eqref{J defn} below.

The study of \eqref{main ptop} was initiated in \cite{DJ}, where the constant coefficient case $A=I$ was considered, together with the simplifying assumptions $h\equiv 0$ and $k_+,\ k_-$ being constant. In \cite{DJ} the authors establish regularity properties of the solution and describe the structure of the free boundary. From the point of view of applications, however, it is important to allow for variable coefficients in \eqref{main ptop}. In fact, if $\Gamma$ is sufficiently smooth,  a standard flattening procedure  leads to the analysis of a similar model for a flat thin manifold,  where the relevant operator has,  however, variable  coefficients.

Quite relevant to our analysis, as observed in \cite{DJ},  are the two limiting cases $k_+=k_-=0$, and $k_+=0$ and $k_-=+\infty$ (or equivalently $k_+=+\infty$ and $k_-=0$). In the first one  $u$ is clearly the solution of a classical Neumann problem. The other one is more interesting, since the boundary condition becomes
$$
(u-h)\frac{\partial u}{\partial \nu}=0,
$$ and $u$ is a solution of the \emph{Signorini problem}, also known as the \emph{thin obstacle problem}. The Signorini problem has received a resurgence of attention in the last decade, due to the discovery of several families of powerful monotonicity formulas, which in turn have allowed to establish the optimal regularity of the solution, a full classification of free boundary points, smoothness of the free boundary at regular points, and the structure of the free boundary at singular points. We refer  the interested reader to \cite{AC1}, \cite{ACS}, \cite{CSS}, \cite{GP09}, \cite{GSVG14}, \cite{GPS}, \cite{DSS}, \cite{KPS}, see also the survey \cite{DS} and the references therein.

The general scheme of a solution to the variable-coefficient Signorini problem provides a blueprint for the solution of problem \eqref{main ptop}, but there are two new substantial difficulties. Due to the non-homogeneous nature of the boundary condition in \eqref{main ptop}, this problem does not admit global homogeneous solutions of any degree. This is in stark contrast with the thin obstacle problem, where the existence and classification  of such solutions  play a fundamental role. Moreover, in the  Signorini problem it is easily verified that continuity arguments force $u$ to be always above $h$ (that is, $h$ plays the role of an obstacle), whereas the case $h(x)>u(x)$ is no longer ruled out in \eqref{main ptop}. Allowing for both constants $k_+,\ k_-$ to be finite (even when one of the two vanishes) essentially destroys the one-phase character of the problem. Finally, we notice that allowing for variable coefficients and removing the hypothesis $h=0$ introduces a host of new technical challenges not present in \cite{DJ}.

We begin our study by showing existence and uniqueness of solutions to \eqref{main ptop}  using  variational techniques (see Theorem \ref{exist uniq thm} and Lemma \ref{Jmin is Jstat lemma}). We then proceed to establish the H\"older regularity of solutions in Theorem \ref{regularity thm}. More precisely, we show that if
$\kappa \geq 1$ is an integer and $\alpha \in (0,1)$ with $\kappa +\alpha\leq p$, $\Gamma$ is a relatively open $C^{\kappa,\alpha}$-portion of $\partial \Omega$, $a^{ij} \in C^{\kappa-1,\alpha}(\Omega \cup \Gamma)$, $k_+,k_- \in C^{\kappa-1,\alpha}(\Gamma)$ , and $h \in C^{\kappa-1,1}(\Gamma)$, then $$u\in C^{\kappa,\alpha}(\Omega\cup\Gamma).$$ Our approach is centered on a bootstrapping argument, which in turn is based on the Caccioppoli inequality, an initial modulus of continuity for the solution, and variational methods, combined with the classical regularity theory for the oblique derivative problem. Furthermore, we prove that such regularity is optimal (see Theorem \ref{irregularity thm}), by means of a careful construction of a suitable family of rescalings, and of an explicit solution to the ensuing limiting problem.  As an immediate consequence of the regularity of the solution and of the implicit function theorem, we obtain the following result concerning the regularity of the free boundary.
\begin{definition}
The {regular set of the free boundary} is defined as
$$
{\mathcal{R}(u)=\{(x',0)\in B'_1(0)\ |\ u(x',0)=h(x'),\ \nabla_{x'}u(x',0)\neq \nabla_{x'}h(x')\}}
$$
\end{definition}

\begin{theorem}\label{FBreg} {If $x_0\in \mathcal{R}(u)$, then in a neighborhood of $x_0$ the free boundary $\{ u(x',0)=0\}$ is a $C^{1,\alpha}-$ graph for all $0<\alpha<1$.}
\end{theorem}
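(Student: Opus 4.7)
The plan is to realize the free boundary locally as the zero set of the tangential function $v(x') := u(x',0) - h(x')$ on $\Gamma$, and then invoke the classical implicit function theorem. By the very definition of $\mathcal{R}(u)$, we already have $v(x_0) = 0$ and $\nabla_{x'} v(x_0) = \nabla_{x'} u(x_0) - \nabla_{x'} h(x_0) \neq 0$, so the nondegeneracy hypothesis required by the implicit function theorem is built in. The only substantive content is therefore to verify that $v$ has enough regularity in a neighborhood of $x_0$ on $\Gamma$ to conclude a $C^{1,\alpha}$ graph.

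Fix $\alpha \in (0,1)$ and impose on the data $a^{ij}$, $k_\pm$, $h$, and $\Gamma$ the hypotheses under which Theorem \ref{regularity thm} applies with $\kappa = 1$ and the chosen $\alpha$; in particular the requirement $\kappa + \alpha \leq p$ reads $1 + \alpha \leq p$. Theorem \ref{regularity thm} then gives $u \in C^{1,\alpha}(\Omega \cup \Gamma)$, and combining with $h \in C^{1,\alpha}(\Gamma)$ yields $v \in C^{1,\alpha}$ on a neighborhood of $x_0$ in $\Gamma$. After a rotation in the tangential coordinates placing $\nabla_{x'} v(x_0)$ parallel to $e_1$, the $C^{1,\alpha}$ implicit function theorem delivers $r > 0$ and a function $g$ of class $C^{1,\alpha}$ such that
\[
\{v = 0\} \cap B'_r(x_0) = \{(x_1, \ldots, x_{n-1}) : x_1 = g(x_2, \ldots, x_{n-1})\} \cap B'_r(x_0),
\]
which is the asserted graph representation of the free boundary $\{u(x',0) = h(x')\}$ in a neighborhood of $x_0$. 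Running this argument for every admissible $\alpha$ delivers the full conclusion "for all $0 < \alpha < 1$".

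The main obstacle, such as it is, is a purely bookkeeping matter: one must ensure that Theorem \ref{regularity thm} can be invoked at the exponent $\alpha$ one wishes to realize on the free boundary, which in turn requires the smoothness of $a^{ij}$, $k_\pm$, $h$, and $\Gamma$ to match the choice of $\alpha$ (and possibly $p$ to be large enough, or a separate improvement-of-regularity step near regular points to be run once $C^{1,\alpha_0}$ is in hand for some $\alpha_0$). The implicit function step itself is entirely standard and genuinely \emph{immediate}, as announced in the paragraph preceding the statement.
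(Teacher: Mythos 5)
Your proof is correct and follows precisely the route the paper intends: the remark immediately preceding Theorem~\ref{FBreg} says the result is ``an immediate consequence of the regularity of the solution and of the implicit function theorem,'' and no further proof is offered in the paper beyond that sentence. Your argument -- define $v(x') = u(x',0) - h(x')$, invoke Theorem~\ref{regularity thm} with $\kappa=1$ to place $v \in C^{1,\alpha}$ near $x_0$, note $v(x_0)=0$ and $\nabla_{x'}v(x_0)\neq 0$ by definition of $\mathcal{R}(u)$, and apply the $C^{1,\alpha}$ implicit function theorem -- supplies exactly the details the paper omits. Your closing caveat is also apt: the hypothesis $\kappa+\alpha \leq p$ with $\kappa=1$ only delivers all $\alpha\in(0,1)$ when $p\geq 2$, which is indeed the range tacitly assumed throughout the free-boundary analysis in the paper; for $1<p<2$ the uniform ``for all $\alpha<1$'' statement would require a separate improvement step (not supplied in the paper either).
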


The remainder of the paper is devoted to the study of rectifiability properties of the free boundary $ \Sigma(u) = \partial_{\Gamma}\{ x \in \Gamma : u(x) = h(x) \}$,  inspired by  the approach introduced in  \cite{GP09},  \cite{GSVG14},  \cite{GPS} and \cite{DJ}, but with the added difficulty of working with variable coefficients and non-zero ``obstacles" at once. In order to focus on this  aspect, it is useful to concentrate our attention on the following simplified version of \eqref{main ptop}
\begin{align}
	&D_i (a^{ij} D_j u) = 0 \qquad \text{ in } B_1^+(0), \nonumber \\
	&a^{nn} D_n u  = k_+ ((u-h)^+)^{p-1} - k_- ((u-h)^-)^{p-1}\quad \text{ on } B'_1(0). \label{hompb}
\end{align}
Here
\begin{align*}
	B^+_{\rho}(y) &= B_{\rho}(y) \cap \{ x_n > 0 \}, \\
	B'_{\rho}(y) &= B_{\rho}(y) \cap \{ x_n = 0 \}
\end{align*}
for each $y \in \mathbb{R}^{n-1} \times \{0\}$ and $\rho > 0$. We have used the notation, for points $x\in\mathbb{R}^n$, $x=(x',x_n)$, with $x'=(x_1, \dots, x_{n-1})$. For future reference, we will denote $$(\partial B_{\rho}(y))^+ = \partial B_{\rho}(y) \cap \{ x_n > 0 \}.$$
The first step in our program consists in essentially subtracting the  $h$ (or better, its Taylor polynomial of order $\kappa$) from the solution, so that we can rewrite our problem as
\begin{align}
	&D_i (a^{ij} D_j v) = f \qquad \text{ in } B_1^+(0), \nonumber \\
	&a^{nn} D_n v  = k_+ (v^+)^{p-1} - k_- (v^-)^{p-1}\quad \text{ on } B'_1(0) , \label{nonhom}
\end{align}
with the regularity of the right-hand side $f$ being correlated to the one of $h$. Next, we establish one of the main ingredients in our analysis in Theorem \ref{Almgren monotonicity thm}, namely a monotonicity formula of Almgren's type for solutions to \eqref{nonhom}. We were inspired by some of the ideas first introduced in \cite{GL} for unconstrained divergence form equations $\div(A\nabla u)=0$, and then in \cite{GSVG14} for the Signorini problem with variable coefficients, but we had to tackle  some delicate differences due to the nature of the boundary conditions. With this tool at our disposal, we proceed to establish optimal growth rates of the solution at free boundary points (Lemma \ref{L2 growth lemma}), which in turn allow us to control the convergence of a suitable family of rescalings to a homogeneous harmonic polynomial, the so-called blow-up (see Theorem \ref{tangent thm}). At this point, we introduce two families of functionals of Weiss' and Monneau's type, and we prove their almost-monotonicity in Theorem \ref{Weiss mono} and Theorem \ref{Monneau mono}, respectively. These results yield the nondegeneracy of the solution (Lemma \ref{nondeg lemma}) and the continuous dependance of the blow-up limits on the free boundary point (Theorem \ref{conts tangent thm}). We explicitly observe that, to the best of our knowledge, this is the first result of this kind for variable coefficients and non-zero obstacles. Our final main result asserts the rectifiability of the free boundary.

\begin{theorem}\label{rectifiability thm}
Let $2 \leq p < \infty$, and let $\kappa$, $\nu$ be positive integers with $2\leq \nu < \kappa$.  Let $a^{ij} \in C^{\kappa -1,1}(B^+_1(0) \cup B'_1(0))$ such that $a^{ij} = a^{ji}$ on $B^+_1(0)$, satisfying \eqref{ellipticity hyp}  for some constants $0 < \lambda \leq \Lambda < \infty$, and such that \eqref{regularity assumption} holds true on $B'_1(0)$.  Let $k_+,k_- \in C^{0,1}(B'_1(0))$ with $k_+,k_- \geq 0$ and $h \in C^{\kappa ,1}(B'_1(0))$.  Let $u \in W^{1,2}(B^+_1(0))$ be a solution to \eqref{main ptop}.   Then for each $d = 0,1,2,\ldots,n-2$, $\Sigma_{\nu}^d$ is contained in a countable union of $d$-dimensional $C^1$-submanifolds of $B'_1(0)$.
\end{theorem}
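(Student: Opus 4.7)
The plan is to follow the stratification scheme developed for the Signorini problem in \cite{GP09}, \cite{GSVG14}, \cite{GPS} and adapted to the penalized setting in \cite{DJ}: show that the stratum $\Sigma_\nu^d$ satisfies a Whitney-type compatibility condition on the Taylor polynomials of the solution, and then deduce the asserted $d$-dimensional $C^1$ rectifiability by a combination of the Whitney extension theorem and the implicit function theorem. As is standard, after subtracting a suitable Taylor polynomial of $h$ one may assume that the solution $v$ satisfies \eqref{nonhom}, and I interpret $\Sigma_\nu^d$ as the set of free boundary points where the Almgren frequency of Theorem \ref{Almgren monotonicity thm} equals $\nu$ and the spine of the unique tangent polynomial has dimension $d$.

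First I would combine the Monneau almost monotonicity formula (Theorem \ref{Monneau mono}) with Theorem \ref{tangent thm} to upgrade the existence of a blow-up to its uniqueness at each $x_0 \in \Sigma_\nu^d$: there exists a unique $\nu$-homogeneous polynomial $q_{x_0}$, harmonic with respect to the frozen operator $D_i(a^{ij}(x_0) D_j \cdot)$ and satisfying the correct sign condition on $\{x_n = 0\}$, such that the Almgren rescalings of $v$ centered at $x_0$ converge to $q_{x_0}$. Combining this uniqueness with the continuous dependence statement (Theorem \ref{conts tangent thm}), the nondegeneracy (Lemma \ref{nondeg lemma}) and the $L^2$ growth bound (Lemma \ref{L2 growth lemma}) should then produce a uniform Whitney-type estimate of the form
\[
|v(x) - q_{x_0}(x - x_0)| \leq \sigma(|x - x_0|)\,|x - x_0|^{\nu},
\]
together with the corresponding bounds on derivatives of $v - q_{x_0}(\cdot - x_0)$ of order less than $\nu$, valid uniformly as $x_0$ varies over compact subsets of $\Sigma_\nu^d$.

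With this compatibility in hand, the classical Whitney extension theorem yields an $F \in C^{\nu}(B'_1(0))$ whose $\nu$-th Taylor polynomial at each $x_0 \in \Sigma_\nu^d$ coincides with $q_{x_0}$; in particular every partial derivative of $F$ of order strictly less than $\nu$ vanishes on $\Sigma_\nu^d$. At each $x_0 \in \Sigma_\nu^d$ the $d$-dimensional spine of $q_{x_0}$ singles out an $(n-1-d)$-dimensional collection of order-$\nu$ directional derivatives of $F$ that vanish at $x_0$ but whose gradients are linearly independent; these gradients are order-$(\nu-1)$ derivatives of $F$, hence of class $C^1$ by construction. Partitioning $\Sigma_\nu^d$ into countably many Borel pieces according to the choice of multi-indices and a quantitative lower bound on the relevant Jacobian determinant, the implicit function theorem realises each piece as locally contained in the common zero set of $n-1-d$ independent $C^1$ functions, i.e.\ in a $d$-dimensional $C^1$ submanifold; the countable union then gives the conclusion.

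The main obstacle will be the second step: propagating the pointwise blow-up convergence into the uniform Whitney estimate in the presence of both variable coefficients and the inhomogeneous right-hand side $f$ in \eqref{nonhom}. The Monneau-type functional carries correction terms arising from the deviation of $A(x)$ from $A(x_0)$ and from the obstacle remainder $f$, and these correction terms must be shown to be dominated by a modulus of continuity independent of $x_0$; in addition, because the harmonicity of $q_{x_0}$ is with respect to the frozen matrix $A(x_0)$, the Whitney data depend on the base point through a matrix that itself varies in $C^{\kappa-1,1}$, so a careful $x_0$-dependent linear change of variables with uniform estimates on compact subsets of $\Sigma_\nu^d$ must accompany the extension argument before the constant-coefficient Whitney theorem can be applied.
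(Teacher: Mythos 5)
Your proposal follows essentially the same route as the paper's proof: uniqueness of the blow-up via the Monneau monotonicity formula (Theorem~\ref{unique tangent thm}), continuous dependence of blow-ups on the base point (Theorem~\ref{conts tangent thm}), a Whitney-type compatibility estimate (Lemma~\ref{whitney prereq lemma}), Whitney extension to a $C^\kappa$ function $F$, and then the implicit function theorem applied to the $(n-1-d)$ order-$(\nu-1)$ derivatives $D^{\beta_i}F$ whose gradients are linearly independent at points of $\Sigma_\nu^d$. The only point worth flagging is a minor index imprecision near the end: it is the \emph{order-$(\nu-1)$} derivatives of $F$ that vanish on $\Sigma_\nu^d$ with linearly independent gradients (not the order-$\nu$ ones), and since $F\in C^\kappa$ with $\kappa>\nu$ these are indeed $C^1$; with that adjustment your argument matches the paper.
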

The set $\Sigma_{\nu}^d$ consists of free boundary points with Almgren's frequency $\nu$ and degree $d$. We refer to Section \ref{rect} for its precise definition. The proof of Theorem \ref{rectifiability thm} hinges on  Theorem~\ref{conts tangent thm}, combined with  Whitney's extension and the implicit function theorem.

\subsection{Structure of the paper}
The paper is organized as follows. In Section 2 we introduce the variational formulation of the problem. In Section 3 we prove existence and uniqueness of solutions.  In Section 4 we establish the optimal regularity of solutions. In Section 5 we show how to transform problem \eqref{hompb} into \eqref{nonhom}. Section 6 is devoted to proof  of the almost-monotonicity of a truncated functional of Almgren type, and in Section 7 we infer  growth  properties of the solution near free boundary points as a consequence. In Section 8 we introduce the Almgren rescalings, and discuss their blow-up limits. In Sections 9 and 10 we prove the almost-monotonicity of  Weiss-type and Monneau-type functionals, respectively.  Finally, in Section 11 we establish non-degeneracy of solutions and continuous dependance of the blow-up limits on the free boundary point, and  prove Theorem \ref{rectifiability thm}.

\section{Preliminaries}

Our goal is to understand the existence and regularity of solutions to the penalized thin obstacle problem, following some of the ideas introduced in \cite{DJ}.  Throughout this section,
\begin{definition} \label{J defn} {\rm
$J$ is a functional of $v \in W^{1,2}(\Omega)$ defined by
\begin{equation} \label{J functional}
	J(v) = \frac{1}{2} \int_{\Omega} a^{ij} D_i v \,D_j v + \frac{1}{p} \int_{\Gamma} (k_+ (v-h)^+)^p + k_- (v-h)^-)^p) ,
\end{equation}
where $v$ takes the values of its trace on $\Gamma$ and we let $(v-h)^+ = \max\{v-h,0\}$ and $(v-h)^- = -\min\{v-h,0\}$.
} \end{definition}

The functional $J$ satisfies the following transformation formula under $C^1$-diffeomorphisms, which follows by a straight-forward application of change of variables.

\begin{lemma} \label{J change var lemma}
Let $\Omega, \widetilde{\Omega} \subset \mathbb{R}^n$ be bounded Lipschitz domains, $\Gamma$ be a smooth relatively open subset of $\partial \Omega$, $\widetilde{\Gamma}$ be a smooth relatively open subset of $\partial \widetilde{\Omega}$, and $\varphi : \Omega \cup \Gamma \rightarrow \widetilde{\Omega} \cup \widetilde{\Gamma}$ be a $C^1$-diffeomorphism.  Let $J$ be the functional defined by \eqref{J functional}, where $1 < p < \infty$ is a constant, $a^{ij} \in L^{\infty}(\Omega)$ such that $a^{ij} = a^{ji}$ on $\Omega$, $k_+,k_- \in L^1(\Gamma)$ with $k_+,k_- \geq 0$ on $\Gamma$, and $h \in L^{\infty}(\Gamma)$.  Then for each $v \in W^{1,2}(\Omega)$,
\begin{equation*}
	J(v) = \frac{1}{2} \int_{\widetilde{\Omega}} \widetilde{a}^{ij} D_i \widetilde{v} \,D_j \widetilde{v}
		+ \frac{1}{p} \int_{\widetilde{\Gamma}} (\widetilde{k}_+ (\widetilde{v}-\widetilde{h})^+)^p + \widetilde{k}_- (\widetilde{v}-\widetilde{h})^-)^p)
\end{equation*}
where $\widetilde{v} = v \circ \varphi^{-1}$, $\widetilde{a}^{ij} = ((a^{kl} D_k \varphi^i \,D_l \varphi^j) \circ \varphi^{-1}) \,J\varphi^{-1}$, $\widetilde{k}_{\pm} = (k_{\pm} \circ \varphi^{-1}) \,J_{\widetilde{\Gamma}} \varphi^{-1}$, and $\widetilde{h} = h \circ \varphi^{-1}$ where $J \varphi^{-1}$ is the Jacobian of $\varphi^{-1} : \widetilde{\Omega} \rightarrow \Omega$ and $J_{\widetilde{\Gamma}} \varphi^{-1}$ is the Jacobian of $\varphi^{-1} |_{\widetilde{\Gamma}} : \widetilde{\Gamma} \rightarrow \Gamma$.
\end{lemma}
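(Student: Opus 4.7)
The plan is a direct application of the change of variables formula, treating the volume term and the boundary term of $J$ separately. For the volume term, I would set $\psi = \varphi^{-1}$, view $\psi : \widetilde{\Omega} \cup \widetilde{\Gamma} \to \Omega \cup \Gamma$ as the parameterization, and use the identity $v = \widetilde{v} \circ \varphi$ together with the chain rule to write
$$D_k v(x) = (D_i \widetilde{v})(\varphi(x))\, D_k \varphi^i(x).$$
Substituting into $a^{kl} D_k v\, D_l v$ and then performing the substitution $y = \varphi(x)$ with $dx = J\varphi^{-1}(y)\, dy$ yields
$$\int_{\Omega} a^{kl}(x) D_k v\, D_l v\, dx = \int_{\widetilde{\Omega}} \bigl[(a^{kl} D_k \varphi^i D_l \varphi^j) \circ \varphi^{-1}\bigr](y)\, J\varphi^{-1}(y)\, D_i \widetilde{v}(y)\, D_j \widetilde{v}(y)\, dy,$$
which is exactly $\int_{\widetilde\Omega} \widetilde{a}^{ij} D_i \widetilde{v}\, D_j \widetilde{v}$ by the stated definition of $\widetilde{a}^{ij}$.

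For the boundary term, I would similarly pull back along $\varphi^{-1}$, using the elementary fact that composition commutes with the positive- and negative-part operators, so that
$$\bigl((v-h)^{\pm}\bigr)\circ \varphi^{-1} = (\widetilde{v} - \widetilde{h})^{\pm}$$
on $\widetilde{\Gamma}$. Applying the surface change of variables with Jacobian $J_{\widetilde{\Gamma}} \varphi^{-1}$ converts $\int_{\Gamma} k_{\pm}((v-h)^{\pm})^p\, dS$ into $\int_{\widetilde{\Gamma}} \widetilde{k}_{\pm}((\widetilde{v} - \widetilde{h})^{\pm})^p\, d\widetilde{S}$, since $\widetilde{k}_{\pm}$ absorbs both the composition with $\varphi^{-1}$ and the surface Jacobian. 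Adding the two transformed integrals gives the claimed identity.

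The only subtlety worth flagging is regularity: since $v \in W^{1,2}(\Omega)$ rather than $C^1$, one must invoke the standard Sobolev chain rule for $C^1$-diffeomorphisms (which are bi-Lipschitz on compact sets, so they preserve $W^{1,2}$ and the almost-everywhere formula $D(v \circ \psi) = (Dv \circ \psi)\, D\psi$ holds), and likewise appeal to the fact that traces transform naturally under boundary diffeomorphisms so that the boundary integral is well-defined and the surface change of variables applies. No genuine obstacle arises; the result is purely a bookkeeping consequence of change of variables, which is why the authors label it straightforward.
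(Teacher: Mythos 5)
Your proposal is correct and matches the paper's intent exactly: the paper gives no proof, stating only that the lemma ``follows by a straight-forward application of change of variables,'' which is precisely the computation you carry out (chain rule on the gradient term, surface change of variables on the boundary term, with the observation that composition with $\varphi^{-1}$ commutes with the positive and negative part operators). The regularity remarks you add about bi-Lipschitz invariance of $W^{1,2}$ and naturality of traces are sound and appropriate, though the paper leaves them implicit.
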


Let $J$ be as in Definition~\ref{J defn}.  We say that $u \in W^{1,2}(\Omega)$ is \emph{$J$-minimizing} if $J(u) < \infty$ and
\begin{equation*}
	J(u) \leq J(v)
\end{equation*}	
for all $v \in W^{1,2}(\Omega)$ with $u = v$ on $\partial \Omega \setminus \Gamma$, where $u,v$ take the values of their trace on $\partial \Omega \setminus \Gamma$.  %Let $\nu = (\nu_1,\nu_2,\ldots,\nu_n)$ be the outward unit normal to $\Omega$ with respect to the standard Euclidian metric.
Recall that $u \in W^{1,2}(\Omega)$ is a weak solution to \eqref{main ptop} if \eqref{main weak ptop} holds true for all $\zeta \in W^{1,2}(\Omega)$ with $\int_{\Gamma} (k_+ + k_-) \,|\zeta|^p < \infty$ and $\zeta = 0$ near $\partial \Omega \setminus \Gamma$.
The assumption that $\int_{\Gamma} (k_+ + k_-) \,|\zeta|^p < \infty$ guarantees that if $u \in W^{1,2}(\Omega)$ with $J(u) < \infty$, then by Young's inequality
\begin{align*}
	\int_{\Gamma} k_{\pm} ((u+t\zeta-h)^{\pm})^p &\leq \int_{\Gamma} k_{\pm} ((u-h)^{\pm} + |t| \,|\zeta|)^p
		\\&\leq 2^{p-1} \int_{\Gamma} k_{\pm} ((u-h)^{\pm})^p + 2^{p-1} \int_{\Gamma} k_{\pm} \,|\zeta|^p < \infty , \\
	\int_{\Gamma} k_{\pm} ((u-h)^{\pm})^{p-1} \,|\zeta|
		&\leq \frac{p-1}{p} \int_{\Gamma} k_{\pm} ((u-h)^{\pm})^p + \frac{1}{p} \int_{\Gamma} k_+ \,|\zeta|^p < \infty .
\end{align*}
Hence $J(u+t\zeta) < \infty$ for all $-1 \leq t \leq 1$ and the second integral on the right-hand side of \eqref{main weak ptop} is also finite.  Moreover, $k_+,k_- \in L^1(\Gamma)$ guarantees that $\int_{\Gamma} (k_+ + k_-) \,|\zeta|^p < \infty$ for all $\zeta \in C^1_c(\Omega \cup \Gamma)$.

\begin{lemma} \label{Jmin is Jstat lemma}
Let $J$ be as in Definition~\ref{J defn}.  For each $u \in W^{1,2}(\Omega)$ with $J(u) < \infty$, $u$ is $J$-minimizing if and only if $u$ is a weak solution to \eqref{main ptop}.
\end{lemma}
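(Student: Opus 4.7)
The plan is to prove the equivalence by exploiting the convexity of $J$ and carrying out a standard first-variation computation, with the admissibility condition $\int_{\Gamma}(k_++k_-)|\zeta|^p<\infty$ supplying the integrability needed to differentiate under the integral sign.

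\textbf{Step 1: Convexity of $J$.} I would first note that $J$ is convex on $\{v\in W^{1,2}(\Omega):J(v)<\infty\}$. The Dirichlet part is convex because $A$ is symmetric and positive semidefinite by \eqref{ellipticity hyp}, while the boundary penalties are convex because $s\mapsto(s^+)^p$ and $s\mapsto(s^-)^p$ are convex on $\mathbb{R}$ for $p\geq 1$ (each is a composition of a nonnegative convex function with $s\mapsto s-h$, which is affine). This convexity is what makes the two directions of the lemma essentially equivalent to the vanishing of the first variation.

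\textbf{Step 2: Minimizer $\Rightarrow$ weak solution.} Fix an admissible test function $\zeta\in W^{1,2}(\Omega)$ with $\int_\Gamma(k_++k_-)|\zeta|^p<\infty$ and $\zeta=0$ near $\partial\Omega\setminus\Gamma$. By the computation already given in the text, $J(u+t\zeta)<\infty$ for $|t|\leq 1$, and $u+t\zeta$ agrees with $u$ on $\partial\Omega\setminus\Gamma$. I would then show that $\varphi(t):=J(u+t\zeta)$ is differentiable at $t=0$ with
\begin{equation*}
	\varphi'(0)=\int_{\Omega}a^{ij}D_j u\,D_i\zeta+\int_{\Gamma}\bigl(k_+((u-h)^+)^{p-1}-k_-((u-h)^-)^{p-1}\bigr)\zeta.
\end{equation*}
The Dirichlet term is quadratic in $t$, so its derivative is immediate. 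For the boundary terms, the integrand $\tfrac{1}{p}k_\pm((u+t\zeta-h)^\pm)^p$ is pointwise $C^1$ in $t$ (here $p>1$ matters, so the derivative $k_\pm((u+t\zeta-h)^\pm)^{p-1}\zeta$ is continuous in $t$), and the difference quotients are dominated by a multiple of $k_\pm\bigl(((u-h)^\pm)^p+|\zeta|^p\bigr)\in L^1(\Gamma)$ via the Young-type estimate already displayed before the lemma; dominated convergence then justifies passing $\tfrac{d}{dt}$ under the integral. Since $u$ is a minimizer, $\varphi'(0)=0$, which is precisely \eqref{main weak ptop}.

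\textbf{Step 3: Weak solution $\Rightarrow$ minimizer.} Given a competitor $v\in W^{1,2}(\Omega)$ with $v=u$ on $\partial\Omega\setminus\Gamma$ and $J(v)<\infty$, set $\zeta=v-u$. Then $\zeta=0$ on $\partial\Omega\setminus\Gamma$, and the bound $|\zeta|^p\leq 2^{p-1}(((u-h)^\pm)^p+((v-h)^\pm)^p+\text{symmetric terms})$ combined with $J(u),J(v)<\infty$ yields $\int_\Gamma(k_++k_-)|\zeta|^p<\infty$, so $\zeta$ is admissible. By convexity of $J$,
\begin{equation*}
	\frac{J(u+t\zeta)-J(u)}{t}\leq J(v)-J(u)\qquad\text{for }t\in(0,1].
\end{equation*}
Letting $t\downarrow 0$, the left side tends to $\varphi'(0)$, which vanishes by the weak-solution identity \eqref{main weak ptop} applied to the test function $\zeta$. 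Hence $J(u)\leq J(v)$.

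\textbf{Main obstacle.} The only subtle point is the differentiation under the integral for the $((u+t\zeta-h)^\pm)^p$ terms, since this is where the assumption $p>1$ and the hypothesis $\int_\Gamma(k_++k_-)|\zeta|^p<\infty$ are essential; the bounds needed for dominated convergence are exactly the Young's inequality estimates the authors have already recorded just before the lemma, so the argument is routine once those are invoked.
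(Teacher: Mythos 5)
Your proposal is correct and follows essentially the same strategy as the paper: both directions exploit convexity of the penalty term, and the forward direction is the usual first-variation argument justified by exactly the Young-inequality bounds recorded just before the lemma. The only cosmetic difference is that for the converse direction the paper avoids the limit $t\downarrow 0$ by expanding $J(u+\zeta)$ directly via the pointwise convexity inequality $F(x,u+\zeta-h)-F(x,u-h)-D_zF(x,u-h)\,\zeta\geq 0$ and then inserting the weak-form identity, whereas you invoke convexity of $J$ at the level of difference quotients and pass to the derivative.
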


\begin{proof}
If $u$ is $J$-minimizing, then by a standard variational argument $u$ is a weak solution to \eqref{main ptop}; see for instance the proof of Lemma~4.1 of~\cite{ALP15}.  To see the converse, suppose that $u$ is a weak solution to \eqref{main ptop} and fix any function $\zeta \in W^{1,2}(\Omega)$ with $\int_{\Gamma} (k_+ + k_-) \,|\zeta|^p < \infty$ and $\zeta = 0$ near $\partial \Omega \setminus \Gamma$.  Define $F : \Gamma \times \mathbb{R} \rightarrow [0,\infty)$ by
\begin{equation} \label{F defn}
	F(x,z) = \frac{1}{p} \,k_+(x) \,(z^+)^p + \frac{1}{p} \,k_-(x) \,(z^-)^p
\end{equation}
for $x \in \Gamma$ and $z \in \mathbb{R}$.  Since $F(x,\,\cdot\,)$ is convex for each $x \in \Gamma$,
\begin{align} \label{Jmin is Jstat eqn1}
	&F(x,u + \zeta - h) - F(x,u - h) - D_z F(x,u - h) \,\zeta \nonumber
		\\&\hspace{15mm} = \int_0^1 (D_z F(x,u + t \zeta - h) - D_z F(x,u - h)) \,\zeta \,dt \geq 0
\end{align}
on $\Gamma$.  Hence by \eqref{Jmin is Jstat eqn1}
\begin{align} \label{Jmin is Jstat eqn2}
	J(u + \zeta) =&\, \frac{1}{2} \int_{\Omega} a^{ij} D_i (u+\zeta) \,D_j (u+\zeta) + \int_{\Gamma} F(x,u + \zeta - h) \nonumber
		\\ \geq&\, \frac{1}{2} \int_{\Omega} (a^{ij} D_i u \,D_j u + 2 a^{ij} D_j u \,D_i \zeta + a^{ij} D_i \zeta \,D_j \zeta) \nonumber
			\\& + \int_{\Gamma} (F(x,u - h) + D_z F(x,u - h) \,\zeta) .
\end{align}
Note that we have equality in \eqref{Jmin is Jstat eqn2} if and only if we have $D_z F(x,u + t \zeta - h) = D_z F(x,u - h)$ on $\Gamma$ for all $0 \leq t \leq 1$, so that we have equality in \eqref{Jmin is Jstat eqn1}.  The assumption that $u$ is a weak solution to \eqref{main ptop} means that \eqref{main weak ptop} holds true.  We can rewrite \eqref{main weak ptop} as
\begin{equation} \label{Jmin is Jstat eqn3}
	\int_{\Omega} a^{ij} D_j u \,D_i \zeta + \int_{\Gamma} D_z F(x,u-h) \,\zeta = 0 .
\end{equation}
Combining \eqref{Jmin is Jstat eqn2}, \eqref{Jmin is Jstat eqn3} and \eqref{ellipticity hyp},  we obtain
\begin{align} \label{Jmin is Jstat eqn4}
	J(u + \zeta) \geq&\, \frac{1}{2} \int_{\Omega} a^{ij} D_i u \,D_j u + \frac{1}{2} \int_{\Omega} a^{ij} D_i \zeta \,D_j \zeta + \int_{\Gamma} F(x,u - h) \geq J(u)
\end{align}
(with equality if and only if $\zeta$ is constant and $D_z F(x,u + t \zeta - h) = D_z F(x,u - h)$ on $\Gamma$ for all $0 \leq t \leq 1$).
\end{proof}

\section{Existence and uniqueness of $J$-minimizers}

Theorem~\ref{exist uniq thm} below is the general existence and uniqueness theorem for $J$-minimizing functions on bounded domains $\Omega$.  We distinguish between the cases $\mathcal{H}^{n-1}(\partial \Omega \setminus \Gamma) > 0$ and $\mathcal{H}^{n-1}(\partial \Omega \setminus \Gamma) = 0$ (i.e.~$\Gamma = \partial \Omega$ up to a set of $\mathcal{H}^{n-1}$-measure zero).  For the case $\mathcal{H}^{n-1}(\partial \Omega \setminus \Gamma) > 0$ we include the Dirichlet boundary condition $u = \varphi$ on $\partial \Omega \setminus \Gamma$.  In the case $\Gamma = \partial \Omega$ uniqueness is determined by the values of $k_+$, $k_-$ and $h$, and one might have infinitely many $J$-minimizing constant functions $c$ with $J(c) = 0$.  %An illuminating example is when $\Gamma = \partial \Omega$ and $k_+ > 0$ and $k_- \geq 0$ are constants, for which there is a unique $J$-minimizing function if $k_- > 0$ and every constant function $\leq \inf_{\Gamma} h$ is $J$-minimizing if $k_- = 0$.
We show existence by the direct method, and uniqueness using the equality case of \eqref{Jmin is Jstat eqn4}.

\begin{theorem} \label{exist uniq thm}
Let $\Omega \subset \mathbb{R}^n$ be a bounded Lipschitz domain and $\Gamma$ be a relatively open subset of $\partial \Omega$.  Let $J$ be as in Definition~\ref{J defn} where $a^{ij} \in L^{\infty}(\Omega)$ are such that $a^{ij} = a^{ji}$ on $\Omega$ and \eqref{ellipticity hyp} hold true for some constants $0 < \lambda \leq \Lambda < \infty$, $1 < p < \infty$ is a constant, $k_+,k_- \in L^1(\Gamma)$ with $k_+,k_- \geq 0$ on $\Gamma$, and $h \in L^{\infty}(\Gamma)$.
\begin{enumerate}
	\item[(i)]  If $\mathcal{H}^{n-1}(\partial \Omega \setminus \Gamma) > 0$, for each $\varphi \in W^{1,2}(\Omega)$ with $J(\varphi) < \infty$ there exists a unique $J$-minimizing function $u \in W^{1,2}(\Omega)$ with $u = \varphi$ on $\partial \Omega \setminus \Gamma$.
\end{enumerate}
Suppose instead that $\mathcal{H}^{n-1}(\partial \Omega \setminus \Gamma) = 0$ and set
\begin{equation*}
	M_+ = \inf_{\Gamma \cap \{k_+ > 0\}} h, \quad M_- = \sup_{\Gamma \cap \{k_- > 0\}} h
\end{equation*}
(with the usual convention that $M_+ = +\infty$ if $k_+ = 0$ on $\Gamma$ and $M_- = -\infty$ if $k_- = 0$ on $\Gamma$).
\begin{enumerate}
	\item[(ii)] If $\mathcal{H}^{n-1}(\partial \Omega \setminus \Gamma) = 0$ and $M_+ \leq M_-$, then there exists a unique $J$-minimizing function $u \in W^{1,2}(\Omega)$.

	\item[(iii)] If $\mathcal{H}^{n-1}(\partial \Omega \setminus \Gamma) = 0$ and $M_- \leq M_+$, then the $J$-minimizing functions $u \in W^{1,2}(\Omega)$ are precisely the constant functions $u$ with $M_- \leq u \leq M_+$.
\end{enumerate}
\end{theorem}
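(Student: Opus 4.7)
The plan is to combine the direct method of the calculus of variations (for existence) with the equality case of the convexity chain \eqref{Jmin is Jstat eqn2}--\eqref{Jmin is Jstat eqn4} from Lemma~\ref{Jmin is Jstat lemma} (for uniqueness). In every case, ellipticity gives $\|\nabla u_k\|_{L^2(\Omega)}^2 \leq (2/\lambda)\, J(u_k)$ for a minimizing sequence $u_k$, so the gradients are $L^2$-bounded. I would then establish weak lower semicontinuity of $J$ on $W^{1,2}(\Omega)$: the Dirichlet term by convexity of its quadratic integrand, and the boundary term by using the compact trace embedding $W^{1,2}(\Omega) \hookrightarrow L^2(\partial\Omega)$ to pass to an a.e.\ convergent subsequence on $\Gamma$ and then applying Fatou to the nonnegative integrands $k_\pm((u_k-h)^\pm)^p$. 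In Case~(i), the Poincar\'e inequality applied to $u_k - \varphi$ (which has zero trace on the positive-measure set $\partial\Omega \setminus \Gamma$) bounds $\|u_k\|_{W^{1,2}}$, so extracting a weak limit produces a minimizer.

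Case~(ii) existence requires a more delicate coercivity argument, since without a Dirichlet condition $J$ is insensitive to additive constants away from the boundary term. First I would note that $M_+ \leq M_-$ excludes $k_+ \equiv 0$ and $k_- \equiv 0$ individually, so both $\{k_\pm > 0\}$ have positive $\mathcal{H}^{n-1}$-measure. Decomposing $u_k = \bar u_k + \widetilde u_k$ with $\bar u_k$ the spatial average, Poincar\'e plus the trace theorem gives $\widetilde u_k$ bounded in $L^2(\partial \Omega)$. Supposing $\bar u_k \to +\infty$, Chebyshev yields $\mathcal{H}^{n-1}(\{|\widetilde u_k| > \bar u_k/2\} \cap \Gamma) \to 0$, so by $L^1$-absolute continuity of $k_+$ we get $\int_{\{k_+ > 0\} \cap \{|\widetilde u_k| \leq \bar u_k/2\}} k_+ \geq \tfrac{1}{2}\|k_+\|_{L^1}$ for large $k$; on this set $u_k - h \geq \bar u_k / 4$ once $\bar u_k > 4\|h\|_{\infty}$, so $\int_\Gamma k_+ ((u_k-h)^+)^p \gtrsim \bar u_k^p$, contradicting boundedness of $J(u_k)$. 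The case $\bar u_k \to -\infty$ is symmetric via $k_-$. Thus $u_k$ is bounded in $W^{1,2}$ and the lsc argument above delivers a minimizer.

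For uniqueness I would apply the equality case of \eqref{Jmin is Jstat eqn4}. In Case~(i), with $\zeta = \widetilde u - u$ for two minimizers $u,\widetilde u$, one gets $\int a^{ij} D_i \zeta D_j \zeta = 0$, hence $\zeta$ constant; vanishing on $\partial\Omega \setminus \Gamma$ forces $\zeta \equiv 0$. In Case~(ii) the same equality case gives simultaneously $u_2 - u_1 = c$ constant and $F(x,\cdot)$ affine on $[u_1(x)-h(x),\, u_2(x)-h(x)]$ a.e.\ on $\Gamma$. Taking WLOG $c > 0$, strict convexity of $z \mapsto (z^+)^p$ on $[0,\infty)$ and of $z \mapsto (z^-)^p$ on $(-\infty, 0]$ for $p > 1$ lets me split the affine condition pointwise into three exhaustive cases ($u_1 \geq h$, $u_2 \leq h$, $u_1 < h < u_2$) and conclude $u_2 \leq h$ a.e.\ on $\{k_+ > 0\}$ and $u_1 \geq h$ a.e.\ on $\{k_- > 0\}$. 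Hence both $(u_1-h)^+ = 0$ on $\{k_+ > 0\}$ and $(u_1-h)^- = 0$ on $\{k_- > 0\}$, so the entire boundary integrand in \eqref{main weak ptop} for $u_1$ vanishes a.e., making $u_1$ a weak solution of the homogeneous Neumann problem; standard elliptic theory then forces $u_1 \equiv C$ constant on the connected domain $\Omega$. Taking essential infima and suprema in the pointwise inequalities yields $C \geq M_-$ and $C + c \leq M_+$, hence $c \leq M_+ - M_- \leq 0$, contradicting $c > 0$. Case~(iii) follows directly: any $C \in [M_-, M_+]$ has $(C-h)^\pm = 0$ on $\{k_\pm > 0\}$, so $J(C) = 0$ and $C$ is a minimizer; conversely any minimizer $u$ has $J(u) = 0$, forcing $\nabla u \equiv 0$ and, from vanishing of the boundary integrand, $M_- \leq u \leq M_+$.

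I expect the main obstacle to be uniqueness in Case~(ii). The key insight is that the pointwise affine-ness of $F$ on $[u_1-h,\, u_2-h]$---a purely convex-analytic consequence of equality in \eqref{Jmin is Jstat eqn4}---actually annihilates the \emph{entire} boundary term in the weak equation for $u_1$, reducing the problem to a pure Neumann problem whose only solutions are constants; from there the geometric hypothesis $M_+ \leq M_-$ closes the argument at once. The secondary technical point is the Case~(ii) coercivity estimate, which rests on the interplay between the Sobolev trace embedding and the $L^1$-absolute continuity of the weights $k_\pm$.
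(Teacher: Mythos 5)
Your proposal is correct, and parts (i) and (iii) follow the paper's argument closely (direct method with the Poincar\'e inequality of Lemma~\ref{poincare lemma}, compact trace operator, Fatou; and the elementary observation that $J(v)=0$ characterizes constants in $[M_-,M_+]$). Part (ii), however, takes a genuinely different route. For existence, the paper simply truncates the minimizing sequence at the levels $M_+$ and $M_-$: this decreases $J$, bounds $\overline u_\ell$ in $L^\infty$ uniformly, and hence in $W^{1,2}$, with no coercivity estimate needed; it also produces a minimizer satisfying $M_+\le u\le M_-$ as a byproduct. You instead prove coercivity by decomposing $u_k$ into its average and mean-zero part, controlling the latter via Poincar\'e--Wirtinger and the trace theorem, and ruling out $\bar u_k\to\pm\infty$ by Chebyshev combined with the $L^1$-absolute continuity of $k_\pm$; this is more work, but it shows directly that $J$ is coercive without relying on the special cutoff.

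For uniqueness in (ii), the paper exploits the pointwise bound $M_+\le u\le M_-$ from its existence construction: equality in \eqref{Jmin is Jstat eqn4} forces $\zeta=v-u$ constant, and if $\zeta>0$ then $u+\zeta\ge M_++\zeta>h$ on the positive-measure set $\{k_+>0\}\cap\{h<M_++\zeta\}$, forcing $k_+=0$ there---a contradiction. Your argument is self-contained: from the affineness of $F(x,\cdot)$ on $[u_1-h,u_2-h]$ you deduce pointwise that $u_2\le h$ on $\{k_+>0\}$ and $u_1\ge h$ on $\{k_->0\}$, which annihilates the boundary term in the weak equation for $u_1$; you then obtain $u_1\equiv$ const from the homogeneous Neumann problem and close the loop by comparing with $M_\pm$. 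This trades the paper's reliance on the constructed minimizer's structure for a cleaner decoupling of existence and uniqueness. The only point worth making explicit is the final Neumann step: the test class for \eqref{main weak ptop} a priori requires $\int_\Gamma(k_++k_-)|\zeta|^p<\infty$, so to conclude $\int_\Omega a^{ij}D_j u_1\,D_i\zeta=0$ for all $\zeta\in W^{1,2}(\Omega)$ (and in particular $\zeta=u_1$) you should note that once the boundary integrand vanishes a.e.\ the identity holds for bounded $\zeta$ (which are always admissible since $k_\pm\in L^1$) and then extends to $W^{1,2}$ by density of $C^1(\overline\Omega)$ on the Lipschitz domain. That is the only step your ``standard elliptic theory'' compresses; it is routine but not automatic given the restricted test class.
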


To prove Theorem~\ref{exist uniq thm}(i), we will need the following variant of the Poincar\'e inequality.

\begin{lemma} \label{poincare lemma}
Let $\Omega \subset \mathbb{R}^n$ be a bounded Lipschitz domain and $\Gamma$ be a relatively open subset of $\partial \Omega$ with $\mathcal{H}^{n-1}(\partial \Omega \setminus \Gamma) > 0$.  Suppose that $v \in W^{1,2}(\Omega)$ with $v = 0$ on $\partial \Omega \setminus \Gamma$.  Then
\begin{equation*}
	\int_{\Omega} v^2 \leq C \int_{\Omega} |Dv|^2
\end{equation*}
for some constant $C = C(n,\Omega,\Gamma) \in (0,\infty)$.
\end{lemma}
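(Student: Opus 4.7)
The plan is to argue by contradiction using the standard compactness method that goes back to Lions. Suppose the inequality fails. Then for every integer $k \geq 1$ there exists $v_k \in W^{1,2}(\Omega)$ with $v_k = 0$ on $\partial \Omega \setminus \Gamma$ and
\begin{equation*}
    \int_{\Omega} v_k^2 > k \int_{\Omega} |Dv_k|^2 .
\end{equation*}
After normalizing, I may assume $\int_\Omega v_k^2 = 1$ for every $k$, whence $\int_\Omega |Dv_k|^2 < 1/k \to 0$. In particular, $\{v_k\}$ is bounded in $W^{1,2}(\Omega)$.

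Next I would invoke the Rellich--Kondrachov compactness theorem for the bounded Lipschitz domain $\Omega$: along a subsequence (not relabeled), $v_k \to v$ strongly in $L^2(\Omega)$ and $v_k \rightharpoonup v$ weakly in $W^{1,2}(\Omega)$ for some $v \in W^{1,2}(\Omega)$. The strong $L^2$ convergence gives $\int_{\Omega} v^2 = 1$, so $v \not\equiv 0$. By lower semicontinuity of the Dirichlet energy under weak convergence,
\begin{equation*}
    \int_{\Omega} |Dv|^2 \leq \liminf_{k \to \infty} \int_{\Omega} |Dv_k|^2 = 0 ,
\end{equation*}
so $Dv \equiv 0$ a.e.\ in $\Omega$. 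Since $\Omega$ is a (connected) Lipschitz domain, this forces $v$ to be a constant $c$, and $c \neq 0$ because $\int_\Omega v^2 = 1$.

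Finally I would use the trace theorem for Lipschitz domains, which gives a bounded linear operator $T : W^{1,2}(\Omega) \to L^2(\partial \Omega)$ that is in fact compact. Hence $Tv_k \to Tv$ strongly in $L^2(\partial \Omega)$. Since $Tv_k = 0$ on $\partial \Omega \setminus \Gamma$ for every $k$, the limit satisfies $Tv = c = 0$ a.e.\ on $\partial \Omega \setminus \Gamma$. Using the hypothesis $\mathcal{H}^{n-1}(\partial \Omega \setminus \Gamma) > 0$, this forces $c = 0$, contradicting $c \neq 0$.

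The only mildly delicate step is passing the vanishing boundary condition to the limit; this is handled cleanly by the compactness (or merely continuity) of the trace operator on a Lipschitz domain, which is the reason the Lipschitz regularity of $\partial \Omega$ appears in the hypotheses. If $\Omega$ has several connected components, the same argument is applied component by component, provided each one meets $\partial \Omega \setminus \Gamma$ in a set of positive $\mathcal{H}^{n-1}$-measure (otherwise the inequality genuinely fails on that component and one would need to restrict the class of admissible $v$).
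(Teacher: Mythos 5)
Your proof is correct and follows essentially the same route as the paper: a compactness contradiction argument using normalization, Rellich--Kondrachov, lower semicontinuity of the Dirichlet energy, and compactness of the trace operator to pass the boundary condition to the limit. The paper likewise implicitly assumes $\Omega$ is connected; your closing remark about disconnected domains is a sensible clarification but not a departure in method.
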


\begin{proof}
Suppose to the contrary that for $\ell = 1,2,3,\ldots$ there exist non-zero functions $v_{\ell} \in W^{1,2}(\Omega)$ such that $v_{\ell} = 0$ on $\partial \Omega \setminus \Gamma$ and $\|Dv_{\ell}\|_{L^2(\Omega)} \leq (1/\ell) \,\|v_{\ell}\|_{L^2(\Omega)}$.  By scaling we may also assume that $\|v_{\ell}\|_{L^2(\Omega)} = 1$ and thus $\|Dv_{\ell}\|_{L^2(\Omega)} \leq 1/\ell$.  By the Rellich compactness theorem, there exists a constant function $v$ on $\Omega$ such that, after passing to a subsequence, $v_{\ell} \rightarrow v$ strongly in $L^2(\Omega)$ and $Dv_{\ell} \rightarrow 0$ weakly in $L^2(\Omega)$.  In particular, $v$ is a non-zero constant function with $\|v\|_{L^2(\Omega)} = 1$.  Since the trace operator $W^{1,2}(\Omega) \rightarrow L^2(\partial \Omega)$ is a bounded compact linear operator, after passing to a further sequence, $v_{\ell} \rightarrow v$ strongly in $L^2(\partial \Omega)$ and pointwise $\mathcal{H}^{n-1}$-a.e.~on $\partial \Omega$.  In particular, since $v_{\ell} = 0$ on $\partial \Omega \setminus \Gamma$, $v = 0$ on $\partial \Omega \setminus \Gamma$.  Since $v$ is a constant function on $\Omega$, the trace of $v$ on $\partial \Omega$ is equal to the constant value of $v$.  Therefore, $v = 0$ on $\Omega$, contradicting $v$ being non-zero.
\end{proof}

\begin{proof}[Proof of Theorem~\ref{exist uniq thm}(i)]
To prove the existence of a $J$-minimizing function, for $\ell = 1,2,3,\ldots$ let $u_{\ell} \in W^{1,2}(\Omega)$ such that $u_{\ell} = \varphi$ on $\partial \Omega$ and
\begin{equation*}
	\lim_{\ell \rightarrow \infty} J(u_{\ell}) = \inf \{ J(v) : v \in W^{1,2}(\Omega), \,v = \varphi \text{ on } \partial \Omega \} .
\end{equation*}
By Lemma~\ref{poincare lemma} applied to $v = u_{\ell} - \varphi$ and \eqref{ellipticity hyp},
\begin{align*}
	\|u_{\ell}\|_{W^{1,2}(\Omega)}
		&\leq \|u_{\ell} - \varphi\|_{W^{1,2}(\Omega)} + \|\varphi\|_{W^{1,2}(\Omega)}
		\leq C \|Du_{\ell} - D\varphi\|_{L^2(\Omega)} + \|\varphi\|_{W^{1,2}(\Omega)}
		\\&\leq C \|Du_{\ell}\|_{L^2(\Omega)} + (C+1) \,\|\varphi\|_{W^{1,2}(\Omega)}
		\leq \frac{C}{\sqrt{\lambda}} \,J(u_{\ell})^{1/2} + (C+1) \,\|\varphi\|_{W^{1,2}(\Omega)}
		\\&\leq \frac{C}{\sqrt{\lambda}} \,(J(\varphi)^{1/2} + 1) + (C+1) \,\|\varphi\|_{W^{1,2}(\Omega)}
\end{align*}
for all sufficiently large $\ell$, where $C = C(n,\Omega,\Gamma) \in (0,\infty)$ is a constant.  Thus by the Rellich compactness theorem, there exists a function $u \in W^{1,2}(\Omega)$ such that, after passing to a subsequence, $u_{\ell} \rightarrow u$ strongly in $L^2(\Omega)$ and $Du_{\ell} \rightarrow Du$ weakly in $L^2(\Omega,\mathbb{R}^n)$.  Since the trace operator $W^{1,2}(\Omega) \rightarrow L^2(\partial \Omega)$ is a bounded compact linear operator, after passing to a further sequence, $u_{\ell} \rightarrow u$ strongly in $L^2(\partial \Omega)$ and pointwise $\mathcal{H}^{n-1}$-a.e.~on $\partial \Omega$.  Since $u_{\ell} = \varphi$ on $\partial \Omega \setminus \Gamma$, $u = \varphi$ on $\partial \Omega \setminus \Gamma$.  The weak convergence $Du_{\ell} \rightarrow Du$ in $L^2(\Omega,\mathbb{R}^n)$ implies that
\begin{equation*}
	\lim_{k \rightarrow \infty} \int_{\Omega} a^{ij} D_i u \,\zeta_j = \liminf_{\ell \rightarrow \infty} \int_{\Omega} a^{ij} D_i u_{\ell} \,\zeta_j
\end{equation*}
for all $\zeta = (\zeta_1,\zeta_2,\ldots,\zeta_n) \in C^0_c(\Omega, \mathbb{R}^n)$ (as $a^{ij} \,\zeta_j \in C^0_c(\Omega)$); that is, $Du_{\ell} \rightarrow Du$ weakly in $L^2(\Omega,\mathbb{R}^n)$ equipped with the inner product $\langle \xi, \zeta \rangle = \int_{\Omega} a^{ij} \,\xi_i \,\zeta_j$.  Hence
\begin{equation} \label{exist uniq eqnA1}
	\int_{\Omega} a^{ij} D_i u \,D_j u \leq \liminf_{\ell \rightarrow \infty} \int_{\Omega} a^{ij} D_i u_{\ell} \,D_j u_{\ell} .
\end{equation}
Recalling from above that $u_{\ell} \rightarrow u$ pointwise $\mathcal{H}^{n-1}$-a.e.~on $\partial \Omega$, $(u_{\ell} - h)^+ \rightarrow (u - h)^+$ pointwise $\mathcal{H}^{n-1}$-a.e.~on $\Gamma$.  By Fatou's lemma,
\begin{equation} \label{exist uniq eqnA2}
	\int_{\Gamma} k_+ ((u - h)^+)^p \leq \liminf_{\ell \rightarrow \infty} \int_{\Gamma} k_+ ((u_{\ell} - h)^+)^p .
\end{equation}
Similarly,
\begin{equation} \label{exist uniq eqnA3}
	\int_{\Gamma} k_- ((u - h)^-)^p \leq \liminf_{\ell \rightarrow \infty} \int_{\Gamma} k_- ((u_{\ell} - h)^-)^p .
\end{equation}
By \eqref{exist uniq eqnA1}, \eqref{exist uniq eqnA2}, and \eqref{exist uniq eqnA3},
\begin{equation*}
	J(u) \leq \liminf_{\ell \rightarrow \infty} J(u_{\ell})
\end{equation*}
and it follows that $u$ is $J$-minimizing.

To show uniqueness of $J$-minimizing functions, suppose that $u, v \in W^{1,2}(\Omega)$ are both $J$-minimizing functions with $u = v = \varphi$ on $\partial \Omega \setminus \Gamma$.  Set $\zeta = v-u$ so that $\zeta \in W^{1,2}(\Omega)$ with $\int_{\Gamma} (k_+ + k_-) \,|\zeta|^p < \infty$ (by Minkowski's inequality) and $\zeta = 0$ on $\partial \Omega \setminus \Gamma$.  Since $J(u+\zeta) = J(u)$, i.e.~equality holds true in \eqref{Jmin is Jstat eqn4}, $\zeta$ must be a constant function on $\Omega$ and in particular $\zeta = 0$ on $\Omega$.  Therefore $u = v$ on $\Omega$.
\end{proof}

\begin{proof}[Proof of Theorem~\ref{exist uniq thm}(ii)]
To prove the existence of a $J$-minimizing function, for $\ell = 1,2,3,\ldots$ let $u_{\ell} \in W^{1,2}(\Omega)$ such that
\begin{equation*}
	\lim_{\ell \rightarrow \infty} J(u_{\ell}) = \inf \{ J(v) : v \in W^{1,2}(\Omega) \} .
\end{equation*}
Note that since $M_+ \leq M_-$ we have $M_+ > -\infty$ and $M_- < \infty$.
Truncate $u_{\ell}$ by letting $\overline{u}_{\ell} \in W^{1,2}(\Omega)$ be given by
\begin{equation*}
	\overline{u}_{\ell} = \begin{cases}
		M_+ &\text{if } u_{\ell} < M_+ \\
		u_{\ell} &\text{if } M_+ \leq u_{\ell} \leq M_- \\
		M_- &\text{if } u_{\ell} > M_-
	\end{cases}
\end{equation*}
Clearly $M_+ \leq \overline{u}_{\ell} \leq M_-$ on $\Omega$.  Moreover,
\begin{align*}
	J(\overline{u}_{\ell}) = \frac{1}{2} \int_{\Omega \cap \{M_+ < u_{\ell} < M_-\}} |Du_{\ell}|^2
		&+ \frac{1}{p} \int_{\Gamma \cap \{\overline{u}_{\ell} > h \geq M_+\}} k_+ (\overline{u}_{\ell} - h)^p
		\\&+ \frac{1}{p} \int_{\Gamma \cap \{\overline{u}_{\ell} < h \leq M_-\}} k_- (h - \overline{u}_{\ell})^p \leq J(u_{\ell}) ,
\end{align*}
where the first step uses $k_+ = 0$ on $\Gamma \cap \{h < M_+\}$ and $k_- = 0$ on $\Gamma \cap \{h > M_-\}$ and the last step uses $\overline{u}_{\ell} \leq u_{\ell}$ on $\Gamma \cap \{\overline{u}_{\ell} > h \geq M_+\}$ and $\overline{u}_{\ell} \geq u_{\ell}$ on $\Gamma \cap \{\overline{u}_{\ell} < h \leq M_-\}$.  Thus, using \eqref{ellipticity hyp},
\begin{equation*}
	\limsup_{\ell \rightarrow \infty} \lambda \,\|D\overline{u}_{\ell}\|_{L^2(\Omega)}^2
		\leq \lim_{\ell \rightarrow \infty} J(\overline{u}_{\ell})
		\leq \lim_{\ell \rightarrow \infty} J({u}_{\ell}) =\inf \{ J(v) : v \in W^{1,2}(\Omega) \} .
\end{equation*}
Thus by the Rellich compactness theorem, there exists a function $u \in W^{1,2}(\Omega)$ such that, after passing to a subsequence, $\overline{u}_{\ell} \rightarrow u$ strongly in $L^2(\Omega)$ and $D\overline{u}_{\ell} \rightarrow Du$ weakly in $L^2(\Omega,\mathbb{R}^n)$.  Moreover, by the compactness of the trace operator $W^{1,2}(\Omega) \rightarrow L^2(\partial \Omega)$, after passing to a further sequence, $u_{\ell} \rightarrow u$ strongly in $L^2(\partial \Omega)$ and pointwise $\mathcal{H}^{n-1}$-a.e.~on $\partial \Omega$.  Clearly $M_+ \leq u \leq M_-$ on $\Omega$.  Arguing as we did for (i), $u$ is $J$-minimizing.

To show uniqueness of $J$-minimizing functions, suppose that $u, v \in W^{1,2}(\Omega)$ are both $J$-minimizing functions. Without loss of generality assume that $M_+ \leq u \leq M_-$ on $\Omega$.  Set $\zeta = v - u$ so that $\zeta \in W^{1,2}(\Omega)$ with $\int_{\Gamma} (k_+ + k_-) \,|\zeta|^p < \infty$ (by Minkowski's inequality).  Since $J(u+\zeta) = J(u)$, i.e.~equality holds true in \eqref{Jmin is Jstat eqn4}, $\zeta$ must be a constant function on $\Omega$ and
\begin{equation} \label{exist uniq eqnB1}
	0 = \left. \frac{d}{dt} D_z F(x,u + t \zeta - h) \right.
	= \begin{cases}
		\hspace{3mm} (p-1) \,k_+ (u + t \zeta - h)^{p-2} \,\zeta &\text{if } u + t \zeta > h \\
		-(p-1) \,k_- (h - u - t \zeta)^{p-2} \,\zeta &\text{if } u + t \zeta < h
	\end{cases}
\end{equation}
on $\Gamma \cap \{ u + t \zeta \neq h \}$ for all $0 \leq t \leq 1$, where $F$ is as in \eqref{F defn}.  If $\zeta > 0$, then by \eqref{exist uniq eqnB1} we have $k_+ = 0$ on $\Gamma \cap \{ u + \zeta > h \}$.
%However, by the definition of $M_+$
However, recalling that $M_+ > -\infty$ and applying the definition of $M_+$, the set $S = \Gamma \cap \{ k_+ > 0 \} \cap \{ h < M_+ + \zeta \}$ has $\mathcal{H}^{n-1}(S) > 0$ and we have $k_+ > 0$ and $u + \zeta \geq M_+ + \zeta > h$ on $S$, giving us a contradiction.  Hence we cannot have $\zeta > 0$.  By similar reasoning we cannot have $\zeta < 0$.  Therefore, $\zeta = 0$ on $\Omega$, that is $u = v$ on $\Omega$.
\end{proof}

\begin{proof}[Proof of Theorem~\ref{exist uniq thm}(iii)]
We observe that $J(v) \geq 0$ for all $v \in W^{1,2}(\Omega)$ and $J(v) = 0$ if and only if $v$ is a constant function on $\Omega$, $v \leq h$ on $\Gamma \cap \{k_+ > 0\}$, and $v \geq h$ on $\Gamma \cap \{k_- > 0\}$, i.e.~$v$ is a constant function with $M_- \leq v \leq M_+$.
\end{proof}

\section{Optimal regularity near points of $\Gamma$} \label{sec:regularity}

The goal of this section is to investigate the local regularity of a weak solution $u \in W^{1,2}(\Omega)$ to \eqref{main ptop}.    Suppose that $\Gamma$ is a $C^{1,1}$-hypersurface and $x_0 \in \Gamma$.  After a $C^{1,1}$-change of coordinates we may assume that $x_0 = 0$ and that $\Omega = B^+_1(0)$ and $\Gamma = B'_1(0)$, see Lemma~\ref{J change var lemma}.  Without loss of generality, throughout the remainder of the paper, we will assume
\begin{equation}\label{identity_origin}
a^{ij}(0)=\delta_{ij}, \quad i,j=1,\dots,n.
\end{equation}
Moreover, by~\cite[Theorem~B.1]{GSVG14} after a further $C^{1,1}$-change of coordinates we may assume that
\begin{equation} \label{regularity assumption}
	a^{in} = a^{ni} = 0  \text{ on }B'_1(0), \quad i = 1,2,\ldots,n-1.
\end{equation}

We begin by establishing a Caccioppoli-type inequality.
\begin{lemma} \label{regularity lemma1}
Let $a^{ij} \in L^{\infty}(B^+_1(0))$ such that $a^{ij} = a^{ji}$ on $B^+_1(0)$, \eqref{ellipticity hyp} holds true for some constants $0 < \lambda \leq \Lambda < \infty$, and \eqref{regularity assumption} holds true on $B'_1(0)$.  Let $1 < p < \infty$ be a constant, $k_+,k_- \in L^1(B'_1(0))$ with $k_+,k_- \geq 0$ on $B'_1(0)$, and $h \in L^{\infty}(B'_1(0))$.  If $u \in W^{1,2}(B^+_1(0))$ is a weak solution to \eqref{main ptop}, then
\begin{align} \label{regularity1 concl}
	&\int_{B^+_{\rho/2}(0)} |Du|^2 + \frac{1}{p} \int_{B'_{\rho/2}(0)} (k_+ ((u-h)^+)^p + k_- ((u-h)^-)^p) \nonumber
		\\&\leq \frac{C}{\rho^2} \int_{B^+_{\rho}(0)} u^2 + C \int_{B'_{\rho}(0)} (k_+ + k_-) \,|h|^p
\end{align}
for all $0 < \rho \leq 1$ and some constant $C = C(p,\lambda,\Lambda) \in (0,\infty)$.
\end{lemma}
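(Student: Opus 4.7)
This is a classical Caccioppoli-type estimate, and the natural choice of test function in \eqref{main weak ptop} is $\zeta = \eta^2 u$, where $\eta \in C^\infty_c(B_\rho(0))$ satisfies $\eta \equiv 1$ on $B_{\rho/2}(0)$, $0 \le \eta \le 1$, and $|D\eta| \le C/\rho$; note that such $\zeta$ vanishes near the curved part $\partial B^+_1(0) \setminus B'_1(0)$ automatically. Admissibility of $\zeta$ requires $\int_{B'_1(0)}(k_+ + k_-)|\zeta|^p < \infty$, which is not obvious from $u \in W^{1,2}$ alone; I would circumvent this by first applying the argument with $\zeta_M = \eta^2 u_M$ where $u_M = \max\{-M,\min\{u,M\}\}$ (for which admissibility follows from $k_\pm \in L^1(B'_1(0))$), and passing to $M \to \infty$ at the end.

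Substituting $\zeta$ into \eqref{main weak ptop} and expanding via the product rule produces
\begin{align*}
0 &= \int_{B^+_1(0)} \eta^2\, a^{ij} D_i u\, D_j u + 2\int_{B^+_1(0)} \eta u\, a^{ij} D_j u\, D_i \eta \\
  &\quad + \int_{B'_1(0)} \eta^2 u\,\bigl(k_+((u-h)^+)^{p-1} - k_-((u-h)^-)^{p-1}\bigr).
\end{align*}
The crucial algebraic step is to write $u = (u-h) + h$ on $B'_1(0)$ and exploit the pointwise identities $(u-h)^+(u-h) = ((u-h)^+)^2$ and $(u-h)^-(u-h) = -((u-h)^-)^2$ to split the boundary integrand as
\[
\eta^2\bigl[k_+((u-h)^+)^p + k_-((u-h)^-)^p\bigr] + \eta^2 h\,\bigl(k_+((u-h)^+)^{p-1} - k_-((u-h)^-)^{p-1}\bigr).
\]
The first bracket is the nonnegative ``coercive'' piece that will end up on the left-hand side of \eqref{regularity1 concl}, while the $h$-piece is an error to be absorbed.

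Both error terms are then controlled by Young's inequality. For the interior cross term, ellipticity \eqref{ellipticity hyp} and Young with a small $\varepsilon > 0$ give
\[
2|\eta u\, a^{ij} D_j u\, D_i\eta| \le \varepsilon\, \eta^2\, a^{ij} D_i u\, D_j u + C(\varepsilon,\Lambda)\, u^2\, |D\eta|^2 .
\]
For the boundary $h$-term, Young with conjugate exponents $p$ and $p/(p-1)$ yields
\[
|h|\,k_\pm((u-h)^\pm)^{p-1} \le \varepsilon\, k_\pm((u-h)^\pm)^p + C(\varepsilon,p)\, k_\pm\, |h|^p .
\]
Choosing $\varepsilon$ small (depending on $\lambda$ and $p$) to absorb these contributions into the left-hand side, using $|D\eta| \le C/\rho$ and $\eta \equiv 1$ on $B_{\rho/2}(0)$, gives exactly \eqref{regularity1 concl}. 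The main (mild) obstacle is spotting the algebraic splitting of the boundary integrand that isolates the good $((u-h)^\pm)^p$ term from the $h$-error; the $M$-truncation passage to the limit is routine, since the coercive boundary term is monotone in $M$, the interior term on $\{|u|\le M\}$ increases to the full Dirichlet integral, and the cross term converges by dominated convergence using $|u_M| \le |u|$.
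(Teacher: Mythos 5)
Your proof is correct and follows essentially the same route as the paper's: test function $\zeta=\eta^2 u$, the algebraic splitting of the boundary term isolating $((u-h)^\pm)^p$ (the paper writes it as $((u-h)^{\pm})^{p-1} u = \pm((u-h)^{\pm})^p + ((u-h)^{\pm})^{p-1} h$, equivalent to your $u=(u-h)+h$ decomposition), and Cauchy/Young absorption. The truncation-by-$u_M$ step you add is a careful extra precaution that the paper omits (it implicitly uses that $h\in L^\infty$ together with local finiteness of the boundary energy makes $\eta^2 u$ admissible), and it does work as you outline, though note that with your truncation the correct monotone quantity is $((u-h)^\pm)^{p-1}(\pm(u_M-h))$ which is nonnegative and increasing to $((u-h)^\pm)^p$ once $M>\|h\|_\infty$.
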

\begin{proof}
Set $\zeta = \eta^2 u$ in \eqref{main weak ptop}, where $\eta \in C^1(B^+_1 \cup B'_1)$ with $\eta = 0$ on $(\partial B_1)^+$ to obtain
\begin{align*}
	&\int_{B^+_1} a^{ij} D_i u \,D_j u \,\eta^2 + \int_{B'_1} (k_+ ((u-h)^+)^{p-1} - k_- ((u-h)^-)^{p-1}) \,u \,\eta^2
		\\&= -2 \int_{B^+_1} a^{ij} u \,D_j u \,\eta \,D_i \eta .
\end{align*}
Using \eqref{ellipticity hyp} and Cauchy's inequality,
\begin{equation*}
	\frac{\lambda}{2} \int_{B^+_1} |Du|^2 \,\eta^2 + \int_{B'_1} (k_+ ((u-h)^+)^{p-1} - k_- ((u-h)^-)^{p-1}) \,u \,\eta^2
		\leq \frac{2\Lambda^2}{\lambda} \int_{B^+_1} u^2 \,|D\eta|^2 .
\end{equation*}
Since $((u-h)^{\pm})^{p-1} u = \pm ((u-h)^{\pm})^p + ((u-h)^{\pm})^{p-1} h$ on $B'_1$,
\begin{align*}
	&\frac{\lambda}{2} \int_{B^+_1} |Du|^2 \,\eta^2 + \int_{B'_1} (k_+ ((u-h)^+)^p + k_- ((u-h)^-)^p) \,\eta^2
	\\&\leq \frac{2\Lambda^2}{\lambda} \int_{B^+_1} u^2 \,|D\eta|^2 - \int_{B'_1} (k_+ ((u-h)^+)^{p-1} - k_- ((u-h)^-)^{p-1}) \,h \,\eta^2
\end{align*}
Hence by Young's inequality $ab \leq \frac{p-1}{p} \,a^{\frac{p}{p-1}} + \frac{1}{p} \,b^p$ with $a = (u-h)^{\pm}$ and $b = |h|$,
\begin{align*}
	&\frac{\lambda}{2} \int_{B^+_1} |Du|^2 \,\eta^2 + \frac{1}{p} \int_{B'_1} (k_+ ((u-h)^+)^p + k_- ((u-h)^-)^p) \,\eta^2
		\\&\leq \frac{2\Lambda^2}{\lambda} \int_{B^+_1} u^2 \,|D\eta|^2 + \frac{1}{p} \int_{B'_1} (k_+ + k_-) \,|h|^p \,\eta^2 .
\end{align*}
%Now choose $\eta$ so that $0 \leq \eta \leq 1$, $\eta = 1$ on $B^+_{\rho/2}$, $\eta = 0$ on $B^+_1 \setminus B^+_{\rho}$, and $|D\eta| \leq 3/\rho$, we obtain \eqref{regularity1 concl}.
Now to obtain \eqref{regularity1 concl} choose $\eta$ so that $0 \leq \eta \leq 1$, $\eta = 1$ on $B^+_{\rho/2}$, $\eta = 0$ on $B^+_1 \setminus B^+_{\rho}$, and $|D\eta| \leq 3/\rho$.
\end{proof}

Next, we prove the boundedness of solutions to \eqref{main ptop}.
\begin{lemma} \label{regularity lemma2}
Let $a^{ij} \in L^{\infty}(B^+_1(0))$ such that $a^{ij} = a^{ji}$ on $B^+_1(0)$, \eqref{ellipticity hyp} holds true for some constants $0 < \lambda \leq \Lambda < \infty$ and \eqref{regularity assumption} holds true on $B'_1(0)$.  Let $1 < p < \infty$ be a constant, $k_+,k_- \in L^1(B'_1(0))$ with $k_+,k_- \geq 0$ on $B'_1(0)$, and $h \in L^{\infty}(B'_1(0))$.  If $u \in W^{1,2}(B^+_1(0))$ is a weak solution to \eqref{main ptop}, then $u \in L^{\infty}(B^+_{1/2}(0))$ and
\begin{align}
	\label{regularity2 concl} \sup_{B^+_{1/2}(0)} |u| \leq C \|u\|_{L^2(B^+_1(0))} + C \|h\|_{L^{\infty}(B'_1(0))}
\end{align}
for some constant $C = C(n,\lambda,\Lambda) \in (0,\infty)$.
\end{lemma}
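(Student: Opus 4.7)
My plan is a De Giorgi truncation argument applied to $(u-k)^+$ and $(u+k)^-$ for levels $k \geq k_0 := \|h\|_{L^{\infty}(B'_1)}$. The key observation is that once the truncation level exceeds $\|h\|_{L^\infty}$, the boundary nonlinearity has a favorable sign when tested against the truncation and can simply be discarded, reducing the problem to the standard half-ball $L^\infty$ iteration.

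First I would derive a truncated Caccioppoli estimate. Fix $k \geq k_0$ and set $v := (u-k)^+$. On $\{v > 0\}$ we have $u > k \geq h$, so $(u-h)^- = 0$, and testing \eqref{main weak ptop} against $\zeta = \eta^2 v$ (with $\eta \in C^1(B^+_1(0) \cup B'_1(0))$ vanishing near $(\partial B_1(0))^+$) reduces the boundary integral to $\int_{B'_1} k_+(u-h)^{p-1} \eta^2 v \geq 0$, which can be dropped. Treating the volume integral with \eqref{ellipticity hyp} and Cauchy's inequality as in the proof of Lemma~\ref{regularity lemma1} then yields
\[
\int_{B^+_1(0)} |Dv|^2 \eta^2 \leq C \int_{B^+_1(0)} v^2 |D\eta|^2,
\]
with $C = C(\lambda,\Lambda)$. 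Replacing $\zeta$ by $-\eta^2 w$ with $w := (u+k)^-$ (so that on $\{w>0\}$ we have $u < -k \leq h$, hence $(u-h)^+ = 0$, and the boundary term is $\int_{B'_1} k_-(h-u)^{p-1} \eta^2 w \geq 0$) gives the analogous estimate for $w$.

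The rest is the standard De Giorgi scheme. Introduce decreasing radii $r_m := 1/2 + 2^{-m-1}$, increasing levels $k_m := k_0 + M(1 - 2^{-m})$ with $M$ to be chosen, and cutoffs $\eta_m$ supported in $B^+_{r_m}(0) \cup B'_{r_m}(0)$ with $\eta_m \equiv 1$ on $B^+_{r_{m+1}}(0)$ and $|D\eta_m| \leq C 2^m$. The Sobolev inequality required at each step follows from the usual one in $B_{r_m}(0)$ after extending $\eta_m (u-k_m)^+$ evenly across $\{x_n = 0\}$ to obtain a $W^{1,2}_0(B_{r_m}(0))$ function. Combining this with the truncated Caccioppoli estimate and a Chebyshev-type bound for $|\{u > k_{m+1}\} \cap B^+_{r_m}(0)|$ produces a super-quadratic recursion
\[
\Phi_{m+1} \leq C\, 4^m M^{-\alpha} \Phi_m^{1+\alpha}, \qquad \Phi_m := \int_{B^+_{r_m}(0)} ((u-k_m)^+)^2,
\]
for some $\alpha = \alpha(n) > 0$. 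The standard iteration lemma forces $\Phi_m \to 0$ as soon as $M$ is a fixed multiple of $\|u\|_{L^2(B^+_1(0))} + k_0$, giving $u \leq k_0 + M$ on $B^+_{1/2}(0)$. The analogous argument applied to $-u$ supplies the lower bound and yields \eqref{regularity2 concl}.

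The only nontrivial point is the choice of truncation level: one must truncate above $\|h\|_{L^\infty}$ to give the penalty term the correct sign on $\{v > 0\}$, which is why $\|h\|_{L^{\infty}(B'_1(0))}$ appears explicitly in the final estimate. (A minor technical detail is that to ensure admissibility of the test functions in \eqref{main weak ptop} one should first work with $\min(v,L)$ and pass $L \to \infty$ by monotone convergence.) Beyond this, the argument is a routine half-ball adaptation of the classical De Giorgi $L^\infty$ bound.
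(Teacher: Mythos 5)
Your proof is correct and leads to the same estimate, but it takes a genuinely different (more self-contained) route than the paper. The paper shows, via a variational comparison argument (perturbing $u$ by a one-parameter family $u_t$ and using $J(u) \leq J(u_t)$), that the even reflection of $(u-M_-)^+$ across $\{x_n = 0\}$ is a weak subsolution of $D_i(a^{ij}D_j\,\cdot\,) \geq 0$ in the full ball $B_1$, and then cites the local maximum principle~\cite[Theorem~8.17]{GT} as a black box. Here $M_\pm$ are the sharper thresholds $M_- = \sup_{\{k_->0\}} h$ and $M_+ = \inf_{\{k_+>0\}} h$, though since $|M_\pm| \leq \|h\|_{L^\infty}$ this does not change the form of the final estimate. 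You instead test the Euler--Lagrange identity directly with $\eta^2 (u-k)^+$, obtain the truncated Caccioppoli inequality, and re-derive the iteration that underlies~\cite[Theorem~8.17]{GT} by hand on the half-ball (using even reflection only to invoke the Sobolev inequality). Both arguments hinge on precisely the same key observation you isolate: once the truncation level exceeds the relevant threshold, the penalty term has a favorable sign against the truncated test function and can be discarded, reducing the problem to a linear homogeneous one. The paper's route is shorter because it delegates the iteration to Gilbarg--Trudinger; yours is longer but entirely explicit.

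One small remark on your aside about admissibility: the cap $\min(v,L)$ is not actually necessary here. Since $u$ is a weak solution with $J(u)<\infty$, one has $\int_{B'_1}(k_++k_-)((u-h)^\pm)^p <\infty$, and combined with $h\in L^\infty$ and $k_\pm\in L^1$ this already gives $\int_{B'_1}(k_++k_-)|u|^p<\infty$; since $|\eta^2(u-k)^+|\leq|u|+|k|$, the test function $\eta^2(u-k)^+$ is admissible in \eqref{main weak ptop} directly. Including the extra truncation and passing $L\to\infty$ is harmless, merely unnecessary.
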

\begin{proof}
Set
\begin{equation*}
	M_- = \sup_{B'_1 \cap \{k_- > 0\}} h, \quad M_+ = \inf_{B'_1 \cap \{k_+ > 0\}} h .
\end{equation*}
%Recalling \eqref{regularity assumption}, extend $a^{ij}$ to a continuous function on $B_1$ by an even reflection, i.e. $a^{ij}(x',x_n) = a^{ij}(x',-x_n)$.
Extend $a^{ij}$ to an $L^{\infty}$-function on $B_1$ by even reflection, i.e.~$a^{ij}(x',x_n) = a^{ij}(x',-x_n)$, if $i,j < n$ and if $i = j = n$ and by odd reflection, i.e.~$a^{ij}(x',-x_n) = -a^{ij}(x',x_n)$, if $i < n$ and $j = n$ and if $i = n$ and $j < n$.
Similarly, extend $u$ to a $W^{1,2}$-function on $B_1$ %also
by even reflection, i.e. $u(x',x_n) = u(x',-x_n)$.  First we claim that
\begin{equation} \label{regularity2 eqn1}
	D_i (a^{ij} D_j (u - M_-)^+) \geq 0 \text{ weakly in } B_1 .
\end{equation}
Let $t > 0$ and $\zeta \in W^{1,2}_0(B_1)$ be an arbitrary nonnegative function.  Let's compare $u$ to $u_t \in W^{1,2}(B_1)$, where
\begin{equation*}
	u_t %= M_- + (u - M_- - t \zeta)^+ - (u - M_-)^-
	= \begin{cases}
		u &\text{if } u \leq M_- \\
		M_- &\text{if } M_- < u \leq M_- + t\zeta \\
		u-t\zeta &\text{if } u > M_- + t\zeta .
	\end{cases}
\end{equation*}
Notice that since $\zeta = 0$ on $\partial B_1$, $u_t = u$ on $\partial B_1$. Since $u_t \leq u$ on $B'_1$, $(u_t - h)^+ \leq (u-h)^+$ on $B'_1$.  Moreover, we have $u \geq u_t \geq M_- \geq h$ on $B'_1 \cap \{k_- > 0 \text{ and } u > M_-\}$ and $u_t = u$ on $B'_1 \cap \{u \leq M_-\}$, and therefore $k_- ((u_t - h)^-)^p = k_- ((u - h)^-)^p$ on $B'_1$.  Hence, recalling that $J(u) \leq J(u_t)$ for all $t > 0$ because  $u$ is $J$-minimizing, and utilizing the symmetry properties, we infer
\begin{align*}
	\int_{B_1 \cap \{u > M_-\}} a^{ij} D_i u \,D_j u &\leq \int_{B_1 \cap \{u > M_- + t\zeta\}} a^{ij} D_i (u-t\zeta) \,D_j (u-t\zeta)
		\\&\leq \int_{B_1 \cap \{u > M_-\}} a^{ij} D_i (u-t\zeta) \,D_j (u-t\zeta)
\end{align*}
for all $t > 0$.  Thus by differentiating $\int_{B^+_1 \cap \{u > M_-\}} a^{ij} D_i (u-t\zeta) \,D_j (u-t\zeta)$ at $t = 0^+$,
\begin{equation*}
	\int_{B^+_1 \cap \{u > M_-\}} a^{ij} D_i u \,D_j \zeta \leq 0
\end{equation*}
for all nonnegative functions $\zeta \in W^{1,2}_0(B^+_1)$.  Therefore \eqref{regularity2 eqn1} holds true.  By \eqref{regularity2 eqn1} and~\cite[Theorem~8.17]{GT},
\begin{equation} \label{regularity2 eqn2}
	\sup_{B^+_{1/2}} (u - M_-) \leq C \|(u - M_-)^+\|_{L^2(B^+_1)}
\end{equation}
for some constant $C = C(n,\lambda,\Lambda) \in (0,\infty)$.

By instead comparing $u$ to
\begin{equation*}
	u_t %= M_+ + (u - M_+ - t \zeta)^+ - (u - M_+)^-
	= \begin{cases}
		u + t\zeta &\text{if } u \leq M_+ - t\zeta \\
		M_+ &\text{if } M_+ - t\zeta < u \leq M_+ \\
		u &\text{if } u \geq M_+
	\end{cases}
\end{equation*}
where $t > 0$ and $\zeta \in W^{1,2}_0(B^+_1)$, we can show that
\begin{equation*}
	D_i (a^{ij} D_j (u - M_+)^-) \geq 0 \text{ weakly in } B_1 .
\end{equation*}
Then by applying~\cite[Theorem~8.17]{GT} to $(u - M_+)^-$
\begin{equation} \label{regularity2 eqn3}
	\sup_{B^+_{1/2}} (M_+ - u) \leq C \|(u - M_+)^-\|_{L^2(B^+_1)}
\end{equation}
for some constant $C = C(n,\lambda,\Lambda) \in (0,\infty)$.  It follows from \eqref{regularity2 eqn2} and \eqref{regularity2 eqn3} that \eqref{regularity2 concl} holds true.
\end{proof}

Our next step consists in proving an initial H\"older modulus of continuity for the solution.
\begin{lemma} \label{regularity lemma3}
Let $a^{ij} \in C^0(B^+_1(0) \cup B'_1(0))$ such that $a^{ij} = a^{ji}$ on $B^+_1(0) \cup B'_1(0)$, \eqref{ellipticity hyp} holds true for some constants $0 < \lambda \leq \Lambda < \infty$, and \eqref{regularity assumption} holds true on $B'_1(0)$.  Let $1 < p < \infty$ be a constant and $k_+,k_-,h \in L^{\infty}(B'_1(0))$ with $k_+,k_- \geq 0$ on $B'_1(0)$.  If $u \in W^{1,2}(B^+_1(0))$ is a weak solution to \eqref{main ptop}, then $u \in C^{0,1/2}(B_{1/8}(0))$ with
\begin{equation} \label{regularity3 concl}
	[u]_{1/2,B^+_{1/8}(0)}^2 \leq C \,\|u\|_{L^2(B^+_1(0))}^2
		+ C \,\|k_+ + k_-\|_{L^{\infty}(B'_1(0))} \left( \|u\|_{L^2(B^+_1(0))}^p + \|h\|_{L^{\infty}(B'_1(0))}^p \right)
\end{equation}
for some constant $C \in (0,\infty)$ depending on $n$, $p$, $\lambda$, $\Lambda$, and the modulus of continuity of $a^{ij}$.
\end{lemma}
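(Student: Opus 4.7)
The plan is to first apply Lemma~\ref{regularity lemma2} to bound $u$ in $L^{\infty}(B^+_{1/2}(0))$, so that the boundary flux $g := k_+ ((u-h)^+)^{p-1} - k_- ((u-h)^-)^{p-1}$ is bounded on $B'_{1/2}(0)$ by $C \|k_+ + k_-\|_{L^{\infty}} (\|u\|_{L^{\infty}} + \|h\|_{L^{\infty}})^{p-1}$, and then extract H\"older continuity from a Campanato-type decay estimate for the Dirichlet energy, closing the final bound by invoking the Caccioppoli inequality of Lemma~\ref{regularity lemma1}.

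Since by \eqref{regularity assumption} we have $a^{in} = a^{ni} = 0$ on $B'_1(0)$ for $i < n$, I would extend $u$ to $B_{1/2}(0)$ by even reflection across $\{x_n = 0\}$, and extend $a^{ij}$ by even reflection when the number of indices equal to $n$ is $0$ or $2$ and by odd reflection when exactly one index equals $n$. The vanishing of $a^{in}$ on $B'_1(0)$ ensures that the extended $\widetilde{a}^{ij}$ are continuous on $B_{1/2}(0)$, with the same ellipticity constants and essentially the same modulus of continuity. Splitting an arbitrary $\zeta \in C^1_c(B_{1/2}(0))$ into its even and odd parts in $x_n$ and applying \eqref{main weak ptop} to the even part, one verifies that $\widetilde{u}$ satisfies
\begin{equation*}
	\int_{B_{1/2}(0)} \widetilde{a}^{ij} D_j \widetilde{u} \, D_i \zeta + 2 \int_{B'_{1/2}(0)} g\, \zeta = 0
\end{equation*}
for all such $\zeta$.

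Next I would run a Campanato iteration. Fix $x_0 \in B'_{1/8}(0)$ and $0 < r \leq 1/8$, and let $v$ solve the frozen-coefficient problem $D_i(\widetilde{a}^{ij}(x_0) D_j v) = 0$ in $B_r(x_0)$ with $v = \widetilde{u}$ on $\partial B_r(x_0)$. Standard estimates for constant-coefficient divergence-form equations give $\int_{B_\rho(x_0)} |Dv|^2 \leq C (\rho/r)^n \int_{B_r(x_0)} |Dv|^2$ for $0 < \rho \leq r$. Testing the equation for $w := \widetilde{u} - v$ against $w$, using ellipticity \eqref{ellipticity hyp}, the modulus of continuity $\omega$ of $\widetilde{a}^{ij}$, the Poincar\'e inequality, and the trace inequality $\|w\|_{L^2(B'_r(x_0))}^2 \leq C r\, \|Dw\|_{L^2(B_r(x_0))}^2$ (valid because $w = 0$ on $\partial B_r(x_0)$), I expect to obtain
\begin{equation*}
	\int_{B_r(x_0)} |Dw|^2 \leq C \omega(r)^2 \int_{B_r(x_0)} |D\widetilde{u}|^2 + C r^n \|g\|_{L^{\infty}}^2 .
\end{equation*}
Combining the two estimates yields the perturbative inequality
\begin{equation*}
	\int_{B_\rho(x_0)} |D\widetilde{u}|^2 \leq C \bigl[ (\rho/r)^n + \omega(r)^2 \bigr] \int_{B_r(x_0)} |D\widetilde{u}|^2 + C r^n \|g\|_{L^{\infty}}^2,
\end{equation*}
and a standard iteration lemma of Campanato/Giaquinta type gives $\int_{B_\rho(x_0)} |D\widetilde{u}|^2 \leq C \rho^{n-1} M^2$ for small $\rho$, where $M^2$ collects $\|u\|_{L^2(B^+_1)}^2$ together with $\|k_+ + k_-\|_{L^{\infty}} (\|u\|_{L^2(B^+_1)}^p + \|h\|_{L^{\infty}}^p)$ via Lemma~\ref{regularity lemma1} and Lemma~\ref{regularity lemma2}. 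Morrey's characterization of $C^{0,1/2}$ then gives $\widetilde{u} \in C^{0,1/2}(B_{1/8}(0))$ with the claimed seminorm bound.

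The main technical obstacle is handling the boundary integral $\int_{B'_r(x_0)} g\, w$ coming from the reflected equation at the correct scale: the trace inequality for $w$ (vanishing on $\partial B_r(x_0)$) must pair with Poincar\'e to yield a bound by $r^{n/2} \|Dw\|_{L^2(B_r(x_0))}$, which after absorption produces the crucial $r^n$ term --- exactly the scaling that, via Morrey, corresponds to H\"older exponent $1/2$. A weaker control of this boundary term would only produce a smaller exponent, and the vanishing coefficient structure \eqref{regularity assumption} is essential to even make the reflection legitimate.
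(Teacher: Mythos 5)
Your strategy is sound and genuinely different from the paper's, so let me first compare and then flag one discrepancy.

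The paper does not reflect.  It works directly on half-balls $B^+_\rho(y)$ with $y\in B'_{1/4}$, and the comparison function $v$ solves the frozen-coefficient \emph{Neumann} problem \eqref{regularity3 eqn1}, i.e.\ $D_n v=0$ on $B'_\rho(y)$ with $v=u$ on the spherical cap.  Because $v$ agrees with $u$ on $(\partial B_\rho(y))^+$, it is an admissible competitor for the localized energy, and the paper exploits $J$-minimality of $u$ (rather than the weak Euler--Lagrange equation) to get
$\int_{B^+_\rho}(a^{ij}D_iu\,D_ju - a^{ij}D_iv\,D_jv) \le C K\rho^{n-1}$
where the boundary term is estimated directly from the $L^\infty$ bound of Lemma~\ref{regularity lemma2} and the maximum principle for $v$.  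Freezing $a^{ij}$ at $y$ then gives the perturbation inequality, and the Campanato iteration and Morrey's theorem follow exactly as in your sketch.  So the common skeleton is the same (frozen-coefficient comparison, Campanato iteration, Morrey); what differs is the comparison problem (Neumann on the half-ball vs.\ Dirichlet on the reflected full ball) and, crucially, the mechanism that controls the flat boundary contribution.  The paper's variational route replaces your trace/Poincar\'e step by the monotonicity of the energy under the comparison, which is cleaner precisely because the test function $v$ already satisfies the Neumann condition and so no boundary flux term survives.  Your reflection--and--trace route is also legitimate: the parity check on $\tilde a^{ij}$, the derivation of $\int_{B_{1/2}} \tilde a^{ij}D_j\tilde u\,D_i\zeta + 2\int_{B'_{1/2}} g\,\zeta = 0$, the scaling $\int_{B'_r}w^2\le Cr\int_{B_r}|Dw|^2$ for $w\in W^{1,2}_0(B_r)$, and the ensuing $r^n\|g\|_\infty^2$ perturbation all work.

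The one thing your approach does not quite reproduce is the precise form of the constant in \eqref{regularity3 concl}.  Your Cauchy--Schwarz/Young treatment of $\int_{B'_r} g\,w$ unavoidably produces a term $C r^n\|g\|_\infty^2$, and since $\|g\|_\infty\le \|k_++k_-\|_\infty(\|u\|_\infty+\|h\|_\infty)^{p-1}$ this gives a bound that is \emph{quadratic} in $\|k_++k_-\|_\infty$ and of degree $2(p-1)$ in the $L^\infty$ norms.  The paper's $J$-minimality comparison bounds the boundary contribution by $\frac{2}{p}\int_{B'_\rho} k_\pm((v-h)^\pm)^p \le C\|k_++k_-\|_\infty(\|u\|_{L^2}+\|h\|_\infty)^p\rho^{n-1}$, which is \emph{linear} in $\|k_++k_-\|_\infty$ and of degree $p$; that is what produces the stated
$C\|u\|_{L^2}^2 + C\|k_++k_-\|_\infty(\|u\|_{L^2}^p + \|h\|_\infty^p)$.
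These agree only when $p=2$ and only up to the power of $\|k\|_\infty$.  So while your proposal does establish $u\in C^{0,1/2}(B_{1/8})$ with an explicit estimate, it does not yield \eqref{regularity3 concl} with the same structure; if the quantitative form matters you should either switch to the $J$-minimality comparison for the decay step, or estimate the boundary flux through the energy density rather than pointwise.
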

\begin{proof}
Let $y \in B'_{1/4}(0)$, $0 < \rho \leq 1/4$, and let $v \in W^{1,2}(B^+_{\rho}(0))$ be the weak solution to the constant coefficient problem
\begin{gather}
	D_i (a^{ij}(y) \,D_j v) = 0 \text{ in } B^+_{\rho}(y), \nonumber \\
	\label{regularity3 eqn1} D_n v = 0 \text{ on } B'_{\rho}(y), \\
	v = u \text{ on } (\partial B_{\rho}(y))^+ . \nonumber
\end{gather}
Extending $v$ by even reflection $v(x',x_n) = v(x',-x_n)$, $v$ is a solution to the constant coefficient elliptic equation $D_i (a^{ij}(y) \,D_j v) = 0$ in $B_{\rho}(y)$ with $v = u$ on $\partial B_{\rho}(y)$.  Thus by the maximum principle, $\sup_{B^+_{\rho}(y)} |v| \leq \sup_{\partial B^+_{\rho}(y)} |u|$.  Using the $J$-minimality of $u$ and Lemma~\ref{regularity lemma2},
\begin{align} \label{regularity3 eqn2}
	\int_{B^+_{\rho}(y)} (a^{ij} D_i u \,D_j u - a^{ij} D_i v \,D_j v) \nonumber
		\leq&\, \frac{2}{p} \int_{B'_{\rho}(y)} (k_+ ((v-h)^+)^p - k_+ ((u-h)^+)^p) \nonumber
			\\&+ \frac{2}{p} \int_{B'_{\rho}(y)} (k_- ((v-h)^-)^p - k_- ((u-h)^-)^p) \nonumber
		\\ \leq& C K \rho^{n-1} ,
\end{align}
where $C = C(n,p) \in (0,\infty)$ is a constant and
\begin{equation*}
	K = \|k_+ + k_-\|_{L^{\infty}(B'_1(0))} \,(\|u\|_{L^2(B^+_1(0))}^p + \|h\|_{L^{\infty}(B'_1(0))}^p) .
\end{equation*}
Since $v$ is the solution to \eqref{regularity3 eqn1},
\begin{equation*}
	\int_{B^+_{\rho}(y)} a^{ij}(y) \,D_i (u - v) \,D_j v = 0
\end{equation*}
and thus
\begin{align} \label{regularity3 eqn3}
	\int_{B^+_{\rho}(y)} a^{ij}(y) \,D_i (u - v) \,D_j (u - v) = \int_{B^+_{\rho}(y)} (a^{ij}(y) \,D_i u \,D_j u - a^{ij}(y) \,D_i v \,D_j v)
\end{align}
Hence by \eqref{regularity3 eqn3} and \eqref{regularity3 eqn2},
\begin{align*}
	\int_{B^+_{\rho}(y)} |Du - Dv|^2
	&\leq \frac{1}{\lambda} \int_{B^+_{\rho}(y)} a^{ij}(y) \,D_i (u - v) \,D_j (u - v)
	\\&= \frac{1}{\lambda} \int_{B^+_{\rho}(y)} (a^{ij}(y) \,D_i u \,D_j u - a^{ij}(y) \,D_i v \,D_j v)
	\\&\leq -\frac{1}{\lambda} \int_{B^+_{\rho}(y)} (a^{ij} - a^{ij}(y)) \,(D_i u \,D_j u - D_i v \,D_j v) + C K\rho^{n-1}
%	\\&\leq C \rho \int_{B^+_{\rho}(y)} (|Du|^2 + |Dv|^2) + C K \rho^{n-1} ,
\end{align*}
for some constant $C = C(n,p,\lambda) \in (0,\infty)$.  Thus there exists $\rho_0 \in (0,1/4]$ depending on $\lambda$ and the modulus of continuity of $a^{ij}$ such that if $0 < \rho \leq \rho_0$ then
\begin{equation*}
	\int_{B^+_{\rho}(y)} |Du - Dv|^2 \leq \frac{1}{4} \int_{B^+_{\rho}(y)} (|Du|^2 + |Dv|^2) + C K \rho^{n-1}
\end{equation*}
for some constant $C = C(n,p,\lambda) \in (0,\infty)$.  Thus, by the triangle inequality we have, if $0 < \rho \leq \rho_0$
\begin{align}
	\label{regularity3 eqn5} \int_{B^+_{\rho}(y)} |Du - Dv|^2 &\leq \frac{3}{2} \int_{B^+_{\rho}(y)} |Dv|^2 + 2 C K \rho^{n-1} , \\
	\label{regularity3 eqn6} \int_{B^+_{\rho}(y)} |Du - Dv|^2 &\leq \frac{3}{2} \int_{B^+_{\rho}(y)} |Du|^2 + 2 C K \rho^{n-1}
\end{align}
for some constant $C = C(n,p,\lambda) \in (0,\infty)$.

Let $0 < \sigma < \rho \leq \rho_0$.  Using the triangle inequality and \eqref{regularity3 eqn5},
\begin{align*}
	\sigma^{1-n} \int_{B^+_{\sigma}(y)} |Du|^2
		&\leq 2 \sigma^{1-n} \int_{B^+_{\sigma}(y)} |Du - Dv|^2 + 2 \sigma^{1-n} \int_{B^+_{\sigma}(y)} |Dv|^2
		\\&\leq 5 \sigma^{1-n} \int_{B^+_{\sigma}(y)} |Dv|^2 + 4 C K
\end{align*}
for some constant $C = C(n,p,\lambda) \in (0,\infty)$.  By~\cite[Equation (5.82)]{L13},
\begin{equation*}
	\sigma^{-n} \int_{B^+_{\sigma}(y)} |Dv|^2 \leq C \rho^{-n} \int_{B^+_{\rho}(y)} |Dv|^2
\end{equation*}
for some constant $C = C(n,\lambda,\Lambda) \in (0,\infty)$ and thus
\begin{equation*}
	\sigma^{1-n} \int_{B^+_{\sigma}(y)} |Du|^2
		\leq C \left(\frac{\sigma}{\rho}\right) \rho^{1-n} \int_{B^+_{\rho}(y)} |Dv|^2 + C K .
\end{equation*}
for some constant $C = C(n,p,\lambda,\Lambda) \in (0,\infty)$.  By the triangle inequality and \eqref{regularity3 eqn6},
\begin{align*}
	\sigma^{1-n} \int_{B^+_{\sigma}(y)} |Du|^2
		&\leq 2C \left(\frac{\sigma}{\rho}\right) \rho^{1-n} \int_{B^+_{\rho}(y)} |Du - Dv|^2
			+ 2C \left(\frac{\sigma}{\rho}\right) \rho^{1-n} \int_{B^+_{\rho}(y)} |Du|^2 + C K \nonumber
		\\&\leq C \left(\frac{\sigma}{\rho}\right) \rho^{1-n} \int_{B^+_{\rho}(y)} |Du|^2 + C K ,
\end{align*}
where $C = C(n,p,\lambda,\Lambda) \in (0,\infty)$ are constants.  Hence there exists $\theta = \theta(n,p,\lambda,\Lambda) \in (0,1)$ such that, setting $\sigma = \theta \rho$,
\begin{align} \label{regularity3 eqn7}
	(\theta\rho)^{1-n} \int_{B^+_{\theta\rho}(y)} |Du|^2 \leq \frac{1}{2} \,\rho^{1-n} \int_{B^+_{\rho}(y)} |Du|^2 + C K
\end{align}
for all $0 < \rho \leq \rho_0$ and some constant $C = C(n,p,\lambda,\Lambda) \in (0,\infty)$.  Iteratively applying \eqref{regularity3 eqn7} with $\rho = \theta^{j-1} \rho_0$ for $j = 1,2,3,\ldots,m$,
\begin{align} \label{regularity3 eqn8}
	(\theta^m \rho_0)^{1-n} \int_{B^+_{\theta^m \rho_0}(y)} |Du|^2
		&\leq \frac{1}{2^m} \,\rho_0^{1-n} \int_{B^+_{\rho_0}(y)} |Du|^2
			+ C K \sum_{j=0}^{m-1} \frac{1}{2^j} \nonumber
		\\&\leq \frac{1}{2^m} \,\rho_0^{1-n} \int_{B^+_{\rho_0}(y)} |Du|^2 + 2 C K
\end{align}
for each integer $m = 1,2,3,\ldots$.  Take $0 < \rho \leq \rho_0$ and choose $m$ to be the nonnegative integer such that $\theta^{m+1} \rho_0 < \rho \leq \theta^m \rho_0$.  Then \eqref{regularity3 eqn8} gives us
\begin{equation*}
	\rho^{1-n} \int_{B^+_{\rho}(y)} |Du|^2 \leq C \rho_0^{1-n} \int_{B^+_{\rho_0}(y)} |Du|^2 + C K
\end{equation*}
for some constant $C = C(n,p,\lambda,\Lambda) \in (0,\infty)$.  Therefore, using Lemma~\ref{regularity lemma1},
\begin{equation} \label{regularity3 eqn9}
	\int_{B^+_{\rho}(y)} |Du|^2 \leq C \rho^{n-1} \left( \|u\|_{L^2(B^+_1(0))}^2 + K \right)
\end{equation}
for all $y \in B'_{1/4}(0)$ and $0 < \rho \leq 1/4$, and for some constant $C \in (0,\infty)$ depending only on $n$, $p$, $\lambda$, $\Lambda$, and the modulus of continuity of $a^{ij}$.  By \eqref{regularity3 eqn9},
\begin{equation} \label{regularity3 eqn10}
	\int_{B^+_{\rho}(y)} |Du|^2 \leq \int_{B^+_{2\rho}(y',0)} |Du|^2 \leq C \rho^{n-1} \left( \|u\|_{L^2(B^+_1(0))}^2 + K \right)
\end{equation}
for all $y = (y',y_n) \in B_{1/8}(0)$ and $|y_n| \leq \rho \leq 1/8$, where $C \in (0,\infty)$ is a constant depending only on $n$, $p$, $\lambda$, $\Lambda$, and the modulus of continuity of $a^{ij}$.  By the argument leading to \eqref{regularity3 eqn9} with obvious modifications,
\begin{equation} \label{regularity3 eqn11}
	\int_{B_{\rho}(y)} |Du|^2 \leq C \left(\frac{\rho}{|y_n|}\right)^{n-1} \|u\|_{L^2(B_{|y_n|}(y))}^2
\end{equation}
for all $y = (y',y_n) \in B_{1/8}(0)$ and $0 < \rho \leq |y_n|$ and some constant $C \in (0,\infty)$ depending only on $n$, $\lambda$, $\Lambda$, and the modulus of continuity of $a^{ij}$.  By \eqref{regularity3 eqn10} and \eqref{regularity3 eqn11}, \eqref{regularity3 eqn9} holds true for all $y = (y',y_n) \in B_{1/8}(0)$ and $0 < \rho \leq |y_n|$.  In other words, \eqref{regularity3 eqn9} holds true for all $y \in B^+_{1/8}(0)$ and $0 < \rho \leq 1/8$.  Hence we may apply Morrey's Dirichlet Growth Theorem using \eqref{regularity3 eqn9} to conclude that $u \in C^{0,1/2}(B^+_{1/8}(0))$ and \eqref{regularity3 concl} holds true.
\end{proof}

We are now ready to prove our main result in this section.
\begin{theorem} \label{regularity thm}
Let $\kappa \geq 1$ be an integer and $\alpha \in (0,1)$.  Let $\Omega \subset \mathbb{R}^n$ be a bounded open subset and $\Gamma$ be a relatively open $C^{\kappa,\alpha}$-portion of $\partial \Omega$.  Let $a^{ij} \in C^{\kappa-1,\alpha}(\Omega \cup \Gamma)$ such that $a^{ij} = a^{ji}$ on $\Omega \cup \Gamma$ and \eqref{ellipticity hyp} holds true for some constants $0 < \lambda \leq \Lambda < \infty$.  Let $1 < p < \infty$ be a constant, $k_+,k_- \in C^{\kappa-1,\alpha}(\Gamma)$ with $k_+,k_- \geq 0$ on $\Gamma$, and $h \in C^{\kappa-1,1}(\Gamma)$.   Let $u \in W^{1,2}(\Omega)$ be a weak solution to \eqref{main ptop}.
\begin{enumerate}
	\item[(a)]  If $\kappa+\alpha \leq p$, then $u \in C^{\kappa,\alpha}(\Omega \cup \Gamma)$.
	
	\item[(b)]  Let $x_0 \in \Gamma$ and $0 < \delta < \op{dist}(x_0,\partial \Omega \setminus \Gamma)$.  Suppose that either
	\begin{enumerate}
		\item[(i)] $u \neq h$ on $\Gamma \cap B_{\delta}(x_0)$ or
		\item[(ii)] $p$ is an even integer and $k_+ = k_-$ on $\Gamma \cap B_{\delta}(x_0)$ or
		\item[(iii)] $p$ is an odd integer and $k_+ = k_- = 0$ on $\Gamma \cap B_{\delta}(x_0)$.
	\end{enumerate}
	Then $u \in C^{\kappa,\alpha}((\Omega \cup \Gamma) \cap B_{\delta}(x_0))$.
\end{enumerate}
\end{theorem}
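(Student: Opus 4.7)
I would prove this by local flattening and a bootstrap on classical boundary Schauder estimates for oblique derivative problems. Using Lemma~\ref{J change var lemma} and a $C^{\kappa,\alpha}$-flattening of $\Gamma$, together with the normalizations \eqref{identity_origin} and \eqref{regularity assumption}, I reduce to working locally on $B_1^+(0)$ with $\Gamma = B'_1(0)$ and boundary condition
\[
a^{nn} D_n u \;=\; g \;:=\; -k_+((u-h)^+)^{p-1} + k_-((u-h)^-)^{p-1} \quad \text{on } B'_1(0).
\]
Interior $C^{\kappa,\alpha}$-regularity away from $B'_1(0)$ is standard elliptic Schauder theory, so the real issue is regularity up to $B'_1(0)$, and Lemma~\ref{regularity lemma3} already supplies the starting point $u \in C^{0,1/2}_{\mathrm{loc}}$.

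The heart of the argument is a finite bootstrap combining boundary Schauder estimates for oblique derivative problems with the following composition principle: the model nonlinearity $t\mapsto(t^\pm)^{p-1}$ has total H\"older regularity exactly $p-1$ at $t=0$, so for any bounded $f\in C^{m,\beta}$ one has $(f^\pm)^{p-1} \in C^{m-1,\beta}$ whenever $m+\beta\le p$, the loss of one derivative absorbing the non-smoothness at zeros of $f$. This is precisely why the hypothesis $\kappa+\alpha\le p$ is imposed in part~(a). Inductively, given $u\in C^{m,\beta}_{\mathrm{loc}}$ with $1\le m\le\kappa-1$ and $\beta\le\alpha$, the difference $u-h$ lies in $C^{m,\beta}$ (using $h\in C^{\kappa-1,1}$); by the composition principle and $k_\pm\in C^{\kappa-1,\alpha}$ one gets $g\in C^{m-1,\beta}$; and the oblique derivative Schauder estimate for $D_i(a^{ij}D_j u)=0$, $a^{nn}D_n u=g$ promotes $u$ to $C^{m+1,\beta}$. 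The base case $m=0$ is handled separately: the model nonlinearity is only $C^{0,\min(p-1,1)}$, which yields $g\in C^{0,\beta'}$ for some $\beta'>0$ and hence $u\in C^{1,\beta'}$, after which the clean inductive step takes over. Finitely many iterations produce $u\in C^{\kappa,\alpha}$.

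For part~(b), each of the three special hypotheses eliminates the non-smoothness of $t\mapsto(t^\pm)^{p-1}$ at the free boundary, so the constraint $\kappa+\alpha\le p$ becomes unnecessary. In case~(i), $u-h$ is bounded away from zero on $\Gamma\cap B_\delta(x_0)$, so $((u-h)^\pm)^{p-1}$ inherits the full regularity of $u-h$. In case~(ii), $p$ even together with $k_+=k_-$ collapses $g$ to $k_+(u-h)^{p-1}$, a smooth (in fact polynomial, since $p$ is an even integer) function of $u-h$. In case~(iii) one simply has $g\equiv 0$ and a classical oblique derivative problem. In each case the same bootstrap produces $u\in C^{\kappa,\alpha}$.

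The main technical obstacle is making the composition principle precise and handling the base step: the sublinear regime $1<p<2$, and the quantitative loss of regularity of $(t^\pm)^{p-1}$ near zero, must be tracked carefully through the Schauder machinery to ensure the first iteration converts the initial $C^{0,1/2}$ estimate into usable $C^{1,\beta'}$ regularity, before the gain-of-one-derivative step can take over.
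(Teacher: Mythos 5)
Your proposal matches the paper's proof in essentially every step: flatten via Lemma~\ref{J change var lemma} and the normalization from~\cite{GSVG14}, seed the bootstrap with the $C^{0,1/2}$ bound of Lemma~\ref{regularity lemma3}, observe that $\kappa+\alpha\le p$ makes $z\mapsto (z^\pm)^{p-1}$ a $C^{\kappa-1,\alpha}$ function, and iterate oblique-derivative Schauder estimates (gaining one derivative per pass) until $C^{\kappa,\alpha}$ is reached; part~(b) drops the constraint $\kappa+\alpha\le p$ because the special hypotheses make $(z^\pm)^{p-1}$ smooth where it matters. The worry you raise at the end about the base step and the ``sublinear regime $1<p<2$'' is largely dissolved by the hypothesis itself: with $\kappa\ge 1$ and $\kappa+\alpha\le p$, one has $p\ge 1+\alpha>1$, and the paper's handling of the first iteration is simply to note that composing the $C^{0,1/2}$ initial regularity with the $C^{\kappa-1,\alpha}$ model nonlinearity yields $g\in C^{0,\alpha/2}$ (when $\kappa=1$) or $C^{0,1/2}$ (when $\kappa\ge 2$), which already feeds into the Schauder estimate; after the first pass the bootstrap stabilizes exactly as you describe, with no additional difficulty in the range $1<p<2$.
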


\begin{remark} {\rm
In the special case that $a^{ij} \in C^{\infty}(\Omega \cup \Gamma)$ and $k_+,k_-,h \in C^{\infty}(\Gamma)$, Theorem~\ref{regularity thm} gives us that if $u \in W^{1,2}(\Omega)$ is a weak solution to \eqref{main ptop}, then $u \in C^{\lfloor p \rfloor, p - \lfloor p \rfloor}(\Omega \cup \Gamma)$ if $p$ is not an integer and $u \in C^{p-1,\alpha}(\Omega \cup \Gamma)$ for all $\alpha \in (0,1)$ if $p$ is an integer.  Moreover, given $x_0 \in \Gamma$ and $0 < \delta < \op{dist}(x_0,\partial \Omega \setminus \Gamma)$, if either $u \neq h$ in $\Gamma \cap B_{\delta}(x_0)$, or $p$ is an even integer and $k_+ = k_-$ on $\Gamma \cap B_{\delta}(x_0)$, or $p$ is an odd integer and $k_+ = k_- = 0$ on $\Gamma \cap B_{\delta}(x_0)$, then $u \in C^{\infty}((\Omega \cup \Gamma) \cap B_{\delta}(x_0))$.  This is consistent with what was found in~\cite{DJ} in the special case that $\Omega = B^+_1(0)$, $\Gamma = B'_1(0)$, $a^{ij} = \delta_{ij}$ on $B^+_1(0) \cup B'_1(0)$, $h = 0$ on $B'_1(0)$, and $k_+,k_-$ are constants on $B'_1(0)$.
} \end{remark}

\begin{proof}[Proof of Theorem~\ref{regularity thm}]
By elliptic regularity~\cite[Corollary~8.36 and Theorem~6.17]{GT}, $u \in C^{\kappa,\alpha}(\Omega)$.  Thus to show (a), it suffices to show that for each $x_0 \in \Gamma$ and some $0 < \delta < \op{dist}(x_0,\partial \Omega \setminus \Gamma)$ (depending on $x_0$) that $u \in C^{\kappa,\alpha}((\Omega \cup \Gamma) \cap B_{\delta}(x_0))$.  After a $C^{\kappa,\alpha}$-change of coordinates we may assume that $x_0 = 0$, $\delta = 1$, $\Omega \cap B_{\delta}(x_0) = B^+_1(0)$, and $\Gamma \cap B_{\delta}(x_0) = B'_1(0)$.  Moreover, by~\cite[Theorem~B.1]{GSVG14} after a further $C^{\kappa,\alpha}$-change of coordinates we may assume that \eqref{regularity assumption} holds true on $B'_1(0)$.  (Note that if $a^{ij}$ is in $C^{\kappa-1,\alpha}(\Omega\cup \Gamma)$ rather than in $C^{1,1}(\Omega\cup \Gamma)$ then the change of coordinates constructed in~\cite[Theorem~B.1]{GSVG14} is in fact $C^{\kappa,\alpha}$.)

Let's suppose that $\kappa+\alpha \leq p$ and show that $u \in C^{\kappa,\alpha}(B^+_1 \cup B'_1)$.  By Lemma~\ref{regularity lemma3}, $u \in C^{0,1/2}(B^+_1 \cup B'_1)$.  Notice that since $\kappa+\alpha \leq p$, $z \mapsto (z^+)^{p-1}$ and $z \mapsto (z^-)^{p-1}$ are $C^{\kappa-1,\alpha}$-functions of $z \in \mathbb{R}$.  All this together with $k_+,k_- \in C^{\kappa-1,\alpha}(B'_1)$ and $h \in C^{\kappa-1,1}(B'_1)$, gives us
\begin{equation*}
	k_+ ((u-h)^+)^{p-1} - k_- ((u-h)^-)^{p-1} \in \begin{cases}
		C^{0,\alpha/2}(B'_1) &\text{if } \kappa = 1 \\
		C^{0,1/2}(B'_1) &\text{if } \kappa \geq 2 .
	\end{cases}
\end{equation*}
By the regularity theory for weak solutions to oblique derivative problems (see e.g.~\cite[Proposition 5.53]{L13}),
\begin{equation} \label{regularity eqn1}
	u \in \begin{cases}
		C^{1,\alpha/2}(B^+_1 \cup B'_1) &\text{if } \kappa = 1 \\
		C^{1,1/2}(B^+_1 \cup B'_1) &\text{if } \kappa \geq 2 .
	\end{cases}
\end{equation}
But then
\begin{equation*}
	k_+ ((u-h)^+)^{p-1} - k_- ((u-h)^-)^{p-1} \in \begin{cases}
		C^{0,\alpha}(B'_1) &\text{if } \kappa = 1 \\
		C^{1,\alpha/2}(B'_1) &\text{if } \kappa = 2 \\
		C^{1,1/2}(B'_1) &\text{if } \kappa \geq 3
	\end{cases}
\end{equation*}
and so applying the regularity theory for solutions to oblique derivative problems again,
\begin{equation} \label{regularity eqn2}
	u \in \begin{cases}
		C^{1,\alpha}(B^+_1 \cup B'_1) &\text{if } \kappa = 1 \\
		C^{2,\alpha/2}(B^+_1 \cup B'_1) &\text{if } \kappa = 2 \\
		C^{2,1/2}(B^+_1 \cup B'_1) &\text{if } \kappa \geq 3
	\end{cases}
\end{equation}
arriving at the desired conclusion $u \in C^{1,\alpha}(B^+_1 \cup B'_1)$ if $\kappa = 1$.  If instead $\kappa \geq 2$, for each positive integer $\ell \leq \kappa+1$ by inductively applying the regularity theory solutions to oblique derivative problems a total of $\ell$ times we obtain
\begin{equation} \label{regularity eqn3}
	u \in \begin{cases}
		C^{\ell-1,\alpha}(B^+_1 \cup B'_1) &\text{if } \kappa = \ell-1 \\
		C^{\ell,\alpha/2}(B^+_1 \cup B'_1) &\text{if } \kappa = \ell \\
		C^{\ell,1/2}(B^+_1 \cup B'_1) &\text{if } \kappa \geq \ell+1.
	\end{cases}
\end{equation}
Indeed, by \eqref{regularity eqn1} and \eqref{regularity eqn2}, \eqref{regularity eqn3} holds true if $\ell = 1,2$.  If \eqref{regularity eqn3} holds true for some integer $\ell \leq \kappa$, then \eqref{regularity eqn3} implies that
\begin{equation*}
	k_+ ((u-h)^+)^{p-1} - k_- ((u-h)^-)^{p-1} \in \begin{cases}
		C^{\ell-1,\alpha}(B'_1) &\text{if } \kappa = \ell \\
		C^{\ell,\alpha/2}(B'_1) &\text{if } \kappa = \ell+1 \\
		C^{\ell,1/2}(B'_1) &\text{if } \kappa \geq \ell+2
	\end{cases}
\end{equation*}
and so applying the regularity theory one more time gives us \eqref{regularity eqn3} with $\ell+1$ in place of $\ell$.  Taking $\ell = \kappa+1$ in \eqref{regularity eqn3} gives us the desired conclusion that $u \in C^{\kappa,\alpha}(B^+_1 \cup B'_1)$.

(b) follows by a similar bootstrap argument, noting for (i) that $z \mapsto (z^+)^{p-1}$ and $z \mapsto (z^-)^{p-1}$ are smooth functions of $z \in \mathbb{R} \setminus \{0\}$.  Also, assuming (ii) or (iii),
\begin{equation*}
	k_+(x) \,(z^+)^{p-1} - k_-(x) \,(z^-)^{p-1} = k_+(x) \,z^{p-1}
\end{equation*}
for each $x \in \Gamma \cap B_{\delta}(x_0)$ and $z \in \mathbb{R}$ and $z \mapsto z^{p-1}$ is a smooth function of $z \in \mathbb{R}$.
\end{proof}

In Theorem~\ref{irregularity thm} below, we show that the regularity in Theorem~\ref{regularity thm} is optimal in the sense that if $p$ is an integer, $u$ is not locally $C^{p-1,1}$ near free boundary points $x_0 \in \Gamma$ at which $u(x_0) = h(x_0)$, $\nabla^{\Gamma} u(x_0) \neq \nabla^{\Gamma} h(x_0)$, and either $p$ is even and $k_+(x_0) \neq k_-(x_0)$ or $p$ is odd and $k_+(x_0) \neq 0$ or $k_-(x_0) \neq 0$.  This was previously shown in~\cite[Theorem~2]{DJ} in the special case $p = 2$, $a^{ij} = \delta_{ij}$, $k_+$ and $k_-$ are constants, and $h = 0$ on $\Gamma$.  The key obstruction is that after an orthogonal change of coordinates, we expect $u$ to be asymptotic to a degree $(p-1)$ polynomial plus a constant multiple of $\op{Re}( (x_1-ix_n)^p \log(x_n+ix_1))$.

\begin{theorem} \label{irregularity thm}
Let $\Omega \subset \mathbb{R}^n$ be a bounded open subset and $\Gamma$ be a relatively open $C^p$-portion of $\partial \Omega$.  Let $a^{ij} \in C^{p-1}(\Omega \cup \Gamma)$ such that $a^{ij} = a^{ji}$ on $\Omega$ and \eqref{ellipticity hyp} holds true for some constants $0 < \lambda \leq \Lambda < \infty$.  Let $p \geq 2$ be an integer, $k_+,k_- \in C^0(\Gamma)$ such that $k_+,k_- \geq 0$ on $\Gamma$, and $h \in C^1(\Gamma)$.  Let $u \in W^{1,2}(\Omega)$ be a weak solution to \eqref{main ptop}.  Moreover, let $x_0 \in \Gamma$ be such that $u(x_0) = h(x_0)$ and $\nabla^{\Gamma} u(x_0) \neq \nabla^{\Gamma} h(x_0)$, where $\nabla^{\Gamma}$ denotes tangential gradient of $\Gamma$ In addition, assume that either
\begin{enumerate}
	\item[(i)]  $p$ is even and $k_+(x_0) \neq k_-(x_0)$, or
	\item[(ii)] $p$ is odd and $k_+(x_0) \neq 0$ or $k_-(x_0) \neq 0$.
\end{enumerate}
Then there is no $\delta > 0$ such that $u \in C^{p-1,1}((\Omega \cap \Gamma) \cap B_{\delta}(x_0))$.
\end{theorem}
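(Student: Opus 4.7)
The plan is to argue by contradiction: assume $u \in C^{p-1,1}((\Omega\cup\Gamma)\cap B_\delta(x_0))$ for some $\delta > 0$. By Lemma~\ref{J change var lemma} and~\cite[Theorem~B.1]{GSVG14}, after a $C^p$-change of coordinates one may reduce to $x_0 = 0$, $\Omega\cap B_\delta = B_\delta^+(0)$, $\Gamma\cap B_\delta = B'_\delta(0)$, with $a^{ij}(0) = \delta_{ij}$ and $a^{in}(x',0) = 0$ on $B'_\delta(0)$ for $i < n$. Rotating in $x'$, assume $\nabla_{x'}(u-h)(0) = a e_1$ with $a > 0$, and set $K_\pm := k_\pm(0)\,a^{p-1}\geq 0$; hypotheses (i) and (ii) translate respectively into $K_+ \neq K_-$ (for $p$ even) and $K_+ + K_- > 0$ (for $p$ odd).

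Next I would construct a family of rescalings. Let $P$ be a polynomial of degree $p-1$ approximating $u$ at $0$, chosen by inductively solving linear constraints on its coefficients (using the Taylor jets of $u$ and $a^{ij}$ together with the PDE) so that $D_i(a^{ij}D_j P)(y) = O(|y|^{p-2})$ near $0$. Define
\[
u_r(x) = \frac{u(rx) - P(rx)}{r^p},\qquad x \in B^+_{\delta/r}(0),\ 0 < r < \delta.
\]
The $C^{p-1,1}$ hypothesis yields uniform bounds $|D^\beta u_r(x)| \leq C|x|^{p-|\beta|}$ for $|\beta| \leq p-1$, so by Arzel\`a--Ascoli a subsequence $u_{r_k} \to u_\infty$ in $C^{p-1,\alpha}_{\op{loc}}(\overline{\mathbb{R}^n_+})$ for each $\alpha \in (0,1)$. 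Passing to the limit in the PDE (using $a^{ij}(r_kx) \to \delta_{ij}$ and the cancellation built into $P$) and the boundary condition (using continuity of $k_\pm$ and $(u-h)(r_k x)/r_k \to a x_1$ locally uniformly), one obtains
\[
\Delta u_\infty = q \quad \text{in } \mathbb{R}^n_+,\qquad \partial_{x_n} u_\infty|_{x_n = 0} = K_+((x_1)^+)^{p-1} - K_-((x_1)^-)^{p-1} + \tilde q
\]
for polynomials $q$, $\tilde q$ of degrees $\leq p-2$ and $\leq p-1$, together with $|u_\infty(x)| \leq C|x|^p$ on $\overline{\mathbb{R}^n_+}$.

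I would then compare $u_\infty$ against an explicit singular solution. Decomposing the Neumann datum into even and odd parts in $x_1$,
\[
K_+((x_1)^+)^{p-1} - K_-((x_1)^-)^{p-1} = \frac{K_+ - K_-}{2}|x_1|^{p-1} + \frac{K_+ + K_-}{2}\op{sgn}(x_1)|x_1|^{p-1},
\]
one of the two summands is a polynomial in $x_1$ ($\op{sgn}(x_1)|x_1|^{p-1} = x_1^{p-1}$ when $p$ is even; $|x_1|^{p-1} = x_1^{p-1}$ when $p$ is odd) while the other is not. Absorbing $q$, $\tilde q$ and the polynomial part of the datum by subtracting a polynomial of degree $\leq p$ from $u_\infty$, one reduces to $V$ satisfying $\Delta V = 0$ in $\mathbb{R}^n_+$ with $\partial_{x_n} V|_{x_n = 0}$ equal to the non-polynomial part of the datum. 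A direct calculation in polar coordinates (using the branch $\arg(x_n + ix_1) \in (-\pi/2, \pi/2)$) shows that the harmonic function $\op{Re}((x_1 - ix_n)^p \log(x_n + ix_1))$ has Neumann trace $(p\pi/2)|x_1|^{p-1}$ for $p$ even and $(p\pi/2)\op{sgn}(x_1)|x_1|^{p-1}$ for $p$ odd. Uniqueness for the half-space Neumann problem with polynomial growth (modulo harmonic polynomials of degree $\leq p$ with zero Neumann trace) then forces
\[
V(x) = C\,\op{Re}((x_1 - ix_n)^p \log(x_n + ix_1)) + (\text{polynomial of degree} \leq p),
\]
where $C$ is a nonzero multiple of $K_+ - K_-$ or of $K_+ + K_-$ according to the parity of $p$; under hypothesis (i) or (ii) one has $C \neq 0$.

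The contradiction will come from the behavior of $V$ near the origin. The function $\op{Re}((x_1 - ix_n)^p \log(x_n + ix_1))$ contains a $r^p \log r$ contribution (visible in polar form as $r^p[\log r \cos p\theta + \cdots]$), so with $C \neq 0$ one has $V(x)/|x|^p \sim C'\log|x| \to \pm\infty$ as $x \to 0$ along appropriate rays, directly contradicting the inherited bound $|V(x)| \leq C|x|^p$. The main obstacles I anticipate are (a) the inductive construction of the polynomial $P$ achieving the cancellation $D_i(a^{ij} D_j P) = O(|x|^{p-2})$, which requires careful tracking of the $(p-1)$-jets of $a^{ij}$ and $u$ against the PDE $D_i(a^{ij}D_j u) = 0$, and (b) justifying the limit of the piecewise nonlinearity $k_+((u-h)^+)^{p-1} - k_-((u-h)^-)^{p-1}$ across the sign change of $u - h$ at $x_1 = 0$; the remaining passage to the limit and the classical half-space Neumann analysis should be more routine.
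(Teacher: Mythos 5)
Your proposal follows essentially the same path as the paper: blow up at the free-boundary point via the rescalings $u_r(x)=(u(rx)-P(rx))/r^p$, pass to a limiting half-space Neumann problem, and compare against the explicit singular harmonic function of the form $\operatorname{Re}\left(\,(x_1-ix_n)^p\log(x_n+ix_1)\,\right)$. Three remarks.

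First, the inductive construction of $P$ that you flag as obstacle (a) is unnecessary: taking $P$ to be the degree $(p-1)$ Taylor polynomial of $u$ at $0$ already forces $D_i(a^{ij}D_jP)=-D_i\bigl(a^{ij}D_j(u-P)\bigr)=O(|x|^{p-2})$ because $u$ solves the equation and $u-P$ vanishes to order $p$; and since $D_nu=O(|x|^{p-1})$ on $B'_1(0)$ (from the boundary condition and $u(0)=h(0)$) one also gets $D_nP\equiv 0$ on $\{x_n=0\}$, so the residual polynomial $\tilde q$ in your limiting boundary condition is simply absent. This matters for the next point.

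Second, there is a gap in the growth contradiction: the bound $|u_\infty(x)|\leq C|x|^p$ is \emph{not} automatically inherited by $V=u_\infty-R$ unless the absorbing polynomial $R$ vanishes to order $p$ at the origin. If $\tilde q$ had low-degree terms, $R$ would too, and $|V(x)|/|x|^p$ would already blow up at $0$ for a trivial reason, killing the argument. You must therefore check that $R$ can be chosen homogeneous of degree $p$; this is true precisely because $q$ is a homogeneous degree $(p-2)$ polynomial (the paper's $Q$), $\tilde q=0$, and the polynomial part of the Neumann datum is homogeneous of degree $p-1$. Similarly, in the decomposition $V=cW+R''$ via odd reflection and Liouville, one must argue that $R''$ vanishes to order $p$ (which then does follow from $|V|\le C|x|^p$ and $|W(x)|=O(|x|^p\log(1/|x|))$ near $0$). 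Once this is in place the contradiction $V(r\omega)/r^p\to\pm\infty$ along a ray with $\cos\bigl(p(\theta-\pi/2)\bigr)\neq 0$ does go through.

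Third, the paper's closing step is slightly different and avoids the growth issue altogether: after subtracting a homogeneous degree-$p$ polynomial $R$ it exhibits an explicit $w\in C^{p-1}\setminus C^{p-1,1}$ with the same Neumann datum, notes that $v-w$ has zero Neumann trace and is thus smooth after odd reflection, and concludes that $v$ shares the $C^{p-1,1}$-deficiency of $w$ -- directly contradicting the standing assumption $v\in C^{p-1,1}$. Your growth-contradiction and the paper's regularity-contradiction are two sides of the same coin, but the latter sidesteps the need to track the order of vanishing of the subtracted polynomial so carefully.
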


\begin{proof}
After a change of coordinates assume that $x_0 = 0$, $\delta = 1$, $\Omega = B^+_1(0)$, $\Gamma = B'_1(0)$,  and \eqref{identity_origin} and \eqref{regularity assumption} hold true.  Suppose to the contrary that $u \in W^{1,2}(B^+_1(0))$ is a weak solution to \eqref{main ptop} such that either $p$ is even and $k_+(0) = k_-(0)$, or $p$ is odd and $k_+(0) = k_-(0) = 0$, $u(0) = h(0)$, $\nabla^{\Gamma} u(0) \neq \nabla^{\Gamma} h(0)$, and $u \in C^{p-1,1}(B^+_1(0) \cup B'_1(0))$ with $\|D^p u\|_{L^{\infty}(B^+_1(0))} < \infty$.  Without loss of generality assume that
\begin{equation} \label{irregularity eqn1}
	D_1 (u-h)(0) = b, \quad D_i (u-h)(0) = 0 \text{ for all } i = 2,3,\ldots,n-1
\end{equation}
for some constant $b > 0$.  Notice that by \eqref{main ptop} and $u(0) = h(0)$, $D_n u(0) = 0$.  Let $P(x) = \sum_{|\alpha| \leq p-1} \frac{1}{\alpha!} \,D^{\alpha} u(0) \,x^{\alpha}$ be the degree $(p-1)$ Taylor polynomial of $u$ at the origin.  For each $\rho > 0$ define $u_{\rho} \in C^{p-1,1}(B^+_{1/\rho}(0) \cup B'_{1/\rho}(0))$ by
\begin{equation*}
	u_{\rho}(x) = \frac{u(\rho x) - P(\rho x)}{\rho^{\,p}} .
\end{equation*}
By \eqref{main ptop} and $u(0) = h(0)$, $|D_n u| \leq C |x|^{p-1}$ for all $x \in B'_1(0)$ and some constant $C \in (0,\infty)$.  Since $P$ is the degree $(p-1)$ Taylor polynomial of $u$ at the origin, $D_n P = 0$ on $\mathbb{R}^{n-1} \times \{0\}$.  Hence %by \eqref{main ptop}
\begin{align}\label{irregularity eqn2}
	&D_i (a^{ij}(\rho x) \,D_j u_{\rho}) = -\frac{(D_i (a^{ij} \,D_j P))(\rho x)}{\rho^{\,p-2}} \text{ in } B^+_{1/\rho}(0), \nonumber \\
	&a^{nn}(\rho x) D_n u_{\rho} = k_+(\rho x) \bigg( \bigg( \frac{P(\rho x) - h(\rho x)}{\rho} + \rho^{p-1} u_{\rho}(x) \bigg)^+ \bigg)^{p-1} \nonumber
		\\&\hspace{19mm} - k_-(\rho x) \bigg( \bigg( \frac{P(\rho x) - h(\rho x)}{\rho} + \rho^{p-1} u_{\rho}(x) \bigg)^- \bigg)^{p-1} \text{ on } B'_{1/\rho}(0) .
\end{align}
Since $u \in C^{p-1,1}(B^+_1(0) \cup B'_1(0))$ with $\|D^p u\|_{L^{\infty}(B^+_1(0))} < \infty$,
\begin{equation*}
	\|u_{\rho}\|_{C^{p-1,1}(B^+_{1/\rho}(0))} \leq C(n,p) \,\|D^p u\|_{L^{\infty}(B^+_1(0))} .
\end{equation*}
Thus, by the Arzel\`a-Ascoli theorem, there exists a sequence $\rho_{\ell} \rightarrow 0^+$ and function $v \in C^{p-1,1}(\mathbb{R}^n_+)$ such that $u_{\rho_{\ell}} \rightarrow v$ in the $C^{p-1}$-topology on compact subsets of $\overline{\mathbb{R}^n_+}$.  By continuity, $a^{ij}(\rho x) \rightarrow a^{ij}(0) = \delta_{ij}$ uniformly on compact subsets of $\overline{\mathbb{R}^n_+}$ as $\rho \rightarrow 0^+$ and $k_{\pm}(rx) \rightarrow \kappa_{\pm}$ uniformly on compact subsets of $\mathbb{R}^{n-1} \times \{0\}$ as $\rho \rightarrow 0^+$, where $\kappa_{\pm} = k_{\pm}(0)$.  Notice that $D_i (a^{ij} \,D_j P) \in C^{p-2}(B^+_1(0) \cup B'_1(0))$.  Keeping in mind that $u$ satisfies \eqref{main ptop} and $P$ is the degree $(p-1)$ Taylor polynomial of $u$ at the origin, the degree $(p-3)$ Taylor polynomial of $D_i (a^{ij} \,D_j P)$ is zero.  Thus,
\begin{equation*}
	\frac{(D_i (a^{ij} \,D_j P))(\rho x)}{\rho^{\,p-2}} \rightarrow Q(x)
\end{equation*}
uniformly on compact subsets of $\mathbb{R}^n$ as $r \rightarrow 0^+$, where $Q(x) = \sum_{|\alpha| = p-2} \sum_{i,j=1}^n \frac{1}{\alpha!} \,D^{\alpha} D_i (a^{ij} \,D_j P)(0) \,x^{\alpha}$ is a homogeneous degree $(p-2)$ polynomial.  Since $P$ is the degree $(p-1)$ Taylor polynomial of $u$ at the origin, $u(0) = h(0)$, and \eqref{irregularity eqn1} holds true,
\begin{equation*}
	\frac{P(\rho x) - h(\rho x)}{\rho} \rightarrow (\nabla^{\Gamma} u(0) - \nabla^{\Gamma} h(0)) \cdot x = b x_1
\end{equation*}
uniformly on compact subsets of $\mathbb{R}^{n-1} \times \{0\}$ as $\rho \rightarrow 0^+$.  Therefore, by letting $\rho = \rho_{\ell} \rightarrow 0^+$ in \eqref{irregularity eqn2}, $v \in C^{p-1,1}(\mathbb{R}^n_+)$ is a solution to
\begin{gather}
	\Delta v = Q \text{ in } \mathbb{R}^n_+, \nonumber \\
	\label{irregularity eqn3} D_n v = \kappa_+ \,(b x_1^+)^{p-1} - \kappa_- \,(b x_1^-)^{p-1} \text{ on } \mathbb{R}^{n-1} \times \{0\} ,
\end{gather}
where we recall that $Q$ is a homogeneous  polynomial of degree $p-2$ on $\mathbb{R}^n$ and $b > 0$ is a constant.  Now it suffices to show that:

\noindent\emph{Claim.  Let $p \geq 0$ be a positive integer and let $b > 0$ and $\kappa_+,\kappa_- \geq 0$ be constants such that either $p$ is even and $\kappa_+ \neq \kappa_-$ or $p$ is odd and $\kappa_+ \neq 0$ or $\kappa_- \neq 0$.  Let $Q$ be a homogeneous degree $p-2$ polynomial.  Then there are no weak solutions $v \in C^{p-1,1}(\overline{\mathbb{R}^n_+})$ to \eqref{irregularity eqn3}.}
%For each positive integer $p \geq 2$, constants $b > 0$ and $\kappa_+,\kappa_- \geq 0$ such that either $p$ is even and $\kappa_+ \neq \kappa_-$ or $p$ is odd and $\kappa_+ \neq 0$ or $\kappa_- \neq 0$, and homogeneous polynomial $Q$ degree $p-2$, there are no weak solutions $v \in C^{p-1,1}(\mathbb{R}^n_+)$ to \eqref{irregularity eqn3}.}

To see this, first we make the following simplifying assumptions.  Notice that we can find a homogeneous  polynomial $R$ of degree $p$ such that
\begin{gather*}
	\Delta R = Q \text{ in } \mathbb{R}^n_+, \nonumber \\
	D_n R = -\kappa_- \,(-b x_1)^{p-1} \text{ on } \mathbb{R}^{n-1} \times \{0\} .
\end{gather*}
To see this, suppose that
\begin{equation*}
	R(x',x_n) = \sum_{j=0}^p \alpha_j(x') \,x_n^j, \quad Q(x',x_n) = \sum_{j=0}^{p-2} \beta_j(x') \,x_n^j
\end{equation*}
for all $x' \in \mathbb{R}^{n-1}$ and $x_n \in \mathbb{R}$, where $\alpha_j(x')$ are homogeneous degree $(p-j)$ polynomials on $\mathbb{R}^{n-1}$ and $\beta_j(x')$ are homogeneous degree $(p-2-j)$ polynomials on $\mathbb{R}^{n-1}$.  Let's solve for $\alpha_j(x')$ for $j = 0,1,2,\ldots,p$.  We compute that
\begin{equation*}
	\Delta R(x',x_n) = \sum_{j=0}^{p-2} ((j+1)(j+2) \alpha_{j+2}(x') + \Delta \alpha_j(x')) \,x_n^j .
\end{equation*}
Setting $\Delta R = Q$ gives us that $\alpha_j(x')$ are defined by the recurrence relation
\begin{equation*}
	\alpha_1(x') = -\kappa_- (-b x_1)^{p-1}, \quad \alpha_{j+2}(x') = \frac{\beta_j(x') - \Delta \alpha_j(x')}{(j+1)(j+2)} \text{ for } j = 0,1,2,\ldots,p-2 ,
\end{equation*}
with $\alpha_0(x')$ to be chosen freely (as $R$ is uniquely defined up at adding a homogeneous degree $p$ harmonic polynomial which is even in $x_n$).  By replacing $v$ with $v - R$, we may assume that $Q = 0$, $\kappa_+ \neq 0$, and $\kappa_- = 0$ and show that there are no weak solutions $v \in C^{p-1,1}(\overline{\mathbb{R}^n_+})$ to
\begin{gather}
	\Delta v = 0 \text{ in } \mathbb{R}^n_+, \nonumber \\
	\label{irregularity eqn4} D_n v = \kappa_+ (x_1^+)^{p-1} \text{ on } \mathbb{R}^{n-1} \times \{0\} .
\end{gather}
%Notice that
%\begin{align*}
%	\Delta (v-R) &= Q-Q = 0 \text{ in } \mathbb{R}^n_+, \nonumber \\
%	D_n (v-R) &= \kappa_+ (b x_1^+)^{p-1} - \kappa_- (b x_1^-)^{p-1} + \kappa_- (-b x_1)^{p-1}
%		\\&= (\kappa_+ + (-1)^{p-1} \kappa_-) \,(b x_1^+)^{p-1} \text{ on } \mathbb{R}^{n-1} \times \{0\} .
%\end{align*}
%Thus by relabeling $\kappa_+ + (-1)^{p-1} \kappa_-$ and $v-R$ as $\kappa_+$ and $v$, we may assume that $Q = 0$, $\kappa_+ \neq 0$, and $\kappa_- = 0$.  By scaling, we may assume that $\kappa_+ = b = 1$.  Therefore, it suffices to show that there are no weak solutions $v \in C^{p-1,1}(\mathbb{R}^n_+)$ to
%\begin{gather}
%	\Delta v = 0 \text{ in } \mathbb{R}^n_+, \nonumber \\
%	\label{irregularity eqn4} D_n v = (x_1^+)^{p-1} \text{ on } \mathbb{R}^{n-1} \times \{0\} .
%\end{gather}

Consider the function $w : \overline{\mathbb{R}^n_+} \rightarrow \mathbb{R}$ defined by
\begin{equation} \label{irregularity eqn5}
	w(x) = \op{Re} \left( (-i)^p \kappa_+ (x_n+ix_1)^p \left( \frac{1}{\pi p} \log(x_n+ix_1) - \frac{1}{\pi p^2} + \frac{i}{2p} \right) \right) ,
\end{equation}
where $i = \sqrt{-1}$ and we use the branch of $\log(x_n+ix_1)$ on $\{ x_n+ix_1 : x_n \geq 0 \}$ for which $\log(x_n + i 0) = \log|x_n|$.  One readily verifies that $w \in C^{p-1}(\overline{\mathbb{R}^n_+})$ but $w \not\in C^{p-1,1}(\overline{\mathbb{R}^n_+})$.  We claim that \eqref{irregularity eqn4} holds true with $w$ in place of $v$.  Clearly $w \in W^{2,2}(\mathbb{R}^n_+)$ and $\Delta w = 0$ in $\mathbb{R}^n_+$.
%Clearly $\Delta w = 0$ in $\overline{\mathbb{R}^n_+} \setminus \{x_1 = x_n = 0\}$.  Hence by a standard cutoff function argument, $w$ is a weak solution to $\Delta w = 0$ in $\mathbb{R}^n_+$.
By differentiating \eqref{irregularity eqn5},
\begin{equation} \label{irregularity eqn6}
	D_n w(x',0) = \op{Re} \left( \kappa_+ \,x_1^{p-1} \left( \frac{1}{i \pi} \log(ix_1) + \frac{1}{2} \right) \right) = \kappa_+ (x_1^+)^{p-1} ,
\end{equation}
for all $(x',0) \in \mathbb{R}^{n-1} \times \{0\}$, where the last step follows from $\op{Im} \log(ix_1) = \pi/2$ if $x_1 > 0$ and $\op{Im} \log(ix_1) = -\pi/2$ if $x_1 < 0$.

Now let $v \in C^{p-1,1}(\overline{\mathbb{R}^n_+})$ solve \eqref{irregularity eqn4} and $w$ be as in \eqref{irregularity eqn5}.  $\Delta (v-w) = 0$ in $\mathbb{R}^n_+$ and $D_n (v-w) = 0$ on $\mathbb{R}^{n-1} \times \{0\}$.  Thus the extension of $v-w$ across $\mathbb{R}^{n-1} \times \{0\}$ via odd reflection is a harmonic function on $\mathbb{R}^n$.  Hence $v-w$ is smooth on $\overline{\mathbb{R}^n_+}$.  Since $w \not\in C^{p-1,1}(\overline{\mathbb{R}^n_+})$, we have $v \not\in C^{p-1,1}(\overline{\mathbb{R}^n_+})$.
\end{proof}

\section{Normalized solutions} \label{sec:almgren setup sec}

Let $u \in W^{1,2}(\Omega)$ be a solution to \eqref{main ptop}.  We define the \emph{contact set} $\Lambda(u)$ and the \emph{free boundary} $\Sigma(u)$ of $u$, respectively, by
\begin{equation*}
	\Lambda(u) = \{ x \in \Gamma : u(x) = h(x) \} , \quad\quad \Sigma(u) = \partial_{\Gamma} \Lambda(u) ,
\end{equation*}
where $\partial_{\Gamma}$ denotes the boundary (frontier) with respect to the topology of $\Gamma$.  The remainder of the paper will be focused on studying the regularity of the free boundary $\Sigma(u)$.  %In Sections~\ref{sec:almgren setup sec}, \ref{sec:almgren sec}, \ref{sec:growth sec}, and \ref{sec:tangent fn sec},
We will use Almgren's frequency function to determine the local asymptotic behavior of $u$ at a free boundary, using a combination of approaches of~\cite{GP09},~\cite{GSVG14}, and~\cite{DJ}.

We shall assume the following.  Let $2 \leq p < \infty$.  Let $\Omega = B^+_1(0)$ and $\Gamma = B'_1(0)$.  For a fixed positive integer  $\kappa$, let $a^{ij} \in C^{\kappa-1,1}(B^+_1(0))$ be such that $a^{ij} = a^{ji}$ on $B^+_1(0)$, the ellipticity condition \eqref{ellipticity hyp} holds  for some constants $0 < \lambda \leq \Lambda < \infty$, and the normalization assumptions \eqref{identity_origin} and \eqref{regularity assumption} are satisfied.  Let $k_+,k_- \in C^{0,1}(B'_1(0))$ with $k_+,k_- \geq 0$, and let $h \in C^{\kappa,1}(B'_1(0))$.  We explicitly observe  that the regularity assumptions on  $a^{ij}$ and $h$ will be used in Lemma~\ref{h taylor thm} below to construct the extension $\overline{h}$ of a Taylor polynomial of $h$.  In the special case that $h = 0$ on $B'_1(0)$, it suffices to assume that $a^{ij} \in C^{0,1}(B^+_1(0) \cup B'_1(0))$.  Finally, let $u \in W^{1,2}(B^+_1(0))$ be a weak solution to \eqref{main ptop} and $0 \in \Sigma(u)$.

Following the approach in~\cite{GP09}, we will extend the degree $\kappa$ Taylor polynomial of $h$ to a polynomial solution $\overline{h}$ to $|D_i (a^{ij} D_j \overline{h})| \leq C |x|^{\kappa-1}$ in $B^+_1(0)$ and $D_n \overline{h} = 0$ on $B'_1(0)$, which we will then subtract from $u$ (see \eqref{frequency v} below).

\begin{lemma} \label{h taylor thm}
Let $\kappa$ be a positive integer.  Let $a^{ij} \in C^{\kappa-1,1}(B^+_1(0))$ such that $a^{ij} = a^{ji}$ on $B^+_1(0)$ and \eqref{ellipticity hyp} holds true for some constants $0 < \lambda \leq \Lambda < \infty$.  Let $h \in C^{\kappa,1}(B'_1(0))$.  There exists a unique polynomial $\overline{h} : \mathbb{R}^n \rightarrow \mathbb{R}$ of degree at most $\kappa$ such that
\begin{align}\label{h taylor concl}
	\big| D_i (a^{ij} D_j \overline{h}) \big| &\leq C |x|^{\kappa-1} \text{ in } B^+_1(0), \quad D_n \overline{h} = 0 \text{ on } B'_1(0), \nonumber \\
	\big| \overline{h}(x',0) - h(x') \big| &+ \sum_{i=1}^{n-1} |x'| \big| D_i \overline{h}(x',0) - D_i h(x') \big| \nonumber \\
		&+ \sum_{i,j=1}^{n-1} |x'|^2 \big| D_{ij} \overline{h}(x',0) - D_{ij} h(x') \big| \leq C |x'|^{\kappa+1},
\end{align}
for some constant $C = C(n, \lambda, \|a^{ij}\|_{C^{\kappa-1,1}(B^+_1(0))}, \|h\|_{C^{\kappa,1}(B'_1(0))}) \in (0,\infty)$.
\end{lemma}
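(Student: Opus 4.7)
The approach is to construct $\overline{h}$ as a polynomial of degree $\leq \kappa$ using the ansatz
\[
\overline{h}(x', x_n) = \sum_{j=0}^{\kappa} P_j(x')\, x_n^j,
\]
where each $P_j$ is a polynomial in $x'$ of degree $\leq \kappa - j$, and to read off the coefficients of the $P_j$'s from the three conditions in \eqref{h taylor concl}.

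First, since $\overline{h}$ is polynomial in $x_n$, the conditions relating $\overline{h}$, $D_i\overline{h}$, $D_{ij}\overline{h}$ to $h$ on $B'_1(0)$ only involve $P_0$: explicitly, $\overline{h}(x',0) = P_0(x')$, $D_i \overline{h}(x',0) = D_i P_0(x')$ for $i<n$, and $D_{ij}\overline{h}(x',0) = D_{ij} P_0(x')$ for $i,j<n$. The three remainder bounds are then equivalent to $P_0$ being the degree-$\kappa$ Taylor polynomial of $h$ at the origin, with the $C^{\kappa,1}$-regularity of $h$ supplying the remainder estimate. The Neumann condition $D_n\overline{h}|_{x_n=0} = P_1 \equiv 0$ then fixes $P_1 = 0$.

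The main step is to determine $P_2,\dots,P_\kappa$ from the condition $|L\overline{h}| \le C|x|^{\kappa-1}$, where $L\overline{h} := D_i(a^{ij}D_j\overline{h})$. Since $a^{ij}\in C^{\kappa-1,1}$ and $\overline{h}$ is polynomial, $L\overline{h}\in C^{\kappa-2,1}(B^+_1(0))$, so this bound is equivalent to the vanishing of the degree-$(\kappa-2)$ Taylor polynomial of $L\overline{h}$ at the origin. Using the normalization $a^{ij}(0)=\delta_{ij}$, a direct Taylor expansion shows that, for $0 \le l+|\beta'| \le \kappa -2$, the coefficient of $x'^{\beta'}x_n^l$ in the Taylor expansion of $L\overline{h}$ at the origin has the form
\[
(l+2)(l+1)\, c_{(\beta',l+2)} + \bigl(\text{terms depending only on earlier-determined coefficients}\bigr),
\]
where $c_{(\beta',l+2)}$ is the coefficient of $x'^{\beta'}$ in $P_{l+2}$, and the remaining terms involve coefficients of $P_0,P_1,\ldots,P_{l+1}$ or coefficients of $P_{l+2}$ at strictly lower tangential degree $|\beta'|$. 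The key point is that the top-normal-degree contribution $c_{(\beta',l+2)}$ can arise only from $a^{nn}(0)D_{nn}\overline{h}$, giving the nonzero diagonal $(l+2)(l+1)$. One therefore solves iteratively: first over $l=0,1,\ldots,\kappa-2$, and within each $l$ over $|\beta'|=0,1,\ldots,\kappa-l-2$. This simultaneously constructs $\overline{h}$ and yields uniqueness, since applying the same triangular argument to the difference $Q := \overline{h}_1-\overline{h}_2$ of two candidate solutions (which satisfies $Q(x',0)\equiv 0$, $D_n Q|_{B'_1}\equiv 0$, and vanishing degree-$(\kappa-2)$ Taylor polynomial of $LQ$ at $0$) forces every coefficient of $Q$ to vanish.

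The quantitative bound $|L\overline{h}(x)| \le C|x|^{\kappa-1}$ then follows from Taylor's theorem applied to the $C^{\kappa-2,1}$ function $L\overline{h}$, whose derivatives of order $\le \kappa-2$ vanish at the origin, with $C$ controlled by $\|L\overline{h}\|_{C^{\kappa-2,1}(B^+_1(0))}$ and hence by $\|a^{ij}\|_{C^{\kappa-1,1}}$ and $\|h\|_{C^{\kappa,1}}$ (through a priori bounds on the polynomial coefficients of $\overline{h}$ coming from the recursion). The main technical obstacle is the bookkeeping verifying the triangular structure; the simplifications that make it transparent are the normalizations $a^{nn}(0)=1$ and $a^{in}(0)=0$ for $i<n$, which isolate the diagonal term $(l+2)(l+1)c_{(\beta',l+2)}$ from genuine cross-coupling at the top normal degree.
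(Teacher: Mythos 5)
Your proposal is correct and follows essentially the same approach as the paper's own proof: both use the ansatz $\overline{h}=\sum_{j=0}^{\kappa}P_j(x')x_n^j$, fix $P_0$ as the degree-$\kappa$ Taylor polynomial of $h$, set $P_1=0$ from the Neumann condition, and then solve a recursion forcing the degree-$(\kappa-2)$ Taylor polynomial of $D_i(a^{ij}D_j\overline{h})$ to vanish, with the ellipticity of $a^{nn}$ at the origin providing the nonzero diagonal $(l+2)(l+1)$ that pins down $P_{l+2}$. The only cosmetic difference is that you organize the iteration doubly in $(l,|\beta'|)$, while the paper records the equivalent recurrence grouped by the power of $x_n$ alone; both give the same coefficients and the same uniqueness argument.
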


\begin{proof}
We will prove the result only in the case $\kappa\geq 2$, and leave the easy modifications for the case $\kappa =1$ to the reader. Express the  Taylor polynomial of $a^{ij}$ of degree $(\kappa-1)$ as
\begin{equation*}
	\sum_{k=0}^{\kappa-1} a^{ij}_k(x') \,x_n^k
\end{equation*}
for some polynomials $a^{ij}_k(x')$ on $\mathbb{R}^{n-1}$ of degree at most $\kappa-1 - k$.  %By \eqref{regularity assumption}, $a^{in}_0(x') = a^{ni}_0(x') = 0$ for $i = 1,2,\ldots,n-1$.
Also, let
\begin{equation*}
	\overline{h}(x',x_n) = \sum_{k=0}^{\kappa} h_k(x') \,x_n^k
\end{equation*}
for some polynomials $h_k(x')$ on $\mathbb{R}^{n-1}$ of degree at most $\kappa - k$.  We have that
%Setting the degree $(\kappa-2)$ Taylor polynomial of $D_i (a^{ij} D_j \overline{h})$ at the origin equal to zero, %and recalling \eqref{regularity assumption},
\begin{align*}
	D_i (a^{ij} D_j \overline{h}) =& \sum_{k=0}^{\kappa-2} \sum_{\ell=0}^k \sum_{i,j=1}^{n-1} D_i (a^{ij}_{k-\ell}(x') \,D_j h_{\ell}(x')) \,x_n^k
		+ \sum_{k=0}^{\kappa-2} \sum_{\ell=0}^k \sum_{i=1}^{n-1} (\ell+1) \,D_i (a^{in}_{k-\ell}(x') \,h_{\ell+1}(x')) \,x_n^k
		\\&+ \sum_{k=0}^{\kappa-2} \sum_{\ell=0}^{k+1} \sum_{j=1}^{n-1} (k+1) \,a^{nj}_{k-\ell+1}(x') \,D_j h_{\ell}(x') \,x_n^k
		+ \sum_{k=0}^{\kappa-2} \sum_{\ell=0}^k (\ell+1)(k+1) \,a^{nn}_{k-\ell+1}(x') \,h_{\ell+1}(x') \,x_n^k
		\\&+ \sum_{k=0}^{\kappa-2} (k+2)(k+1) \,a^{nn}_0(x') \,h_{k+2}(x') \,x_n^k + E(x) ,
\end{align*}
where $E(x)$ is an error term such that $|E(x)| \leq C |x|^{\kappa-1}$ for some constant $C = C(n, \|a^{ij}\|_{C^{\kappa-1,1}(B^+_1(0))},$ $\|h\|_{C^{\kappa,1}(B'_1(0))}) \in (0,\infty)$.  Therefore \eqref{h taylor concl} holds true if and only if $h_0(x')$ is the degree $\kappa$ Taylor polynomial of $h(x')$, $h_1(x') = 0$ on $\mathbb{R}^{n-1}$, and $h_{k+2}(x')$ satisfies the recurrence relation
\begin{align}\label{h taylor recurrence}
	h_{k+2}(x') =&\, \frac{-1}{(k+1)(k+2) \,a^{nn}_0(x')} \left( \sum_{\ell=0}^k \sum_{i,j=1}^{n-1} D_i (a^{ij}_{k-\ell}(x') \,D_j h_{\ell}(x')) \right. \nonumber
		\\&+ \sum_{\ell=0}^k \sum_{i=1}^{n-1} (\ell+1) \,D_i (a^{in}_{k-\ell}(x') \,h_{\ell+1}(x'))
		+ \sum_{\ell=0}^{k+1} \sum_{j=1}^{n-1} (k+1) \,a^{nj}_{k-\ell+1}(x') \,D_j h_{\ell}(x') \nonumber
		\\&\left. + \sum_{\ell=0}^k (\ell+1)(k+1) \,a^{nn}_{k-\ell+1}(x') \,h_{\ell+1}(x') \right) + E(x)
\end{align}
for $k = 0,1,2,\ldots,\kappa-2$, where $E(x)$ is a polynomial such that $|E(x)| \leq C |x|^{\kappa-k-1}$ for some constant $C \in (0,\infty)$ (independent of $|x|$).  In particular, observe that the recurrence relation \eqref{h taylor recurrence} uniquely defines $h_{k+2}(x')$.
\end{proof}

Now recall that $u \in W^{1,2}(B^+_1(0))$ is a solution to \eqref{main ptop} with $\Omega = B^+_1(0)$ and $\Gamma = B'_1(0)$, and that $0 \in \Sigma(u)$.  Following~\cite{GP09}, take an integer $\kappa \geq 2$ and let $\overline{h} = \overline{h}_{\kappa}$ be as in Lemma~\ref{h taylor thm}.  Set
\begin{gather}
	\widetilde{h}(x',x_n) = \widetilde{h}_{\kappa}(x',x_n) = \overline{h}(x',x_n) - \overline{h}(x',0) + h(x'), \nonumber \\
	v(x',x_n) = v_{\kappa}(x',x_n) = u(x',x_n) - \widetilde{h}(x',x_n) , \nonumber \\
	\label{frequency v} f = f_{\kappa} = -D_i (a^{ij} D_j \widetilde{h})
\end{gather}
for each $(x',x_n) \in B^+_1(0)$.  By \eqref{h taylor concl}, %and \eqref{regularity assumption},
\begin{equation} \label{f bound0}
	|f| \leq \left| \sum_{i,j=1}^n D_i (a^{ij} D_j \overline{h}(x',x_n)) \right| + \left| \sum_{i=1}^n \sum_{j=1}^{n-1} D_i (a^{ij} D_j (h(x') - \overline{h}(x',0))) \right|
		\leq C |x|^{\kappa-1}
\end{equation}
in $B^+_1(0)$ for some constant $C = C(n, \lambda,  \|a^{ij}\|_{C^{\kappa-1,1}(B^+_1(0))}, \|h\|_{C^{\kappa,1}(B'_1(0))}) \in (0,\infty)$.  Since $u$ is $J$-minimizing, $v$ is a minimizer for the functional in $w \in W^{1,2}(B^+_1(0))$
\begin{equation*}
	J(w + \widetilde{h}) = \frac{1}{2} \int_{B^+_1(0)} a^{ij} D_i (w + \widetilde{h}) \,D_j (w + \widetilde{h})
		+ \frac{1}{p} \int_{B'_1(0)} (k_+ (w^+)^p + k_- (w^-)^p) .
\end{equation*}
Using \eqref{regularity assumption}, and the facts that $D_i (a^{ij} D_j \widetilde{h}) = -f$ weakly in $B^+_1(0)$ and $D_n \widetilde{h} = 0$ on $B'_1(0)$ (since $h_1(x')=0$ on $\mathbb{R}^{n-1}$), we obtain
\begin{align*}
	&\frac{1}{2} \int_{B^+_1(0)} (a^{ij} D_i(w + \widetilde{h})\,D_j(w + \widetilde{h}) - a^{ij} D_i(v + \widetilde{h})\,D_j(v + \widetilde{h}))
	\\&= \frac{1}{2} \int_{B^+_1(0)} a^{ij} D_i(w - v) \,D_j(w + v + 2\widetilde{h})
	\\&= \frac{1}{2} \int_{B^+_1(0)} (a^{ij} D_i w \,D_j w - a^{ij} D_i v \,D_j v + 2 \,(w - v) \,f)
\end{align*}
for all $w \in W^{1,2}(B^+_1(0))$ with $w = v$ on $(\partial B_1(0))^+$.  Hence $v$ is a minimizer for the functional in $w \in W^{1,2}(B^+_1(0))$
\begin{equation} \label{J minus h}
	\widetilde{J}(w) = \frac{1}{2} \int_{B^+_1(0)} (a^{ij} D_i w \,D_j w + 2 \,w \,f) + \frac{1}{p} \int_{B'_1(0)} (k_+ (w^+)^p + k_- (w^-)^p) .
\end{equation}
By a standard variational argument (see for instance the proof of Lemma~4.1 of~\cite{ALP15}),
\begin{equation} \label{ptop freq integral eqn}
	\int_{B^+_1(0)} (a^{ij} D_j v \,D_i \zeta + f \,\zeta) + \int_{B'_1(0)} (k_+ (v^+)^{p-1} - k_- (v^-)^{p-1}) \,\zeta = 0
\end{equation}
for each $\zeta \in C^1_c(B^+_1(0) \cup B'_1(0))$.  That is, $v$ is a weak solution to
\begin{gather}
	D_i (a^{ij} D_j v) = f \text{ in } B^+_1(0), \nonumber \\
	a^{nn} D_n v = k_+ (v^+)^{p-1} - k_- (v^-)^{p-1} \text{ on } B'_1(0) . \label{ptop freq eqn}
\end{gather}

\section{Almgren's monotonicity formula} \label{sec:almgren sec}

Throughout this section, we shall assume the following:

\begin{hypothesis}\label{almgren hyp}
Let $2 \leq p < \infty$ and $\kappa$ be a positive integer.  Let $a^{ij} \in C^{0,1}(B^+_1(0) \cup B'_1(0))$ such that $a^{ij} = a^{ji}$ on $B^+_1(0)$, \eqref{ellipticity hyp} holds true for some constants $0 < \lambda \leq \Lambda < \infty$, and satisfying \eqref{identity_origin} and \eqref{regularity assumption}.  Let $f \in L^{\infty}(B^+_1(0))$ such that
\begin{equation} \label{f bound}
	\sup_{x \in B^+_1} |x|^{1-\kappa} |f(x)| < \infty .
\end{equation}
Let $k_+,k_- \in C^{0,1}(B'_1(0))$ such that $k_+, k_- \geq 0$ on $B'_1(0)$.
\end{hypothesis}

We will show that a frequency function of Almgren's type   for $v$ is monotone nondecreasing by suitably modifying an approach  from~\cite{GSVG14}.  It will be convenient to define $b^{ij}, \mu \in C^{0,1}(B^+_1(0) \cup B'_1(0))$ by
\begin{align}
	\label{b defn} a^{ij}(x) &= \delta_{ij} + b^{ij}(x) \\
	\label{mu defn} \mu(x) &= \sum_{i,j=1}^n a^{ij}(x) \,\frac{x_i x_j}{|x|^2} = 1 + \sum_{i,j=1}^n b^{ij}(x) \,\frac{x_i x_j}{|x|^2}
\end{align}
for all $x \in B^+_1(0) \cup B'_1(0)$.  By \eqref{ellipticity hyp},
\begin{equation}\label{mu positive}
	\lambda \leq \mu(x) \leq \Lambda
\end{equation}
on $B^+_1(0) \cup B'_1(0)$.  Recalling that $a^{ij}(0) = \delta_{ij}$,
\begin{equation}\label{b eqn}
	|b^{ij}(x)| \leq C |x|, \quad\quad |Db^{ij}(x)| \leq C, \quad\quad |\mu(x) - 1| \leq C |x|, \quad\quad |D\mu(x)| \leq C
\end{equation}
on $B^+_1(0) \cup B'_1(0)$, for some constant $C = C(n, \|Da^{ij}\|_{L^{\infty}(B^+_1)}) \in (0,\infty)$.  The relevant Almgren's frequency function is defined as follows.

\begin{definition} \label{frequency defn} {\rm
Assume Hypothesis~\ref{almgren hyp} and let $v \in C^1(B^+_1(0) \cup B'_1(0))$ be a solution to \eqref{ptop freq eqn}.  We let
\begin{align}
	H(\rho) = H_{v}(\rho) &= \rho^{1-n} \int_{(\partial B_{\rho}(0))^+}  \mu \,v^2 ,\label{eq.H} \\
	I(\rho) = I_{v}(\rho) &= \rho^{2-n} \int_{(\partial B_{\rho}(0))^+} a^{ij} \,v \,D_i v \,\frac{x_j}{\rho} ,\label{eq.I} \\
	D(\rho) = D_{v}(\rho) &= \rho^{2-n} \int_{B^+_{\rho}(0)} a^{ij} D_i v \,D_j v
		+ \frac{2}{p} \,\rho^{2-n} \int_{B'_{\rho}(0)} (k_+ (v^+)^p + k_- (v^-)^p) \label{eq.D}
\end{align}
for each $0 < \rho < 1$, where $\mu$ is as in \eqref{mu defn}.
} \end{definition}

\begin{lemma}
Assume Hypothesis~\ref{almgren hyp} and let $v \in C^1(B^+_1 \cup B'_1)$ be a solution to \eqref{ptop freq eqn}.  Then
\begin{equation} \label{variation concl1}
	\int_{B^+_1} (a^{ij} D_i v \,D_j v \,\zeta + a^{ij} v \,D_j v \,D_i \zeta + v \,f \,\zeta) + \int_{B'_1} (k_+ (v^+)^p + k_- (v^-)^p) \,\zeta = 0
\end{equation}
for each $\zeta \in C^1_c(B^+_1 \cup B'_1)$.  Moreover,
\begin{align} \label{variation concl2}
	&\int_{B^+_1} (a^{ij} D_i v \,D_j v \,\div \zeta - 2 \,a^{ij} D_j v \,D_k v \,D_i \zeta^k + D_k a^{ij} \,D_i v \,D_j v \,\zeta^k
		- 2 \,D_i v \,f \,\zeta^i) \nonumber \\&
		+ \frac{2}{p} \int_{B'_1} ( \div_{B'_1}(k_+ \zeta) \,(v^+)^p + \div_{B'_1}(k_- \zeta) \,(v^-)^p) = 0
\end{align}
for each $\zeta = (\zeta^1, \ldots, \zeta^n) \in C^1_c(B^+_1 \cup B'_1, \mathbb{R}^n)$ with $\zeta^n = 0$ on $B'_1$, where $\div$ and $\div_{B'_1}$ denote the divergence operators on $B^+_1$ and $B'_1$ respectively.
\end{lemma}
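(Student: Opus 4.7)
The two identities arise from outer and inner variations of the functional $\widetilde{J}$ defined in~\eqref{J minus h}, of which $v$ is a minimizer.

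For~\eqref{variation concl1}, the plan is to substitute $v\zeta$ as a test function in the weak equation~\eqref{ptop freq integral eqn}. Since $v\in C^1(B^+_1\cup B'_1)$ and $\zeta\in C^1_c(B^+_1\cup B'_1)$, the product $v\zeta$ is admissible. Expanding $D_i(v\zeta)=\zeta\,D_iv+v\,D_i\zeta$ in the bulk integral and using the pointwise identities $(v^\pm)^{p-1}\,v=\pm(v^\pm)^p$ on $B'_1$ (which hold since $v^+v^-=0$) yields~\eqref{variation concl1} at once.

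For~\eqref{variation concl2}, I would use an inner (domain) variation. Define $\Phi_t(x)=x+t\zeta(x)$ for $|t|$ small and set $v_t=v\circ\Phi_t^{-1}$. The compact support of $\zeta$ in $B^+_1\cup B'_1$ together with the hypothesis $\zeta^n=0$ on $B'_1$ ensures that $\Phi_t$ is a diffeomorphism of $B^+_1$ onto itself, it restricts to a diffeomorphism of $B'_1$ onto itself, and it is the identity near $(\partial B_1)^+$. Hence $v_t=v$ on $(\partial B_1)^+$, and minimality of $v$ for $\widetilde{J}$ gives $\left.\tfrac{d}{dt}\right|_{t=0}\widetilde{J}(v_t)=0$.

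It remains to unravel this identity. I would pull each integral in $\widetilde{J}(v_t)$ back to $B^+_1$ via $y=\Phi_t(x)$, using
\[
D^y_iv_t(y)=\bigl((D\Phi_t)^{-1}\bigr)^k_i(x)\,D^x_kv(x), \qquad dy=\det(D\Phi_t)(x)\,dx,
\]
and the analogous tangential change of variables on $B'_1$. Differentiating at $t=0$, where $\left.\tfrac{d}{dt}\right|_{t=0}\det(D\Phi_t)=\div\zeta$, $\left.\tfrac{d}{dt}\right|_{t=0}\bigl((D\Phi_t)^{-1}\bigr)^k_i=-D_i\zeta^k$, and the corresponding tangential Jacobian derivative is $\div_{B'_1}\zeta=\sum_{k<n}D_k\zeta^k$, and using the symmetry $a^{ij}=a^{ji}$, the quadratic term in $\widetilde{J}$ contributes the first three bulk terms of~\eqref{variation concl2}, the tangential change of variables produces $\tfrac{1}{p}\int_{B'_1}(\div_{B'_1}(k_+\zeta)(v^+)^p+\div_{B'_1}(k_-\zeta)(v^-)^p)$, and the linear term $\int v_tf$ contributes $-\int f\,\zeta^kD_kv$ after one integration by parts (whose $B'_1$-boundary remainder vanishes because $\zeta^n=0$). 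Multiplying by~$2$ yields~\eqref{variation concl2}. The main subtle point is the role of $\zeta^n=0$, used both to ensure that $\Phi_t$ preserves $B^+_1$ and to make the change of variables on $B'_1$ intrinsically tangential; a convenient feature of this route is that only first derivatives of $v$ enter, matching the bare hypothesis $v\in C^1(B^+_1\cup B'_1)$.
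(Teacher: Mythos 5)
Your proposal is correct and follows essentially the same route as the paper: \eqref{variation concl1} by testing the weak equation \eqref{ptop freq integral eqn} with $v\zeta$, and \eqref{variation concl2} by an inner variation of $\widetilde{J}$ using $\Phi_t(x)=x+t\zeta(x)$ (the paper writes the competitor as $v(x+t\zeta(x))$ rather than $v\circ\Phi_t^{-1}$, which is the same family up to the sign of $t$). One small remark: for the linear term $\int_{B^+_1}v_t\,f$ no integration by parts is needed or even available (since $f$ is merely $L^\infty$); one simply differentiates $v_t$ in $t$ directly to get $-\int_{B^+_1}f\,\zeta^kD_kv$, exactly as the paper does.
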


\begin{proof}
Replacing $\zeta$ with $v \,\zeta$ in \eqref{ptop freq integral eqn} and noting that $\pm (v^{\pm})^{p-1} v = (v^{\pm})^p$ on $B'_1$ gives us \eqref{variation concl1}.  To obtain \eqref{variation concl2}, notice that since $v$ is $\widetilde{J}$-minimizing (where $\widetilde{J}$ is as in \eqref{J minus h}),
\begin{align} \label{variation eqn1}
	&\left. \frac{d}{dt} \widetilde{J}(v(x + t \zeta(x))) \right|_{t=0} \nonumber
		\\&= \left. \frac{d}{dt} \left( \frac{1}{2} \int_{B^+_1} a^{ij}(x) \,D_k v(x + t \zeta(x)) \,(\delta_{ik} + t D_i \zeta^k(x)) \,D_l v(x + t \zeta(x))
			\,(\delta_{jl} + t D_j \zeta^l(x)) \,dx  \right. \right. \nonumber \\&\hspace{15mm}
			+ \int_{B^+_1} v(x + t \zeta(x)) \,f(x)
			+ \frac{1}{p} \int_{B'_1} k_+(x) \,(v(x + t \zeta(x))^+)^p \,dx \nonumber \\&\hspace{15mm} \left.
			+ \frac{1}{p} \int_{B'_1} k_-(x) \,(v(x + t \zeta(x))^-)^p \,dx \,\right|_{t=0}
\end{align}
for all $\zeta = (\zeta^1, \ldots, \zeta^n) \in C^1_c(B^+_1 \cup B'_1, \mathbb{R}^n)$.  We have that %For the term in \eqref{variation eqn1} with $f$ we have that
\begin{equation*}
	\left. \frac{d}{dt} \int_{B^+_1} v(x + t \zeta(x)) \,f(x) \,dx \,\right|_{t=0} = \int_{B^+_1} D_i v \,f \,\zeta^i .
\end{equation*}
For the remaining terms in \eqref{variation eqn1}, we use the change of variables $y = x + t \zeta(x)$ and note that $x = y - t \zeta(y) + O(t^2)$.  By the dominated convergence theorem,
\begin{align*}
	&\left. \frac{d}{dt} \int_{B^+_1} a^{ij}(x) \,D_k v(x + t \zeta(x)) \,(\delta_{ik} + t D_i \zeta^k(x)) \,D_l v(x + t \zeta(x))
		\,(\delta_{jl} + t D_j \zeta^l(x)) \,dx \,\right|_{t=0} \nonumber
	\\&= \left. \frac{d}{dt} \int_{B^+_1} a^{ij}(y - t \zeta(y)) \,D_k v(y) \,(\delta_{ik} + t D_i \zeta^k(y)) \,D_l v(y)
		\,(\delta_{jl} + t D_j \zeta^l(y)) \,(1 - t \div \zeta(y)) \,dy \,\right|_{t=0}
	\\&= -\int_{B^+_1} (a^{ij} D_i v \,D_j v \,\div \zeta - 2 \,a^{ij} D_j v \,D_k v \,D_i \zeta^k
		+ D_k a^{ij} D_i v \,D_j v \,\zeta^k)
\end{align*}
and
\begin{align*}
	&\left. \frac{d}{dt} \int_{B'_1} (k_+(x) \,(v(x + t \zeta(x))^+)^p + k_-(x) \,(v(x + t \zeta(x))^-)^p) \,dx \,\right|_{t=0} \nonumber
	\\&= \left. \frac{d}{dt} \int_{B'_1} (k_+(y - t \zeta(y)) \,(v(y)^+)^p + k_-(y - t \zeta(y)) \,(v(y)^-)^p) (1 - t \div_{B'_1} \zeta(y)) \,dy \,\right|_{t=0}
	\\&= -\int_{B'_1} ((k_+ \div_{B'_1} \zeta + \nabla^{B'_1} k_+ \cdot \zeta) \,(v^+)^p
		+ (k_- \div_{B'_1} \zeta + \nabla^{B'_1} k_- \cdot  \zeta) \,(v^-)^p)
	\\&= -\int_{B'_1} (\div_{B'_1} (k_+ \zeta) \,(v^+)^p + \div_{B'_1} (k_- \zeta) \,(v^-)^p) ,
\end{align*}
where $\nabla^{B'_1}$ denotes the gradient on $B'_1$, and thus \eqref{variation concl2} holds true.
\end{proof}

\begin{lemma}
Let $a^{ij} \in C^{0,1}(B^+_1(0) \cup B'_1(0))$ such that $a^{ij} = a^{ji}$ on $B^+_1(0)$, \eqref{ellipticity hyp} holds true for some constants $0 < \lambda \leq \Lambda < \infty$, and satisfying \eqref{identity_origin} and \eqref{regularity assumption}. Let $w \in W^{1,2}(B^+_1)$ and
\begin{equation}\label{eq.H_w}
	H_{w}(\rho) = \rho^{1-n} \int_{(\partial B_{\rho})^+} \mu \,w^2
\end{equation}
for each $0 < \rho < 1$, where $\mu$ is as in \eqref{mu defn}.  Then $H_{w}$ is absolutely continuous as a function of $\rho \in (0,1)$ and
\begin{equation} \label{variation eqn2}
	H'_{w}(\rho) =\, \frac{2}{\rho} I(\rho)
		+ \rho^{1-n} \int_{(\partial B_{\rho})^+} w^2 \left( (1-n)\,\frac{\mu-1}{\rho} + D_i\bigg( b^{ij} \frac{x_j}{|x|} \bigg) \right)
\end{equation}
for $\mathcal{L}^1$-a.e.~$0 < \rho < 1$, where $b^{ij}$ is as in \eqref{b defn}.
\end{lemma}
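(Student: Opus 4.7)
The plan is to reduce the computation of $H'_w$ to a volume identity via the divergence theorem, using the crucial structural assumption \eqref{regularity assumption}, and then perform an algebraic rearrangement to recover the stated form.

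First, I would apply the divergence theorem to the vector field $X^i = a^{ij} x_j w^2$ on $B^+_\rho$. On the spherical cap $(\partial B_\rho)^+$, the outward normal is $\nu = x/\rho$, so $X \cdot \nu = a^{ij} x_i x_j w^2/\rho = \rho\, \mu w^2$. On the flat part $B'_\rho$, the outward normal is $-e_n$, and by \eqref{regularity assumption} we have $a^{nj}(x',0) = 0$ for $j < n$, while $x_n = 0$, so $X^n$ vanishes identically on $B'_\rho$ and there is no flat boundary contribution. Since $a^{ij} \in C^{0,1}$ and $w \in W^{1,2}$, the Leibniz rule gives $X \in W^{1,1}(B^+_1)$, and the divergence theorem yields
\begin{equation*}
\rho^n H_w(\rho) = \int_{B^+_\rho} D_i(a^{ij} x_j w^2)\,dx = \int_{B^+_\rho} \bigl[(D_i a^{ij}) x_j w^2 + (\operatorname{tr} a) w^2 + 2 a^{ij} x_j w\, D_i w\bigr]\,dx.
\end{equation*}
The integrand is in $L^1(B^+_1)$ (use $Da \in L^\infty$, boundedness of $x$, and Cauchy–Schwarz for the last term), so the right-hand side is absolutely continuous in $\rho$, which establishes the absolute continuity of $H_w$ on $(0,1)$.

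Next I would differentiate, using the coarea-type identity $\frac{d}{d\rho}\int_{B^+_\rho} g = \int_{(\partial B_\rho)^+} g$ valid for $g \in L^1$ and a.e.~$\rho$, to obtain
\begin{equation*}
\rho^n H'_w(\rho) + n\rho^{n-1} H_w(\rho) = \int_{(\partial B_\rho)^+} \bigl[(D_i a^{ij}) x_j w^2 + (\operatorname{tr} a) w^2 + 2 a^{ij} x_j w\, D_i w\bigr].
\end{equation*}
On $(\partial B_\rho)^+$ we have $x_j = \rho (x_j/\rho)$, so the last term contributes $2\rho^{n-1} I(\rho)$ after recognizing the definition \eqref{eq.I}, and dividing by $\rho^n$ produces the desired $\frac{2}{\rho}I(\rho)$. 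Writing $n\rho^{-1}H_w(\rho) = n\rho^{-n}\int_{(\partial B_\rho)^+}\mu w^2$ and combining everything gives
\begin{equation*}
H'_w(\rho) = \frac{2}{\rho} I(\rho) + \rho^{-n} \int_{(\partial B_\rho)^+} w^2 \bigl[(D_i a^{ij}) x_j + \operatorname{tr}(a) - n\mu\bigr].
\end{equation*}

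The last step is a purely algebraic identity on $(\partial B_\rho)^+$: I claim that
\begin{equation*}
(D_i a^{ij}) x_j + \operatorname{tr}(a) - n\mu \;=\; (1-n)(\mu - 1) + \rho\, D_i\!\left(b^{ij}\frac{x_j}{|x|}\right).
\end{equation*}
This is verified by direct computation using $a^{ij} = \delta_{ij} + b^{ij}$: one has $D_i a^{ij} = D_i b^{ij}$, $\operatorname{tr}(a) = n + \operatorname{tr}(b)$, and on $\{|x|=\rho\}$
\begin{equation*}
D_i\!\left(b^{ij}\frac{x_j}{|x|}\right) = (D_i b^{ij})\frac{x_j}{\rho} + \frac{\operatorname{tr}(b)}{\rho} - \frac{b^{ij} x_i x_j}{\rho^3} = (D_i b^{ij})\frac{x_j}{\rho} + \frac{\operatorname{tr}(b) - (\mu-1)}{\rho},
\end{equation*}
since $b^{ij}x_ix_j/|x|^2 = \mu - 1$. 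Multiplying through by $\rho$ and collecting terms reduces the right-hand side to $(D_i a^{ij})x_j + \operatorname{tr}(a) - n\mu$. Substituting back yields \eqref{variation eqn2}.

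The only mildly delicate point is the absolute continuity statement: it hinges on the divergence-theorem rewrite expressing $\rho^n H_w(\rho)$ as the integral of an $L^1$ function over the growing set $B^+_\rho$, which is why the identity \eqref{regularity assumption} (killing the flat boundary flux) is essential at this stage. The remaining calculations are routine differentiation and algebra.
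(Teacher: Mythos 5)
Your proof is correct and takes essentially the same route as the paper: both apply the divergence theorem on $B^+_\rho$ (using \eqref{regularity assumption} to kill the flat boundary flux) to convert the spherical integral $\int_{(\partial B_\rho)^+}\mu\,w^2$ into a volume integral, then differentiate via the coarea formula. The paper works directly with the vector field $a^{ij}\frac{x_j}{|x|}w^2$, which produces $D_i(b^{ij}x_j/|x|)$ immediately inside the volume integral, whereas your choice $a^{ij}x_j w^2$ avoids the singularity at the origin but requires the short algebraic identity at the end to recast the boundary integrand into the stated form; both choices are equally valid.
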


\begin{proof}
By the divergence theorem, \eqref{b defn}, \eqref{mu defn}, and \eqref{regularity assumption},
\begin{align*}
	\int_{(\partial B_{\rho})^+} \mu \,w^2
		&= \int_{(\partial B_{\rho})^+} a^{ij} \frac{x_i x_j}{|x|^2} \,w^2
		\\&= \int_{B'_{\rho}} a^{nj} \frac{x_j}{|x|} \,w^2 + \int_{B_{\rho}^+} \left( 2 a^{ij} \,w \,D_i w \,\frac{x_j}{|x|} + w^2 \,D_i\bigg( a^{ij} \frac{x_j}{|x|} \bigg)
		\right) \\&= \int_{B_{\rho}^+} \left( 2 a^{ij} \,w \,D_i w \,\frac{x_j}{|x|} + w^2 \,\frac{n-1}{|x|} + w^2 \,D_i\bigg( b^{ij} \frac{x_j}{|x|} \bigg) \right) .
\end{align*}
Hence by the coarea formula, $H_w(\rho)$ is absolutely continuous and %$\rho^{1-n} \int_{(\partial B_{\rho})^+} \mu \,w^2$ is absolutely continuous and
\begin{align*}
	\frac{\partial}{\partial \rho} \left( \rho^{1-n} \int_{(\partial B_{\rho})^+} \mu \,w^2 \right)
		=& (1-n) \,\rho^{-n} \int_{(\partial B_{\rho})^+} \mu \,w^2 + \rho^{1-n} \,\frac{d}{d\rho} \left( \int_{(\partial B_{\rho})^+} \mu \,w^2 \right)
		\\=&\, 2 \rho^{1-n} \int_{(\partial B_{\rho})^+} a^{ij} \,w \,D_i w \,\frac{x_j}{\rho}
			\\&+ \rho^{1-n} \int_{(\partial B_{\rho})^+} w^2 \left( (1-n)\,\frac{\mu-1}{\rho} + D_i\bigg( b^{ij} \frac{x_j}{|x|} \bigg) \right)
\end{align*}
for $\mathcal{L}^1$-a.e.~$0 < \rho < 1$.
\end{proof}

\begin{lemma}
Assume Hypothesis~\ref{almgren hyp} and let $v \in C^1(B^+_1 \cup B'_1)$ be a solution to \eqref{ptop freq eqn}.  Let $I(\rho)$ and $D(\rho)$ be as in Definition~\ref{frequency defn}.  Then $D(\rho)$ and $I(\rho)$ are locally absolutely continuous functions of $\rho \in (0,1)$, with
\begin{equation} \label{variation eqn4}
	D(\rho) = I(\rho) - \rho^{2-n} \int_{B^+_{\rho}} v \,f - \frac{p-2}{p} \,\rho^{2-n} \int_{B'_{\rho}} (k_+ \,(v^+)^p + k_- \,(v^-)^p)
\end{equation}
for all $0 < \rho \leq 1$ and
\begin{align} \label{variation eqn5}
	D'(\rho) \geq&\, 2 \rho^{2-n} \int_{(\partial B_{\rho})^+} \frac{1}{\mu} \left( \sum_{i,j=1}^n a^{ij} D_j v \,\frac{x_i}{\rho} \right)^2
		- 2 \rho^{1-n} \int_{B^+_{\rho}} \frac{1}{\mu} \,a^{ij} \,D_j v \,f \,x_i \nonumber \\&
		%+ \frac{2}{p} \,\rho^{1-n} \int_{B'_{\rho}} (k_+ \,(v^+)^p + k_- \,(v^-)^p) \nonumber \\&\hspace{30mm}
		+ \frac{2}{p} \,\rho^{1-n} \int_{B'_{\rho}} \frac{1}{\mu} \,a^{ij} x_i \,(D_j k_+ \,(v^+)^p + D_j k_- \,(v^-)^p) - C D(\rho)
\end{align}
for $\mathcal{L}^1$-a.e.~$0 < \rho < 1$, where $\mu$ is as in \eqref{mu defn} and $C = C(n, \lambda, \|Da^{ij}\|_{L^{\infty}(B^+_1)}) \in (0,\infty)$ is a constant.
\end{lemma}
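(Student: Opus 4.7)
The proof divides into establishing \eqref{variation eqn4}, verifying absolute continuity, and then deriving the lower bound \eqref{variation eqn5}. For \eqref{variation eqn4}, I would test \eqref{variation concl1} with $\zeta(x) = \eta_\epsilon(|x|)$, where $\eta_\epsilon \in C^1([0,\infty))$ approximates $\chi_{[0,\rho]}$ from above with $\eta'_\epsilon \le 0$ concentrating its mass at $r=\rho$. Since $D_i\zeta = \eta'_\epsilon(|x|)\,x_i/|x|$, the coarea formula gives in the limit $\epsilon\to 0$ a surface integral $-\int_{(\partial B_\rho)^+} a^{ij} v D_j v\, x_i/\rho$ from $\int a^{ij} v D_j v D_i\zeta$, while the other terms become bulk integrals over $B^+_\rho$ and $B'_\rho$. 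Multiplying through by $\rho^{2-n}$ and rearranging using \eqref{eq.I} and \eqref{eq.D} yields \eqref{variation eqn4}. Absolute continuity of $D$ and $I$ then follows immediately from the coarea formula applied to their locally $L^1$ integrands.

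For the lower bound on $D'(\rho)$, I would first differentiate \eqref{eq.D} directly to get
\begin{equation*}
D'(\rho) = \frac{2-n}{\rho}D(\rho) + \rho^{2-n}\!\int_{(\partial B_\rho)^+}\! a^{ij} D_i v\, D_j v + \frac{2}{p}\rho^{2-n}\!\int_{\partial B'_\rho}\!(k_+(v^+)^p + k_-(v^-)^p),
\end{equation*}
and then rewrite the spherical energy via a Pohozaev identity obtained from \eqref{variation concl2} with $\zeta^k(x) = \eta_\epsilon(|x|)\,\beta^k(x)$, where $\beta^k(x) = a^{kl}(x)\,x_l/\mu(x)$. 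This choice is motivated by two algebraic observations: first, $\beta^n = 0$ on $B'_1$, because $a^{nl}(x',0) = 0$ for $l < n$ by \eqref{regularity assumption} and $x_n = 0$, so $\zeta$ satisfies the admissibility hypothesis $\zeta^n|_{B'_1} = 0$ of \eqref{variation concl2}; second, by the symmetry of $a^{ij}$, $\sum_k \beta^k x_k = |x|^2$ while $\sum_k D_k v\,\beta^k = (a^{ij} D_j v\, x_i)/\mu$. The second identity is precisely what converts the cross-term $-2 a^{ij} D_j v\, D_k v\, D_i\zeta^k$ into the desired squared oblique-normal derivative $(a^{ij} D_j v\, x_i)^2/\mu$ on the sphere.

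Letting $\epsilon\to 0$, the $\eta'_\epsilon$-contributions concentrate on $(\partial B_\rho)^+$ and $\partial B'_\rho$: the sphere produces $-\rho\int a^{ij} D_i v D_j v + (2/\rho)\int(a^{ij} D_j v\, x_i)^2/\mu$, and the $\partial B'_\rho$ piece yields $-(2\rho/p)\int_{\partial B'_\rho}(k_+(v^+)^p + k_-(v^-)^p)$, which cancels the corresponding term in $D'(\rho)$ above exactly. The $\eta_\epsilon$-bulk pieces, after Taylor-expanding $\div\beta = n + O(|x|)$, $D_i\beta^k = \delta_{ik} + O(|x|)$, and $\div_{B'_1}\beta' = (n-1) + O(|x'|)$ around $x=0$ via $a^{ij}(0)=\delta_{ij}$ and $\mu(0)=1$, contribute $(n-2)\int_{B^+_\rho} a^{ij} D_i v D_j v$, the targeted $-2\int_{B^+_\rho} a^{ij} D_j v f x_i/\mu$, and the targeted $\frac{2}{p}\int_{B'_\rho} a^{ij} x_i D_j k_\pm (v^\pm)^p/\mu$ terms, modulo first-order-in-$\rho$ residues. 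Solving the Pohozaev identity for the spherical energy and substituting back, the $(n-2)/\rho$ piece combines with the $(2-n)D(\rho)/\rho$ prefix so that only a positive multiple of $\rho^{1-n}\int_{B'_\rho}(k_+(v^+)^p + k_-(v^-)^p)$ remains, which may be discarded since it only strengthens the bound. The residual errors—the $O(|x|)$ and $O(|x'|)$ expansion errors and the coefficient-derivative term $\int D_k a^{ij} D_i v D_j v\,\beta^k$—are each controlled by $C\rho$ times $\int(|Dv|^2 + k_\pm(v^\pm)^p)$ via \eqref{b eqn} and ellipticity, so they absorb into the $-CD(\rho)$ slack. The main obstacle is this bookkeeping: tracking each contribution precisely to verify the clean cancellation at $\partial B'_\rho$ and the $(n-2)\leftrightarrow(2-n)$ cancellation, and confirming that no subleading piece escapes $-CD(\rho)$.
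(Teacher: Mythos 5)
Your proof is correct and follows essentially the same route as the paper: the vector field $\beta^k = a^{kl}x_l/\mu$ you introduce is precisely the field $Z$ used in the paper's proof, and your Rellich--Pohozaev bookkeeping from testing \eqref{variation concl2} against $\mathbf{1}_{B^+_\rho}Z$, including the cancellation of the $\partial B'_\rho$ terms and the $(n-2)$ versus $(2-n)$ bulk terms, mirrors the paper's passage from \eqref{variation Dderiv1} and \eqref{variation Dderiv2} to \eqref{variation eqn5}. Your treatment of absolute continuity is also consistent with the paper's, which obtains it for $D$ by coarea and for $I$ via the identity \eqref{variation eqn4}.
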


\begin{proof}
By \eqref{variation concl1} with $\zeta$ approximating the characteristic function $\mathbf{1}_{B^+_{\rho}}$ on $B^+_{\rho}$,
\begin{equation*}
	\int_{B^+_{\rho}} (a^{ij} D_i v \,D_j v + v \,f) - \int_{(\partial B_{\rho})^+} a^{ij} \,v \,D_j v \,\frac{x_i}{|x|} + \int_{B'_{\rho}} (k_+ (v^+)^p + k_- (v^-)^p) = 0 ,
\end{equation*}
which after multiplying by $\rho^{2-n}$ and rearranging terms gives us \eqref{variation eqn4}.  Let $Z = (Z^1,Z^2,\ldots,Z^n) \in C^{0,1}(B^+_1 \cup B'_1, \mathbb{R}^n)$ be the vector field defined by
\begin{equation*}
	Z^i(x) = \frac{1}{\mu(x)} \sum_{j=1}^n a^{ij}(x) \,x_j .
\end{equation*}
By the definition of $\mu$ in \eqref{mu defn} and by \eqref{mu positive} and \eqref{b eqn},
\begin{gather}
	\label{Z eqn1} \sum_{i=1}^n Z^i(x) \,x_i = \frac{1}{\mu} \sum_{i,j=1}^n a^{ij}(x) \,x_i \,x_j = r^2, \\
	\label{Z eqn2} |D_i Z^k(x) - \delta_{ik}| \leq C |x| , \quad |\op{div} Z(x) - n| \leq C |x|
\end{gather}
on $B^+_1 \cup B'_1$, where $C = C(n, \lambda, \|Da^{ij}\|_{L^{\infty}(B^+_1)}) \in (0,\infty)$ is a constant.  By \eqref{variation concl2} with $\zeta(x)$ approximating $\mathbf{1}_{B^+_{\rho}}(x) \,Z(x)$ and using \eqref{Z eqn1}, see also \cite[LemmaA.9]{GSVG14},
\begin{align*}
	&\int_{B^+_{\rho}} (a^{ij} D_i v \,D_j v \,\div Z - 2 \,a^{ij} D_j v \,D_k v \,D_i Z^k + D_k a^{ij} \,D_i v \,D_j v \,Z^k - 2 \,D_i v \,f \,Z^i) \nonumber \\&
		+ \frac{2}{p} \int_{B'_{\rho}} (k_+ (v^+)^p + k_- (v^-)^p) \div_{B'_1} Z
		+ \frac{2}{p} \int_{B'_{\rho}} (Z \cdot Dk_+ \,(v^+)^p + Z \cdot Dk_+ \,(v^-)^p) \nonumber \\
	=&\, \rho \int_{(\partial B_{\rho})^+} a^{ij} D_i v \,D_j v
		- 2\rho \int_{(\partial B_{\rho})^+} \frac{1}{\mu} \left( \sum_{i,j=1}^n a^{ij} D_j v \,\frac{x_i}{\rho} \right)^2
		+ \frac{2}{p} \,\rho \int_{\partial B'_{\rho}} (k_+ (v^+)^p + k_- (v^-)^p) .
\end{align*}
Hence by \eqref{Z eqn2}, together with $\lambda \,|Dv|^2 \leq a^{ij} D_i v \,D_j v$ on $B^+_1$ by \eqref{ellipticity hyp}, we have that
\begin{align} \label{variation Dderiv1}
	&(n-2) \int_{B^+_{\rho}} a^{ij} D_i v \,D_j v - 2 \int_{B^+_{\rho}} \frac{1}{\mu} \,a^{ij} D_j v \,f \,x_i \nonumber \\&
		+ \frac{2(n-1)}{p} \int_{B'_{\rho}} (k_+ (v^+)^p + k_- (v^-)^p)
		+ \frac{2}{p} \int_{B'_{\rho}} (Z \cdot Dk_+ \,(v^+)^p + Z \cdot Dk_+ \,(v^-)^p) \nonumber \\
	\leq&\, \rho \int_{(\partial B_{\rho})^+} a^{ij} D_i v \,D_j v
		- 2\rho \int_{(\partial B_{\rho})^+} \frac{1}{\mu} \left( \sum_{i,j=1}^n a^{ij} D_j v \,\frac{x_i}{\rho} \right)^2 \nonumber
		\\&+ \frac{2}{p} \,\rho \int_{\partial B'_{\rho}} (k_+ (v^+)^p + k_- (v^-)^p) + C \rho \int_{B^+_{\rho}} a^{ij} D_i v \,D_j v
			+ C\rho \int_{B'_{\rho}} (k_+ (v^+)^p + k_- (v^-)^p)
\end{align}
for some constant $C \in (0,\infty)$ depending only on $n$, $\lambda$, and $\|Da^{ij}\|_{L^{\infty}(B^+_1)}$.  On the other hand, by the coarea formula $D(\rho)$ is locally absolutely continuous on $(0,1)$ and
\begin{align} \label{variation Dderiv2}
	D'(\rho) =&\, (2-n) \,\rho^{1-n} \int_{B^+_{\rho}} a^{ij} D_i v \,D_j v
		+ \rho^{2-n} \int_{\partial B^+_{\rho}} a^{ij} D_i v \,D_j v \nonumber \\&
		+ \frac{2(2-n)}{p} \,\rho^{1-n} \int_{B'_{\rho}} (k_+ \,(v^+)^p + k_- \,(v^-)^p)
		+ \frac{2}{p} \,\rho^{2-n} \int_{\partial B'_{\rho}} (k_+ \,(v^+)^p + k_- \,(v^-)^p)
\end{align}
for $\mathcal{L}^1$-a.e.~$0 < \rho < 1$.  Combining \eqref{variation Dderiv1} and \eqref{variation Dderiv2} gives us \eqref{variation eqn5}.  Note that by \eqref{variation eqn4}, since $D(\rho)$ is absolutely continuous on $(0,1)$, $I(\rho)$ must also be absolutely continuous on $(0,1)$.
\end{proof}

\begin{lemma} \label{v L2 lemma}
Assume Hypothesis~\ref{almgren hyp} and let $v \in C^1(B^+_1 \cup B'_1)$ be a solution to \eqref{ptop freq eqn}.  Then
\begin{equation} \label{v L2 concl}
	\rho^{-n} \int_{B_{\rho}} v^2 \leq C H(\rho) + C \rho^{2\kappa+2}
\end{equation}
for all $0 < \rho < 1$, where $H(\rho)$ is as in \eqref{eq.H} and $C \in (0,\infty)$ is a constant depending only on $n$, $\lambda$, $\|Da^{ij}\|_{L^{\infty}(B^+_1)}$, and $\sup_{B^+_1} |x|^{1-\kappa} |f|$.
\end{lemma}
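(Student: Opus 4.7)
The strategy is to reduce the bulk $L^2$ estimate to a boundary estimate via the coarea formula, and then establish an almost-monotonicity of $H$ to close the loop. By the coarea formula together with $\mu \geq \lambda$ from \eqref{mu positive},
\[
	\int_{B_\rho^+} v^2 \;=\; \int_0^\rho \int_{(\partial B_t)^+} v^2 \, dt \;\leq\; \frac{1}{\lambda}\int_0^\rho t^{n-1} H(t)\, dt,
\]
so the goal reduces to a pointwise estimate of the form $H(t) \leq C H(\rho) + C\, t^{2\kappa+2}$ for $0 < t \leq \rho$, which integrated over $(0,\rho)$ and divided by $\rho^n$ yields the desired conclusion (here I read $B_\rho$ in the statement as $B_\rho^+$, since $v$ is defined only on $B_1^+$).

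To obtain this pointwise bound, I would combine \eqref{variation eqn2} with the size estimates in \eqref{b eqn} to get $H'(t) \geq (2/t)\, I(t) - C H(t)$. From \eqref{variation eqn4}, $I(t) = D(t) + t^{2-n} \int_{B_t^+} v f + \tfrac{p-2}{p}\, t^{2-n} \int_{B'_t}(k_+(v^+)^p + k_-(v^-)^p)$; here $D(t) \geq 0$ by \eqref{ellipticity hyp}, and the last term is nonnegative since $p \geq 2$ and $k_{\pm} \geq 0$. Hence $I(t) \geq -\bigl| t^{2-n}\int_{B_t^+} vf \bigr|$. To control the cross term, I would use $|f|\leq C|x|^{\kappa-1}$ from \eqref{f bound} together with Cauchy-Schwarz, writing $|t^{2-n}\int vf| \leq C\, t^{\kappa+1}\, \phi(t)^{1/2}$ with $\phi(t) := t^{-n}\int_{B_t^+} v^2$. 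Young's inequality $ab \leq a^2/2 + b^2/2$ applied with $a = \phi^{1/2}$ and $b = C t^{\kappa+1}$ then yields $|t^{2-n}\int vf| \leq \epsilon\,\phi(t) + C_\epsilon\, t^{2\kappa+2}$, and thus
\[
	H'(t) \;\geq\; -C H(t) \;-\; \tfrac{2\epsilon}{t}\, \phi(t) \;-\; 2 C_\epsilon\, t^{2\kappa+1}.
\]

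The proof is completed by integrating this differential inequality from $t$ to $\rho$, using $\phi(s) \leq \frac{1}{\lambda\, s^n}\int_0^s \tau^{n-1} H(\tau)\, d\tau$ (again from coarea and $\mu \geq \lambda$), and running a Gronwall-type argument on the coupled pair $(H, \phi)$, after which the coarea reduction finishes the estimate. The main obstacle is precisely this coupling: the cross term $\int vf$ re-introduces $\phi(t)$, which is the very quantity we are bounding, on the right-hand side of the differential inequality for $H$; resolving it requires choosing $\epsilon$ sufficiently small and treating the nonlocal integral carefully (for instance, by introducing an auxiliary weighted quantity combining $H(t)$ with its running integral that is almost monotone under differentiation), so as to absorb the $\phi$ contribution while retaining the sharp power $t^{2\kappa+2}$ in the error term without a logarithmic loss.
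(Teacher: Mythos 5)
Your setup mirrors the paper's: you start from the coarea reduction, the differential inequality $H'(t) \geq \tfrac{2}{t}I(t) - CH(t)$ coming from \eqref{variation eqn2} and \eqref{b eqn}, and the lower bound $I(t) \geq t^{2-n}\int_{B_t^+} vf$ obtained by dropping $D(t)\geq 0$ and the boundary term (using $p\geq 2$) in \eqref{variation eqn4}. You also correctly identify the crux: after Cauchy--Schwarz/Young on $\int vf$, the unknown $\phi(t) = t^{-n}\int_{B_t^+}v^2$ reappears on the right-hand side. But there is a genuine gap: you leave the resolution of this coupling as a sketch, and the route you indicate does not obviously close. Concretely, integrating $H'(t)\geq -CH(t) - \tfrac{2\epsilon}{t}\phi(t) - C_\epsilon t^{2\kappa+1}$ from $t$ to $\rho$, multiplying by $t^{n-1}$, integrating in $t$, and applying Fubini to the double integral yields, with $G(\rho) := \int_0^\rho s^{n-1}H(s)\,ds$ and $\phi(\tau)\leq \lambda^{-1}\tau^{-n}G(\tau)$, a self-referential inequality of the form $G(\rho) \leq C\rho^n H(\rho) + C\epsilon\int_0^\rho \tfrac{G(\tau)}{\tau}\,d\tau + C\rho^{n+2\kappa+2}$. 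The kernel $1/\tau$ does not produce a smallness factor when integrated against $G$ (it is scale-invariant, giving $\int_0^\rho G/\tau \sim \tfrac{1}{n}G(\rho)$ in the best case, not $o(1)\cdot G(\rho)$), so a Gronwall iteration cannot absorb the middle term without an a priori growth rate for $H$, which is exactly what the lemma is supposed to supply. Choosing $\epsilon$ small helps, but you pay for it with a large $C_\epsilon$, and there is no independent mechanism to bound $\phi$ along the way.

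The paper sidesteps this by never splitting $\int fv$ at the differential level and by working with the surface quantity $S(\rho) := \int_{(\partial B_\rho)^+}\mu v^2 = \rho^{n-1}H(\rho)$ rather than $H$ itself. For $S$, the bound reads $\tfrac{d}{d\rho}\big(e^{C\rho}S(\rho)\big) \geq 2e^{C\rho}\int_{B_\rho^+} fv$, with no $1/\rho$ singularity. The point is that the running integral of $S$ is exactly the quantity you want: $\int_0^\rho S(\sigma)\,d\sigma = \int_{B_\rho^+}\mu v^2$. Integrating once over $[\sigma,\rho]$ gives $e^{C\rho}S(\rho) \geq S(\sigma) - 2e^{C\rho}\rho\int_{B_\rho^+}|f||v|$, and integrating again in $\sigma \in (0,\rho]$ gives
\begin{equation*}
\rho\, e^{C\rho}S(\rho) \;\geq\; \int_{B_\rho^+}\mu v^2 \;-\; 2e^{C\rho}\rho^2\int_{B_\rho^+}|f||v| .
\end{equation*}
Only now does one apply Cauchy--Schwarz and Young's inequality: $2e^{C\rho}\rho^2\int|f||v| \leq \tfrac12\int_{B_\rho^+}\mu v^2 + 2e^{2C\rho}\rho^4\int_{B_\rho^+} f^2/\mu$, and the term $\tfrac12\int_{B_\rho^+}\mu v^2$ is absorbed into the left-hand side (no coupling issue because it now appears as an explicit quantity, not inside a Volterra integral). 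Dividing by $\rho^n$, using \eqref{f bound} on $\int f^2$, and using $\mu \geq \lambda$ gives \eqref{v L2 concl} directly. So your starting inequalities are right, but you should delay the Cauchy--Schwarz step and exploit that $\int_0^\rho \rho^{n-1}H\,d\rho$ is the target quantity, rather than trying to run a Gronwall argument on $(H,\phi)$.
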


\begin{proof}
We compute that
\begin{equation} \label{v L2 eqn1}
	\frac{d}{d\rho} \left( \int_{(\partial B_{\rho})^+} \mu \,v^2 \right)
		= \frac{d}{d\rho} \left( \rho^{n-1} H(\rho) \right) = (n-1) \,\rho^{n-2} H(\rho) + \rho^{n-1} H'(\rho)
		\geq \rho^{n-1} H'(\rho)
\end{equation}
for $\mathcal{L}^1$-a.e.~$0 < \rho < 1$.  By \eqref{variation eqn2} with $w = v$, \eqref{b eqn}, and \eqref{mu positive},
\begin{equation*}
	\frac{d}{d\rho} \left( \int_{(\partial B_{\rho})^+} \mu \,v^2 \right) \geq 2 \rho^{n-2} I(\rho) - C \int_{(\partial B_{\rho})^+} \mu \,v^2 .
\end{equation*}
for $\mathcal{L}^1$-a.e.~$0 < \rho < 1$, where $C \in (0,\infty)$ is a constant depending only on $n$, $\lambda$, and $\|Da^{ij}\|_{L^{\infty}(B^+_1)}$.  By \eqref{variation eqn4},
\begin{equation} \label{v L2 eqn2}
	\frac{d}{d\rho} \left( \int_{(\partial B_{\rho})^+} \mu \,v^2 \right) \geq 2 \int_{B^+_{\rho}} f \,v - C \int_{(\partial B_{\rho})^+} \mu \,v^2
\end{equation}
for $\mathcal{L}^1$-a.e.~$0 < \rho < 1$, where $C \in (0,\infty)$ is a constant depending only on $n$, $\lambda$, and $\|Da^{ij}\|_{L^{\infty}(B^+_1)}$.  Multiplying by sides of \eqref{v L2 eqn2} by $e^{C\rho}$ and rearranging terms,
\begin{equation*}
	\frac{d}{d\rho} \left( e^{C\rho} \int_{(\partial B_{\rho})^+} \mu \,v^2 \right) \geq 2 e^{C\rho} \int_{B^+_{\rho}} f \,v
\end{equation*}
for $\mathcal{L}^1$-a.e.~$0 < \rho < 1$.  Integrating over $[\sigma,\rho]$ gives us
\begin{equation*}
	e^{C\rho} \int_{(\partial B_{\rho})^+} \mu \,v^2 \geq \int_{(\partial B_{\sigma})^+} \mu \,v^2 - 2 e^{C\rho} \rho \int_{B^+_{\rho}} |f| \,|v|
\end{equation*}
for all $0 < \sigma \leq \rho \leq 1$.  Integrating over $\sigma \in (0,\rho]$ gives us
\begin{equation*}
	e^{C\rho} \rho \int_{(\partial B_{\rho})^+} \mu \,v^2 \geq \int_{B^+_{\rho}} \mu \,v^2 - 2 e^{C\rho} \rho^2 \int_{B^+_{\rho}} |f| \,|v|
\end{equation*}
for all $0 < \rho \leq 1$.  By applying the Cauchy-Schwarz inequality,
\begin{equation*}
	\int_{B^+_{\rho}} \mu \,v^2 \leq 2 e^{C \rho} \rho \int_{(\partial B_{\rho})^+} \mu \,v^2 + 4 e^{2 C \rho} \rho^4 \int_{B^+_{\rho}} \frac{f^2}{\mu} .
\end{equation*}
Multiplying by $\rho^{-n}$ and applying \eqref{f bound} and \eqref{mu positive} gives us \eqref{v L2 concl}.
\end{proof}

We now follow an approach introduced in~\cite{GSVG14} to define the relevant Almgren frequency function.  Recalling \eqref{variation eqn2} (with $w = v$), for each $0 < \rho < 1$ we define $G(\rho) = G_{v}(\rho)$ by
\begin{equation}\label{eq.G}
	G(\rho) = \frac{\rho^{1-n}}{H(\rho)} \int_{(\partial B_{\rho})^+} v^2 \left( (1-n)\,\frac{\mu-1}{\rho} + D_i\bigg( b^{ij} \frac{x_j}{|x|} \bigg) \right)
\end{equation}
if $H(\rho) > 0$ and $G(\rho) = 0$ if $H(\rho) = 0$.  By \eqref{mu positive} and \eqref{b eqn}, $G(\rho)$ is a bounded function on $(0,1)$ with
\begin{equation} \label{Alm mono G bounded}
	|G(\rho)| \leq C
\end{equation}
for some constant $C = C(n, \lambda, \|Da^{ij}\|_{L^{\infty}(B^+_1)}) \in (0,\infty)$.  Thus, we may define $K(\rho) = K_{v}(\rho)$ by
\begin{equation} \label{eq.K}
	K(\rho) = H(\rho) \exp\left( -\int_0^{\rho} G(\tau) \,d\tau \right)
\end{equation}
for all $0 < \rho \leq 1$.  By \eqref{Alm mono G bounded},
\begin{equation} \label{Alm mono eqn1}
	\frac{1}{C} \,H(\rho) \leq K(\rho) \leq C \,H(\rho)
\end{equation}
for all $0 < \rho < 1$ and some constant $C = C(n, \lambda, \|Da^{ij}\|_{L^{\infty}(B^+_1)}) \in [1,\infty)$.  By \eqref{variation eqn2} with $w = v$, we have that
\begin{equation} \label{variation eqn6}
	\frac{\rho \,K'(\rho)}{2 \,K(\rho)} = \frac{\rho \,H'(\rho)}{2 \,H(\rho)} - \frac{\rho}{2} \,G(\rho) = \frac{I(\rho)}{H(\rho)}
\end{equation}
for $\mathcal{L}^1$-a.e.~$0 < \rho < 1$ with $H(\rho) > 0$.  We introduce the $\mathcal{L}^1$-measurable function $\Phi(\rho) = \Phi_{v}(\rho)$, called the \emph{Almgren frequency function}, as
\begin{equation} \label{eq.Phi}
	\Phi(\rho) = \frac{\rho}{2} \,\frac{d}{d\rho} \log \max \{ K(\rho) , \rho^{2\kappa} \}
\end{equation}
for $\mathcal{L}^1$-a.e.~$0 < \rho < 1$.

We are now ready to prove the main result in this section, i.e. the almost-monotonicity of the Almgren frequency function.

\begin{theorem} \label{Almgren monotonicity thm}
Assume Hypothesis~\ref{almgren hyp} and let $v \in C^1(B^+_1 \cup B'_1)$ be a solution to \eqref{ptop freq eqn}.  There exists constants $\rho_0 \in (0,1/2)$ and $C \in (0,\infty)$ depending only on $n$, $p$, $\lambda$, $\|a^{ij}\|_{C^{0,1}(B^+_1)}$, $\sup_{B^+_1} |x|^{1-\kappa} |f|$, $\|k_+\|_{C^{0,1}(B'_1)}$, $\|k_+\|_{C^{0,1}(B'_1)}$, $\|v\|_{L^{\infty}(B_1)}$ such that the function
\begin{equation} \label{Alm mono concl}
 e^{C\sigma} \,\Phi(\sigma) + C e^C \sigma \leq e^{C\rho} \,\Phi(\rho) + C e^C \rho
\end{equation}
is nondecreasing for all $0 < \rho \leq \rho_0$.
\end{theorem}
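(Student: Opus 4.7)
The plan is to establish the infinitesimal inequality
\[
\Phi'(\rho) + C\Phi(\rho) + C \geq 0 \quad \text{for $\mathcal{L}^1$-a.e.\ } \rho \in (0,\rho_0),
\]
from which the stated almost-monotonicity follows by multiplying by the integrating factor $e^{C\rho}$ and integrating, with the proviso that $\Phi$ may have countably many upward jumps that only help. The truncation splits $(0,1)$ into the open set $\mathcal{U} = \{K(\rho) > \rho^{2\kappa}\}$, where \eqref{variation eqn6} gives $\Phi(\rho) = I(\rho)/H(\rho)$, and its complement, where $\Phi(\rho) = \kappa$ is constant. By \eqref{Alm mono eqn1}, $H(\rho) \geq \rho^{2\kappa}/C$ on $\mathcal U$; this \emph{reservoir} is the key to absorbing error terms. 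At a transition $\rho_0$ from one regime to the other, the sign condition $(K(\rho)/\rho^{2\kappa})' \geq 0$ (resp.\ $\leq 0$) on the $\mathcal U$-side forces $\Phi(\rho_0^{\pm})$, computed as $I/H$, to be $\geq \kappa$ (resp.\ $\leq \kappa$), so $\Phi$ always jumps upward at transitions. Hence it suffices to prove the infinitesimal inequality inside $\mathcal U$.

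The core computation on $\mathcal U$ begins with the weighted Cauchy--Schwarz inequality on $(\partial B_\rho)^+$ with weight $\mu$:
\[
\Big(\int_{(\partial B_\rho)^+} v\,{\textstyle\sum_{i,j}} a^{ij} D_j v \,\tfrac{x_i}{\rho}\Big)^{\!2} \leq \int_{(\partial B_\rho)^+}\! \mu\,v^2 \cdot \int_{(\partial B_\rho)^+}\! \mu^{-1}\Big({\textstyle\sum_{i,j}} a^{ij} D_j v \,\tfrac{x_i}{\rho}\Big)^{\!2},
\]
which, combined with \eqref{eq.H}--\eqref{eq.I}, forces the first term on the right-hand side of \eqref{variation eqn5} to be bounded below by $2I(\rho)^2/(\rho H(\rho))$. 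Thus $D'(\rho) \geq 2I(\rho)^2/(\rho H(\rho)) - E_f(\rho) - E_k(\rho) - CD(\rho)$, where $E_f$ gathers the forcing errors and $E_k$ the Lipschitz-penalty errors from \eqref{variation eqn5}. Now differentiate $\Phi = I/H$, substitute \eqref{variation eqn4} to pass between $I$ and $D$ (and differentiate its extra terms), and use $H'(\rho) = 2I(\rho)/\rho + H(\rho) G(\rho)$ from \eqref{variation eqn2}: the leading $I^2/H^2$ contributions cancel and, absorbing $|G(\rho)| \leq C$ via \eqref{Alm mono G bounded}, one obtains
\[
\Phi'(\rho) \geq -C\Phi(\rho) - \frac{E_f(\rho) + E_k(\rho) + R(\rho)}{H(\rho)} - C,
\]
where $R$ collects the remaining lower-order remainder terms.

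The remaining task is to show that each of $E_f/H$, $E_k/H$, $R/H$ is bounded by $C\Phi(\rho) + C$ on $\mathcal U$. For $E_f$, Cauchy--Schwarz together with \eqref{f bound} yields $|E_f(\rho)| \leq C \rho^{\kappa + n/2} D(\rho)^{1/2}$; inserting $D \leq I + (\text{errors}) \leq (\Phi + O(1)) H$ and then $H(\rho) \geq \rho^{2\kappa}/C$ gives the bound, after appealing to Lemma~\ref{v L2 lemma} to compare $L^2$-norms of $v$ with $H(\rho)$ plus the acceptable remainder $\rho^{2\kappa+2}$. For $E_k$, one uses the $L^\infty$-bound on $v$ (from Lemma~\ref{regularity lemma2} applied to $u - \widetilde h$) and $k_{\pm} \in C^{0,1}$ to estimate the boundary integrals by $C\rho^{n-1}\|v\|_{L^2(B'_\rho)}^2$, then absorbs via a standard trace inequality relating this quantity to $H(\rho)$. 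The main obstacle is precisely this bookkeeping of three independent perturbations---the forcing $f$ from subtracting the polynomial $\widetilde h$, the variable coefficients $a^{ij}$ (contributing through the $D_k a^{ij}$-term in \eqref{variation eqn5} and the deviation $\mu - 1$), and the variable nonlinear penalty with exponent $p \neq 2$ (producing the $(p-2)/p$ factor in \eqref{variation eqn4} together with the $Dk_{\pm}$-term in \eqref{variation eqn5}). The role of the truncation $\max\{K, \rho^{2\kappa}\}$ is exactly to guarantee that all these perturbations, whose intrinsic size is $\rho^{2\kappa+2}$ by Lemma~\ref{v L2 lemma}, become genuine lower-order corrections of size $O(\rho^2)\,H(\rho)$ on $\mathcal U$ and hence are absorbed into $C\Phi(\rho) + C$.
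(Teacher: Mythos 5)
Your proposal is correct and follows essentially the same approach as the paper's proof: the decomposition of $(0,\rho_0)$ into the open set $\mathcal U = \{K>\rho^{2\kappa}\}$ and its complement, the reservoir bound $H(\rho)\gtrsim\rho^{2\kappa}$ on $\mathcal U$ used to absorb the forcing and variable-coefficient errors, the weighted Cauchy--Schwarz step on $(\partial B_\rho)^+$ producing the $2I^2/(\rho H)$ lower bound, the conversion between $D$ and $I$ via \eqref{variation eqn4}, the $L^2$-growth Lemma~\ref{v L2 lemma} and the trace inequality for the boundary-penalty term, and the observation that $\Phi$ jumps upward at endpoints of components of $\mathcal U$ because $K/\rho^{2\kappa}$ touches $1$ from above there. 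The paper states its key lower bound for $I'$ rather than $D'$ (the two are interchangeable via \eqref{variation eqn4}) and carries a short case split on $\operatorname{sgn} I(\rho)$ before reaching $\Phi'\geq -C(\Phi+1)$, but these are bookkeeping variants of exactly what you describe; the exponents you quote for $E_f$ are slightly off, but the underlying mechanism (Cauchy--Schwarz, Young, and the $\rho^{2\kappa}\leq CH$ reservoir) is the one the paper uses.
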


\begin{remark}\label{Almgren monotonicity rmk}{\rm
Notice that $\Phi$ is a measurable function defined only up to a set of $\mathcal{L}^1$-measure zero.  We take Theorem~\ref{Almgren monotonicity thm} to assert that there is a nondecreasing function which equals $\Phi$ $\mathcal{L}^1$-a.e.~on $(0,1)$.  To be more precise, by \eqref{variation eqn6},
\begin{equation} \label{Alm mono eqn2}
	\Phi(\rho) = \frac{I(\rho)}{H(\rho)}
\end{equation}
on the open set $\{ \rho \in (0,1) : K(\rho) > \rho^{2\kappa} \}$, whereas  $\Phi(\rho) = \kappa$ for $\mathcal{L}^1$-a.e.~$\rho \in (0,1)$ with $K(\rho) \leq \rho^{2\kappa}$.  Thus $\Phi(\rho) = \overline{\Phi}(\rho)$ for $\mathcal{L}^1$-a.e.~$\rho \in (0,1)$ where $\overline{\Phi} : (0,1) \rightarrow [0,\infty)$ is the function defined by
\begin{equation*}
	\overline{\Phi}(\rho) = \begin{cases}
		I(\rho)/H(\rho) &\text{if } K(\rho) > \rho^{2\kappa} \\
		\kappa &\text{if } K(\rho) \leq \rho^{2\kappa}
	\end{cases}
\end{equation*}
for each $\rho \in (0,1)$.  We can take Theorem~\ref{Almgren monotonicity thm} to mean that $\rho \mapsto e^{C\rho} \overline{\Phi}(\rho) + C e^C \rho$ is monotone nondecreasing on $(0,1)$.

%Since $I(\rho)$ and $H(\rho)$ are locally absolutely continuous in $(0,1)$, $\Phi(\rho)$ is locally absolutely continuous in $(\alpha, \beta)$.  We will show that $e^{C\rho} \Phi(\rho) + C \rho$ has nonnegative derivative on $(\alpha, \beta)$.  Since $K(\rho) > \rho^{2\kappa}$ for all $\alpha < \rho < \beta$ and $K(\rho) = \rho^{2\kappa}$ for $\rho = \alpha,\beta$, $\Phi(\alpha+) \geq \kappa$ and $\Phi(\beta+) \leq \kappa$.  $\Phi(\rho) = \kappa$ for $\mathcal{L}^1$-a.e.~$0 < \rho < 1$ with $K(\rho) \leq \rho^{2\kappa}$.
}\end{remark}

\begin{proof}[Proof of Theorem~\ref{Almgren monotonicity thm}]
Let's take any open interval $(\alpha, \beta) \subset \{ \rho \in (0,\rho_0) : K(\rho) > \rho^{2\kappa} \}$ and show that $e^{C\rho} \Phi(\rho) + C e^C \rho$ is monotone nondecreasing on $(\alpha, \beta)$.  Unless otherwise stated, throughout the proof we will let $C$ denote positive constants depending only on $n$, $p$, $\lambda$, $\|a^{ij}\|_{C^{0,1}(B^+_1)}$, $\sup_{B^+_1} |x|^{-\kappa+1} |f|$, $\|k_+\|_{C^{0,1}(B'_1)}$, $\|k_+\|_{C^{0,1}(B'_1)}$, $\|v\|_{L^{\infty}(B_1)}$.  Note that by \eqref{f bound}, $\sup_{B^+_1} |x|^{-\kappa+1} |f| < \infty$.

By \eqref{Alm mono eqn1},
\begin{equation} \label{Alm mono eqn3}
	\rho^{2\kappa} < K(\rho) \leq C H(\rho)
\end{equation}
for each $\alpha < \rho < \beta$.  By \eqref{v L2 concl} and \eqref{Alm mono eqn3},
\begin{equation} \label{Alm mono eqn4}
	\rho^{-n} \int_{B^+_{\rho}} v^2 \leq C H(\rho) + C \rho^{2\kappa+2} \leq C H(\rho)
\end{equation}
for all $\alpha < \rho < \beta$.  Using \eqref{ellipticity hyp}, \eqref{mu positive}, \eqref{Alm mono eqn3}, \eqref{Alm mono eqn4}, and the Cauchy-Schwarz inequality, %\eqref{f bound},
\begin{align}
	\label{Alm mono eqn5} \left| \rho^{-n} \int_{B^+_{\rho}} f \,v \right|
		&\leq \sup_{B^+_{\rho}} |f| \left(\rho^{-n} \int_{B^+_{\rho}} v^2 \right)^{1/2}
		\leq C \rho^{\kappa-1} H(\rho)^{1/2} \leq \frac{C}{\rho} \,H(\rho) , \\
	\label{Alm mono eqn6} \left| \rho^{-n} \int_{B^+_{\rho}} \frac{1}{\mu} \,a^{ij} \,D_j v \,f \,x_i \right|
		&\leq C \sup_{B^+_{\rho}} |f| \left(\rho^{2-n} \int_{B^+_{\rho}} |Dv|^2 \right)^{1/2}
		\leq C \rho^{\kappa-1} D(\rho)^{1/2} \nonumber \\&\leq \frac{C}{\rho} \,(H(\rho) + D(\rho)) , \\
	\label{Alm mono eqn7} \left| \rho^{1-n} \int_{(\partial B_{\rho})^+} f \,v \right|
		&\leq C \sup_{B^+_{\rho}} |f| \left(\rho^{1-n} \int_{(\partial B_{\rho})^+} v^2 \right)^{1/2}
		\leq C \rho^{\kappa-1} H(\rho)^{1/2} \leq \frac{C}{\rho} \,H(\rho) ,
\end{align}
for all $\alpha < \rho < \beta$.  By~\cite[Lemma~4.2]{DJ},
\begin{equation*}
	\rho^{1-n} \int_{B'_{\rho}} v^2 \leq C H(\rho) + C D(\rho)
\end{equation*}
for all $\alpha < \rho < \beta$ and some constant $C = C(n) \in (0,\infty)$.  Moreover, since $p \geq 2$ we have
\begin{equation} \label{Alm mono eqn8}
	\rho^{1-n} \int_{B'_{\rho}} (k_+ (v^+)^p + k_- (v^-)^p) + \rho^{-n} \int_{B'_{\rho}} \frac{1}{\mu} a^{ij} x_i \,(D_j k_+ (v^+)^p + D_j k_- (v^-)^p)
		\leq C H(\rho) + C D(\rho)
\end{equation}
for all $\alpha < \rho < \beta$.  By \eqref{Alm mono G bounded} and \eqref{variation eqn6},
\begin{equation} \label{Alm mono eqn9}
	\left| H'(\rho) - \frac{2}{\rho} \,I(\rho) \,\right| \leq C H(\rho)
\end{equation}
for all $\alpha < \rho < \beta$ and some constant $C \in (0,\infty)$ depending only on $n$, $\lambda$, and $\|Da^{ij}\|_{L^{\infty}(B^+_1)}$.  By \eqref{variation eqn4} and \eqref{variation eqn5},
\begin{align*}
	I'(\rho) \geq&\,  2 \rho^{2-n} \int_{(\partial B_{\rho})^+} \frac{1}{\mu} \bigg( a^{ij} D_j v \,\frac{x_i}{\rho} \bigg)^2
		- 2 \rho^{1-n} \int_{B^+_{\rho}} \frac{1}{\mu} \,a^{ij} D_j v \,f \,x_i + (2-n) \rho^{1-n} \int_{B^+_{\rho}} v \,f \\&
		+ \rho^{2-n} \int_{(\partial B_{\rho})^+} v \,f + \frac{2}{p} \,\rho^{1-n} \int_{B'_{\rho}} \frac{1}{\mu} a^{ij} x_i \,(D_j k_+ (v^+)^p + D_j k_- (v^-)^p) \\&
		- \frac{(p-2)(n-2)}{p} \,\rho^{1-n} \int_{B'_{\rho}} (k_+ \,(v^+)^p + k_- \,(v^-)^p) \\&
		+ \frac{p-2}{p} \,\rho^{2-n} \int_{\partial B'_{\rho}} (k_+ \,(v^+)^p + k_- \,(v^-)^p) - C D(\rho)
\end{align*}
for all $0 < \rho < 1$.  Thus, by keeping in mind that $p \geq 2$ and applying \eqref{Alm mono eqn5}-\eqref{Alm mono eqn8}, we obtain
\begin{equation} \label{Alm mono eqn10}
	I'(\rho) \geq 2 \rho^{2-n} \int_{(\partial B_{\rho})^+} \frac{1}{\mu} \bigg( a^{ij} D_j v \,\frac{x_i}{\rho} \bigg)^2 - C H(\rho) - C D(\rho)
\end{equation}
for $\mathcal{L}^1$-a.e.~$\alpha < \rho < \beta$.  By \eqref{Alm mono eqn10} and \eqref{Alm mono eqn9},
\begin{align} \label{Alm mono eqn11}
	\Phi'(\rho) =&\, \frac{H(\rho) \,I'(\rho) - H'(\rho) \,I(\rho)}{H(\rho)^2} \nonumber \\
		\geq&\, \frac{2\rho^{3-2n}}{H(\rho)^2} \left( \left( \int_{(\partial B_{\rho})^+} \mu \,v^2 \right)
			\left( \int_{(\partial B_{\rho})^+} \frac{1}{\mu} \bigg( a^{ij} D_j v \,\frac{x_i}{\rho} \bigg)^2  \right)
			- \left( \int_{(\partial B_{\rho})^+} a^{ij} v \,D_j v \,\frac{x_i}{\rho} \right)^2 \right) \nonumber
		\\& - C \,\frac{H(\rho) + D(\rho) + |I(\rho)|}{H(\rho)} .
\end{align}
Using \eqref{Alm mono eqn5} and \eqref{Alm mono eqn8}, \eqref{variation eqn4} gives us
\begin{equation} \label{Alm mono eqn12}
	(1 - C \rho) \,D(\rho) - C \rho H(\rho) \leq I(\rho) \leq (1 + C \rho) \,D(\rho) + C \rho H(\rho)
\end{equation}
for all $\alpha < \rho < \beta$.  Thus provided $\rho \leq \rho_0$ for $\rho_0 < 1/(2C)$, $D(\rho) \leq 2I(\rho) + 2C \rho H(\rho)$.  Thus if $\alpha < \rho < \beta$ is such that $I(\rho) \geq 0$, then \eqref{Alm mono eqn12} gives us
\begin{equation} \label{Alm mono eqn13}
	\Phi'(\rho) \geq -C \,(\Phi(\rho) + 1) .
\end{equation}
If instead $I(\rho) < 0$, then $|I(\rho)| = -I(\rho) \leq C \rho H(\rho)$ and $D(\rho) \leq C \rho H(\rho)$ for all $\alpha < \rho < \beta$.  Thus \eqref{Alm mono eqn11} gives us $\Phi(\rho) \geq -C\rho$ and $\Phi'(\rho) \geq -C$.  In either case we conclude that \eqref{Alm mono eqn13} holds true for all $\alpha < \rho < \beta$.  Hence, letting $C$ be a constant such that \eqref{Alm mono eqn13} holds true and $\Phi(\rho) \geq -C\rho$ for all $\alpha < \rho < \beta$, we have that
\begin{equation} \label{Alm mono eqn14}
	\frac{d}{d\rho} \Big( e^{2C\rho} \Phi(\rho) + 2C e^{2C} \rho \Big) \geq C e^{2C \rho} (\Phi(\rho) - 1) + 2C e^{2C}
		\geq -C e^{2C \rho} (C\rho + 1) + 2C e^{2C} \geq 0
\end{equation}
for all $\alpha < \rho < \beta$ provided $\rho \leq \rho_0$ for $\rho_0 < 1/C$.  This gives us \eqref{Alm mono concl} with $2C$ in place of $C$.

Now let $0 < \sigma < \rho \leq \rho_0$.  We want to show that \eqref{Alm mono concl} holds true, where we use the convention that $\Phi(\tau) = \kappa$ if $K(\tau) \leq \tau^{2\kappa}$ for $0 < \tau \leq \rho_0$ (see Remark~\ref{Almgren monotonicity rmk}).  In the case that $\sigma$ and $\rho$ belong to the same connected component of the open set $\{ \tau \in (0,\rho_0) : K(\tau) > \tau^{2\kappa} \}$, we showed above that \eqref{Alm mono concl} holds true.  Suppose that $\sigma$ belongs to the connected component $(\alpha,\beta)$ of $\{ \tau \in (0,\rho_0) : K(\tau) > \tau^{2\kappa} \}$.  Then $K(\tau) > \tau^{2\kappa}$ for all $\alpha < \tau < \beta$ and $K(\tau) = \tau^{2\kappa}$ for $\tau = \alpha, \beta$.  Hence,
\begin{equation} \label{Alm mono eqn15}
	\Phi(\alpha^+) \geq \kappa, \quad\quad \Phi(\beta^-) \leq \kappa .
\end{equation}
If $\rho$ belongs to the connected component $(\gamma,\delta)$ of $\{ \tau \in (0,\rho_0) : K(\tau) > \tau^{2\kappa} \}$ we have $\alpha < \sigma < \beta \leq \gamma < \rho < \delta$ and in addition to \eqref{Alm mono eqn15} we have $\Phi(\gamma^+) \geq \kappa$.  Thus
\begin{equation*}
	e^{C\sigma} \,\Phi(\sigma) + C e^C \sigma \leq e^{C\beta} \,\Phi(\beta^-) + Ce^C \beta \leq e^{C\beta} \,\kappa + C e^C \beta
		\leq e^{C\gamma} \,\Phi(\gamma^+) + Ce^C \gamma \leq e^{C\rho} \,\Phi(\rho) + Ce^C \rho ,
\end{equation*}
proving \eqref{Alm mono concl}.  If instead $K(\rho) \leq \rho^{2\kappa}$, then we have $\alpha < \sigma < \beta \leq \rho$ and thus using \eqref{Alm mono eqn15}
\begin{equation*}
	e^{C\sigma} \,\Phi(\sigma) + C e^C \sigma \leq e^{C\beta} \,\Phi(\beta^-) + Ce^C \beta \leq e^{C\beta} \,\kappa + Ce^C \beta
		\leq e^{C\rho} \,\kappa + C e^C \rho = e^{C\rho} \,\Phi(\rho) + Ce^C \rho ,
\end{equation*}
again proving \eqref{Alm mono concl}.  Similarly, if $K(\sigma) \leq \sigma^{2\kappa}$ and $K(\rho) > \rho^{2\kappa}$, \eqref{Alm mono concl} holds true.  In the case that $K(\sigma) \leq \sigma^{2\kappa}$ and $K(\rho) \leq \rho^{2\kappa}$, then $\Phi(\sigma) = \Phi(\rho) = \kappa$ and thus \eqref{Alm mono concl} holds true.
\end{proof}

\section{Growth rates at free boundary points} \label{sec:growth sec}

Assume that $2 \leq p < \infty$.  Let $u \in W^{1,2}(B^+_1(0))$ be a weak solution to \eqref{main ptop} and $0 \in \Sigma(u)$.  For each positive integer $\kappa$, let $v_{\kappa}$ be as in \eqref{frequency v} and $\Phi_{v_{\kappa}}$ be as in \eqref{eq.Phi}.  By Theorem~\ref{Almgren monotonicity thm}, for each integer $\kappa$ the limit
\begin{equation*}
	\Phi_{v_{\kappa}}(0^+) = \lim_{\rho \downarrow 0} \Phi_{v_{\kappa}}(\rho)
\end{equation*}
exists.  However, $\Phi_{v_{\kappa}}(0^+)$ might depend on $\kappa$.  In Lemma~\ref{frequency consistency lemma} we show that $\Phi_{v_{\kappa}}(0^+) = \min\{\Phi_{v_{\lambda}}(0^+) , \kappa\}$ for integers $\kappa < \lambda$.  It follows in Definition~\ref{Almgren freq defn} that we can define the Almgren frequency $\mathcal{N}_u(0)$ of $u$ at the origin which is independent of $\kappa$, though possibility infinite.

\begin{lemma} \label{L2 growth lemma}
Assume Hypothesis~\ref{almgren hyp} and let $v \in C^1(B^+_1 \cup B'_1)$ be a solution to \eqref{ptop freq eqn}.  Let  $H_v$ be as in \eqref{eq.H} and $K_v$ be as in \eqref{eq.K}. There exists $\rho_0 > 0$ depending only on $n$, $p$, $\lambda$, $\|a^{ij}\|_{C^{0,1}(B^+_1)}$, $\sup_{B^+_1} |x|^{1-\kappa} |f|$, $\|k_+\|_{C^{0,1}(B'_1)}$, $\|k_-\|_{C^{0,1}(B'_1)}$, and $\|v\|_{L^{\infty}(B_1)}$ such that the following holds true.
%Let $\rho_0$ be as in Theorem~\ref{Almgren monotonicity thm}.
\begin{enumerate}
	\item[(a)]  If $0 < \alpha < \sigma < \rho \leq \rho_0$ such that $K_v(\tau) \geq \tau^{2\kappa}$ for all $\alpha \leq \tau \leq \rho$, then
	\begin{gather}
		\left(\frac{\sigma}{\rho}\right)^{2 e^{C\rho} \Phi_v(\rho) + 2 Ce^C\rho} K_v(\rho) \leq K_v(\sigma)
			\leq e^{2C \,(\Phi_v(\alpha) + e^C) \,(\rho-\sigma)} \left(\frac{\sigma}{\rho}\right)^{2 \Phi_v(\alpha)} K_v(\rho) , \label{L2 growth concl1} \\
		\frac{1}{C} \left(\frac{\sigma}{\rho}\right)^{2 e^{C\rho} \Phi_v(\rho) + 2 Ce^C\rho} H_v(\rho) \leq H_v(\sigma)
			\leq C e^{2C \,(\Phi_v(\alpha) + e^C) \,(\rho-\sigma)} \left(\frac{\sigma}{\rho}\right)^{2 \Phi_v(\alpha)} H_v(\rho) , \label{L2 growth concl2}
	\end{gather}
	where $C \in (0,\infty)$ is a constant depending only on $n$, $p$, $\lambda$, $\|a^{ij}\|_{C^{0,1}(B^+_1)}$, $\sup_{B^+_1} |x|^{1-\kappa} |f|$, $\|k_+\|_{C^{0,1}(B'_1)}$, $\|k_-\|_{C^{0,1}(B'_1)}$, and $\|v\|_{L^{\infty}(B_1)}$.

	\item[(b)]  If $0 < \sigma < \rho \leq \rho_0$ such that $K_v(\tau) \geq \tau^{2\kappa}$ for all $0 < \tau \leq \rho$, then
	\begin{align} \label{L2 growth concl3}
		\frac{1}{C} \left(\frac{\sigma}{\rho}\right)^{2 e^{C\rho} \Phi_v(\rho) + 2 Ce^C\rho} &\rho^{-n} \int_{B^+_{\rho}} v^2
			\leq \sigma^{-n} \int_{B^+_{\sigma}} v^2 \nonumber \\&\leq C e^{2C \,(\Phi_v(0^+) + e^C) \,(\rho-\sigma)}
			\left(\frac{\sigma}{\rho}\right)^{2 \Phi_v(0^+)} \rho^{-n} \int_{B^+_{\rho}} v^2 ,
	\end{align}
	where $C \in (0,\infty)$ is a constant depending only on $n$, $p$, $\lambda$, $\|a^{ij}\|_{C^{0,1}(B^+_1)}$, $\sup_{B^+_1} |x|^{1-\kappa} |f|$, $\|k_+\|_{C^{0,1}(B'_1)}$, $\|k_-\|_{C^{0,1}(B'_1)}$, and $\|v\|_{L^{\infty}(B_1)}$.
\end{enumerate}
\end{lemma}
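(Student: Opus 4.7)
The plan is to handle part (a) by integrating an ODE for $K_v$ (which transfers to $H_v$ via the equivalence \eqref{Alm mono eqn1}), and then to handle (b) by converting the $H_v$-bounds from (a) to $L^2$-bounds on $M(\sigma) := \sigma^{-n} \int_{B^+_\sigma} v^2$ through the coarea formula.

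For (a), the hypothesis $K_v(\tau) \geq \tau^{2\kappa}$ on $[\alpha, \rho]$ forces $\max\{K_v(\tau), \tau^{2\kappa}\} = K_v(\tau)$ there, so definition \eqref{eq.Phi} reads $\Phi_v(\tau) = (\tau/2)(\log K_v)'(\tau)$. Integrating yields $\log K_v(\rho) - \log K_v(\sigma) = \int_\sigma^\rho 2\Phi_v(\tau)/\tau \, d\tau$ for $\alpha \leq \sigma \leq \rho$. Theorem~\ref{Almgren monotonicity thm} says $\tau \mapsto e^{C\tau}\Phi_v(\tau) + C e^C \tau$ is nondecreasing on $(0, \rho_0)$, which gives the two-sided control
\begin{equation*}
\Phi_v(\alpha) - C(\Phi_v(\alpha) + e^C)(\tau-\alpha) \leq \Phi_v(\tau) \leq e^{C\rho}\Phi_v(\rho) + C e^C \rho
\end{equation*}
on $[\alpha, \rho]$; the upper bound is immediate from $\Psi(\tau) \leq \Psi(\rho)$, while the lower bound rearranges $\Psi(\tau) \geq \Psi(\alpha)$ together with the elementary inequality $e^{C(\alpha-\tau)} \geq 1 - C(\tau-\alpha)$. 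Plugging each bound into the integral, exponentiating, and using \eqref{Alm mono eqn1} yields \eqref{L2 growth concl1} and \eqref{L2 growth concl2}.

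For (b), I first let $\alpha \downarrow 0$ in (a); since $\Phi_v(\alpha) \to \Phi_v(0^+)$ (the limit exists by Theorem~\ref{Almgren monotonicity thm}), I obtain polynomial two-sided comparisons of $H_v(s)$ with $H_v(\rho)$ for $0 < s \leq \rho$, with exponent $2\Phi_v(0^+)$ on the upper side and $2 e^{C\rho}\Phi_v(\rho) + 2 C e^C \rho$ on the lower side. The coarea formula, combined with $\lambda \leq \mu \leq \Lambda$ from \eqref{mu positive}, sandwiches $\int_{(\partial B_s)^+} v^2$ between constant multiples of $s^{n-1} H_v(s)$. Substituting the polynomial bounds on $H_v(s)$ and evaluating elementary integrals of the form $\int_0^\sigma s^{n-1 + 2\Phi_v(0^+)}\, ds$ produces $\sigma^{-n}\int_{B^+_\sigma} v^2 \leq C e^{2C(\Phi_v(0^+) + e^C)(\rho - \sigma)} (\sigma/\rho)^{2\Phi_v(0^+)} H_v(\rho)$ and the matching lower-bound analogue carrying the exponent from the lower bound of (a).

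To finish I replace $H_v(\rho)$ on the right by $M(\rho)$. The bound $M(\rho) \leq C H_v(\rho)$ is immediate from Lemma~\ref{v L2 lemma}, the remainder $C\rho^{2\kappa+2}$ being absorbed by $H_v(\rho) \geq c\rho^{2\kappa}$ (from $K_v(\rho) \geq \rho^{2\kappa}$ and \eqref{Alm mono eqn1}). The reverse comparison $H_v(\rho) \leq C M(\rho)$ is the subtle point: restrict the coarea integral to $s \in [\rho/2, \rho]$ and apply the lower bound of \eqref{L2 growth concl2} with $\sigma = s$ to get $H_v(s) \geq c H_v(\rho)$ on that range, so $M(\rho) \geq c\rho^{-n}\int_{\rho/2}^\rho s^{n-1} H_v(s)\, ds \geq c' H_v(\rho)$. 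The main obstacle throughout is bookkeeping: one must confirm that the exponents propagated through all these steps match exactly the two distinct expressions $2\Phi_v(0^+)$ and $2 e^{C\rho}\Phi_v(\rho) + 2 C e^C \rho$ displayed in \eqref{L2 growth concl3}, and that the remaining constants $C, c, c'$ remain within the permitted list of dependencies.
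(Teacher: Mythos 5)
Your treatment of part (a) matches the paper's: both integrate the ODE $\Phi_v(\tau) = \tfrac{\tau}{2}(\log K_v)'(\tau)$, control $\Phi_v(\tau)$ from both sides via the almost-monotonicity of $e^{C\tau}\Phi_v(\tau) + Ce^C\tau$, and then transfer from $K_v$ to $H_v$ with \eqref{Alm mono eqn1}.

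For part (b), your strategy differs from the paper's and has two real gaps. First, when you plug the upper bound
$H_v(s) \leq Ce^{2C(\Phi_v(0^+)+e^C)(\rho-s)}(s/\rho)^{2\Phi_v(0^+)}H_v(\rho)$
into $\sigma^{-n}\int_0^\sigma s^{n-1}H_v(s)\,ds$, the exponential factor attains its maximum at $s=0$, so you actually produce $e^{2C(\Phi_v(0^+)+e^C)\rho}$, not $e^{2C(\Phi_v(0^+)+e^C)(\rho-\sigma)}$. The discrepancy $e^{2C(\Phi_v(0^+)+e^C)\sigma}$ cannot be absorbed into $C$, since it involves $\Phi_v(0^+)$. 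Second, your ``reverse comparison'' $H_v(\rho)\leq C\,\rho^{-n}\int_{B^+_\rho}v^2$ uses the lower bound of \eqref{L2 growth concl2} on $s\in[\rho/2,\rho]$, producing a constant of order $(1/2)^{2e^{C\rho}\Phi_v(\rho)+2Ce^C\rho}$, which depends on $\Phi_v(\rho)$ --- not an admissible dependence for the constant $C$ in the statement. Both gaps can be patched by first establishing an a priori bound $\Phi_v(\rho)\leq C$ for $\rho\leq\rho_0$ (for instance deducing $\Phi_v(0^+)\leq\kappa$ from part (a), as in Lemma~\ref{frequency consistency lemma}, and using elliptic estimates to control $\Phi_v(\rho_0)$), but you do not supply that step.

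The paper avoids all of this by a clean change of variables in the coarea integral: writing $\sigma^{-n}\int_{B^+_\sigma}v^2 \approx \sigma^{-n}\int_0^\sigma s^{n-1}H_v(s)\,ds$ and substituting $s=(\sigma/\rho)t$ gives
\begin{equation*}
\sigma^{-n}\int_0^\sigma s^{n-1}H_v(s)\,ds \;=\; \rho^{-n}\int_0^\rho t^{n-1}H_v\!\left(\tfrac{\sigma}{\rho}\,t\right)dt ,
\end{equation*}
and applying \eqref{L2 growth concl2} with radii $(\sigma/\rho)t < t$ (noting $t-(\sigma/\rho)t \leq \rho-\sigma$ for $t\leq\rho$) produces exactly the factor $Ce^{2C(\Phi_v(0^+)+e^C)(\rho-\sigma)}(\sigma/\rho)^{2\Phi_v(0^+)}$ multiplied by $\rho^{-n}\int_0^\rho t^{n-1}H_v(t)\,dt \approx \rho^{-n}\int_{B^+_\rho}v^2$. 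This sidesteps both the exponential-factor mismatch and the need for any reverse comparison or a priori frequency bound, and it also dispenses with Lemma~\ref{v L2 lemma} in this step. You should rework (b) along these lines.
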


\begin{proof}
Throughout this proof we let $C \in (0,\infty)$ denote constants depending only on $n$, $p$, $\lambda$, $\|a^{ij}\|_{C^{0,1}(B^+_1)}$, $\sup_{B^+_1} |x|^{1-\kappa} |f|$, $\|k_+\|_{C^{0,1}(B'_1)}$, $\|k_-\|_{C^{0,1}(B'_1)}$, and $\|v\|_{L^{\infty}(B_1)}$.  Let $\rho_0 \in (0,1/2)$ be a small constant to be later determined.  To see (a), suppose that $0 < \alpha \leq \sigma < \rho \leq \rho_0$ such that $K_v(\tau) > \tau^{2\kappa}$ for all $\alpha \leq \tau \leq \rho$.  Then $\Phi(\tau) = \tau \,K'_v(\tau) / (2 \,K_v(\tau))$ for all $\alpha \leq \tau \leq \rho$.  Hence by Theorem~\ref{Almgren monotonicity thm},
\begin{equation} \label{L2 growth eqn1}
	e^{C\alpha} \,\Phi_v(\alpha) + C e^C\alpha \leq e^{C\tau} \,\frac{\tau K'_v(\tau)}{2K_v(\tau)} + C e^C\tau \leq e^{C\rho} \,\Phi_v(\rho) + C e^C \rho
\end{equation}
for all $\alpha \leq \tau \leq \rho$.  Note that by \eqref{Alm mono eqn2} and \eqref{Alm mono eqn12} we have $\Phi_v(\alpha) \geq -C \alpha$ for some constant $C \in (0,\infty)$.  Thus, assuming $\rho_0$ is sufficiently small, $e^{C\alpha} \,\Phi_v(\alpha) + C e^C\alpha \geq 0$ and thus $e^{C\rho} \,\Phi_v(\rho) + C e^C\rho \geq 0$ (for $C$ as in \eqref{L2 growth eqn1}).
If $\Phi_v(\alpha) \geq 0$, then by rearranging terms in \eqref{L2 growth eqn1} and using $1 - C\tau \leq e^{-C\tau} \leq 1$ yield
\begin{equation} \label{L2 growth eqn2}
	(1 - C\tau) \,\Phi_v(\alpha) - C e^C\tau \leq \frac{\tau K'_v(\tau)}{2K_v(\tau)} \leq e^{C\rho} \,\Phi_v(\rho) + C e^C\rho
\end{equation}
for all $\alpha \leq \tau \leq \rho\leq \rho_0$, provided $\rho_0$ is small enough that $1 - C\rho_0 \geq 0$, where $C$ is as in \eqref{L2 growth eqn2}.
If instead $\Phi_v(\alpha) < 0$, then rearranging terms in \eqref{L2 growth eqn1} gives us $\tau \,K'_v(\tau) / (2 \,K_v(\tau)) \geq -C e^C \tau$ for some constant $C \in (0,\infty)$.  Hence \eqref{L2 growth eqn2} holds true for all $\alpha \leq \tau \leq \rho \leq \rho_0$ provided $\rho_0$ is small enough that $1 - C\rho_0 \geq 0$, where $C$ is as in \eqref{L2 growth eqn2}.
Multiplying \eqref{L2 growth eqn2} by $2/\tau$ and integrating over $\tau \in [\sigma, \rho]$ gives us \eqref{L2 growth concl1}.  By using \eqref{Alm mono eqn1} in \eqref{L2 growth concl1} gives us \eqref{L2 growth concl2}.  Integrating \eqref{L2 growth concl2} with $\alpha = 0^+$ and using \eqref{mu positive} gives us \eqref{L2 growth concl3}.
\end{proof}

\begin{lemma} \label{frequency consistency lemma}
Let $2 \leq p < \infty$, and let $\kappa$ and $\ell$ be positive integers with $\ell\geq \kappa$.  Let $a^{ij} \in C^{\ell -1,1}(B^+_1(0))$ be such that $a^{ij} = a^{ji}$ on $B^+_1(0)$, the ellipticity condition \eqref{ellipticity hyp} holds  for some constants $0 < \lambda \leq \Lambda < \infty$, and the normalization assumptions \eqref{identity_origin} and \eqref{regularity assumption} are satisfied.  Let $k_+,k_- \in C^{0,1}(B'_1(0))$ with $k_+,k_- \geq 0$, and let $h \in C^{\ell,1}(B'_1(0))$.   Let $u \in W^{1,2}(B^+_1(0))$ be a weak solution to \eqref{main ptop} and $0 \in \Sigma(u)$.  Then, if $v_{\kappa}$ is as in \eqref{frequency v},
\begin{equation} \label{freq consistency concl2}
	\Phi_{v_{\kappa}}(0^+) \leq \kappa .
\end{equation}
In addition
\begin{equation} \label{freq consistency concl1}
	\Phi_{v_{\kappa}}(0^+) = \min\{\Phi_{v_{\ell}}(0^+),\kappa\} ,
\end{equation}
where  $v_{\ell}$ is as in \eqref{frequency v} with $\ell$ in place of $\kappa$.

\end{lemma}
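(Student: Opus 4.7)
The plan splits into the two assertions.  For \eqref{freq consistency concl2} I proceed by dichotomy.  If there is a sequence $\rho_j \downarrow 0$ along which $K_{v_\kappa}(\rho_j) \leq \rho_j^{2\kappa}$, then by the definition \eqref{eq.Phi} of $\Phi_{v_\kappa}$ one has $\Phi_{v_\kappa}(\rho_j) = \kappa$, and Theorem~\ref{Almgren monotonicity thm} immediately gives $\Phi_{v_\kappa}(0^+) \leq \kappa$.  Otherwise $K_{v_\kappa}(\tau) > \tau^{2\kappa}$ for all sufficiently small $\tau$, so Lemma~\ref{L2 growth lemma}(a) applied with $\alpha \to 0^+$ yields $K_{v_\kappa}(\sigma) \leq C(\sigma/\rho)^{2\Phi_{v_\kappa}(0^+)} K_{v_\kappa}(\rho)$ for $0 < \sigma < \rho$ small; were $\Phi_{v_\kappa}(0^+) > \kappa$, the right side would fall strictly below $\sigma^{2\kappa}$ for small $\sigma$, contradicting the case hypothesis.

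For \eqref{freq consistency concl1}, the key preliminary is a polynomial comparison between $\widetilde{h}_\kappa$ and $\widetilde{h}_\ell$.  I claim that the Taylor polynomial of $\overline{h}_\ell$ at the origin of degree $\kappa$ satisfies all three conditions defining $\overline{h}_\kappa$ in Lemma~\ref{h taylor thm}: the boundary condition $D_n = 0$ on $B'_1(0)$ is preserved because $D_n \overline{h}_\ell$ vanishes identically on $B'_1(0)$; the interior bound $|D_i(a^{ij} D_j \,\cdot)| \leq C|x|^{\kappa-1}$ holds because the discarded higher-degree part of $\overline{h}_\ell$ contributes only at order $|x|^{\kappa-1}$; and the trace Taylor error against $h$ inherits an even stronger bound than required.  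By the uniqueness clause of Lemma~\ref{h taylor thm} this Taylor polynomial coincides with $\overline{h}_\kappa$, and therefore $P := v_\kappa - v_\ell = \widetilde{h}_\ell - \widetilde{h}_\kappa$ is a polynomial of degree at most $\ell$ whose lowest-order homogeneous piece has degree at least $\kappa + 1$, so $|P(x)| \leq C|x|^{\kappa+1}$ in a neighborhood of the origin.

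Expanding $v_\kappa^2 = v_\ell^2 + 2 v_\ell P + P^2$ and applying Cauchy--Schwarz gives
\[
    \bigl|H_{v_\kappa}(\rho) - H_{v_\ell}(\rho)\bigr|
        \leq 2\, H_{v_\ell}(\rho)^{1/2}\, H_P(\rho)^{1/2} + H_P(\rho),
        \qquad H_P(\rho) \leq C\rho^{2(\kappa+1)}.
\]
In the case $\alpha := \Phi_{v_\ell}(0^+) < \kappa$, Lemma~\ref{L2 growth lemma}(a) yields $H_{v_\ell}(\rho) \geq c\rho^{2\alpha + \varepsilon}$ for any small $\varepsilon > 0$, and $2(\kappa+1) > 2\alpha + \varepsilon$ forces $H_P/H_{v_\ell} \to 0$ and hence $H_{v_\kappa}/H_{v_\ell} \to 1$; since $\alpha < \kappa$ ensures neither truncation activates at small scales, comparing the two-sided growth estimates of Lemma~\ref{L2 growth lemma}(a) applied to $v_\kappa$ and to $v_\ell$ forces $\Phi_{v_\kappa}(0^+) = \alpha$.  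In the case $\Phi_{v_\ell}(0^+) \geq \kappa$, almost-monotonicity of $\Phi_{v_\ell}$ (coupled with the identity $\Phi_{v_\ell}(\rho) = \rho K_{v_\ell}'(\rho)/(2 K_{v_\ell}(\rho))$ on $\{K_{v_\ell} > \rho^{2\ell}\}$ and the trivial bound $K_{v_\ell} \leq \rho^{2\ell} \leq \rho^{2\kappa}$ off this set) yields $H_{v_\ell}(\rho) \leq C\rho^{2\kappa}$ after integration from small $\rho$ up to a fixed reference scale; adding $H_P \leq C\rho^{2(\kappa+1)}$ gives $H_{v_\kappa}(\rho) \leq C\rho^{2\kappa}$.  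If we had $\Phi_{v_\kappa}(0^+) < \kappa$, the lower bound of Lemma~\ref{L2 growth lemma}(a) would then force $H_{v_\kappa}(\rho) \geq c\rho^{2\Phi_{v_\kappa}(0^+) + \varepsilon} \gg \rho^{2\kappa}$ for small $\rho$, a contradiction; combined with \eqref{freq consistency concl2} we conclude $\Phi_{v_\kappa}(0^+) = \kappa$.

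The main obstacle will be the bookkeeping around the truncation threshold $\rho^{2\kappa}$: the growth estimates of Lemma~\ref{L2 growth lemma} are all conditional on $K > \rho^{2\kappa}$ throughout the relevant interval, so each step requires a careful case split on whether the truncation is or is not active near the origin, and the $\varepsilon$-loss in the lower bound of Lemma~\ref{L2 growth lemma}(a) must be absorbed using the strict inequality $\alpha < \kappa$ (respectively $\Phi_{v_\kappa}(0^+) < \kappa$).
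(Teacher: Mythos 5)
Your argument is correct and tracks the paper's closely for both \eqref{freq consistency concl2} and the first case of \eqref{freq consistency concl1}: the dichotomy on whether the truncation activates, the polynomial comparison $|v_\kappa - v_\ell| \leq C|x|^{\kappa+1}$ obtained from the uniqueness clause of Lemma~\ref{h taylor thm}, and the matching of two-sided growth rates of $H_{v_\kappa}$ and $H_{v_\ell}$ via Lemma~\ref{L2 growth lemma}(a) all mirror the paper's argument (your Cauchy--Schwarz packaging of $|H_{v_\kappa} - H_{v_\ell}|$ is a cosmetic variant of the paper's pointwise estimate). The one place you genuinely diverge is the case $\Phi_{v_\ell}(0^+) \geq \kappa$: the paper assumes for contradiction that $\Phi_{v_\kappa}(0^+) < \kappa$ and re-runs the first case with the roles of $\kappa$ and $\ell$ interchanged to conclude $\Phi_{v_\kappa}(0^+) = \Phi_{v_\ell}(0^+)$, yielding a contradiction, whereas you derive the upper bound $H_{v_\ell}(\rho) \leq C\rho^{2\kappa}$ directly by integrating the frequency inequality on the set where $K_{v_\ell} > \rho^{2\ell}$ and using the trivial bound $K_{v_\ell}(\rho) \leq \rho^{2\ell} \leq \rho^{2\kappa}$ off it, then transfer to $H_{v_\kappa}$ via the $\kappa{+}1$-order comparison and contradict the lower growth bound implied by $\Phi_{v_\kappa}(0^+) < \kappa$. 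Both routes are valid; yours is more explicit but requires careful gluing of the integration across crossings of the truncation threshold (which you correctly flag as the main bookkeeping burden), while the paper's role-swap is shorter at the cost of quietly verifying that the first-case machinery applies symmetrically. One small point worth spelling out in a final write-up of your second case: to apply the lower growth bound for $H_{v_\kappa}$ one must first observe that $\Phi_{v_\kappa}(0^+) < \kappa$ forces $K_{v_\kappa}(\rho) > \rho^{2\kappa}$ for all small $\rho$ (else $\Phi_{v_\kappa}(\rho_j) = \kappa$ along a sequence $\rho_j \to 0$ and almost-monotonicity gives $\Phi_{v_\kappa}(0^+) = \kappa$), which is the same hypothesis check the paper performs implicitly in its swap.
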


\begin{proof}
Let's first take a positive integer $\kappa$ and show \eqref{freq consistency concl2}.  Let $v = v_{\kappa}$ be as in \eqref{frequency v} and $K_{v_{\kappa}}$ be as in \eqref{eq.K}.  Notice that if there exists $\rho_j \rightarrow 0^+$ such that $K_{v_{\kappa}}(\rho_j) < \rho_j^{2\kappa}$, then $\Phi_{v_{\kappa}}(\rho_j) = \kappa$ and thus letting $j \rightarrow \infty$ gives us $\Phi_{v_{\kappa}}(0^+) = \kappa$.  Hence we may assume that there exists $\delta > 0$ such that
\begin{equation} \label{freq consistency eqn1}
	K_{v_{\kappa}}(\rho) \geq \rho^{2\kappa} \text{ for all } 0 < \rho < \delta .
\end{equation}
By \eqref{freq consistency eqn1} we can apply~\eqref{L2 growth concl1} to obtain
\begin{equation*}
	\sigma^{2\kappa} \leq K_{v_{\kappa}}(\sigma) \leq C \left(\frac{\sigma}{\rho}\right)^{2 \Phi_{v_{\kappa}}(0^+)} K_{v_{\kappa}}(\rho)
\end{equation*}
for all $0 < \sigma < \rho$ sufficiently small and some constant $C \in (0,\infty)$ independent of $\sigma$ and $\rho$.   Hence \eqref{freq consistency concl2} must hold true.

Now take positive integers $\kappa < \ell$.  Let $v_{\kappa}$ be as in \eqref{frequency v} and $v_{\ell}$ is as in \eqref{frequency v} with $\ell$ in place of $\kappa$.  We want to show \eqref{freq consistency concl1}.  We will separately consider the cases $\Phi_{v_{\ell}}(0^+) < \kappa$ and $\kappa \leq \Phi_{v_{\ell}}(0^+) \leq \ell$.

First suppose that $\Phi_{v_{\ell}}(0^+) < \kappa$.  Let $0 < \varepsilon < \kappa - \Phi_{v_{\ell}}(0^+)$.  As we discussed above, since $\Phi_{v_{\ell}}(0^+) < \ell$ there exists $\delta > 0$ such that
\begin{equation} \label{freq consistency eqn2}
	K_{v_{\ell}}(\rho) \geq \rho^{2\ell} \text{ for all } 0 < \rho < \delta.
\end{equation}
Thus we can apply \eqref{L2 growth concl2} to obtain
\begin{equation} \label{freq consistency eqn3}
	\frac{1}{C} \left(\frac{\sigma}{\rho}\right)^{2\Phi_{v_{\ell}}(0^+) + 2\varepsilon} H_{v_{\ell}}(\rho) \leq H_{v_{\ell}}(\sigma)
			\leq C \left(\frac{\sigma}{\rho}\right)^{2 \Phi_{v_{\ell}}(0^+)} H_{v_{\ell}}(\rho)
\end{equation}
for all sufficiently small $0 < \sigma < \rho$ and some constant $C \in (0,\infty)$ independent of $\sigma$ and $\rho$.  Let $\overline{h}_{\kappa}$ be as in Lemma~\ref{h taylor thm} and $\overline{h}_{\ell}$ be as in Lemma~\ref{h taylor thm} with $\ell$ in place of $\kappa$.  Since $\overline{h}_{\kappa}$ is the unique degree $\kappa$ polynomial such that \eqref{h taylor concl} holds true and \eqref{h taylor concl} holds true with the polynomial $\overline{h}_{\ell}$ in place of $\overline{h}_{\kappa}$, $\overline{h}_{\kappa}$ and $\overline{h}_{\ell}$ must be equal up to terms of degree $\leq \kappa$.  In particular,
\begin{equation*}
	\big| \overline{h}_{\kappa}(x) - \overline{h}_{\ell}(x) \big| \leq C |x|^{\kappa+1}
\end{equation*}
for each $x \in B^+_1$, where $C \in (0,\infty)$ is a constant depending only on $n$, $\ell$, $\|a^{ij}\|_{C^{\ell-1,1}(B^+_1)}$, and $\|h\|_{C^{\ell,1}(B'_1)}$.  It follows from the definition of $v_{\kappa}$ and $v_{\ell}$ in \eqref{frequency v} that
\begin{equation} \label{freq consistency eqn4}
	|v_{\kappa}(x) - v_{\ell}(x)| \leq C |x|^{\kappa+1}
\end{equation}
for all $x \in B^+_1$, where $C \in (0,\infty)$ is a constant depending only on $n$, $\ell$, $\|a^{ij}\|_{C^{\kappa-1,1}(B^+_1)}$, and $\|h\|_{C^{\kappa,1}(B'_1)}$.  Hence by \eqref{freq consistency eqn3},
\begin{equation} \label{freq consistency eqn6}
	\frac{1}{2C} \left(\frac{\sigma}{\rho}\right)^{2\Phi_{v_{\ell}}(0^+) + 2\varepsilon} H_{v_{\ell}}(\rho) - C \sigma^{2(\kappa+1)}
		\leq H_{v_{\kappa}}(\sigma) \leq 2C \left(\frac{\sigma}{\rho}\right)^{2 \Phi_{v_{\ell}}(0^+)} H_{v_{\ell}}(\rho) + 2C \sigma^{2(\kappa+1)}
\end{equation}
for all sufficiently small $0 < \sigma < \rho$ sufficiently small and some constant $C \in (1,\infty)$ independent of $\sigma$ and $\rho$.  Note that by \eqref{Alm mono eqn1} and \eqref{freq consistency eqn2}, $H_{v_{\ell}}(\rho) > 0$.  Thus by fixing $\rho$ and applying \eqref{Alm mono eqn1} and \eqref{freq consistency eqn6},
\begin{equation*}	
	K_{v_{\kappa}}(\sigma) \geq \frac{1}{C} \,H_{v_{\kappa}}(\sigma)
		> \frac{1}{2C^2} \left(\frac{\sigma}{\rho}\right)^{2\Phi_{v_{\ell}}(0^+) + 2\varepsilon} H_{v_{\ell}}(\rho) - C \sigma^{2(\kappa+1)}
		> \sigma^{2\kappa}
\end{equation*}
for all sufficiently small $\sigma > 0$, where $C \in (1,\infty)$ denotes constants independent of $\sigma$ and $\rho$.  Thus we can apply \eqref{L2 growth concl2} to obtain
\begin{equation} \label{freq consistency eqn7}
	\frac{1}{C} \left(\frac{\sigma}{\rho}\right)^{2\Phi_{v_{\kappa}}(0^+) + 2\varepsilon} H_{v_{\kappa}}(\rho) \leq H_{v_{\kappa}}(\sigma)
			\leq C \left(\frac{\sigma}{\rho}\right)^{2 \Phi_{v_{\kappa}}(0^+)} H_{v_{\kappa}}(\rho) .
\end{equation}
for all sufficiently small $0 < \sigma < \rho$ and some constant $C \in (1,\infty)$ independent of $\sigma$ and $\rho$.  By \eqref{freq consistency eqn6} and \eqref{freq consistency eqn7},
\begin{equation*}
	\frac{1}{2C} \left(\frac{\sigma}{\rho}\right)^{2\Phi_{v_{\ell}}(0^+) + 2\varepsilon} H_{v_{\ell}}(\rho) - C \sigma^{2(\kappa+1)}
		\leq H_{v_{\kappa}}(\sigma)
			\leq C \left(\frac{\sigma}{\rho}\right)^{2 \Phi_{v_{\kappa}}(0^+)} H_{v_{\kappa}}(\rho)
\end{equation*}
for all sufficiently small $0 < \sigma < \rho$ sufficiently small and thus $\Phi_{v_{\kappa}}(0^+) \leq \Phi_{v_{\ell}}(0^+) + \varepsilon$.  On the other hand, by \eqref{freq consistency eqn6} and \eqref{freq consistency eqn7},
\begin{equation*}
	\frac{1}{C} \left(\frac{\sigma}{\rho}\right)^{2\Phi_{v_{\kappa}}(0^+) + 2\varepsilon} H_{v_{\kappa}}(\rho)
		\leq H_{v_{\kappa}}(\sigma)
			\leq 2C \left(\frac{\sigma}{\rho}\right)^{2 \Phi_{v_{\ell}}(0^+)} H_{v_{\ell}}(\rho) + C \sigma^{2(\kappa+1)}
\end{equation*}
for all sufficiently small $0 < \sigma < \rho$ sufficiently small and thus $\Phi_{v_{\ell}}(0^+) \leq \Phi_{v_{\kappa}}(0^+) + \varepsilon$.  In other words,
\begin{equation} \label{freq consistency eqn8}
	\Phi_{v_{\ell}}(0^+) - \varepsilon \leq \Phi_{v_{\kappa}}(0^+) \leq \Phi_{v_{\ell}}(0^+) + \varepsilon.
\end{equation}
Letting $\varepsilon \rightarrow 0^+$ gives us $\Phi_{v_{\kappa}}(0^+) = \Phi_{v_{\ell}}(0^+)$.

Suppose instead that $\kappa \leq \Phi_{v_{\ell}}(0^+) \leq \ell$.  Note that if $\Phi_{v_{\kappa}}(0^+)=\kappa$, then there is nothing to show. Assume thus $\Phi_{v_{\kappa}}(0^+)<\kappa$, and let $0<\varepsilon<\kappa - \Phi_{v_{\kappa}}(0^+)$. We can then repeat the above arguments, with the roles of $\kappa$ and $\ell$ interchanged, to show $\Phi_{v_\kappa}(0^+)=\Phi_{v_\ell}(0^+)$. However, since $\Phi_{v_\kappa}(0^+)<\kappa\leq\Phi_{v_\ell}(0^+)$, we have a reached a contradiction and the proof is complete.
\end{proof}

In light of Lemma~\ref{frequency consistency lemma}, we can define the Almgren frequency $\mathcal{N}_u(0)$ of $u$ at the origin as follows.

\begin{definition} \label{Almgren freq defn} {\rm
Let $u \in W^{1,2}(B^+_1(0))$ be a solution to \eqref{main ptop} and $x_0 \in \Sigma(u)$ as above.  Translate $x_0$ to the origin and then apply a linear change of variables so that $a^{ij}(0) = \delta_{ij}$.  Then for each positive integer $\kappa$ define $v_{\kappa}$ as in \eqref{frequency v}.  If there exists a positive integer $\kappa$ such that $\Phi_{v_{\kappa}}(0^+) < \kappa$, where $v_{\kappa}$ is as in \eqref{frequency v}, then we define $\mathcal{N}_u(x_0) = \Phi_{v_{\kappa}}(0^+)$.  Otherwise, we define $\mathcal{N}_u(x_0) = +\infty$.
} \end{definition}

It follows from Lemma~\ref{frequency consistency lemma} that $\mathcal{N}_u(x_0)$ is well-defined and $\Phi_{v_{\kappa}}(0^+) = \min\{\mathcal{N}_u(x_0),\kappa\}$ for each positive integer $\kappa$.

\section{Blow-ups} \label{sec:tangent fn sec}

Assume Hypothesis~\ref{almgren hyp} and let $v \in C^1(B^+_1(0) \cup B'_1(0))$ be a solution to \eqref{ptop freq eqn}.  Suppose $\Phi_v(0^+) < \infty$.  We want to determine the asymptotic behavior of $v$ at the origin.  Let
\begin{equation} \label{v rescaled1}
	v_{\rho}(x) = \frac{v(\rho x)}{H_v(\rho)^{1/2}}
\end{equation}
for each $0 < \rho \leq 1$ and $x \in B^+_{1/(2\rho)} \cup B'_{1/(2\rho)}$, where $H_v$ is as in \eqref{eq.H}.  A \emph{blow-up} of $v$ at the origin is a function $v^* \in C^1(\overline{\mathbb{R}^n_+})$ such that for some sequence $\rho_{\ell} \rightarrow 0^+$ we have $v_{\rho_{\ell}} \rightarrow v^*$ in the $C^1$-topology on compact subsets of $\overline{\mathbb{R}^n_+}$.

\begin{theorem}\label{tangent thm}
Assume Hypothesis~\ref{almgren hyp} and let $v \in C^1(B^+_1(0) \cup B'_1(0))$ be a solution to \eqref{ptop freq eqn} with $\Phi_v(0^+) < \infty$.  For each sequence $\rho_{\ell} \rightarrow 0^+$, there exists a subsequence $\{\rho_{\ell'}\} \subset \{\rho_{\ell}\}$ and a non-zero function $v^* \in C^1(\overline{\mathbb{R}^n_+})$ such that
\begin{equation} \label{tangent concl1}
	v_{\rho_{\ell'}} \rightarrow v^*
\end{equation}
in $C^1(B^+_{\sigma} \cup B'_{\sigma})$ for all $\sigma \in (0,\infty)$, where $v_{\rho_{\ell'}}$ is as in \eqref{v rescaled1} with $\rho = \rho_{\ell'}$.  Furthermore, the even reflection of $v^*$ across $\{x_n = 0\}$ (also denoted by $v^*$) is a homogeneous harmonic polynomial of degree $\Phi_v(0^+)$.
\end{theorem}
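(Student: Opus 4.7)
The plan is to use the rescaled sequence \eqref{v rescaled1} to extract a subsequential limit via uniform estimates coming from Almgren's monotonicity, and then read off harmonicity and homogeneity from the limiting equations together with the classical Almgren monotonicity formula for harmonic functions. A direct computation shows that $H_{v_\rho}(1)=1$ (where $H$ is now computed with the rescaled weight $\mu(\rho\,\cdot)$), that the frequencies scale as $\Phi_{v_\rho}(\sigma)=\Phi_v(\rho\sigma)$, and that $v_\rho$ solves the rescaled system
\begin{align*}
D_i\bigl(a^{ij}(\rho x)\,D_j v_\rho\bigr)&=\rho^{2}\,f(\rho x)/H_v(\rho)^{1/2}\quad\text{in }B^+_{1/\rho},\\
a^{nn}(\rho x)\,D_n v_\rho&=\rho\,H_v(\rho)^{(p-2)/2}\bigl(k_+(\rho x)(v_\rho^+)^{p-1}-k_-(\rho x)(v_\rho^-)^{p-1}\bigr)\quad\text{on }B'_{1/\rho}.
\end{align*}

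The first step is compactness. The hypothesis $\Phi_v(0^+)<\infty$ together with Theorem~\ref{Almgren monotonicity thm}, \eqref{Alm mono eqn1} and Lemma~\ref{L2 growth lemma} gives both the upper bound $H_{v_\rho}(\sigma)+D_{v_\rho}(\sigma)\leq C(R)$ for $0<\sigma\leq R$ (uniform in small $\rho$) and the lower bound $H_v(\rho)\geq c\,\rho^{2\Phi_v(0^+)+\eta}$ for any $\eta>0$ and $\rho$ small. The lower bound, together with $|f(x)|\leq C|x|^{\kappa-1}$ and $\Phi_v(0^+)\leq\kappa$ from Lemma~\ref{frequency consistency lemma}, shows that the interior right-hand side of the rescaled equation tends to $0$ uniformly on compact sets; and since $p\geq 2$ and $H_v(\rho)$ is bounded above, the prefactor $\rho\,H_v(\rho)^{(p-2)/2}$ of the boundary term likewise vanishes as $\rho\to 0^+$. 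The local regularity theory of Section~\ref{sec:regularity} applied to $v_\rho$ (the rescaled coefficients still satisfy \eqref{ellipticity hyp}, \eqref{identity_origin}, \eqref{regularity assumption} with Lipschitz norms uniform in $\rho\leq 1$) then delivers $\rho$-uniform $C^{0,1/2}$ estimates, and a standard bootstrap via the oblique derivative theory upgrades these to $\rho$-uniform $C^{1,\alpha}$ bounds on any $B^+_R\cup B'_R$. Arzel\`a--Ascoli with a diagonal argument furnishes a subsequence $v_{\rho_{\ell'}}\to v^{*}$ in $C^{1}(B^+_\sigma\cup B'_\sigma)$ for every $\sigma>0$.

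The second step is the identification of $v^{*}$. Passing to the limit in the rescaled system and using $a^{ij}(\rho x)\to\delta_{ij}$ uniformly on compact sets, $v^{*}$ solves $\Delta v^{*}=0$ in $\mathbb{R}^n_+$ with $D_n v^{*}=0$ on $\{x_n=0\}$, so even reflection across $\{x_n=0\}$ produces a harmonic function on all of $\mathbb{R}^{n}$, still denoted $v^{*}$. Non-degeneracy is immediate from the normalization, $\int_{(\partial B_1)^+}(v^{*})^2=\lim_{\ell'}\int_{(\partial B_1)^+}\mu(\rho_{\ell'}x)\,v_{\rho_{\ell'}}^{2}=1$. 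For the homogeneity, the scaling identity $\Phi_{v_\rho}(\sigma)=\Phi_v(\rho\sigma)$, combined with the convergence of the rescaled Almgren functionals to the classical harmonic ones (since the RHS and boundary nonlinearity become asymptotically negligible), yields $\Phi_{v^{*}}(\sigma)=\Phi_v(0^+)$ for every $\sigma>0$. The equality case of the classical Almgren monotonicity formula for harmonic functions then forces $v^{*}$ to be homogeneous of degree $\Phi_v(0^+)$; since a globally harmonic function on $\mathbb{R}^n$ that is homogeneous of degree $d$ must be a polynomial, $d$ is a non-negative integer.

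The principal obstacle I anticipate is establishing the $\rho$-uniform $C^{1,\alpha}$ estimate for $v_\rho$: the boundary nonlinearity $k_\pm(v^{\pm})^{p-1}$ scales badly in general, and controlling it in the limit requires both $p\geq 2$ and the $H_v$ lower bound delivered by Almgren's monotonicity (to ensure $\rho\,H_v(\rho)^{(p-2)/2}\to 0$). Once this compactness is in hand, the remaining identification via the classical Almgren formula is standard.
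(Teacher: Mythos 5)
Your compactness argument follows the paper's route quite closely: uniform $C^{1,\alpha}$ estimates for $v_\rho$ via the Almgren monotonicity lower bound on $H_v(\rho)$, the $|f|\lesssim|x|^{\kappa-1}$ decay, and the $p\ge 2$ scaling of the boundary nonlinearity, followed by Arzel\`a--Ascoli and passage to the limit to identify $v^*$ as harmonic in $\mathbb{R}^n_+$ with vanishing Neumann trace. (The paper packages the elliptic estimate as its Lemma~\ref{elliptic est lemma}, but the ingredients are the same.)

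Where you genuinely diverge is in establishing homogeneity. The paper rescales \eqref{L2 growth concl3} and passes to the limit to obtain the two-sided growth bound \eqref{tangent eqn8}, namely that $\sigma^{-n}\int_{B^+_\sigma}|v^*|^2$ grows exactly like $(\sigma/\tau)^{2\Phi_v(0^+)}$ relative to $\tau^{-n}\int_{B^+_\tau}|v^*|^2$, and then invokes a Liouville-type argument: the upper bound forces $v^*$ to be a polynomial of degree $\le\Phi_v(0^+)$, and the matching lower bound kills all lower-degree homogeneous components, so $v^*$ is homogeneous of degree $\Phi_v(0^+)$. You instead show that the scaling identity $\Phi_{v_\rho}(\sigma)=\Phi_v(\rho\sigma)$ plus the convergence of the rescaled Almgren functionals forces $\Phi_{v^*}\equiv\Phi_v(0^+)$, and then appeal to the equality case of the classical Almgren monotonicity formula. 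Both routes are legitimate and comparable in length. The paper's has the slight virtue of not needing to verify that $H_{v^*}(\sigma)>0$ for every $\sigma$ (a unique-continuation point you leave implicit but which is required to make the frequency $I_{v^*}/H_{v^*}$ well-defined at all radii) or to separately invoke the characterization of constant-frequency harmonic functions; yours has the virtue of making the ``why homogeneous'' conceptually transparent. Either way, no gap.
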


Before proving Theorem~\ref{tangent thm}, let's first prove the following estimate:

\begin{lemma}\label{elliptic est lemma}
Assume Hypothesis~\ref{almgren hyp} and let $v \in C^1(B^+_1(0) \cup B'_1(0))$ be a solution to \eqref{ptop freq eqn}.  Then
\begin{align}\label{elliptic est concl}
	&\sup_{B^+_{\sigma/2}(0)} |v| + \sigma \sup_{B^+_{\sigma/2}(0)} |Dv| + \sigma^{3/2} [Dv]_{1/2,B^+_{\sigma/2}(0)}
	\leq C \left( \sigma^{-n/2} \|v\|_{L^2(B^+_{\sigma}(0))} + \sigma^2 \sup_{B^+_{\sigma}(0)} |f| \right)
\end{align}
for each $\sigma \in (0,1)$ and some constant $C \in (0,\infty)$ depending only on $n$, $p$, $\lambda$, $\|a^{ij}\|_{C^{0,1}(B^+_1)}$, $\sup_{B^+_1} |x|^{1-\kappa} |f|$, $\|k_+\|_{C^{0,1}(B'_1)}$, $\|k_-\|_{C^{0,1}(B'_1)}$, and $\|v\|_{L^{\infty}(B_1)}$.
\end{lemma}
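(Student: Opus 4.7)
The plan is to reduce to unit scale by rescaling and then run a three-step bootstrap---an $L^\infty$ bound, an initial H\"older estimate, and a Schauder-type upgrade from oblique derivative theory. For the rescaling, setting $\tilde v(y) = v(\sigma y)$, $\tilde a^{ij}(y) = a^{ij}(\sigma y)$, $\tilde k_{\pm}(y) = \sigma k_{\pm}(\sigma y)$, and $\tilde f(y) = \sigma^2 f(\sigma y)$, the chain rule shows that $\tilde v$ satisfies the analogue of \eqref{ptop freq eqn} on $B^+_1(0) \cup B'_1(0)$, and Hypothesis~\ref{almgren hyp} holds for $(\tilde a^{ij}, \tilde k_{\pm}, \tilde f)$ with the same ellipticity constants and with $\|\tilde k_{\pm}\|_{C^{0,1}}$ and $\sup_{B^+_1}|y|^{1-\kappa}|\tilde f|$ uniformly bounded in $\sigma \in (0,1]$. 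Under the standard seminorm transformations $\sup_{B^+_{1/2}}|\tilde v| = \sup_{B^+_{\sigma/2}}|v|$, $\sup|D\tilde v| = \sigma\sup|Dv|$, $[D\tilde v]_{1/2,B^+_{1/2}} = \sigma^{3/2}[Dv]_{1/2,B^+_{\sigma/2}}$, $\|\tilde v\|_{L^2(B^+_1)} = \sigma^{-n/2}\|v\|_{L^2(B^+_\sigma)}$, and $\sup|\tilde f| \leq \sigma^2\sup|f|$, the estimate \eqref{elliptic est concl} at scale $\sigma$ is precisely its $\sigma=1$ case applied to $\tilde v$. It therefore suffices to prove \eqref{elliptic est concl} with $\sigma = 1$, which I do on the shrinking chain $B^+_1 \supset B^+_{3/4} \supset B^+_{5/8} \supset B^+_{1/2}$.

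For the $L^\infty$ bound on $B^+_{3/4}$, the crucial observation is that $z \mapsto k_+(z^+)^{p-1} - k_-(z^-)^{p-1}$ is nondecreasing in $z$, so testing \eqref{ptop freq integral eqn} with $\zeta = \eta^2 v|v|^{2q-2}$ yields a \emph{nonnegative} boundary contribution that can simply be dropped. What remains is the Caccioppoli/Moser inequality for a uniformly elliptic divergence-form equation with right-hand side $|f|$, and the standard Moser iteration (cf.~\cite[Theorem 8.17]{GT}) gives $\sup_{B^+_{3/4}}|v| \leq C(\|v\|_{L^2(B^+_1)} + \sup_{B^+_1}|f|)$. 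Equivalently, one may first extend $v$ and the $a^{ij}$ across $\{x_n = 0\}$ by the even/odd reflection used in the proof of Lemma~\ref{regularity lemma2} (legitimate because \eqref{regularity assumption} forces $a^{in} = a^{ni} = 0$ on $B'_1$, keeping the reflected coefficients continuous) and apply the interior estimate.

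With $v \in L^\infty(B^+_{3/4})$ in hand, the boundary datum $g := k_+(v^+)^{p-1} - k_-(v^-)^{p-1}$ lies in $L^\infty(B'_{3/4})$, and the frozen-coefficient comparison argument of Lemma~\ref{regularity lemma3}---the only novelty being the extra term $\int v f$, which is absorbed by Cauchy-Schwarz---gives Morrey decay for $\int_{B^+_\rho(y)}|Dv|^2$, whence $v \in C^{0,1/2}(B^+_{5/8})$ by the Dirichlet Growth Theorem. Since $p \geq 2$, the maps $z \mapsto (z^\pm)^{p-1}$ are locally Lipschitz on bounded sets, so combining $v \in C^{0,1/2}$ with $k_{\pm} \in C^{0,1}$ forces $g \in C^{0,1/2}(B'_{5/8})$. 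Viewing $v$ as a solution of the \emph{linear} oblique derivative problem $D_i(a^{ij} D_j v) = f$ in $B^+_{5/8}$ with $a^{nn} D_n v = g$ on $B'_{5/8}$, having $a^{ij} \in C^{0,1}$, $f \in L^\infty$, and $g \in C^{0,1/2}$, the classical $W^{2,q}$ theory for oblique derivative problems (e.g.~\cite[Proposition 5.53]{L13}) applied with $q > n$, together with Morrey's embedding $W^{2,q} \hookrightarrow C^{1,1/2}$, delivers $v \in C^{1,1/2}(B^+_{1/2})$ with $\sup|Dv| + [Dv]_{1/2}$ controlled by $\|v\|_{L^\infty(B^+_{5/8})} + \sup|f| + \|g\|_{C^{0,1/2}(B'_{5/8})}$. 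Chained with the previous estimates, this is \eqref{elliptic est concl} at $\sigma = 1$.

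The single genuinely delicate point is the $L^\infty$ step: one must recognize and exploit the monotonicity of the boundary nonlinearity to conclude that the boundary contribution in the Moser test has a harmless sign (and to see that the normalization \eqref{regularity assumption} is what makes the even/odd reflection procedure yield a legitimately uniformly elliptic divergence-form equation). Once $v$ is bounded, Steps 2--3 are essentially routine applications of the classical linear oblique-derivative theory.
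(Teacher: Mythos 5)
Your proof is correct in its essentials but takes a genuinely different route from the paper's. The paper does not bootstrap through $L^\infty$ and $C^{0,1/2}$; instead it proves a small-ball estimate directly from Lieberman's $C^{1,\alpha}$ oblique-derivative Schauder theory, treating the nonlinear boundary term as data and observing that its $C^{0,1/2}$ norm enters with factors $\tau\sup|v|^{p-1}$ and $\tau^{3/2}\sup|v|^{p-2}[v]_{1/2}$ that are forced small by restricting to balls of radius $\tau\le\tau_0$ with $\tau_0\sup_{B_1}|v|^{p-2}\le\delta$; interpolation then converts the residual lower-order terms into $\delta\tau^{3/2}[Dv]_{1/2}$, and the abstract absorption lemma (Lemma 2 of Section 2.8 of Simon) closes the estimate. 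Your route, by contrast, re-runs the structure of Section~\ref{sec:regularity} for the normalized problem \eqref{ptop freq eqn}: first an $L^\infty$ bound exploiting the sign $v\bigl(k_+(v^+)^{p-1}-k_-(v^-)^{p-1}\bigr)\ge0$ of the boundary nonlinearity, then the frozen-coefficient Morrey decay to get $C^{0,1/2}$, then the linear oblique Schauder estimate. Both arguments are sound; the paper's is more self-contained (it does not require re-deriving Lemmas~\ref{regularity lemma2} and~\ref{regularity lemma3} with the nonzero source $f$, and it bypasses the De Giorgi--Moser step entirely), while yours is perhaps more transparent because it mirrors the bootstrap the reader has already seen.

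One imprecision worth flagging in your Step 3: the ``$W^{2,q}$ theory applied with $q>n$ plus Morrey embedding'' route does not actually apply here, because $g=k_+(v^+)^{p-1}-k_-(v^-)^{p-1}\in C^{0,1/2}(B'_{5/8})$ is \emph{not} generically in the trace space $W^{1-1/q,q}(B'_{5/8})$ when $q>n$, so the $L^q$ Calder\'on--Zygmund estimate for the oblique derivative problem is unavailable. What does work, and is what \cite[Proposition~5.53]{L13} actually provides, is the $C^{1,\alpha}$ Schauder estimate for the oblique problem with Hölder boundary data, which you also cite; so the conclusion stands, but the $W^{2,q}$/Morrey detour should be deleted and replaced by the direct Schauder statement. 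With that fixed, the constants trace through as you indicate: the factors $\|v\|_{L^\infty}^{p-2}$ coming from differentiating $z\mapsto(z^\pm)^{p-1}$ are absorbed into the allowed $\|v\|_{L^\infty(B_1)}$-dependence, and Steps 1--2 convert the remaining $\|v\|_{L^\infty}$ and $[v]_{1/2}$ factors into $\|v\|_{L^2(B^+_1)}+\sup_{B^+_1}|f|$, giving \eqref{elliptic est concl} at unit scale.
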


\begin{proof}
Throughout the proof we will denote by $C \in (0,\infty)$ constants depending only on $n$, $p$, $\lambda$, $\|a^{ij}\|_{C^{0,1}(B^+_1)}$, $\sup_{B^+_1} |x|^{1-\kappa} |f|$, $\|k_+\|_{C^{0,1}(B'_1)}$, $\|k_-\|_{C^{0,1}(B'_1)}$, and $\|v\|_{L^{\infty}(B_1)}$. Without loss of generality assume $\sigma = 1$.  First we claim that for each $\delta > 0$ there exists $\tau_0 = \tau_0(p, \|v\|_{C^0(B_1)}) \in (0,1/2]$ such that if $B_{\tau}(z) \subset B_1(0)$ with $0 < \tau \leq \tau_0$ then
\begin{align}\label{elliptic est eqn1}
	&\,\sup_{B_{\tau/16}(z) \cap B^+_1(0)} |v| + \tau \sup_{B_{\tau/16}(z) \cap B^+_1(0)} |Dv| + \tau^{3/2} [Dv]_{1/2,B_{\tau/16}(z) \cap B^+_1(0)} \nonumber
	\\ \leq&\, C \delta \left( \sup_{B_{\tau}(z) \cap B^+_1(0)} |v| + \tau^{1/2} [v]_{1/2,B_{\tau}(z) \cap B^+_1(0)} \right) \nonumber
		\\& + C \left( \tau^{-n/2} \|v\|_{L^2(B_{\tau}(z) \cap B^+_1(0))} + \tau^2 \sup_{B_{\tau}(z) \cap B^+_1(0)} |f| \right) .
\end{align}
  To see this, fix $B_{\tau}(z) \subset B_1(0)$.  Notice that if $B_{\tau/8}(z) \subset B^+_1(0)$, then \eqref{elliptic est eqn1} holds true by standard elliptic estimates~\cite[Theorem~8.32]{GT}.  Otherwise, $(z',0) \in B_{\tau/8}(z) \cap B'_1(0)$, where $z = (z',z_n)$.  By $C^{1,\alpha}$-estimates for weak solutions to the oblique derivative problems~\cite[Proposition 5.53]{L13} applied to the solution $v$ to \eqref{ptop freq eqn},
\begin{align}\label{elliptic est eqn2}
	&\sup_{B^+_{\tau/4}(z',0)\cap B^+_1(0)} |v| + \tau \sup_{B^+_{\tau/4}(z',0)\cap B^+_1(0)} |Dv| + \tau^{3/2} [Dv]_{1/2,B^+_{\tau/4}(z',0)\cap B^+_1(0)} \nonumber\\
	& \leq C \left( \tau^{-n/2} \|v\|_{L^2(B^+_{\tau/2}(z',0)\cap B^+_1(0))}   + \tau \sup_{B'_{\tau/2}(z',0)\cap B^+_1(0)} |v|^{p-1}\right. \nonumber \\	& \left.	+ \tau^{3/2} \sup_{B'_{\tau/2}(z',0)\cap B^+_1(0)} |v|^{p-2} \,[v]_{1/2,B'_{\tau/2}(z',0)} + \tau^2 \sup_{B^+_{\tau/2}(z',0)\cap B^+_1(0)} |f| \right).
\end{align}
 Notice that $B_{\tau/16}(z) \subset B_{\tau/4}(z',0)$ and $B_{\tau/2}(z',0) \subset B_{\tau}(z)$.  Hence if we require that $0 < \tau \leq \tau_0$ where
\begin{equation*}
	\tau_0 \sup_{B_1} |v|^{p-2} \leq \delta ,
\end{equation*}
then \eqref{elliptic est eqn2} gives us \eqref{elliptic est eqn1}.  Now having shown \eqref{elliptic est eqn1}, it follows by interpolation that
\begin{align*}
	&\,\sup_{B_{\tau/16}(z) \cap B^+_1(0)} |v| + \tau \sup_{B_{\tau/16}(z) \cap B^+_1(0)} |Dv| + \tau^{3/2} [Dv]_{1/2,B_{\tau/16}(z) \cap B^+_1(0)} \nonumber
	\\&\leq C \delta \tau^{3/2} [Dv]_{1/2,B_{\tau}(z) \cap B^+_1(0)} + C \left( \tau^{-n/2} \|v\|_{L^2(B_{\tau}(z) \cap B^+_1(0))} + \tau^2 \sup_{B_{\tau}(z) \cap B^+_1(0)} |f| \right) .
\end{align*}
   Then using Lemma~2 of Section~2.8 of~\cite{SimonHarmonicMaps}, it follows that \eqref{elliptic est concl} holds true.
\end{proof}

%This is the version of the Lemma that we can use
%\begin{lemma} If
%\begin{equation}\label{simon1}
%\tau^k\varphi(B_{\tau/8}(z))\leq \tau^k\varepsilon\varphi(B_{\tau}(z))+\gamma
%\end{equation}
%whenever $B_{2\tau}(z)\subset B_\rho(y)$, then
%$$
%\rho^k\varphi(B_{\rho/2}(y))\leq C\gamma.
%$$
%\end{lemma}
%\begin{proof}
%Let $Q=\sup{\tau^k\varphi(B_{\tau}(z))}$, where the supremum is taken over all balls $B_{\tau}(z)$ such that $B_{2\tau}(z)\subset B_\rho(y)$. Take an arbitrary ball $B_{\tau}(z)$ such that $B_{2\tau}(z)\subset B_\rho(y)$, and cover it with balls $B_{\tau/16}(z_i)$, with $z_i\in B_{\tau}(z)$ and $B_{\tau}(z_i)\subset B_\rho(y)$. Let $S$ be the number of such balls. Apply \eqref{simon1} with $z_i$ instead of $z$, and $\tau/16$ instead of $\tau/8$. We obtain
%$$
%\left(\frac{\tau}{2}\right)^k\varphi(B_{\tau/16}(z_i))\leq \left(\frac{\tau}{2}\right)^k\varepsilon\varphi(B_{\tau/2}(z_i))+\gamma\leq Q\varepsilon+\gamma.
%$$
%Using the sub-additivity of $\varphi$ we infer
%$$
%\left(\frac{\tau}{2}\right)^k\varphi(B_{\tau}(z))\leq Q\varepsilon S+\gamma S.
%$$
%Taking the sup on the LHS yields
%$$
%Q\leq 2^kQ\varepsilon S+2^k\gamma S.
%$$
%Choosing $\varepsilon$ so that $2^k\varepsilon S\leq \frac{1}{2}Q,$ we obtain
%$$
%Q\leq 2^{k+1}\gamma S,
%$$
%which in turn implies
%$$
%\rho^k\varphi(B_{\rho/2}(y))\leq C\gamma.
%$$
%\end{proof}

\begin{proof}[Proof of Theorem~\ref{tangent thm}]
Fix $1 < \sigma < \infty$.  First we will show that $v_{\rho}$ is uniformly bounded in $C^{1,1/2}(B^+_{\sigma})$.  First let $\kappa$ be an integer such that $\Phi_v(0^+) < \kappa$, and notice that we thus have $K_v(\tau) > \tau^{2\kappa}$ for all sufficiently small $\tau > 0$.  Thus we may apply \eqref{mu positive} and \eqref{L2 growth concl2} to obtain
\begin{align*}
	\lambda \int_{B^+_{2\rho\sigma}} v^2
	&\leq \int_{B^+_{2\rho\sigma}} v^2 \mu
	= \int_0^{2\rho\sigma} \int_{(\partial B_{\tau})^+} v^2 \mu
	\\&\leq C \int_0^{2\rho\sigma} \left(\frac{\tau}{\rho}\right)^{n-1+2\Phi_v(0^+)} \int_{(\partial B_{\rho})^+} v^2 \mu \,d\tau
	\leq C \rho \int_{(\partial B_{\rho})^+} v^2 \mu
\end{align*}
for all sufficiently small $\rho > 0$ and some constant $C \in (0,\infty)$ independent of $\rho$.  By multiplying both sides by $(\rho\sigma)^{-n}$,
\begin{equation} \label{tangent eqn1}
	\sigma^{-n} \int_{B^+_{2\sigma}} v_{\rho}^2 = \frac{1}{(\rho\sigma)^n H_v(\rho)} \int_{B^+_{2\rho\sigma}} v^2 \leq C
\end{equation}
for all sufficiently small $\rho > 0$ and some constant $C \in (0,\infty)$ independent of $\rho$.  Also by \eqref{L2 growth concl2},
\begin{equation} \label{tangent eqn2}
	H_v(\rho) \geq c \rho^{2\kappa}
\end{equation}
for all sufficiently small $\rho > 0$ and some constant $c \in (0,\infty)$ independent of $\rho$.

By Lemma~\ref{elliptic est lemma} with $\rho\sigma$ in place of $\sigma$,
\begin{equation}\label{tangent eqn31}
	\sup_{B^+_{\rho\sigma}} |v| + \rho\sigma \sup_{B^+_{\rho\sigma}} |Dv| + (\rho\sigma)^{3/2} [Dv]_{1/2,B^+_{\rho\sigma}}
	\leq C \left( (\rho\sigma)^{-n/2} \|v\|_{L^2(B^+_{2\rho\sigma})} + (\rho\sigma)^2 \sup_{B^+_{\rho\sigma}} |f| \right) ,
\end{equation}
where $f$ is as in \eqref{frequency v} and $C \in (0,\infty)$ is a constant.  By dividing \eqref{tangent eqn31} by $H_v(\rho)^{1/2}$ and rescaling,
\begin{equation} \label{tangent eqn3}
	\sup_{B^+_{\sigma}} |v_{\rho}| + \sigma \sup_{B^+_{\sigma}} |Dv_{\rho}| + \sigma^{3/2} [Dv_{\rho}]_{1/2,B^+_{\sigma}}
	\leq C \left( \sigma^{-n/2} \|v_{\rho}\|_{L^2(B^+_{2\sigma})} + \sigma^2 \sup_{B^+_{\sigma}} |f_{\rho}| \right) ,
\end{equation}
where $f_{\rho} \in L^{\infty}(B^+_{\sigma})$ is defined by
\begin{equation} \label{tangent eqn4}
	f_{\rho}(x) = \frac{\rho^2 f(\rho x)}{H_v(\rho)^{1/2}}
\end{equation}
for each $x \in B^+_{\sigma}$.  By \eqref{f bound} and \eqref{tangent eqn2},
\begin{equation} \label{tangent eqn5}
	\sigma^2 \sup_{B^+_{\sigma}} |f_{\rho}|
	= \frac{ (\rho\sigma)^2 \sup_{B^+_{\rho\sigma}} |f| }{ H_v(\rho)^{1/2} }
		\leq \frac{ C \,(\rho\sigma)^{\kappa+1} }{ H_v(\rho)^{1/2} }
	 \leq C \rho
\end{equation}
for all sufficiently small $\rho$, where $C \in (0,\infty)$ denotes constants independent of $\rho$.  By bounding the right-hand side of \eqref{tangent eqn3} using \eqref{tangent eqn1} and \eqref{tangent eqn5},
\begin{equation} \label{tangent eqn6}
	\sup_{B^+_{\sigma}} |v_{\rho}| + \sigma \sup_{B^+_{\sigma}} |Dv_{\rho}| + \sigma^{3/2} [Dv_{\rho}]_{1/2,B^+_{\sigma}} \leq C
\end{equation}
for all sufficiently small $\rho$ and some constant $C \in (0,\infty)$ independent of $\rho$.  It now follows from \eqref{tangent eqn6} and the Arzela-Ascoli theorem that for each sequence $\rho_{\ell} \rightarrow 0^+$ there exists a subsequence $\{\rho_{\ell'}\} \subset \{\rho_{\ell}\}$ and a function $v^* \in C^1(\overline{\mathbb{R}^n_+})$ such that \eqref{tangent concl1} holds true with convergence in the $C^1(B^+_{\sigma} \cup B'_{\sigma})$ topology for all $\sigma \in (0,\infty)$.  Note that since
\begin{align*}
	\int_{(\partial B_1)^+} (v^*)^2
	&= \lim_{\ell \rightarrow \infty} \int_{(\partial B_1)^+} (v_{\rho_{\ell}}(x))^2 \,\mu(\rho_{\ell} \,x) \,dx
	\\&= \lim_{\ell \rightarrow \infty} \frac{1}{\rho_{\ell}^{n-1} H_v(\rho_{\ell})} \int_{(\partial B_{\rho_{\ell}})^+} (v(\rho_{\ell} x))^2 \,\mu(\rho_{\ell} \,x) \,dx = 1,
\end{align*}
$v^*$ is a non-zero function.

Again fix $1 < \sigma < \infty$.  Notice that by \eqref{ptop freq eqn}, for each sufficiently small $\rho > 0$, $v_{\rho}$ satisfies
\begin{gather}
	D_i (a^{ij}(\rho x) \,D_j v_{\rho}) = f_{\rho} \text{ in } B^+_{\sigma}, \nonumber \\
	D_n v_{\rho} = \psi_{\rho} \text{ on } B'_{\sigma} \label{tangent eqn7}
\end{gather}
where $f_{\rho}$ is as in \eqref{tangent eqn4} and $\psi_{\rho} \in C^{0,1}(B'_{\sigma})$ is defined by
\begin{equation*}
	\psi_{\rho}(x) = \frac{\rho}{H_v(\rho)^{1/2}} \left( k_+(\rho x) \,(v^+(\rho x))^{p-1} + k_-(\rho x) \,(v^-(\rho x))^{p-1} \right)
\end{equation*}
for each $x \in B'_{\sigma}$.  Since $a^{ij}$ is Lipschitz continuous, $a^{ij}(\rho x) \rightarrow a^{ij}(0) = \delta_{ij}$ uniformly on $B_{\sigma}$ as $\rho \rightarrow 0^+$.  By \eqref{tangent eqn5}, $f_{\rho} \rightarrow 0$ uniformly on $B_{\sigma}$ as $\rho \rightarrow 0^+$.  By \eqref{tangent eqn6},
\begin{equation*}
	\sup_{B'_{\sigma}} |\psi_{\rho}| \leq C\rho \sup_{B^+_1} (k_+ + k_-) \,|v|^{p-2} .
\end{equation*}
Therefore, $\psi_{\rho} \rightarrow 0$ uniformly on $B_{\sigma}$ as $\rho \rightarrow 0^+$.  Therefore, by letting $\rho = \rho_{\ell'} \rightarrow 0^+$ in \eqref{tangent eqn7} and noting that $\sigma$ is arbitrary,
\begin{gather*}
	\Delta v^* = 0 \text{ in } \mathbb{R}^n_+, \nonumber \\
	D_n v^* = 0 \text{ on } \mathbb{R}^{n-1} \times \{0\} .
\end{gather*}
Hence the even extension of $v^*$ across $\{x_n = 0\}$ is a harmonic function in $\mathbb{R}^n$.

Let $0 < \sigma < \tau < \infty$.  By rescaling \eqref{L2 growth concl3}, for each sufficiently small $\rho > 0$
\begin{align*}
	\frac{1}{C} \left(\frac{\sigma}{\tau}\right)^{2 e^{C\rho\tau} \Phi_v(\rho\tau) + 2Ce^C\rho\tau} \tau^{-n} \int_{B^+_{\tau}} |v_{\rho}|^2
		\leq \sigma^{-n} \int_{B^+_{\sigma}} |v_{\rho}|^2 \leq C \left(\frac{\sigma}{\tau}\right)^{2 \Phi_v(0^+)} \tau^{-n} \int_{B^+_{\tau}} |v_{\rho}|^2 .
\end{align*}
Letting $\rho = \rho_{\ell'} \rightarrow 0^+$,
\begin{align} \label{tangent eqn8}
	\frac{1}{C} \left(\frac{\sigma}{\tau}\right)^{2 \Phi_v(0^+)} \tau^{-n} \int_{B^+_{\tau}} |v^*|^2
		\leq \sigma^{-n} \int_{B^+_{\sigma}} |v^*|^2 \leq C \left(\frac{\sigma}{\tau}\right)^{2 \Phi_v(0^+)} \tau^{-n} \int_{B^+_{\tau}} |v^*|^2
\end{align}
for all $0 < \sigma < \tau < \infty$.  Recalling that the even extension of $v^*$ across $\{x_n = 0\}$ is a harmonic function in $\mathbb{R}^n$, it follows from \eqref{tangent eqn8} using the Liouville Theorem that the even extension of $v^*$ across $\{x_n = 0\}$ is also a homogeneous polynomial of degree $\Phi_v(0^+)$.  %In particular, $\Phi_v(0^+)$ must be a positive integer.
\end{proof}

\section{Weiss' monotonicity formula} \label{sec:Weiss sec}

Assume Hypothesis~\ref{almgren hyp} and let $v \in C^1(B^+_1(0) \cup B'_1(0))$ be a solution to \eqref{ptop freq eqn}.  For $\nu\in\mathbb{R}$, we introduce the \emph{Weiss function} $\mathcal{W}(\rho) = \mathcal{W}_{v,\nu}(\rho)$ as
\begin{equation} \label{Weiss fun}
	\mathcal{W}(\rho) = \frac{1}{\rho^{2\nu}} \,(I_v(\rho) - \nu H_v(\rho))
		= \rho^{1-n-2\nu} \int_{(\partial B_{\rho})^+} \left( a^{ij} v \,D_j v \,{x_i} - \mu\nu \,v^2 \right)
\end{equation}
for each $0 < \rho < 1$, where $I_v(\rho)$ and $H_v(\rho)$ are as in \eqref{eq.I} and \eqref{eq.H} respectively.

\begin{theorem}\label{Weiss mono}
Let $\nu \in \mathbb{R}$ and $\kappa\in\mathbb{N}$, with $\nu\leq \kappa$.  Assume Hypothesis~\ref{almgren hyp} and let $v \in C^1(B^+_1(0) \cup B'_1(0))$ be a solution to \eqref{ptop freq eqn} with $\Phi_v(0^+) \geq \nu$.  Let $\rho_0$ be as in Lemma~\ref{L2 growth lemma}.  For each $0 < \sigma < \rho \leq \rho_0/2$ such that $K_{v}(\tau) > \tau^{2\kappa}$ for all $0 < \tau \leq 2\rho$,
\begin{equation}\label{Weiss concl}
	\mathcal{W}(\sigma) + C \sigma \leq \mathcal{W}(\rho) + C \rho
\end{equation}
for some constant $C \in (0,\infty)$ depending only on $n$, $p$, $\lambda$, $\|a^{ij}\|_{C^{0,1}(B^+_1)}$, $\sup_{B^+_1} |x|^{1-\kappa} |f|$, $\|k_+\|_{C^{0,1}(B'_1)}$, $\|k_-\|_{C^{0,1}(B'_1)}$, and $\|v\|_{L^{\infty}(B_1)}$.
\end{theorem}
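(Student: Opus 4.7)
The plan is to differentiate $\mathcal{W}(\rho)$ and show $\mathcal{W}'(\rho) \geq -C$ by extracting a perfect-square term from the leading combination. Starting from
$$\mathcal{W}'(\rho) = \rho^{-2\nu}\bigl[I'_v(\rho) - \nu H'_v(\rho)\bigr] - \frac{2\nu}{\rho^{2\nu+1}}\bigl[I_v(\rho) - \nu H_v(\rho)\bigr],$$
I would substitute \eqref{variation eqn5} for $I'_v(\rho)$ and \eqref{variation eqn2} (with $w=v$) for $H'_v(\rho)$, the latter rearranging as $\rho H'_v(\rho) = 2 I_v(\rho) + O(\rho H_v(\rho))$. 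After expanding and collecting like terms, the principal algebraic part of $\mathcal{W}'(\rho)$ reduces to
$$\mathcal{W}'(\rho) \;\geq\; 2\rho^{-n-2\nu} \int_{(\partial B_\rho(0))^+} \frac{1}{\mu}\Bigl(\sum_{i,j=1}^n a^{ij} D_j v\, x_i - \nu\mu\, v\Bigr)^2 \;-\; (\text{error}),$$
which is manifestly nonnegative. Geometrically, this reflects the fact that when $v$ is homogeneous of degree $\nu$ and coefficients are frozen at the origin ($\mu\equiv 1$), one has $x\cdot Dv = \nu v$ on $\partial B_\rho$, so the squared quantity vanishes precisely on the model homogeneous solutions.

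The error terms fall into three groups: (i) the $H'_v$ remainder involving $(\mu-1)/\rho$ and $D_i(b^{ij} x_j/|x|)$ against $v^2$; (ii) the $f$-dependent contributions $\int f\, v$ (on ball and sphere) and $\int (1/\mu)\,a^{ij} D_j v\, f\, x_i$ from \eqref{variation eqn5}; and (iii) the boundary terms involving $D_j k_\pm$ against $(v^\pm)^p$. Each of these, after multiplication by $\rho^{-2\nu}$, is absorbed into a quantity of the form $C\rho^{-2\nu}(H_v(\rho) + D_v(\rho))$, possibly with an extra favorable power of $\rho$ arising from \eqref{f bound} and $p\geq 2$, using the estimates \eqref{Alm mono eqn5}--\eqref{Alm mono eqn8} already established in the proof of Theorem~\ref{Almgren monotonicity thm}.

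The decisive step is to bound $\rho^{-2\nu}H_v(\rho)$ and $\rho^{-2\nu}D_v(\rho)$ by an absolute constant. The hypothesis $K_v(\tau) > \tau^{2\kappa}$ on $(0,2\rho]$ lets us apply Lemma~\ref{L2 growth lemma}(a) with $\alpha \downarrow 0$; combining this with $\Phi_v(0^+) \geq \nu$ gives $H_v(\rho) \leq C\rho^{2\nu}$, hence $\rho^{-2\nu}H_v(\rho) \leq C$. Then \eqref{Alm mono eqn12}, together with $\Phi_v(\rho) \leq \kappa$ (which follows from $K_v(\tau)>\tau^{2\kappa}$ and Lemma~\ref{frequency consistency lemma}), yields $D_v(\rho) \leq C H_v(\rho)$, giving $\rho^{-2\nu}D_v(\rho) \leq C$ as well. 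Putting these bounds into the error and discarding the nonnegative square, we obtain $\mathcal{W}'(\rho) \geq -C$ for $\mathcal{L}^1$-a.e.\ admissible $\rho$, and integration over $[\sigma,\rho]$ yields \eqref{Weiss concl}.

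I expect the main obstacle to be the bookkeeping required to extract the perfect square cleanly: the inequalities for $I'_v$ and $H'_v$ each carry several distinct error terms, and one must verify that the cross terms $-4\nu\rho^{-n-2\nu}\int vB$ combine exactly (with $B = a^{ij} D_j v\, x_i$) to complete the square $(B-\nu\mu v)^2/\mu$ after accounting for the symmetry $a^{ij}=a^{ji}$. The secondary subtlety is that the assumption $\Phi_v(0^+)\geq\nu$ is used in a non-obvious way: without it, the $\rho^{-2\nu}$-normalized errors could diverge as $\rho\downarrow 0$, and no amount of careful algebra on the principal part would save the monotonicity.
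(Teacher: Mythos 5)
Your proposal follows essentially the same route as the paper's proof: differentiate $\mathcal{W}$, substitute the variational identities \eqref{variation eqn2} and \eqref{variation eqn5} for $H'_v$ and $I'_v$, complete the square $\rho^{-n-2\nu}\int_{(\partial B_\rho)^+}\mu^{-1}\bigl(a^{ij}D_j v\,x_i - \nu\mu v\bigr)^2$, and then absorb the remaining error into a quantity of size $C\rho^{-2\nu}\bigl(H_v(\rho)+D_v(\rho)\bigr)$ which must be shown to be bounded by an absolute constant before integrating. The perfect-square step is exactly the paper's \eqref{eq weiss prelim}, and the role of the hypothesis $\Phi_v(0^+)\geq\nu$ (forcing the $\rho^{2\nu}$ decay of $H_v$ so the normalization does not diverge) is identified correctly.

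Where you diverge is in the final absorption step, and your route is cleaner. The paper bounds $H_v(\rho)+D_v(\rho)$ by first invoking the pointwise elliptic estimate of Lemma~\ref{elliptic est lemma} to get $H_v(\rho)+D_v(\rho)\leq C\bigl(\rho^{-n}\|v\|^2_{L^2(B^+_{2\rho})}+\rho^4\sup_{B^+_{2\rho}}|f|^2\bigr)$, and only then uses the $L^2$-growth estimate \eqref{L2 growth concl3} together with \eqref{f bound}. You instead go straight to \eqref{L2 growth concl2} for $H_v(\rho)\leq C\rho^{2\nu}$, and then argue $D_v(\rho)\leq CH_v(\rho)$ from the two-sided bound \eqref{Alm mono eqn12} between $I_v$ and $D_v$ plus boundedness of $\Phi_v(\rho)$. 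This avoids the elliptic estimates entirely and gives the same conclusion. One small imprecision worth fixing: the bound $\Phi_v(\rho)\leq\kappa$ does \emph{not} follow from Lemma~\ref{frequency consistency lemma}, which only controls the limit $\Phi_v(0^+)$; what you actually need (and what suffices) is that $\Phi_v(\rho)$ is bounded by a constant on $(0,\rho_0]$, and this follows from the almost-monotonicity of $e^{C\rho}\Phi_v(\rho)+Ce^C\rho$ established in Theorem~\ref{Almgren monotonicity thm}, which gives $\Phi_v(\rho)\leq e^{C\rho_0}\Phi_v(\rho_0)+Ce^C\rho_0$. With that correction, your argument is complete and sound.
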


\begin{proof}
Throughout the proof we will denote by $C \in (0,\infty)$  constants depending only on $n$, $p$, $\lambda$, $\|a^{ij}\|_{C^{0,1}(B^+_1)}$, $\sup_{B^+_1} |x|^{1-\kappa} |f|$, $\|k_+\|_{C^{0,1}(B'_1)}$, $\|k_-\|_{C^{0,1}(B'_1)}$, and $\|v\|_{L^{\infty}(B_1)}$. We compute
\begin{equation*}
	\frac{d}{d\rho} \mathcal{W}(\rho) = \frac{1}{\rho^{2\nu}} \,\left( I'(\rho) - \nu H'(\rho) \right)
		- \frac{2\nu}{\rho^{2\nu+1}} \,\left({I(\rho)} - \nu H(\rho) \right)
\end{equation*}
for $\mathcal{L}^1$-a.e.~$0 < \rho < \rho_0/2$.  Taking into account \eqref{Alm mono eqn9} and \eqref{Alm mono eqn10},
\begin{align}\label{eq.W1}
	\frac{d}{d\rho} \mathcal{W}(\rho)
	\geq &\, 2\rho^{2-n-2\nu} \int_{(\partial B_{\rho})^+} \frac{1}{\mu} \left( \sum_{i,j=1}^n a^{ij} D_j v \,\frac{x_i}{\rho} \right)^2
		-\frac{4\nu}{\rho^{2\nu+1}} I(\rho) + \frac{2\nu^2}{\rho^{2\nu+1}} H(\rho) \\
		&- \frac{C}{\rho^{2\nu}} (D(\rho)+H(\rho)), \notag \\
	= &\, 2\rho^{-n-2\nu}\left\{ \rho^2\int_{(\partial B_{\rho})^+} \frac{1}{\mu} \left( \sum_{i,j=1}^n a^{ij} D_j v \,\frac{x_i}{\rho} \right)^2
		- 2\nu\rho \int_{(\partial B_{\rho})^+} a^{ij} v \,D_j v \,\frac{x_i}{\rho} +\nu^2 \int_{(\partial B_{\rho})^+} \mu \,v^2\right\} \notag \\
		&- \frac{C}{\rho^{2\nu}} (D(\rho)+H(\rho)) \notag
\end{align}
for $\mathcal{L}^1$-a.e.~$0 < \rho < \rho_0/2$.  We now observe
\begin{align}\label{eq.W2}
	\rho^2 \int_{(\partial B_{\rho})^+} \frac{1}{\mu} \left( \sum_{i,j=1}^n a^{ij} D_j v \,\frac{x_i}{\rho} \right)^2
		&- 2\rho\nu \int_{(\partial B_{\rho})^+} a^{ij} v \,D_j v \,\frac{x_i}{\rho} +\nu^2 \int_{(\partial B_{\rho})^+} \mu \,v^2 \\
	&= \int_{(\partial B_{\rho})^+} \frac{1}{\mu} \left( \sum_{i,j=1}^n a^{ij} D_j v \,x_i - \nu \,\mu \,v \right)^2 \notag
\end{align}
for each $0 < \rho < \rho_0/2$.  Combining \eqref{eq.W1} and \eqref{eq.W2}, we infer
\begin{equation}\label{eq weiss prelim}
	\frac{d}{d\rho} \mathcal{W}(\rho)
		\geq 2 \rho^{-n-2\nu} \int_{(\partial B_{\rho})^+} \frac{1}{\mu} \left( \sum_{i,j=1}^n a^{ij} D_j v \,x_i - \nu \,\mu \,v \right)^2
		- \frac{C}{\rho^{2\nu}} \left( D(\rho) + H(\rho) \right).
\end{equation}
for $\mathcal{L}^1$-a.e.~$0 < \rho < \rho_0/2$.  Since $p \geq 2$, an application of  Lemma~\ref{elliptic est lemma},
\begin{align*}
	H(\rho) + D(\rho) &\leq \sup_{(\partial B_{\rho})^+} v^2 + \rho^2 \sup_{B^+_{\rho}} |Dv|^2 + \sup_{B'_{\rho}} (k_+ + k_-) \,|v|^p
		\\&\leq C \left( \rho^{-n} \|v\|_{L^2(B^+_{2\rho})}^2 + \rho^4 \sup_{B^+_{2\rho}} |f|^2 \right) .
\end{align*}
 Since by assumption $K(\tau) > \tau^{2\kappa}$ for all $0 < \tau \leq 2\rho$, we can use \eqref{L2 growth concl3}, \eqref{f bound}, and the fact that $\nu\leq \kappa+1$ to obtain
\begin{equation*}
	H(\rho) + D(\rho) \leq C \left( \rho^{2\nu} + \rho^{2\kappa+2} \right) \leq C \rho^{2\nu}.
\end{equation*}
 Hence \eqref{eq weiss prelim} gives us
\begin{equation}\label{eq.W3}
	\frac{d}{d\rho} \mathcal{W}(\rho)
		\geq 2 \rho^{-n-2\nu} \int_{(\partial B_{\rho})^+} \frac{1}{\mu} \left( \sum_{i,j=1}^n a^{ij} D_j v \,x_i - \nu \,\mu \,v \right)^2 - C .
\end{equation}
for each $0 < \rho < 1/2$.   Integrating \eqref{eq.W3} gives us \eqref{Weiss concl}.
\end{proof}

\section{Monneau's monotonicity formula} \label{sec:Monneau sec}

Assume Hypothesis~\ref{almgren hyp} and let $v \in C^1(B^+_1(0) \cup B'_1(0))$ be a solution to \eqref{ptop freq eqn}.  Let $\nu$ be any positive integer and $p_{\nu}$ be any homogeneous degree $\nu$ harmonic polynomial on $\mathbb{R}^n$ such that $p_{\nu}(x',x_n) = p_{\nu}(x',-x_n)$ for each $x' \in \mathbb{R}^{n-1}$ and $x_{n+1} \in \mathbb{R}$.  We introduce the \emph{Monneau function} $\mathcal{M}(\rho) = \mathcal{M}_{v,p_{\nu}}(\rho)$ as
\begin{equation} \label{Monneau fun}
	\mathcal{M}(\rho) = \frac{1}{\rho^{n-1+2\nu}} \int_{(\partial B_{\rho})^+} (v - p_{\nu})^2 \mu .
\end{equation}
for each $0 < \rho < 1$.

\begin{theorem}\label{Monneau mono}
Let $\nu \in \mathbb{R}$ and $\kappa\in\mathbb{N}$, with $\nu\leq \kappa$. Assume Hypothesis~\ref{almgren hyp} and let $v \in C^1(B^+_1(0) \cup B'_1(0))$ be a solution to \eqref{ptop freq eqn} with $\Phi_v(0^+) \geq \nu$.  Let $p_{\nu}$ be a  harmonic polynomial on $\mathbb{R}^n$, homogeneous of degree $\nu$, such that $p_{\nu}(x',x_n) = p_{\nu}(x',-x_n)$ for each $x' \in \mathbb{R}^{n-1}$ and $x_{n+1} \in \mathbb{R}$.  Let $\rho_0$ be as in Lemma~\ref{L2 growth lemma}.  For each $0 < \sigma < \rho \leq \rho_0/2$ such that $K_{v}(\tau) > \tau^{2\kappa}$ for all $0 < \tau \leq 2\rho$,
\begin{equation}\label{Monneau concl}
	\mathcal{M}(\sigma) + C \sigma \leq \mathcal{M}(\rho) + C \rho
\end{equation}
for some constant $C \in (0,\infty)$ depending only on $n$, $p$, $\lambda$, $\|a^{ij}\|_{C^{0,1}(B^+_1)}$, $\sup_{B^+_1} |x|^{1-\kappa} |f|$, $\|k_+\|_{C^{0,1}(B'_1)}$, $\|k_-\|_{C^{0,1}(B'_1)}$, $\|v\|_{L^{\infty}(B_1)}$, and $\|p_{\nu}\|_{L^{\infty}(B_1)}$.
\end{theorem}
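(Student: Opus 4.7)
The plan is to set $w = v - p_{\nu}$, so that $\mathcal{M}(\rho) = \rho^{-2\nu} H_w(\rho)$ with $H_w$ as in \eqref{eq.H_w}, and to write $I_w(\rho) := \rho^{2-n}\int_{(\partial B_\rho)^+} a^{ij} w\, D_i w\, (x_j/\rho)$. Applying \eqref{variation eqn2} (a purely boundary identity valid for any $W^{1,2}$ function) to $w$, together with the perturbation bounds \eqref{b eqn} and \eqref{mu positive}, yields
\begin{equation*}
\mathcal{M}'(\rho) = \frac{2}{\rho^{2\nu+1}}\bigl(I_w(\rho) - \nu H_w(\rho)\bigr) + R(\rho), \qquad |R(\rho)| \leq C\,\mathcal{M}(\rho).
\end{equation*}
The growth estimate in Lemma~\ref{L2 growth lemma} together with $\Phi_v(0^+)\geq \nu$ and the elliptic estimate in Lemma~\ref{elliptic est lemma} give $\sup_{B_\rho^+}|v|\leq C\rho^\nu$, hence $|w|\leq C\rho^\nu$ and $\mathcal{M}(\rho)\leq C$. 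The objective therefore reduces to showing $I_w - \nu H_w \geq -C\rho^{2\nu+1}$; the differential inequality $\mathcal{M}'\geq -C$ then integrates to \eqref{Monneau concl}.

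Expanding $w = v-p_\nu$ decomposes
\begin{equation*}
I_w - \nu H_w = (I_v - \nu H_v) + (I_{p_\nu} - \nu H_{p_\nu}) - \mathcal{C}(\rho),
\end{equation*}
where the cross term is
\begin{equation*}
\mathcal{C}(\rho) = \rho^{2-n}\!\int_{(\partial B_\rho)^+}\!\!(a^{ij} v\, D_i p_\nu + a^{ij} p_\nu\, D_i v)\frac{x_j}{\rho} - 2\nu\,\rho^{1-n}\!\int_{(\partial B_\rho)^+}\!\!\mu\, v\, p_\nu.
\end{equation*}
The first summand equals $\rho^{2\nu}\mathcal{W}_{v,\nu}(\rho)$. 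On the good set $\{K_v(\tau) > \tau^{2\kappa}\}$ one has $\Phi_v = I_v/H_v$, so Almgren monotonicity (Theorem~\ref{Almgren monotonicity thm}) gives $\Phi_v(\rho)\geq \nu - C\rho$, while Lemma~\ref{L2 growth lemma} together with $\Phi_v(0^+)\geq\nu$ gives $H_v(\rho)\leq C\rho^{2\nu}$; multiplying yields $I_v - \nu H_v \geq -C\rho^{2\nu+1}$. The second summand is $O(\rho^{2\nu+1})$ by direct computation: since $p_\nu$ is homogeneous of degree $\nu$ one has $x\cdot\nabla p_\nu = \nu p_\nu$, so the Euclidean parts of $I_{p_\nu}$ and $\nu H_{p_\nu}$ cancel and the residues from $a^{ij}-\delta_{ij} = O(|x|)$ and $\mu - 1 = O(|x|)$ contribute $O(\rho^{2\nu+1})$.

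The main obstacle is controlling $\mathcal{C}(\rho)$. The integrand with $vD_ip_\nu$ is treated exactly as in the pure $p_\nu$ step: using $x\cdot\nabla p_\nu = \nu p_\nu$, the perturbation $a^{ij}-\delta_{ij}$, and $\mu-1 = O(|x|)$ together with $|v|\leq C\rho^\nu$, this piece equals $\nu H_{v,p_\nu} + O(\rho^{2\nu+1})$ where $H_{v,p_\nu} := \rho^{1-n}\!\int\mu vp_\nu$. For the integrand with $p_\nu D_iv$, the divergence theorem converts the boundary integral over $(\partial B_\rho)^+$ into a bulk integral over $B_\rho^+$ plus a flat one over $B'_\rho$; the normalization \eqref{regularity assumption} (giving $a^{in}=0$ on $B'_1$ for $i<n$) combined with the $x_n$-parity of $p_\nu$ (giving $D_np_\nu|_{\{x_n=0\}}=0$) reduces the flat contribution to $\rho^{2-n}\int_{B'_\rho} p_\nu(k_+(v^+)^{p-1} - k_-(v^-)^{p-1})$, which is $O(\rho^{\nu p + 1})\leq O(\rho^{2\nu+1})$ since $p\geq 2$ and $\nu\geq 1$. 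In the bulk, the identity $D_j(a^{ij}p_\nu D_iv) = p_\nu f + a^{ij}D_jp_\nu D_iv$ (from $D_j(a^{ij}D_iv)=f$) splits the result into a $p_\nu f$ piece bounded by $C\rho^{\nu+\kappa+1}\leq C\rho^{2\nu+1}$ using $|f|\leq C|x|^{\kappa-1}$ and $\nu\leq\kappa$, and a piece $\rho^{2-n}\int a^{ij}D_jp_\nu D_iv$ which a second integration by parts (with the same flat-boundary vanishing from \eqref{regularity assumption} and parity) returns to $\mathcal{C}_1 - \rho^{2-n}\int v\, D_i(a^{ij}D_jp_\nu)$; the last term is $O(\rho^{2\nu+1})$ because harmonicity of $p_\nu$ yields $D_i(a^{ij}D_jp_\nu) = D_i(b^{ij}D_jp_\nu) = O(|x|^{\nu-1})$, which pairs against $|v|\leq C\rho^\nu$. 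Collecting, the two integrands of $\mathcal{C}$ together contribute $2\nu H_{v,p_\nu}$, exactly cancelling the subtracted $2\nu\rho^{1-n}\!\int\mu vp_\nu$ modulo $O(\rho^{2\nu+1})$, so $\mathcal{C}(\rho) = O(\rho^{2\nu+1})$. Integrating the resulting inequality $\mathcal{M}'(\rho)\geq -C$ over $[\sigma,\rho]$ produces \eqref{Monneau concl}.
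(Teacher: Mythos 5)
Your proof is correct and follows essentially the same route as the paper: set $w = v - p_\nu$, differentiate $\mathcal{M}(\rho) = \rho^{-2\nu}H_w(\rho)$ using \eqref{variation eqn2}, control the error term via the growth bound $\sup_{B^+_\rho}|v| \leq C\rho^\nu$ (which is \eqref{Monneau eqn8} in the paper), relate $\mathcal{W}_w$ to $\mathcal{W}_v$ modulo $O(\rho)$ by cancelling the cross terms with one or two integrations by parts, bound $\mathcal{W}_v(\rho)$ from below, and integrate the resulting differential inequality $\mathcal{M}' \geq -C$.

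The one place where you take a genuinely different path is the lower bound on the first summand $I_v - \nu H_v = \rho^{2\nu}\mathcal{W}_v(\rho)$. You re-derive $\mathcal{W}_v(\rho) \geq -C\rho$ from scratch, by writing $I_v - \nu H_v = H_v \cdot (\Phi_v - \nu)$ on the good set and then combining $\Phi_v(\rho) \geq \nu - C\rho$ from Almgren monotonicity (Theorem \ref{Almgren monotonicity thm}) with $H_v(\rho) \leq C\rho^{2\nu}$ from Lemma \ref{L2 growth lemma}. The paper instead simply invokes the already-established Weiss monotonicity (Theorem \ref{Weiss mono}): since $\Phi_v(0^+) \geq \nu$ forces $\mathcal{W}_v(0^+) \geq 0$, Theorem \ref{Weiss mono} gives $\mathcal{W}_v(\rho) \geq -C\rho$ directly. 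Your route is valid but duplicates work that Theorem \ref{Weiss mono} has already packaged; when that theorem is available it is cleaner to cite it. A second, more cosmetic difference is the bookkeeping for the cross term: you apply the divergence theorem twice to the $p_\nu D_i v$ piece to fold it back onto the $v D_i p_\nu$ piece plus $O(\rho^{2\nu+1})$, whereas the paper groups the difference $a^{ij}p_\nu D_j v\,x_i - v\,x\cdot\nabla p_\nu$ as a single object $I_1$ and applies one divergence theorem to each half (\eqref{Monneau eqn5}, \eqref{Monneau eqn6}) so the $\nabla v\cdot\nabla p_\nu$ terms cancel in the bulk; the two computations carry the same content.

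One small point to watch: your bound on the flat contribution, $O(\rho^{\nu p + 1}) \leq O(\rho^{2\nu+1})$, requires $\nu \geq 1$ in addition to $p \geq 2$; this is harmless here because the theorem is only applied with $\nu$ a positive integer and $p_\nu$ a genuine homogeneous polynomial, but it is worth stating the restriction when writing it up.
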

	
\begin{proof}
Throughout the proof we will denote by $C \in (0,\infty)$ constants depending only on $n$, $p$, $\lambda$, $\|a^{ij}\|_{C^{0,1}(B^+_1)}$, $\sup_{B^+_1} |x|^{1-\kappa} |f|$, $\|k_+\|_{C^{0,1}(B'_1)}$, $\|k_-\|_{C^{0,1}(B'_1)}$, $\|v\|_{L^{\infty}(B_1)}$, and $\|p_{\nu}\|_{L^{\infty}(B_1)}$. Set $w = v - p_{\nu}$ so that
\begin{equation}\label{Monneau eqn2}
	\mathcal{M}(\rho) = \frac{1}{\rho^{n-1+2\nu}} \int_{(\partial B_{\rho})^+} \mu \,w^2 = \frac{1}{\rho^{2\nu}} \,H_{w}(\rho) ,
\end{equation}
where $H_{w}(\rho)$ is given by \eqref{eq.H_w}.  Let $\mathcal{W}_{v}(\rho)$ and $\mathcal{W}_{p_{\nu}}(\rho)$ be given by \eqref{Weiss fun}.  Using $v = p_{\nu} + w$ and recalling the definitions of the Weiss function in \eqref{Weiss fun} and of $\mu$ in \eqref{mu defn}, we have
\begin{equation*}
	\mathcal{W}_{v}(\rho) = \mathcal{W}_{p_{\nu}}(\rho) + \mathcal{W}_{w}(\rho)
		+ \rho^{1-n-2\nu} \int_{(\partial B_{\rho})^+} \left( a^{ij} p_{\nu} \,D_j w \,x_i + a^{ij} w \,D_j p_{\nu} \,x_i - 2 \nu \,\mu \,p_{\nu} \,w \right) . \notag
\end{equation*}
In the last term, using $w = v - p_{\nu}$ we obtain
\begin{equation*}
	\mathcal{W}_{v}(\rho) =\mathcal{W}_{w}(\rho) - \mathcal{W}_{p_{\nu}}(\rho)
		+ \rho^{1-n-2\nu} \int_{(\partial B_{\rho})^+} \left( a^{ij} p_{\nu} \,D_j v \,x_i + a^{ij} v \,D_j p_{\nu} \,x_i - 2 \nu \,\mu \,p_{\nu} \,v \right) .
\end{equation*}
By the homogeneity of $p_{\nu}$, $x \cdot \nabla p_{\nu} = \nu \,p_{\nu}$, which together with \eqref{b defn} gives us
\begin{align}\label{Monneau eqn3}
	\mathcal{W}_{v}(\rho) =& \mathcal{W}_{w}(\rho) - \mathcal{W}_{p_{\nu}}(\rho)
		\\&+ \rho^{1-n-2\nu} \int_{(\partial B_{\rho})^+} \left( a^{ij} p_{\nu} \,D_j v \,x_i + v \,x \cdot \nabla p_{\nu} \,
			+ b^{ij} v \,D_j p_{\nu} \,x_i - 2 \nu \,\mu \,p_{\nu} \,v \right) \notag \\
	=&\, \mathcal{W}_{w}(\rho) - \mathcal{W}_{p_{\nu}}(\rho)
		+ \underbrace{ \rho^{1-n-2\nu} \int_{(\partial B_{\rho})^+} \left( a^{ij} p_{\nu} \,D_j v \,x_i - v \,x \cdot \nabla p_{\nu} \, \right) }_{I_1} \notag
		\\&+ \underbrace{ \rho^{1-n-2\nu} \int_{(\partial B_{\rho})^+} (b^{ij} v \,D_j p_{\nu} \,x_i - 2 \nu \,(\mu-1) \,p_{\nu} \,v) }_{I_2}. \notag
\end{align}
Now we want to bound the terms $\mathcal{W}_{p_{\nu}}(\rho)$, $I_1$, and $I_2$ on the right-hand side of \eqref{Monneau eqn3}.  By the homogeneity of $p_{\nu}$ and \eqref{b defn},
\begin{align*}
	\mathcal{W}_{p_{\nu}}(\rho)
	=&\, \rho^{1-n-2\nu} \int_{(\partial B_{\rho})^+} \left( a^{ij} p_{\nu} \,D_j p_{\nu} \,x_i - \nu \,\mu\, p_{\nu}^2 \right)
	\\=&\, \rho^{1-n-2\nu} \int_{(\partial B_{\rho})^+} p_{\nu} \left( x \cdot \nabla p_{\nu} - \nu\, p_{\nu} \right)
		\\&\,+ \rho^{1-n-2\nu} \int_{(\partial B_{\rho})^+} \left( b^{ij} p_{\nu} \,D_j p_{\nu} \,x_i - \nu \,(\mu-1)\, p_{\nu}^2 \right)
	\\=&\, \rho^{1-n-2\nu} \int_{(\partial B_{\rho})^+} \left( b^{ij} p_{\nu} \,D_j p_{\nu} \,x_i - \nu \,(\mu-1)\, p_{\nu}^2 \right) .
\end{align*}
Hence by \eqref{b eqn} and the homogeneity of $p_{\nu}$,
\begin{equation}\label{Monneau eqn4}
	|\mathcal{W}_{p_{\nu}}(\rho)| \leq C \rho
\end{equation}
for some constant $C \in (0,\infty)$ depending on $n$ and $\|p_{\nu}\|_{L^{\infty}(B^+_1)}$.   To bound $I_1$, notice that by the divergence theorem and using \eqref{ptop freq integral eqn} with $\zeta = p_{\nu} \mathbf{1}_{B^+_{\rho}}$, where $\mathbf{1}_{B^+_{\rho}}$ is the characteristic function of $B^+_{\rho}$,
\begin{equation}\label{Monneau eqn5}
	\int_{(\partial B_{\rho})^+} a^{ij} p_{\nu} \,D_j v \,\frac{x_i}{\rho}
	= \int_{B^+_{\rho}} (a^{ij} D_j v \,D_i p_{\nu} + f \,p_{\nu}) + \int_{B'_{\rho}} p_{\nu} \,(k_+ (v^+)^{p-1} - k_- (v^-)^{p-1}) .
\end{equation}
By again applying the divergence theorem, using the homogeneity of $p_{\nu}$, $\Delta p_{\nu} = 0$ in $B^+_{\rho}$, and $D_n p_{\nu} = 0$ on $B'_{\rho}$,
\begin{equation}\label{Monneau eqn6}
	\int_{(\partial B_{\rho})^+} v \,\frac{x}{\rho} \cdot \nabla p_{\nu}
	= \int_{B'_{\rho}} v \,D_n p_{\nu} + \int_{B^+_{\rho}} (v \,\Delta p_{\nu} + \nabla v \cdot \nabla p_{\nu})
	= \int_{B^+_{\rho}} \nabla v \cdot \nabla p_{\nu} .
\end{equation}
Therefore, by subtracting \eqref{Monneau eqn6} from \eqref{Monneau eqn5} and  using \eqref{b defn},
\begin{align}\label{Monneau eqn7}
	I_1 =&\, \rho^{2-n-2\nu} \int_{B^+_{\rho}} b^{ij} \,D_j v \,D_i p_{\nu} + \rho^{2-n-2\nu} \int_{B^+_{\rho}} f \, p_{\nu}
		\\&+ \rho^{2-n-2\nu} \int_{B'_{\rho}} p_{\nu} \,(k_+ (v^+)^{p-1} - k_- (v^-)^{p-1}) . \notag
\end{align}
We can bound the right-hand side of \eqref{Monneau eqn7} using elliptic estimates.  By Lemma~\ref{elliptic est lemma},
\begin{align*}
	\sup_{B^+_{\rho}} |v| + \rho \sup_{B^+_{\rho}} |Dv| \leq C \,( \rho^{-n/2} \|v\|_{L^2(B^+_{2\rho})} + \rho^2 \sup_{B^+_{2\rho}} |f| ).
\end{align*}
  Since by assumption $K_{v}(\tau) > \tau^{2\kappa}$ for all $0 < \tau \leq 2\rho$, we can use \eqref{L2 growth concl3} and \eqref{f bound} to obtain
\begin{equation}\label{Monneau eqn8}
	\sup_{B^+_{\rho}} |v| + \rho \sup_{B^+_{\rho}} |Dv| \leq C \rho^{\nu}.
\end{equation}
  By the homogeneity of $p_{\nu}$, \eqref{f bound}, \eqref{b eqn}, \eqref{Monneau eqn8}, and the facts $\kappa\geq \nu$ and $p\geq 2$, we can bound the right-hand side of \eqref{Monneau eqn7} by
\begin{equation}\label{Monneau eqn9}
	|I_1| \leq \rho^{2-\nu} \sup_{B^+_{\rho}} |Dv| + \rho^{\kappa-\nu+1} + \rho^{1-\nu} \sup_{B'_{\rho}} |v|^{p-1} \leq C \rho.
\end{equation}
  To bound $I_2$, by the homogeneity of $p_{\nu}$, \eqref{Monneau eqn8}, and \eqref{b eqn} we have that
\begin{equation}\label{Monneau eqn10}
	|I_2| = \left| \rho^{1-n-2\nu} \int_{(\partial B_{\rho})^+} (b^{ij} v \,D_j p_{\nu} \,x_i - 2 \nu \,(\mu-1) \,p_{\nu} \,v) \right|
	\leq \rho^{1-\nu} \sup_{(\partial B_{\rho})^+} |v|
	\leq C \rho.
\end{equation}
 Combining  \eqref{Monneau eqn3} with \eqref{Monneau eqn4}, \eqref{Monneau eqn9}, and \eqref{Monneau eqn10} we infer
\begin{equation}\label{Monneau eqn11}
	\big| \mathcal{W}_{v}(\rho) - \mathcal{W}_{w}(\rho) \big| \leq C \rho.
\end{equation}
On the other hand, by differentiating \eqref{Monneau eqn2},
\begin{equation*}
	\mathcal{M}'(\rho) = \frac{1}{\rho^{2\nu}} \,H'_{w}(\rho) - \frac{2\nu}{\rho^{2\nu+1}} \,H_{w}(\rho) .
\end{equation*}
Applying \eqref{variation eqn2} yields
\begin{align*}
	\mathcal{M}'(\rho) =& \frac{2}{\rho^{2\nu+1}} \,I_{w}(\rho)
		+ \rho^{1-n-2\nu} \int_{(\partial B_{\rho})^+} w^2 \left( (1-n)\,\frac{\mu-1}{\rho} + D_i\bigg( b^{ij} \frac{x_j}{|x|} \bigg) \right)
		- \frac{2\nu}{\rho^{2\nu+1}} \,H_{w}(\rho) \\
	=& \frac{2}{\rho} \,\mathcal{W}_{w}(\rho)
		+ \rho^{1-n-2\nu} \int_{(\partial B_{\rho})^+} w^2 \left( (1-n)\,\frac{\mu-1}{\rho} + D_i\bigg( b^{ij} \frac{x_j}{|x|} \bigg) \right) \notag
\end{align*}
By \eqref{Monneau eqn11},
\begin{equation}\label{Monneau eqn12}
	\mathcal{M}'(\rho) \geq \frac{2}{\rho} \,\mathcal{W}_{v}(\rho)
		+ \rho^{1-n-2\nu} \int_{(\partial B_{\rho})^+} w^2 \left( (1-n)\,\frac{\mu-1}{\rho} + D_i\bigg( b^{ij} \frac{x_j}{|x|} \bigg) \right) - C .
\end{equation}
%\begin{align}\label{Monneau eqn12}
%	\mathcal{W}_{v}(\rho) =&\, \frac{\rho}{2} \,\mathcal{M}'(\rho)
%		- \frac{1}{2} \,\rho^{2-n-2\nu} \int_{(\partial B_{\rho})^+} w^2 \left( (1-n)\,\frac{\mu-1}{\rho} + D_i\bigg( b^{ij} \frac{x_j}{|x|} \bigg) \right) + O(\rho) ,
%\end{align}
%where $O(\rho)$ denotes terms bounded by $C\rho$ for some appropriate constant $C \in (0,\infty)$.
Recalling that $w = v - p_{\nu}$, and applying \eqref{b eqn} and \eqref{Monneau eqn8}, we can bound the integral on the right-hand side of \eqref{Monneau eqn12} by
\begin{align*}
	&\left| \rho^{1-n-2\nu} \int_{(\partial B_{\rho})^+} w^2 \left( (1-n)\,\frac{\mu-1}{\rho} + D_i\bigg( b^{ij} \frac{x_j}{|x|} \bigg) \right) \right| \\
	&\leq C \rho^{-2\nu} \sup_{(\partial B_{\rho})^+} w^2
	\leq C \rho^{-2\nu} \left( \sup_{(\partial B_{\rho})^+} v^2 + \rho^{2\nu} \right)
	\leq C .
\end{align*}
%\begin{align*}
%	&\left| \rho^{2-n-2\nu} \int_{(\partial B_{\rho})^+} w^2 \left( (1-n)\,\frac{\mu-1}{\rho} + D_i\bigg( b^{ij} \frac{x_j}{|x|} \bigg) \right) \right| \\
%	&\leq C \rho^{1-2\nu} \sup_{(\partial B_{\rho})^+} w^2
%	\leq C \rho^{1-2\nu} \left( \sup_{(\partial B_{\rho})^+} v^2 + \rho^{2\nu} \right)
%	\leq C \rho .
%\end{align*}
Therefore, \eqref{Monneau eqn12} gives us
\begin{equation}\label{Monneau eqn13}
	\mathcal{M}'(\rho) \geq \frac{2}{\rho} \,\mathcal{W}_{v}(\rho) - C .
\end{equation}
%\begin{equation}\label{Monneau eqn13}
%	\big| \mathcal{W}_{v}(\rho) - \frac{\rho}{2} \,\mathcal{M}'(\rho) \big| \leq C \rho.
%\end{equation}
Notice that by Theorem~\ref{Weiss mono}, $\mathcal{W}_{v}(0^+) = \lim_{\rho \rightarrow 0^+} \mathcal{W}_{v}(\rho)$ exists.  Since $\Phi_v(0^+) \geq \nu$, $\mathcal{W}_{v}(0^+) \geq 0$.  Thus by again using Theorem~\ref{Weiss mono} we obtain $\mathcal{W}_{v}(\rho) \geq -C\rho$, which by \eqref{Monneau eqn13} gives us $\mathcal{M}'(\rho) \geq -C$.  The desired conclusion follows by integration.
\end{proof}

\section{Rectifiability of the free boundary}\label{rect}

Let $2 \leq p < \infty$, $u \in W^{1,2}(B^+_1(0))$ be a solution to \eqref{main ptop}, and $0 \in \Sigma(u)$ with $\Phi_u(0) < \infty$.  We want to show that the free boundary $\Sigma(u)$ is countably $(n-2)$-rectifiable in an open neighborhood of the origin.  We will follow the approach introduced in \cite{GP09}, based on the Weiss' and Monneau's monotonicity formulas proved in Sections \ref{sec:Weiss sec} and \ref{sec:Monneau sec}.

Let $\Omega = B^+_1(0)$ and $\Gamma = B'_1(0)$.  Fix a positive integer $\kappa$, and let $a^{ij} \in C^{\kappa -1,1}(B^+_1(0) \cup B'_1(0))$ such that $a^{ij} = a^{ji}$ on $B^+_1(0)$, satisfying \eqref{ellipticity hyp}  for some constants $0 < \lambda \leq \Lambda < \infty$, and such that \eqref{regularity assumption} holds true on $B'_1(0)$.  Let $k_+,k_- \in C^{0,1}(B'_1(0))$ with $k_+,k_- \geq 0$ and $h \in C^{\kappa,1}(B'_1(0))$.  Let $u \in W^{1,2}(B^+_1(0))$ be a weak solution to \eqref{main ptop}, and fix $x_0 \in \Sigma(u)$.  We begin by observing that, as a consequence of Theorem \ref{FBreg}, we may assume $\nabla_{x'} u(x_0) = \nabla_{x'} h(x_0)$. By Lemma~\ref{h taylor thm},   there exists a polynomial $\overline{h}_{x_0,\kappa}$ of degree at most $\kappa$ such that $\overline{h}_{x_0,\kappa}(x',0)$ is the Taylor polynomial of degree $\kappa$, of $h(x')$ at $x_0$ and
\begin{equation*}
	\big| D_i (a^{ij} D_j \overline{h}_{x_0,\kappa}) \big| \leq C |x-x_0|^{\kappa-1} \text{ in } B^+_1(0), \quad
	D_n \overline{h}_{x_0,\kappa} = 0 \text{ on } B'_1(0),
\end{equation*}
for some constant $C = C(n, \lambda, \|a^{ij}\|_{C^{\kappa-1,1}(B^+_1(0))}, \|h\|_{C^{\kappa,1}(B'_1(0))}) \in (0,\infty)$.  Using the notations introduced in \eqref{frequency v}, for fixed $x_0 \in \Sigma(u)$ let
\begin{equation}\label{rectifiability v}
	v_{x_0,\kappa}(x) = u(x+x_0) - \overline{h}_{x_0,\kappa}(x+x_0) + \overline{h}_{x_0,\kappa}((x+x_0)',0) - h((x+x_0)',0)
\end{equation}
for each $x = (x',x_n) \in B^+_1(0) \cup B'_1(0)$.  For each positive integer $\nu$ define
\begin{equation*}
	\Sigma_{\nu}(u) = \{ x \in \Sigma(u) : \mathcal{N}_u(x_0) = \nu \} ,
\end{equation*}
where $\mathcal{N}_u(x_0)$ is as in Definition~\ref{Almgren freq defn}. Since we are assuming $\nabla_{x'} u(x_0) = \nabla_{x'} h(x_0)$,   by virtue of Theorem \ref{tangent thm} in the sequel we can restrict our attention to the case $\nu\geq 2.$

We begin by proving the non-degeneracy property of the solution.

\begin{lemma}\label{nondeg lemma}
Let $2 \leq p < \infty$, and let $\kappa$, $\nu$ be positive integers with $\nu < \kappa$.  Let $a^{ij} \in C^{\kappa -1,1}(B^+_1(0) \cup B'_1(0))$ such that $a^{ij} = a^{ji}$ on $B^+_1(0)$, satisfying \eqref{ellipticity hyp}  for some constants $0 < \lambda \leq \Lambda < \infty$, and such that \eqref{regularity assumption} holds true on $B'_1(0)$.  Let $k_+,k_- \in C^{0,1}(B'_1(0))$ with $k_+,k_- \geq 0$ and $h \in C^{\kappa ,1}(B'_1(0))$.  Let $u \in W^{1,2}(B^+_1(0))$ be a solution to \eqref{main ptop} and $x_0 \in \Sigma_{\nu}(u)$.  Let $v = v_{x_0,\kappa}$ be as in \eqref{rectifiability v}.  Then there exists constants $\delta \in (0,1]$ and $c > 0$ such that for each $\rho \in (0,\delta]$
\begin{equation*}
	\sup_{(\partial B_{\rho}(x_0))^+} |v| \geq c \rho^{\nu} .
\end{equation*}
\end{lemma}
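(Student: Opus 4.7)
Translate $x_0$ to the origin. The function $v = v_{x_0,\kappa}$ of \eqref{rectifiability v} then satisfies \eqref{ptop freq eqn} with source obeying \eqref{f bound}, and Hypothesis~\ref{almgren hyp} is in force. Since $\nu = \mathcal{N}_u(x_0) < \kappa$, Lemma~\ref{frequency consistency lemma} gives $\Phi_v(0^+) = \nu$ with the Almgren truncation inactive near $0$, so Theorems~\ref{tangent thm} and \ref{Monneau mono} apply. I argue by contradiction: suppose $\sup_{(\partial B_{\rho_j})^+} |v| = o(\rho_j^{\nu})$ along some $\rho_j \downarrow 0$. Bounding the surface $L^2$-norm by the $L^\infty$-norm yields $H_v(\rho_j) = o(\rho_j^{2\nu})$, i.e.\ $c_j := H_v(\rho_j)^{1/2}/\rho_j^{\nu} \to 0$. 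After extraction, Theorem~\ref{tangent thm} produces the limit $v_{\rho_j} \to v^*$ in $C^1_{\mathrm{loc}}(\overline{\mathbb{R}^n_+})$, where $v^*$ is a non-zero degree-$\nu$ homogeneous harmonic polynomial (even in $x_n$) with $\|v^*\|_{L^2((\partial B_1)^+)} = 1$.

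The strategy is to combine this blow-up with Monneau's formula to force polynomial decay of $\mathcal{M}_{v,0}(\rho) = \rho^{-2\nu} H_v(\rho)$, and then to contradict $\Phi_v(0^+) = \nu$ through the growth identity $(\log H_v)' = 2\Phi_v/\tau + O(1)$ from \eqref{variation eqn2}. Applying Theorem~\ref{Monneau mono} with $p_\nu = v^*$ and expanding the square (using $\mu = 1 + O(|x|)$ and the homogeneity of $v^*$) gives
\[
\mathcal{M}_{v,v^*}(\rho) \;=\; \mathcal{M}_{v,0}(\rho) \;-\; 2 A(\rho) \;+\; 1 + O(\rho), \qquad A(\rho) := \rho^{1-n-2\nu} \!\!\int_{(\partial B_\rho)^+}\! \mu\, v\, v^*.
\]
A change of variables yields $A(\rho) = c_\rho \int_{(\partial B_1)^+} \mu(\rho y)\, v_\rho(y)\, v^*(y)\, d\sigma$, so along the blow-up sequence $\mathcal{M}_{v,v^*}(\rho_j) \to 1$; almost-monotonicity then upgrades this to $\mathcal{M}_{v,v^*}(\rho) \ge 1 - C\rho$, equivalently $\mathcal{M}_{v,0}(\rho) \ge 2A(\rho) - C\rho$. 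Since $A(\rho_j) = c_j(1+o(1))$, at $\rho_j$ this reads $c_j^2 \ge 2c_j(1+o(1)) - C\rho_j$, which rearranges to $c_j(2 - c_j + o(1)) \le C\rho_j$, and since $c_j \to 0$, forces $c_j \le 2C\rho_j$ and hence $\mathcal{M}_{v,0}(\rho_j) \le 4C^2 \rho_j^2$ for large $j$.

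The same derivation applies with any normalized blow-up $w^*$ in place of $v^*$: because degree-$\nu$ homogeneous harmonic polynomials with unit $L^2((\partial B_1)^+)$-norm form a compact subset of a finite-dimensional space, the Monneau constant is uniform, yielding a universal $C_1$ such that $\mathcal{M}_{v,0}(\sigma_m) \le C_1 \sigma_m^2$ along every sequence $\sigma_m \downarrow 0$ that admits a convergent Almgren rescaling. If instead $\mathcal{M}_{v,0}(\sigma) > C_1 \sigma^2$ held along some $\sigma \downarrow 0$, Theorem~\ref{tangent thm} applied to that sequence would extract a blow-up subsequence contradicting this; hence $\mathcal{M}_{v,0}(\sigma) \le C_1 \sigma^2$, i.e.\ $H_v(\sigma) \le C_1 \sigma^{2\nu+2}$, for every sufficiently small $\sigma > 0$.

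To conclude, integrating $(\log H_v)'(\tau) = 2\Phi_v(\tau)/\tau + O(1)$ on $[\rho, \rho_0]$ gives
\[
\log\!\left(\frac{H_v(\rho)}{\rho^{2\nu}}\right) \;=\; \mathrm{const} \;-\; 2 \!\int_\rho^{\rho_0}\! \frac{\Phi_v(\tau) - \nu}{\tau}\, d\tau,
\]
so the preceding bound forces $\int_\rho^{\rho_0} (\Phi_v - \nu)/\tau\, d\tau \ge \log(1/\rho) + O(1)$. On the other hand, $\Phi_v(\tau) \to \nu$ implies that for every $\epsilon \in (0,1)$ the same integral is at most $\epsilon \log(1/\rho) + O_\epsilon(1)$; combining these gives $(1-\epsilon)\log(1/\rho) \le O_\epsilon(1)$, which is impossible as $\rho \to 0^+$. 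The main technical point is the propagation step in the third paragraph, where the quadratic decay of $\mathcal{M}_{v,0}$ available on a single blow-up subsequence is promoted to a uniform bound at every small scale; this rests crucially on the uniformity of Monneau's constant across the compact family of normalized degree-$\nu$ harmonic polynomial blow-ups. All other steps are direct computations or straightforward applications of the results already established in the paper.
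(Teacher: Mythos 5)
Your argument opens along the same lines as the paper's (translate $x_0$ to the origin, argue by contradiction, extract a blow-up $v^*$, invoke Theorem~\ref{Monneau mono}), but the two proofs then diverge. The paper expands $(v-v^*)^2 = v^2 - 2vv^* + (v^*)^2$, divides the resulting Monneau inequality by $\rho_\ell^{-\nu}H_v(\rho_\ell)^{1/2}$, and passes to the limit: the error term $C\rho_\ell^{\nu+1}/H_v(\rho_\ell)^{1/2}$ vanishes because Lemma~\ref{L2 growth lemma} gives $H_v(\rho_\ell)\gtrsim\rho_\ell^{2\nu+1}$, the $v^2$ piece vanishes because $H_v(\rho_\ell)^{1/2}/\rho_\ell^\nu\to 0$, and the cross term survives, yielding $0 \le -2\int_{(\partial B_1)^+}(v^*)^2$ directly. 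You instead first extract the quadratic decay $\mathcal{M}_{v,0}(\rho_j) = c_j^2\le C\rho_j^2$ from the inequality $c_j(2-c_j+o(1))\le C\rho_j$, and then feed it into the differential relation $(\log H_v)' = 2\Phi_v/\tau + O(1)$, contradicting $\Phi_v(0^+)=\nu$ via an integral estimate. Both routes are legitimate; the paper's is shorter and more self-contained, while yours makes more explicit quantitative use of the Almgren frequency relation.

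However, the ``propagation'' step — which you single out as the main technical point — has a gap as written. You argue that if $\mathcal{M}_{v,0}(\sigma_m) > C_1\sigma_m^2$ along some $\sigma_m\downarrow 0$, running the Monneau argument with a blow-up $w^*$ along $\sigma_m$ gives a contradiction. But that argument requires $\mathcal{M}_{v,w^*}(\sigma_m)\to\int_{(\partial B_1)^+}(w^*)^2 = 1$, and since $\mathcal{M}_{v,w^*}(\sigma_m)=c_{\sigma_m}^2-2c_{\sigma_m}(1+o(1))+1+O(\sigma_m)$, that convergence needs $c_{\sigma_m}\to 0$ (or the irrelevant $c_{\sigma_m}\to 2$) — and along a fresh sequence $\sigma_m$ you have no control on $c_{\sigma_m}$. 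Concretely, the quadratic inequality $c_{\sigma_m}(2-c_{\sigma_m}+o(1))\le C\sigma_m$ is consistent with $c_{\sigma_m}$ bounded away from $0$, which yields no contradiction with $\mathcal{M}_{v,0}(\sigma_m)>C_1\sigma_m^2$. The gap is fillable: apply Theorem~\ref{Monneau mono} with $p_\nu\equiv 0$ to see that $\mathcal{M}_{v,0}(\sigma)+C\sigma$ is nondecreasing; together with $\mathcal{M}_{v,0}(\rho_j)=c_j^2\to 0$ this forces $\mathcal{M}_{v,0}(\sigma)\to 0$, hence $c_\sigma\to 0$, for \emph{every} $\sigma\to 0^+$. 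Better yet, the propagation step is unnecessary: the closing inequality $\int_\rho^{\rho_0}(\Phi_v-\nu)\tau^{-1}\,d\tau\ge\log(1/\rho)+O(1)$ needs to hold only at $\rho=\rho_j$, and the competing bound $\int_{\rho_j}^{\rho_0}(\Phi_v-\nu)\tau^{-1}\,d\tau = o(\log(1/\rho_j))$ (which follows from $\Phi_v(\tau)\to\nu$) already produces the contradiction along the subsequence, so $H_v(\rho_j)\le C_1\rho_j^{2\nu+2}$ alone suffices.
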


\begin{proof}
After translating $x_0$ to the origin and applying a linear change of variables, assume that $x_0 = 0$ and $a^{ij}(0) = \delta_{ij}$.  Suppose there exists a sequence $\rho_{\ell} \rightarrow 0^+$ such that
\begin{equation}\label{nondeg eqn1}
	\lim_{\ell \rightarrow \infty} \sup_{(\partial B_{\rho_{\ell}})^+} \frac{|v|}{\rho_{\ell}^{\nu}} = 0 .
\end{equation}
Set $h_{\ell} = H_v(\rho_{\ell})^{1/2}$, where $H_v$ is as in \eqref{eq.H}.  By Theorem~\ref{tangent thm}, there exists an harmonic  polynomial $v^*$, homogeneous of degree $\nu$ and even in $x_n$, such that after passing to a subsequence
\begin{equation}\label{nondeg eqn2}
	v_{\ell}(x) = \frac{v(\rho_{\ell} x)}{h_{\ell}} \rightarrow v^*(x)
\end{equation}
in the $C^1$-topology on compact subsets of $\mathbb{R}^n \cap \{x_n \geq 0\}$ as $\ell \rightarrow \infty$.  The monotonicity of $\mathcal{M}_{v,v^*}$, established in Theorem \ref{Monneau mono}, implies that $\mathcal{M}_{v,v^*}(0^+) = \lim_{\rho \rightarrow 0^+} \mathcal{M}_{v,v^*}(\rho)$ exists.  Moreover, by \eqref{nondeg eqn1} and the homogeneity of $v^*$,
\begin{align*}
	\mathcal{M}_{v,v^*}(0^+)
	&= \lim_{\ell \rightarrow \infty} \mathcal{M}_{v,v^*}(\rho_{\ell})
	= \lim_{\ell \rightarrow \infty} \rho_{\ell}^{1-n-2\nu} \int_{(\partial B_{\rho_{\ell}})^+} (v - v^*)^2 \mu
	\\&= \lim_{\ell \rightarrow \infty} \int_{(\partial B_1)^+} \left( \frac{v(\rho_{\ell} x)}{\rho_{\ell}^{\nu}} - v^*(x) \right)^2 \mu_{\rho_{\ell}}(x) \,dx
	= \int_{(\partial B_1)^+} (v^*)^2 ,
\end{align*}	
where we let $\mu_{\rho_{\ell}}(x) = \mu(\rho_{\ell} x)$ for each $x \in (\partial B_1)^+$ and we note that $\mu_{\rho_{\ell}} \rightarrow 1$ uniformly on $(\partial B_1)^+$ by \eqref{b eqn}.  Using again the monotonicity of $\mathcal{M}_{v,v^*}$ and the homogeneity of $v^*$, we infer
\begin{align*}
	\rho_{\ell}^{1-n-2\nu} \int_{(\partial B_{\rho_{\ell}})^+} (v^*)^2
	= \int_{(\partial B_1)^+} (v^*)^2
	&= \mathcal{M}_{v,v^*}(0^+) \leq \mathcal{M}_{v,v^*}(\rho_{\ell}) + C \rho_{\ell}
	\\&= \rho_{\ell}^{1-n-2\nu} \int_{(\partial B_{\rho_{\ell}})^+} (v - v^*)^2 \mu + C \rho_{\ell}
\end{align*}	
%or equivalently
and thus by \eqref{b eqn}
\begin{equation*}
	0 \leq \rho_{\ell}^{1-n-2\nu} \int_{(\partial B_{\rho_{\ell}})^+} (v^2 - 2 v v^*) \,\mu  + C \rho_{\ell}
\end{equation*}	
where $C \in (0,\infty)$ is a constant depending only on $n$, $p$, $\lambda$, $\|a^{ij}\|_{C^{\kappa-1,1}(B^+_1)}$, $\|h\|_{C^{\kappa,1}(B'_1)}$, $\|k_+\|_{C^{0,1}(B'_1)}$, $\|k_-\|_{C^{0,1}(B'_1)}$, $\|v\|_{L^{\infty}(B^+_1)}$, and $\|v^*\|_{L^{\infty}(B^+_1)}$.  By rescaling and recalling that $v_{\ell}(x) = v(\rho_{\ell} \,x)/h_{\ell}$ and $\mu_{\rho_{\ell}}(x) = \mu(\rho_{\ell} x)$,
\begin{equation*}
	0 \leq \int_{(\partial B_1)^+} \left( \frac{h_{\ell}^2}{\rho_{\ell}^{2\nu}} \,v_{\ell}^2 - 2 \frac{h_{\ell}}{\rho_{\ell}^{\nu}} \,v_{\ell} \,v^* \right) \mu_{\rho_{\ell}}
		+ C \rho_{\ell}
\end{equation*}	
Dividing by $\rho_{\ell}^{-\nu} h_{\ell}$,
\begin{equation}\label{nondeg eqn3}
	0 \leq \int_{(\partial B_1)^+} \left( \frac{h_{\ell}}{\rho_{\ell}^{\nu}} \,v_{\ell}^2 - 2 v_{\ell} \,v^* \right)\mu_{\rho_{\ell}} \,dx
		+ \frac{C \rho_{\ell}^{\nu+1}}{h_{\ell}} .
\end{equation}
Note that, since $\nu <\kappa$, we have $K(\tau)>\tau^{2\kappa}$ for all $\tau>0$ sufficiently small. We can then apply \eqref{L2 growth concl2} to get $h_{\ell} = H_v(\rho_{\ell})^{1/2} \geq c \rho_{\ell}^{\nu+1/2}$ for some constant $c \in (0,\infty)$ independent of $\ell$.  Hence letting $\ell \rightarrow \infty$ in \eqref{nondeg eqn3} and using $\mu_{\rho_{\ell}} \rightarrow 1$ and $\rho_{\ell}^{-\nu} h_{\ell} \rightarrow 0$ (by \eqref{nondeg eqn1} and \eqref{nondeg eqn2}),
\begin{equation*}
	0 \leq -2 \int_{(\partial B_1)^+} (v^*)^2 .
\end{equation*}
This fact, together with the homogeneity of $v^*$, implies that $v^*$ is identically zero on $\overline{\mathbb{R}^n_+}$, contradicting $v^*$ being a non-zero polynomial.
\end{proof}

\begin{theorem}\label{unique tangent thm}
Let $2 \leq p < \infty$, and let $\kappa$, $\nu$ be positive integers with $2\leq\nu < \kappa$.  Let $a^{ij} \in C^{\kappa -1,1}(B^+_1(0) \cup B'_1(0))$ such that $a^{ij} = a^{ji}$ on $B^+_1(0)$, satisfying \eqref{ellipticity hyp}  for some constants $0 < \lambda \leq \Lambda < \infty$, and such that \eqref{regularity assumption} holds true on $B'_1(0)$.  Let $k_+,k_- \in C^{0,1}(B'_1(0))$ with $k_+,k_- \geq 0$ and $h \in C^{\kappa ,1}(B'_1(0))$.  Let $u \in W^{1,2}(B^+_1(0))$ be a solution to \eqref{main ptop} and $x_0 \in \Sigma_{\nu}(u)$.  Let $v = v_{x_0,\kappa}$ be as in \eqref{rectifiability v}.  Then there exists a unique function $v^*=v^*_{x_0} \in C^1(\overline{\mathbb{R}^n_+})$ such that
\begin{equation} \label{unique tangent concl}
	\frac{v(\rho x)}{\rho^{\nu}} \rightarrow v^*_{x_0}(x) \text{ as }\rho  \rightarrow 0^+
\end{equation}
in $C^1(B^+_{\sigma} \cup B'_{\sigma})$ for every $\sigma \in (0,\infty)$.  Furthermore, the even reflection of $v^*_{x_0}$ across $\{x_n = 0\}$ (also denoted by $v^*=v^*_{x_0}$) is  homogeneous degree $\nu$ and is a smooth solution to
\begin{equation}\label{hom_eqn}
	a^{ij}(x_0) \,D_{ij} v^*_{x_0} = 0 \text{ in } \mathbb{R}^n .
\end{equation}
\end{theorem}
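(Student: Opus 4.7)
The plan is to first reduce to the case $x_0 = 0$ and $a^{ij}(0) = \delta_{ij}$ by translation followed by a linear change of variables that straightens $a^{ij}$ at $x_0$; this brings us into the setting of Hypothesis~\ref{almgren hyp} applied to $v = v_{x_0,\kappa}$, and it converts the equation $a^{ij}(x_0) D_{ij} v^*_{x_0} = 0$ that we must verify into the Laplace equation $\Delta \hat v^* = 0$ in the new coordinates. Once uniqueness of the blow-up is established there, the conclusion is transferred back by undoing the linear change, which turns harmonicity into the stated constant-coefficient equation and preserves polynomial (hence smooth) structure.

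The first substantive step is to upgrade the $H_v(\rho)^{1/2}$-normalization used in Theorem~\ref{tangent thm} to the $\rho^\nu$-normalization of the statement. Since $\nu < \kappa$ and $\Phi_v(0^+) = \nu$, the upper bound in \eqref{L2 growth concl2} gives $H_v(\rho) \leq C \rho^{2\nu}$, while Lemma~\ref{nondeg lemma} combined with the sup-bound in Lemma~\ref{elliptic est lemma} yields the matching lower bound $H_v(\rho) \geq c \rho^{2\nu}$. Consequently $H_v(\rho)^{1/2}/\rho^\nu$ stays in a fixed compact subinterval of $(0,\infty)$. Setting $w_\rho(x) := v(\rho x)/\rho^\nu$, Theorem~\ref{tangent thm} then shows that every sequence $\rho_\ell \to 0^+$ admits a subsequence along which $w_{\rho_\ell} \to v^*$ in $C^1_{\mathrm{loc}}(\overline{\mathbb R^n_+})$, where $v^*$, extended by even reflection, is a homogeneous harmonic polynomial of degree $\nu$.

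The crux is uniqueness of this subsequential limit, which I would obtain from Monneau's formula. Fix a subsequential limit $v^*$; it qualifies as the admissible polynomial $p_\nu$ in Theorem~\ref{Monneau mono}. Using the $\nu$-homogeneity of $v^*$ and the change of variables $x = \rho y$,
\[
\mathcal{M}_{v,v^*}(\rho) \;=\; \int_{(\partial B_1)^+} \bigl( w_\rho(y) - v^*(y) \bigr)^2 \mu(\rho y)\, dy .
\]
Since $\mu(\rho y) \to 1$ uniformly on $(\partial B_1)^+$ by \eqref{b eqn}, the right-hand side tends to zero along the chosen subsequence. The almost-monotonicity from Theorem~\ref{Monneau mono} then propagates this to a full limit, so $\mathcal{M}_{v,v^*}(0^+) = 0$. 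If $\sigma_\ell \to 0^+$ is any other sequence along which $w_{\sigma_\ell} \to \tilde v^*$ (again a homogeneous harmonic polynomial of degree $\nu$), the same identity gives
\[
0 \;=\; \lim_{\ell \to \infty} \mathcal{M}_{v,v^*}(\sigma_\ell) \;=\; \int_{(\partial B_1)^+} (\tilde v^* - v^*)^2 ,
\]
forcing $\tilde v^* = v^*$ on $(\partial B_1)^+$, and hence on all of $\overline{\mathbb R^n_+}$ by homogeneity. This proves convergence of the whole family $\{w_\rho\}$ as $\rho \to 0^+$ to a unique limit $v^*$.

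The main obstacle is the normalization bookkeeping: Theorem~\ref{tangent thm} is stated with the $L^2$-sphere normalization $H_v(\rho)^{1/2}$, whereas Monneau's functional and the conclusion of the present theorem require the homogeneity-compatible scaling $\rho^\nu$. Reconciling the two rests squarely on the two-sided bound $H_v(\rho) \asymp \rho^{2\nu}$, whose nontrivial lower estimate is precisely the content of the nondegeneracy Lemma~\ref{nondeg lemma}. Once that is in hand, Monneau's formula does the rest: it converts convergence along a single subsequence into almost-monotone decay of $\mathcal{M}_{v,v^*}$ toward zero, and this rigidity rules out any competitor limit, delivering both existence and uniqueness of $v^*_{x_0}$ with the claimed properties.
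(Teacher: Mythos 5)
Your proposal is correct and follows the same route as the paper: establish $H_v(\rho)^{1/2}\asymp\rho^\nu$ via Lemma~\ref{L2 growth lemma} and Lemma~\ref{nondeg lemma}, invoke Theorem~\ref{tangent thm} for subsequential convergence of $v(\rho x)/\rho^\nu$ to a homogeneous harmonic polynomial, and then use Monneau's almost-monotonicity (Theorem~\ref{Monneau mono}) to force $\mathcal{M}_{v,v^*}(0^+)=0$ and thereby rule out any competing subsequential limit. You spell out two points the paper leaves implicit --- the elliptic estimate needed to convert the nondegeneracy sup-bound into the lower bound on $H_v$, and the fact that undoing the affine normalization is what produces \eqref{hom_eqn} --- but the argument is otherwise identical.
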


\begin{remark}{\rm
By \eqref{freq consistency eqn4}, the limit $v^*$ in Theorem~\ref{unique tangent thm} is independent of the choice of integer $\kappa$.
}\end{remark}

\begin{proof}[Proof of Theorem~\ref{unique tangent thm}]
We begin by noticing that $H_v(\rho)^{1/2}\approx \rho^\nu$ by Lemmas \ref {L2 growth lemma} and \ref{nondeg lemma}. We can thus follow the lines of the proof of Theorem~\ref{tangent thm} to show the existence of a sequence $\rho_{\ell} \rightarrow 0^+$ and of a  harmonic polynomial $v^* \in C^1(\overline{\mathbb{R}^n})$, homogeneous of degree $\nu$ and even in $x_n$, such that for each $\sigma \in (0,\infty)$
\begin{equation} \label{unique tangent eqn2}
	v_{\rho_{\ell}}(x) = \frac{v(\rho_{\ell} x)}{\rho_{\ell}^{\nu}} \rightarrow v^*(x)
\end{equation}
in $C^1(B^+_{\sigma} \cup B'_{\sigma})$ as $\ell \rightarrow \infty$.  Moreover, an application of Theorem~\ref{Monneau mono} yields
\begin{equation*}
	\mathcal{M}_{v,v^*}(0^+)
	= \lim_{\ell \rightarrow \infty} \mathcal{M}_{v,v^*}(\rho_{\ell})
	= \lim_{\ell \rightarrow \infty} \rho_{\ell}^{1-n-2\nu} \int_{(\partial B_{\rho_{\ell}})^+} (v - v^*)^2\mu
	= \lim_{\ell \rightarrow \infty} \int_{(\partial B_1)^+} (v_{\rho_{\ell}} - v^*)^2\mu_{\rho_{\ell}}
	= 0 ,
\end{equation*}
where $\mu_{\rho}(x) = \mu(\rho x)$ for each $x \in (\partial B_1)^+$ and $\rho > 0$ and in the last equality we have used \ref{unique tangent eqn2}. Therefore, by Theorem~\ref{Monneau mono} again,
\begin{equation*}
	\lim_{\rho \rightarrow 0^+} \int_{(\partial B_1)^+} (v_{\rho} - v^*)^2\mu_{\rho}
	= \lim_{\rho \rightarrow 0^+} \rho^{1-n-2\nu} \int_{(\partial B_{\rho})^+} (v - v^*)^2\mu
	= \lim_{\rho \rightarrow 0^+} \mathcal{M}_{v,v^*}(\rho)
	= \mathcal{M}_{v,v^*}(0^+) = 0 .
\end{equation*}
If follows that if for a different sequence of radii $(\rho'_{\ell})$ with $\rho'_{\ell} \rightarrow 0^+$ and some function $v' \in C^1(\overline{\mathbb{R}^n_+})$ we had
\begin{equation*}
	v_{\rho'_{\ell}}(x) = \frac{v(\rho'_{\ell} x)}{(\rho'_{\ell})^{\nu}} \rightarrow v'(x)
\end{equation*}
in $C^1(B^+_{\sigma} \cup B'_{\sigma})$ as $\ell \rightarrow \infty$, then
\begin{equation*}
	\int_{(\partial B_1)^+} (v' - v^*)^2
	= \lim_{\ell \rightarrow \infty} \int_{(\partial B_1)^+} (v_{\rho'_{\ell}} - v^*)^2\mu_{\rho'_{\ell}}
	= 0,
\end{equation*}
and thus $v' = v^*$. We leave the verification of \eqref{hom_eqn} to the reader.
\end{proof}

\begin{theorem}\label{conts tangent thm}
Let $2 \leq p < \infty$, and let $\kappa$, $\nu$ be positive integers with $2\leq \nu < \kappa$.  Let $a^{ij} \in C^{\kappa -1,1}(B^+_1(0) \cup B'_1(0))$ such that $a^{ij} = a^{ji}$ on $B^+_1(0)$, satisfying \eqref{ellipticity hyp}  for some constants $0 < \lambda \leq \Lambda < \infty$, and such that \eqref{regularity assumption} holds true on $B'_1(0)$.  Let $k_+,k_- \in C^{0,1}(B'_1(0))$ with $k_+,k_- \geq 0$ and $h \in C^{\kappa ,1}(B'_1(0))$.  Let $u \in W^{1,2}(B^+_1(0))$ be a solution to \eqref{main ptop}.  For each $x_0 \in \Sigma_{\nu}(u)$, let $v^*_{x_0}$ be as in Theorem~\ref{unique tangent thm}.  Then the mapping $x_0 \mapsto v^*_{x_0}$ from $\Sigma_{\nu}(u)$ to $C^{\nu}(\overline{B_1(0)})$ is continuous.  Moreover, for each compact set $K \subseteq \Sigma_{\nu}(u)$ there exists a modulus of continuity $\omega = \omega^K$ such that $\omega(0^+) = 0$ and
\begin{equation}\label{conts tangent concl}
	|v_{x_0,\kappa}(x) - v^*_{x_0}(x)| \leq \omega(|x - x_0|) \,|x - x_0|^{\nu}
\end{equation}
for all $x_0 \in K$, where  $v_{x_0,\kappa}$ is as in \eqref{rectifiability v}.
\end{theorem}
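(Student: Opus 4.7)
\medskip

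\noindent\textbf{Plan.} The proof rests on applying Monneau's almost-monotonicity formula (Theorem~\ref{Monneau mono}) with the comparison polynomial $p_{\nu}=v^*_{x_0}$. Recall that, by the proof of Theorem~\ref{unique tangent thm}, once $x_0\in\Sigma_{\nu}(u)$ is translated to the origin and coordinates are normalized so that $a^{ij}(x_0)=\delta_{ij}$, the polynomial $v^*_{x_0}$ becomes a homogeneous, harmonic, even-in-$x_n$ polynomial of degree~$\nu$, and one has $\mathcal{M}_{v_{x_0,\kappa},v^*_{x_0}}(0^+)=0$. Since $\rho\mapsto \mathcal{M}+C\rho$ is nondecreasing, this is a genuine limit, and $\mathcal{M}_{v_{x_0,\kappa},v^*_{x_0}}(\rho)\leq C\rho$ for small $\rho$. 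As a preliminary step I will upgrade the vanishing of $\mathcal{M}$ at $0^+$ to uniform vanishing over a compact set $K\subseteq \Sigma_{\nu}(u)$: if $\mathcal{M}_{v_{x_\ell,\kappa},v^*_{x_\ell}}(\rho_\ell)\geq\varepsilon$ along $x_\ell\to x_\infty\in K$ and $\rho_\ell\to 0^+$, the almost-monotonicity lets me replace $\rho_\ell$ by any fixed $r>0$, and joint continuity of the functional in the base point and the polynomial (combined with the uniform $C^{\nu}$-bound on $\{v^*_{x_0}\}_{x_0\in K}$ coming from Lemmas~\ref{L2 growth lemma} and \ref{nondeg lemma}) produces a contradiction upon sending first $\ell\to\infty$ and then $r\to 0^+$.

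\medskip

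\noindent\textbf{Continuity of $x_0\mapsto v^*_{x_0}$.} Fix $x_0\in\Sigma_{\nu}(u)$, normalize coordinates at $x_0$, and let $y_0\in\Sigma_{\nu}(u)$ be nearby. Running Monneau at $y_0$ (in its own normalized frame) with the pullback of $v^*_{x_0}$ as comparison polynomial, the point is that this pullback is only \emph{almost}-harmonic at $y_0$ because $a^{ij}(y_0)-\delta_{ij}=O(|x_0-y_0|)$; re-running the proof of Theorem~\ref{Monneau mono} while keeping track of the extra term produces an additional $O(|x_0-y_0|)$ error. Letting $\sigma\to 0^+$ in the monotone inequality, and using that $v_{y_0,\kappa}(\sigma x)/\sigma^{\nu}\to v^*_{y_0}(x)$ and $\mu(\sigma x)\to 1$, one obtains
\begin{equation*}
\int_{(\partial B_1)^+}(v^*_{y_0}-v^*_{x_0})^2\,dx \;\leq\; \mathcal{M}_{v_{y_0,\kappa},v^*_{x_0}}(\rho)+C\rho+C|x_0-y_0|.
\end{equation*}
Given $\varepsilon>0$, choose $\rho$ small so that $\mathcal{M}_{v_{x_0,\kappa},v^*_{x_0}}(\rho)+C\rho<\varepsilon/2$; by continuity in $y_0$ of the functional at fixed $\rho$ (which reduces to $L^2$-continuity of the shifted solutions $v_{y_0,\kappa}$ in $y_0$), the right-hand side is below $\varepsilon$ once $y_0$ is close enough to $x_0$. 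Hence $y_0\mapsto v^*_{y_0}$ is continuous into $L^2((\partial B_1)^+)$; because the $\{v^*_{y_0}\}_{y_0\in K}$ lie in the finite-dimensional space of homogeneous degree-$\nu$ polynomials even in $x_n$ with uniformly bounded coefficients, $L^2$-continuity automatically upgrades to $C^{\nu}(\overline{B_1(0)})$-continuity.

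\medskip

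\noindent\textbf{Uniform modulus of continuity.} Set $\omega_K(\rho):=\sup_{x_0\in K}\mathcal{M}_{v_{x_0,\kappa},v^*_{x_0}}(\rho)^{1/2}$; by the preliminary step, $\omega_K(\rho)\to 0$ as $\rho\to 0^+$. Since $\mathcal{M}(\sigma)\leq \mathcal{M}(R)+CR\leq \omega_K(R)^2+CR$ for all $\sigma\leq R$, integrating in $\sigma$ and using $\mu\geq \lambda$ yields the uniform-in-$x_0$ bound
\begin{equation*}
R^{-n-2\nu}\int_{B^+_R}\bigl(v_{x_0,\kappa}-v^*_{x_0}\bigr)^2 \;\leq\; C\bigl(\omega_K(R)^2+R\bigr).
\end{equation*}
The rescaled error $w_R(x):=v_{x_0,\kappa}(Rx)/R^{\nu}-v^*_{x_0}(x)$ is therefore uniformly $L^2$-small on $B^+_1$ and satisfies an oblique derivative problem with source term of size $O(R^{\kappa+1-\nu})$ (from \eqref{f bound0}) and Lipschitz-perturbation term of size $O(R)$ (from $a^{ij}(x_0+Rx)-a^{ij}(x_0)=O(R)$), and boundary data of size $O(R^{\nu(p-2)+1})$ (all decaying in $R$ since $\kappa>\nu$ and $p\geq 2$). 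The $L^{\infty}$-$L^2$ estimate of Lemma~\ref{elliptic est lemma} then yields $\sup_{B^+_{1/2}}|w_R|\leq C\bigl(\omega_K(R)+R^{1/2}\bigr)$, and undoing the rescaling proves \eqref{conts tangent concl} with $\omega(r):=C\bigl(\omega_K(Cr)+r^{1/2}\bigr)$, independent of $x_0\in K$.

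\medskip

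\noindent\textbf{Main obstacle.} The delicate step is the continuity argument: one must justify using the polynomial $v^*_{x_0}$ as a comparison in Monneau's formula centered at the nearby point $y_0$, even though $v^*_{x_0}$ solves $a^{ij}(x_0)D_{ij}v^*_{x_0}=0$ rather than $a^{ij}(y_0)D_{ij}v^*_{x_0}=0$. This forces a careful re-examination of the proof of Theorem~\ref{Monneau mono} to isolate the contributions of the harmonicity assumption and to bound the resulting errors by $C|x_0-y_0|$; the non-degeneracy $H_{v_{y_0,\kappa}}(\rho)\gtrsim \rho^{2\nu}$ from Lemma~\ref{nondeg lemma} is what guarantees that these errors remain of genuinely lower order than the leading Monneau term.
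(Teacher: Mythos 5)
Your plan correctly identifies the crux in the paragraph ``\textbf{Main obstacle}'', but the resolution you sketch there does not actually close the gap, and I believe as stated it would fail. The problem is in the step where you propose to run Monneau's formula at $y_0$ with ``the pullback of $v^*_{x_0}$'' as comparison polynomial and claim that, because this pullback is only almost-harmonic with error $O(|x_0-y_0|)$, ``re-running the proof of Theorem~\ref{Monneau mono} while keeping track of the extra term produces an additional $O(|x_0-y_0|)$ error.'' That claim is too optimistic. In the derivation of Theorem~\ref{Monneau mono}, the identity \eqref{Monneau eqn6} uses $\Delta p_{\nu}=0$ in an essential way: replacing $p_{\nu}$ by a polynomial $\hat p$ with $\Delta \hat p = q \neq 0$, where $q$ is homogeneous of degree $\nu-2$ and $\|q\|=O(|x_0-y_0|)$, introduces the extra term $\rho^{1-n-2\nu}\int_{B^+_{\rho}} v\,q$ into $I_1$. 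Using the sharp growth $\sup_{B^+_{\rho}}|v|\lesssim \rho^{\nu}$ and $|q|\lesssim |x_0-y_0|\,\rho^{\nu-2}$, this term is of size $C\,|x_0-y_0|\,\rho^{-1}$, not $C\,|x_0-y_0|$. Feeding this into the Monneau derivative (which divides once more by $\rho$) gives an error of order $|x_0-y_0|/\rho^2$, and integrating from $\sigma$ to $\rho_{\varepsilon}$ yields a term of order $|x_0-y_0|/\sigma$, which is unbounded as $\sigma \to 0^+$. The comparison $\mathcal{M}(\sigma)\lesssim \mathcal{M}(\rho_{\varepsilon})+C\rho_{\varepsilon}+C|x_0-y_0|$ that you need in order to take $\sigma\to 0^+$ and extract $\int_{(\partial B_1)^+}(v^*_{y_0}-v^*_{x_0})^2$ therefore does not follow. (Even keeping $v^*_{x_0}$ fixed and not pulling it back, but dropping the normalization $a^{ij}(y_0)=\delta_{ij}$, does not help: the bounds $|b^{ij}(x)|\lesssim |x-y_0|$, $|\mu(x)-1|\lesssim |x-y_0|$ used throughout Section~\ref{sec:almgren sec} and Section~\ref{sec:Monneau sec} then degrade to $|x-y_0|+|x_0-y_0|$, and the resulting $G(\rho)$ in \eqref{eq.G} acquires a non-integrable $|x_0-y_0|/\rho$ contribution that breaks the very definition of $K(\rho)$.)

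The paper resolves this cleanly by conjugating the \emph{solution} rather than the \emph{polynomial}. It introduces an affine map $L(x_0)$, continuous in $x_0$ with $L(0)=I$, which normalizes the coefficients at $x_0$, and sets $\widetilde v_{x_0,\kappa}(x)=v_{x_0,\kappa}(L(x_0)x)$. The crucial point you miss is that the comparison polynomial does not need to be pulled back at all: the fixed polynomial $v^*_0$ (the blow-up at the base point) is \emph{already} an exactly harmonic, homogeneous degree-$\nu$ polynomial even in $x_n$, and these properties are frame-independent. Hence Theorem~\ref{Monneau mono} applies \emph{verbatim} to the pair $(\widetilde v_{x_0,\kappa}, v^*_0)$ in the $x_0$-normalized frame, with no error terms to track. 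Joint continuity of $(x_0,\rho)\mapsto \mathcal{M}_{\widetilde v_{x_0,\kappa},v^*_0}(\rho)$ at a fixed $\rho=\rho_{\varepsilon}>0$, together with $|L_{ij}(x_0)-\delta_{ij}|<\varepsilon$, then yields $\rho^{-n-2\nu}\int_{B^+_{\rho}}(v_{x_0,\kappa}-v^*_0)^2\mu\leq C(\varepsilon+\rho_{\varepsilon})$ uniformly in small $\rho$, from which $\int_{(\partial B_1)^+}(v^*_{x_0}-v^*_0)^2<C(\varepsilon+\rho_{\varepsilon})$ follows by letting $\rho\to 0^+$. Your second part — the compactness/covering argument to obtain a uniform modulus and the rescaled $L^2$-$L^{\infty}$ estimate for the oblique derivative problem — is on the right track and matches the paper's strategy, but it presupposes the first part, so the gap has to be closed there first.
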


\begin{proof}
Assume first that $0 \in \Sigma_{\nu}(u)$. We will show that the map $x_0 \mapsto v^*_{x_0}$ is continuous at the origin using  Monneau's monotonicity formula, Theorem~\ref{Monneau mono}.  Recall that Theorem~\ref{Monneau mono} applies for $x_0 \in \Sigma_{\nu}(u)$ after an affine change of variables $x \mapsto L(x_0)^{-1} (x - x_0)$ translates $x_0$ to the origin and transforms $a^{ij}(x_0)$ to $\delta_{ij}$.  In particular, by the continuity of $a^{ij}$, for each $x_0 \in \Sigma_{\nu}(u)$ there exists an invertible $n \times n$ matrix $L(x_0) = (L_{ij}(x_0))$, with  inverse $L(x_0)^{-1} = (L^{ij}(x_0))$ such that $x_0 \,\mapsto\, L(x_0)$ is a continuous mapping on $\Sigma_{\nu}(u)$, $L_{ij}(0) = \delta_{ij}$, and
\begin{equation*}%\label{conts tangent eqn1}
	\sum_{k,l=1}^n \sqrt{\det L(x_0)} \,a^{kl}(x_0+L(x_0) \,x) \,L^{ik}(x_0) \,L^{jl}(x_0) = \delta_{ij}
\end{equation*}
for all $x_0 \in \Sigma_{\nu}(u)$ and $i,j = 1,2,\ldots,n$.  For each $x_0 \in \Sigma_{\nu}(u) \cap B'_{1/2}(0)$ let
\begin{equation*}
	\widetilde{v}_{x_0,\kappa}(x) = v_{x_0,\kappa}(L(x_0) \,x)
\end{equation*}
for all $x \in B^+_{1/4}(0) \cup B'_{1/4}(0)$.  Note that since $L_{ij}(0) = \delta_{ij}$, $\widetilde{v}_{0,\kappa} = v_{0,\kappa}$.  Fix $\varepsilon \in (0,1)$.  Select $\rho_{\varepsilon} > 0$ such that
\begin{equation*}
	\mathcal{M}_{\widetilde{v}_{0,\kappa},v^*_0}(\rho_{\varepsilon})
		= \rho_{\varepsilon}^{1-n-2\nu} \int_{(\partial B_{\rho_{\varepsilon}})^+} (\widetilde{v}_{0,\kappa} - v^*_0)^2\mu < \varepsilon .
\end{equation*}
 By the continuous dependence of $L(x_0)$ and $\widetilde{v}_{x_0,\kappa}$ on $x_0 \in \Sigma_{\nu}(u)$, there exists $\delta_{\varepsilon} > 0$ such that for each $x_0 \in \Sigma_{\nu}(u) \cap B'_{\delta_{\varepsilon}}(0)$ we have
\begin{gather}
	\label{conts tangent eqn2} |L_{ij}(x_0) - \delta_{ij}| < \varepsilon , \\
	\label{conts tangent eqn3} \mathcal{M}_{\widetilde{v}_{x_0,\kappa},v^*_0}(\rho_{\varepsilon})
		= \rho_{\varepsilon}^{1-n-2\nu} \int_{(\partial B_{\rho_{\varepsilon}})^+} (\widetilde{v}_{x_0,\kappa} - v^*_0)^2\mu < 2\varepsilon .
\end{gather}
By Theorem~\ref{Monneau mono} and \eqref{conts tangent eqn3},
\begin{equation}\label{conts tangent eqn4}
	\mathcal{M}_{\widetilde{v}_{x_0,\kappa},v^*_0}(\rho)
		= \rho^{1-n-2\nu} \int_{(\partial B_{\rho})^+} (\widetilde{v}_{x_0,\kappa} - v^*_0)^2\mu < 2\varepsilon + C \rho_{\varepsilon}
\end{equation}
for all $x_0 \in \Sigma_{\nu}(u) \cap B'_{\delta_{\varepsilon}}(0)$ and $0 < \rho \leq \rho_{\varepsilon}$, where $C \in (0,\infty)$ is a constant.  Integrating \eqref{conts tangent eqn4},
\begin{equation}\label{conts tangent eqn5}
	\rho^{-n-2\nu} \int_{B^+_{\rho}} (\widetilde{v}_{x_0,\kappa} - v^*_0)^2 \mu < C (\varepsilon + \rho_{\varepsilon})
\end{equation}
for all $x_0 \in \Sigma_{\nu}(u) \cap B'_{\delta_{\varepsilon}}(0)$ and $0 < \rho \leq \rho_{\varepsilon}$, where $C \in (0,\infty)$ is a constant.  Taking  \eqref{conts tangent eqn2} and \eqref{conts tangent eqn5} into account, for sufficiently small $\delta_\varepsilon>0$ we obtain
\begin{align}\label{conts tangent eqn6}
	\rho^{-n-2\nu} \int_{B^+_{\rho}} (v_{x_0,\kappa} - v^*_0)^2 \mu\nonumber
	\leq&\, C \rho^{-n-2\nu} \int_{B^+_{2\rho}} (v_{x_0,\kappa}(L(x_0)\,x) - v^*_0(L(x_0)\,x))^2\mu \,dx \nonumber
	\\=&\, C \rho^{-n-2\nu} \int_{B^+_{2\rho}} (\widetilde{v}_{x_0,\kappa}(x) - v^*_0(L(x_0)\,x))^2\mu \,dx \nonumber
	\\ \leq&\, 2C \rho^{-n-2\nu} \int_{B^+_{2\rho}} (\widetilde{v}_{x_0,\kappa}(x) - v^*_0(x))^2\mu \,dx \nonumber
		\\& + 2C \rho^{-n-2\nu} \int_{B^+_{2\rho}} (v^*_0(L(x_0)\,x) - v^*_0(x))^2 \mu\,dx \nonumber
	\\ <&\, C (\varepsilon + \rho_{\varepsilon}) + C \max_{1 \leq i,j \leq n} \|L_{ij}(x_0) - \delta_{ij}\| \nonumber
	\\ <&\, C (\varepsilon + \rho_{\varepsilon})
\end{align}
for all $x_0 \in \Sigma_{\nu}(u) \cap B'_{\delta_{\varepsilon}}(0)$ and $0 < \rho \leq \rho_{\varepsilon}/2$, where $C \in (0,\infty)$ denotes constants.  After rescaling,
\begin{equation*}
	\int_{B^+_1} \left(\frac{v_{x_0,\kappa}(\rho x)}{\rho^{\nu}} - v^*_0(x)\right)^2 \mu(\rho x)\,dx < C (\varepsilon + \rho_{\varepsilon})
\end{equation*}
for all $x'_0 \in \Sigma_{\nu}(u) \cap B'_{\delta_{\varepsilon}}(0)$ and $0 < \rho \leq \rho_{\varepsilon}/2$.  Thus letting $\rho \rightarrow 0^+$ using Theorem~\ref{unique tangent thm} and \eqref{b eqn},
\begin{equation}\label{conts tangent eqn7}
	\int_{B^+_1} (v^*_{x_0} - v^*_0)^2 < C (\varepsilon + \rho_{\varepsilon})
\end{equation}
for all $x_0 \in \Sigma_{\nu}(u) \cap B'_{\delta_{\varepsilon}}(0)$.  Therefore, $x_0 \mapsto v^*_{x_0}$ is continuous at the origin as a mapping from $\Sigma_{\nu}(u)$ to $L^2(B^+_1)$.  Since the space of all homogeneous degree $\nu$ polynomials on $\mathbb{R}^n$ is a finite dimensional vector space, $x_0 \mapsto v^*_{x_0}$ is continuous at the origin as a mapping from $\Sigma_{\nu}(u)$ to $C^{\kappa}(\overline{B_1})$.

Now let $K \subset \Sigma_{\nu}(u)$ be a compact set.  By \eqref{conts tangent eqn6} and \eqref{conts tangent eqn7}, for each $\varepsilon > 0$ and $x_0 \in \Sigma_{\nu}(u)$ there exists $\rho_{\varepsilon}(x_0) > 0$ and $\delta_{\varepsilon}(x_0) > 0$ such that
\begin{align*}
	\rho^{-n-2\nu} \int_{B^+_{\rho}} (v_{x'_0,\kappa} - v^*_{x'_0})^2\mu_{x_0}
		&\leq 2 \rho^{-n-2\nu} \int_{B^+_{\rho}} (v_{x'_0,\kappa} - v^*_{x_0})^2\mu_{x_0}
			+ 2 \rho^{-n-2\nu} \int_{B^+_{\rho}} (v^*_{x'_0} - v^*_{x_0})^2\mu_{x_0}
		\\&= 2 \rho^{-n-2\nu} \int_{B^+_{\rho}} (v_{x'_0,\kappa} - v^*_{x_0})^2\mu_{x_0}
			+ 2 \int_{B^+_1} (v^*_{x'_0} - v^*_{x_0})^2\mu_{x_0}
		\\&\leq C (\varepsilon + \rho_{\varepsilon})
\end{align*}
for all $x'_0 \in \Sigma_{\nu}(u) \cap B'_{\delta_{\varepsilon}(x_0)}(x_0)$ and $0 < \rho \leq \rho_{\varepsilon}(x_0)/2$, where $v_{x'_0,\kappa}$ is as in \eqref{rectifiability v} (with $x'_0$ replacing $x_0$), $\mu_{x_0}=\sum_{i,j=1}^n a^{ij}(x+x_0)(x+x_0)_i(x+x_0)_j/|x+x_0|^2$, and $C = C(x_0) \in (0,\infty)$ is a constant.  Since $K$ is compact, we can cover $K$ by open balls $B'_{\delta_{\varepsilon}(x^i_0)}(x^i_0)$, where $x^i_0 \in K$ for $i = 1,2,\ldots,N$.  Then setting
\begin{equation*}
	\rho^K_{\varepsilon} = \min \{ \rho_{\varepsilon}(x^i_0) : i = 1,2,\ldots,N \}, \quad
	C^K = \max \{ C(x^i_0) : i = 1,2,\ldots,N \} ,
\end{equation*}
we have that
\begin{equation}\label{conts tangent eqn8}
	\rho^{-n-2\nu} \int_{B^+_{\rho}} (v_{x'_0,\kappa} - v^*_{x'_0})^2\mu_{x'_0} \leq C^K (\varepsilon + \rho^K_{\varepsilon})
\end{equation}
for all $x'_0 \in K$ and $0 < \rho \leq \rho^K_{\varepsilon}/2$.  At this point we introduce the rescalings
$$
w^{(\rho)}(x)=w^{(\rho)}_{x'_0,\kappa}(x)=\frac{v_{x'_0,\kappa}(\rho x)}{\rho^\nu}\ \mbox{ and }\ f^{(\rho)}(x)=f^{(\rho)}_{x'_0,\kappa}(x)=\frac{f_{x'_0,\kappa}(\rho x)}{\rho^{\nu-2}},
$$
where $f_{x'_0,\kappa}$ is as in \eqref{frequency v}, but corresponding to the free boundary point $x'_0 \in \Sigma_{\nu}(u)$. Since $v_{x'_0,\kappa}$ as in \eqref{rectifiability v} satisfies \eqref{ptop freq eqn} (with $x'_0$ replacing the origin), and $v^*_{x'_0}$ satisfies
$$
a^{ij}(x'_0) \,D_{ij} v^*_{x'_0} = 0 \mbox{ in }\mathbb{R}^n_+\ \mbox{ and }\ D_n v^*_{x'_0} = 0\mbox{ on }\mathbb{R}^{n-1} \times \{0\},
$$
 we have
\begin{gather*}
	D_i (\tilde{a}^{ij} D_j (w^{(\rho)} - v^*_{x'_0})) =  f^{(\rho)} - D_i ((\tilde{a}^{ij}- \tilde{a}^{ij}(0)) \,D_j v^*_{x'_0})\ \text{ in }\ B^+_1, \nonumber \\
	\tilde a^{nn} D_n (w^{(\rho)} - v^*_{x'_0}) = \rho^{1+(p-2)\nu}\left(\tilde k_+ \,((w^{(\rho)})^+)^{p-1} - \tilde k_- \,((w^{(\rho)})^-)^{p-1}\right)\ \text{ on }\ B'_1 ,
\end{gather*}
where $\tilde{a}^{ij}(x)=a^{ij}(x'_0+\rho x)$ and $\tilde k_\pm (x)=  k_\pm (x'_0+\rho x)$. Using \eqref{f bound}, $a^{ij} \in C^{0,1}(B^+_1 \cup B'_1)$, and %the homogeneity of $v^*_{x'_0}$, and the fact
$2\leq\nu <\kappa$, we infer
\begin{equation}\label{RHSest}
	|f^{(\rho)}(x) - D_i ((\tilde{a}^{ij}(x)- \tilde{a}^{ij}(0)) \,D_j v^*_{x'_0}(x))|\leq C (\rho^{\kappa -\nu +1}+\rho)\leq C\rho.
\end{equation}
on $B^+_1$ for some constant $C \in (0,\infty)$ independent of $\rho$.  Observing that $(w^{(\rho)})^{p-1}$ is bounded, taking  \eqref{conts tangent eqn8} and \eqref{RHSest} into account, and applying  $L^2-L^\infty$ estimates for weak solutions to the oblique derivative problems (see for instance~\cite[Theorem 5.36]{L13}) we obtain
\begin{equation*}
	|w^{(\rho)}(x) - v^*_{x'_0}(x)| \leq C \left(\,(\varepsilon + \rho^K_{\varepsilon})^{1/2} +\rho^K_{\varepsilon}\right)\leq C_\varepsilon
\end{equation*}
for all $x'_0 \in K$ and $x\in B_{1/2}(x'_0)$, where $C \in (0,\infty)$ is a constant. It is easy to see that this implies the second part of the theorem, and the proof is thus complete.
\end{proof}

For each $x_0 \in \Sigma_{\nu}(u)$, since $v^*_{x_0}$ is a homogeneous degree $\nu$ polynomial, we can express $v^*_{x_0}$ as
\begin{equation}\label{whitney prereq taylor}
	v^*_{x_0}(x) = \sum_{|\alpha| = \nu} \frac{F_{\alpha}(x_0)}{\alpha!} \,x^{\alpha} ,
\end{equation}
where $F_{\alpha}(x_0) \in \mathbb{R}$ for all $|\alpha| = \nu$ and $x_0 \in \Sigma_{\nu}(u)$.  Define $F_{\alpha}(x_0) = 0$ for all $|\alpha| < \nu$ and $x_0 \in \Sigma_{\nu}(u)$.

\begin{lemma}\label{whitney prereq lemma}
For each compact set $K \subset \Sigma_{\nu}(u)$,
\begin{equation*}
	F_{\alpha}(x) = \sum_{|\beta| \leq \nu-|\alpha|} \frac{F_{\alpha+\beta}(x_0)}{\beta!} \,(x-x_0)^{\alpha+\beta} + R_{\alpha}(x,x_0)
\end{equation*}
for all $|\alpha| \leq \nu$ and $x,x_0 \in K$, where
\begin{equation}\label{whitney prereq concl}
	|R_{\alpha}(x,x_0)| \leq \omega_{\alpha}(|x-x_0|) \,|x-x_0|^{\nu-|\alpha|}
\end{equation}
for all $|\alpha| \leq \nu$ and $x,x_0 \in K$ and some modulus of continuity $\omega_{\alpha}$ with $\omega_{\alpha}(0^+) = 0$.
\end{lemma}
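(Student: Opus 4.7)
The statement asks for a Whitney-type compatibility: the obvious identification for $|\alpha|<\nu$ gives $R_\alpha(x,x_0)=-\sum_{|\beta|=\nu-|\alpha|}\frac{F_{\alpha+\beta}(x_0)}{\beta!}(x-x_0)^\beta = -D^\alpha v^*_{x_0}(x-x_0)$, while for $|\alpha|=\nu$ it gives $R_\alpha(x,x_0)=F_\alpha(x)-F_\alpha(x_0)$. The extreme cases I would handle first and directly from Theorem~\ref{conts tangent thm}. The case $|\alpha|=\nu$ is just continuity of $F_\alpha$ on $\Sigma_\nu(u)\cap K$, which holds because $x_0\mapsto v^*_{x_0}$ is continuous into $C^\nu(\overline{B_1})$, so its coefficients are continuous. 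The case $|\alpha|=0$ is also immediate: since $x\in\Sigma_\nu(u)\subset\Sigma(u)$ is a contact point lying on $\{x_n=0\}$, a direct computation shows $v_{x_0,\kappa}(x-x_0)=0$, and thus $|v^*_{x_0}(x-x_0)|=|v^*_{x_0}(x-x_0)-v_{x_0,\kappa}(x-x_0)|\leq\omega(|x-x_0|)|x-x_0|^\nu$ by Theorem~\ref{conts tangent thm}.

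The core of the proof is the intermediate range $0<|\alpha|<\nu$, where the plan is to extract bounds on $D^\alpha v^*_{x_0}$ via a comparison polynomial. Writing $h=x-x_0$, I would introduce
\begin{equation*}
Q(y):=v^*_{x_0}(y+h)-v^*_x(y),\qquad y\in\mathbb{R}^n,
\end{equation*}
which is a polynomial in $y$ of degree at most $\nu$. By Taylor expansion at $y=0$, its coefficients $\frac{1}{\alpha!}D^\alpha Q(0)$ coincide with $\frac{1}{\alpha!}D^\alpha v^*_{x_0}(h)$ for $|\alpha|<\nu$ (homogeneity of degree $\nu$ of $v^*_{x_0}$ and $v^*_x$ makes their difference contribute only at order $|\alpha|=\nu$). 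The classical equivalence of norms on the finite-dimensional space of polynomials of degree at most $\nu$ (a Markov-type inequality, obtained by rescaling) then gives $|D^\alpha Q(0)|\leq C(n,\nu)\,|h|^{-|\alpha|}\,\|Q\|_{L^\infty(B_{|h|})}$, so everything reduces to proving $\|Q\|_{L^\infty(B_{|h|})}\leq \omega'(|h|)|h|^\nu$ with $\omega'(0^+)=0$.

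To control $\|Q\|_{L^\infty(B_{|h|})}$, I would apply the triangle inequality through the algebraic identity
\begin{equation*}
v_{x,\kappa}(y)-v_{x_0,\kappa}(y+h)\;=\;\tilde h_{x_0,\kappa}(x+y)-\tilde h_{x,\kappa}(x+y)\;=:\;\Psi(x+y),
\end{equation*}
which is immediate from the definition \eqref{rectifiability v} because $x_0+(y+h)=x+y$. For $|y|\leq|h|$, Theorem~\ref{conts tangent thm} bounds the ``tangent approximation'' contributions $v^*_{x_0}(y+h)-v_{x_0,\kappa}(y+h)$ and $v_{x,\kappa}(y)-v^*_x(y)$ by $C\omega(2|h|)|h|^\nu$ each. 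The decisive ingredient is the estimate
\begin{equation*}
|\Psi(x+y)|\leq C\,|h|^{\kappa+1} \qquad\text{for all }|y|\leq|h|,
\end{equation*}
which, combined with $\kappa\geq\nu+1$, yields $\|Q\|_{L^\infty(B_{|h|})}\leq\omega'(|h|)|h|^\nu$ with $\omega'(t)=C\omega(2t)+Ct^{\kappa+1-\nu}$ satisfying $\omega'(0^+)=0$.

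The main obstacle, and the step I would spend the most care on, is justifying the bound on $\Psi$. The idea is that $\tilde h_{x_0,\kappa}$ and $\tilde h_{x,\kappa}$ are both the degree-$\kappa$ Taylor polynomials, at $x_0$ and at $x$ respectively, of a common $C^{\kappa,1}$ extension $H$ of $h$ off the boundary, where $H$ satisfies $H(z',0)=h(z')$, $D_nH=0$ on $\{z_n=0\}$, and $D_i(a^{ij}D_jH)$ vanishes to order $\kappa-1$ along $\{z_n=0\}$. Such an $H$ can be constructed via a Borel-type argument from the recursive formula in the proof of Lemma~\ref{h taylor thm}: one solves recursively for the $x_n$-Taylor coefficients $\phi_k(x')$ starting from $\phi_0=h$, $\phi_1=0$, and then assembles $H(x',x_n)=\sum_{k\leq\kappa+1}\phi_k(x')x_n^k/k!$. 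Once this identification is accepted, Taylor's theorem yields $|\tilde h_{x_0,\kappa}(z)-H(z)|\leq C|z-x_0|^{\kappa+1}$ and the analogous estimate for $x$; subtracting and using $|y|\leq|h|$ produces the desired bound on $\Psi$. Plugging everything into the Markov-type inequality completes the proof for all $|\alpha|\leq\nu$, with modulus $\omega_\alpha(t)=C(n,\nu)\omega'(t)$.
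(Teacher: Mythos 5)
Your treatment of the extreme cases is fine: for $|\alpha|=\nu$ the claim is just uniform continuity of $x_0\mapsto v^*_{x_0}$ on $K$, and for $|\alpha|=0$ the observation $v_{x_0,\kappa}(x-x_0)=u(x',0)-h(x')=0$ combined with Theorem~\ref{conts tangent thm} does the job. The framework you set up for $0<|\alpha|<\nu$ — reduce coefficient bounds to an $L^\infty$ bound on a degree-$\nu$ polynomial by a scaled Markov inequality — is also sound in principle. The gap is in the estimate $|\Psi(x+y)|\leq C|h|^{\kappa+1}$ for $|y|\leq|h|$. Your algebraic identity $v_{x,\kappa}(y)-v_{x_0,\kappa}(y+h)=\Psi(x+y)$ is correct, and on $\{y_n=0\}$ one in fact has $\Psi\equiv 0$ because both $\tilde h_{x_0,\kappa}(z',0)$ and $\tilde h_{x,\kappa}(z',0)$ reduce to $h(z')$. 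But you need the bound on the \emph{full} ball $B_{|h|}$ (otherwise Markov's inequality does not control the coefficients with $\alpha_n>0$), and off $\{y_n=0\}$ the conclusion \eqref{h taylor concl} of Lemma~\ref{h taylor thm} gives no comparison: it only estimates $\overline h_{x_0,\kappa}(\cdot,0)$ against $h$. Your proposed fix — a common $C^{\kappa,1}$ extension $H$ whose degree-$\kappa$ Taylor polynomial at every boundary point is $\overline h_{x_0,\kappa}$ — is not carried out and faces a genuine regularity obstruction: the recurrence in the proof of Lemma~\ref{h taylor thm} produces $x_n$-coefficients $\phi_k$ with decreasing regularity in $x'$ (roughly $\phi_k\in C^{\kappa-k,1}$), so $\sum_k\phi_k(x')x_n^k$ need not lie in $C^{\kappa,1}$, and a Borel-type smoothing would no longer have $\overline h_{x_0,\kappa}$ as its Taylor polynomial.

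The paper's proof has a different, cleaner structure precisely because it never uses interior information about $\overline h_{x_0,\kappa}$. It argues by contradiction: assuming sequences $x^i,x^i_0\in K$ violating \eqref{whitney prereq concl} with $\rho_i=|x^i-x^i_0|\to 0$, it forms the rescalings $w_i,\widehat w_i$ and exploits the exact identity $w_i(\xi_i+x)=\widehat w_i(x)$ on the thin slice $\{x_n=0\}$ (which holds with no error, by the same computation you did for $|\alpha|=0$). Passing to the limit gives $v^*_{x_0}(\xi+\cdot)=v^*_{x_0}(\cdot)$ on $\mathbb{R}^{n-1}\times\{0\}$, and the constant-coefficient equation plus the common Neumann condition propagate this to all of $\overline{\mathbb{R}^n_+}$, contradicting the assumed lower bound on $|D^\alpha v^*_{x_0}(\xi)|$. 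If you prefer to keep your quantitative route, the fix is to replace $Q$ by $\tilde Q(y)=v^*_{x_0}(y+h)-v^*_{x_0}(y)$: unlike $Q$, $\tilde Q$ solves $a^{ij}(x_0)D_{ij}\tilde Q=0$ and is even in $y_n$, so on this finite-dimensional space of polynomials the sup over $B_{|h|}$ is comparable to the sup over the slice $B'_{|h|}$; writing $\tilde Q(y',0)=Q(y',0)+\big(v^*_x(y',0)-v^*_{x_0}(y',0)\big)$, the middle bracket in your decomposition of $Q(y',0)$ vanishes identically, and only Theorem~\ref{conts tangent thm} plus the uniform continuity of $x_0\mapsto v^*_{x_0}$ on $K$ are needed. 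The troublesome $\Psi$ estimate then never enters.
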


\begin{proof}
In the case that $|\alpha| = \nu$, then
\begin{equation*}
	R_{\alpha}(x,x_0) = F_{\alpha}(x) - F_{\alpha}(x_0) = D^{\alpha} v^*_x(x) - D^{\alpha} v^*_{x_0}(x_0)
\end{equation*}
for each $x,x_0 \in K$.  Thus \eqref{whitney prereq concl} holds true by the continuity of $x_0 \mapsto v^*_{x_0}$.  In the case that $|\alpha| < \nu$, then
\begin{equation*}
	R_{\alpha}(x,x_0) = -\sum_{|\beta| = \nu-|\alpha|} \frac{F_{\alpha+\beta}(x_0)}{\beta!} \,(x-x_0)^{\alpha+\beta} = -D^{\alpha} v^*_{x_0}(x-x_0)
\end{equation*}
for each $x,x_0 \in K$.  To show \eqref{whitney prereq concl}, suppose to the contrary that there exists $\varepsilon > 0$ and $x^i,x^i_0 \in K$ such that $\rho_i = |x^i - x^i_0| \rightarrow 0$ and
\begin{equation}\label{whitney prereq eqn1}
	|R_{\alpha}(x^i,x^i_0)| = |D^{\alpha} v^*_{x_0}(x^i - x^i_0)| \geq \varepsilon \,|x^i - x^i_0|^{\nu-|\alpha|} .
\end{equation}
Set
\begin{equation}\label{whitney prereq eqn2}
	w_i(x) = \frac{v_{x^i_0,\kappa}(\rho_i x)}{\rho_i^{\nu}} , \quad\quad
	\widehat{w}_i(x) = \frac{v_{x^i,\kappa}(\rho_i x)}{\rho_i^{\nu}} , \quad\quad
	\xi_i = \frac{x^i - x^i_0}{\rho_i} .
\end{equation}
After passing to a subsequence there exists $x_0 \in K$ such that $x^i \rightarrow x_0$ and $x^i_0 \rightarrow x_0$ as $i \rightarrow \infty$, and there exists $\xi$ such that $|\xi| = 1$ and $\xi_i \rightarrow \xi$ as $i \rightarrow \infty$.  By Theorem~\ref{conts tangent thm},
\begin{equation*}
	|w_i(x) - v^*_{x_0}(x)| \leq |w_i(x) - v^*_{x^i_0}(x)| + |v^*_{x^i_0}(x) - v^*_{x_0}(x)|
		\leq \omega(\rho_i |x|) \,|x|^{\nu} + \|v^*_{x^i_0} - v^*_{x_0}\|_{L^2(B^+_1)} \,|x|^{\nu}
\end{equation*}
for each $x \in B^+_{1/(2\rho_i)}(0) \cup B'_{1/(2\rho_i)}(0)$, where $\omega$ is a modulus of continuity with $\omega(0^+) = 0$ and $\|v^*_{x^i_0} - v^*_{x_0}\|_{L^2(B^+_1)} \rightarrow 0$ as $i \rightarrow \infty$.  Hence
\begin{equation}\label{whitney prereq eqn3}
	w^i(x) \rightarrow v^*_{x_0}(x)
\end{equation}
uniformly on each compact subset of $\overline{\mathbb{R}^n_+}$ as $i \rightarrow \infty$.  Similarly,
\begin{equation}\label{whitney prereq eqn4}
	\widehat{w}^i(x) \rightarrow v^*_{x_0}(x)
\end{equation}
uniformly on each compact subset of $\overline{\mathbb{R}^n_+}$ as $i \rightarrow \infty$.  By \eqref{whitney prereq eqn2} and \eqref{rectifiability v},
\begin{equation}\label{whitney prereq eqn5}
	w^i(\xi_i + x) = \frac{u(x^i + \rho x) - h((x^i + \rho x)')}{\rho_i^{\nu}} = \widehat{w}^i(x)
\end{equation}
for each $x \in B'_{1/(4\rho_i)}(0)$.  Letting $i \rightarrow \infty$ in \eqref{whitney prereq eqn5} using $\xi_i \rightarrow \xi$, \eqref{whitney prereq eqn3}, and \eqref{whitney prereq eqn4}, we obtain
\begin{equation*}
	v^*_{x_0}(\xi + x) = v^*_{x_0}(x)
\end{equation*}
for each $x \in \mathbb{R}^{n-1} \times \{0\}$.  Hence, recalling that $v^*_{x_0}$ satisfies \eqref{hom_eqn},
\begin{gather*}
	a^{ij}(x_0) \,D_{ij} v^*_{x_0}(\xi + x) = 0 = a^{ij}(x_0) \,D_{ij} v^*_{x_0}(x) \ \text {in }\ \mathbb{R}^n_+, \\
	v^*_{x_0}(\xi + x) = v^*_{x_0}(x) \ \text{on }\ \mathbb{R}^{n-1} \times \{0\}, \\
	D_n v^*_{x_0}(\xi + x) = 0 = D_n v^*_{x_0}(x) \ \text{on }\ \mathbb{R}^{n-1} \times \{0\} ,
\end{gather*}
and thus we must have that
\begin{equation*}
	v^*_{x_0}(\xi + x) = v^*_{x_0}(x)
\end{equation*}
for each $x \in \overline{\mathbb{R}^n_+}$.
Recalling that $|\alpha| < \nu$, it follows that $D^{\alpha} v^*_{x_0}(\xi) = D^{\alpha} v^*_{x_0}(0) = 0$.  However, by dividing both sides of \eqref{whitney prereq eqn1} by $\rho_i^{\nu-|\alpha|}$ and letting $i \rightarrow \infty$, we must have that $|D^{\alpha} v^*_{x_0}(\xi)| \geq \varepsilon$, yielding a contradiction.
\end{proof}

At this point, we are ready to give the proof of Theorem \ref{rectifiability thm}.  For each $x_0 \in \Sigma_{\nu}(u)$, there exists a largest linear subspace $S(v^*_{x_0}) \subseteq \mathbb{R}^{n-1} \times \{0\}$ (with respect to set inclusion) such that $v^*_{x_0}(z+x) = v^*_{x_0}(x)$ for all $x \in \mathbb{R}^n$ and $z \in S(v^*_{x_0})$.  Note that if $\dim S(v^*_{x_0}) = n-1$, then $v^*_{x_0}$ must be a non-zero linear function of $x_n$, contradicting $D_n v^*_{x_0} = 0$ on $\mathbb{R}^{n-1} \times \{0\}$.  Therefore, $\dim S(v^*_{x_0}) \leq n-2$ for each $x_0 \in \Sigma_{\nu}(u)$.  For each positive integer $d$ we define
\begin{equation*}
	\Sigma^d_{\nu}(u) = \{ x \in \Sigma_{\nu}(u) : \dim S(v^*_{x_0}) = d \} .
\end{equation*}

\begin{proof}[Proof of Theorem \ref{rectifiability thm}]
Let $K \subseteq \Sigma_{\nu}$ be a compact set.  By Lemma~\ref{whitney prereq lemma} and the Whitney extension theorem, there exists a function $F \in C^{\kappa}(\mathbb{R}^{n-1} \times \{0\})$ such that
\begin{equation}\label{rectifiability eqn1}
	D^{\alpha} F(x_0) = F_{\alpha}(x_0)
\end{equation}
for all $x_0 \in K$ and $|\alpha| \leq \nu$, where $F_{\alpha}(x_0)$ is as in \eqref{whitney prereq taylor}.

Let $x_0 \in \Sigma^d_{\nu}(u) \cap K$.  Since $\dim S(v^*_{x_0}) = d$, there exists a set $\{\xi_1,\xi_2,\ldots,\xi_{n-1-d}\} \subset \mathbb{R}^{n-1} \times \{0\}$ of $(n-1-d)$ linearly independent vectors which are orthogonal to $S(v^*_{x_0})$.  Hence $\xi_i \cdot \nabla v^*_{x_0}$ is not identically zero on $\mathbb{R}^n$ for $i = 1,2,\ldots,n-1-d$.  Thus for each $i = 1,2,\ldots,n-1-d$ there exists a multi-index $\beta_i$ such that $|\beta_i| = \nu-1$ and
\begin{equation*}
	\xi_i \cdot \nabla D^{\beta_i} v^*_{x_0}(0) \neq 0 .
\end{equation*}
By \eqref{whitney prereq taylor} and \eqref{rectifiability eqn1}, $\xi_i \cdot \nabla D^{\beta_i} v^*_{x_0}(0) = \xi_i \cdot \nabla D^{\beta_i} F(x_0)$ and thus
\begin{equation*}
	\xi_i \cdot \nabla D^{\beta_i} F(x_0) \neq 0 .
\end{equation*}
However,
\begin{equation*}
	\Sigma^d_{\nu}(u) \cap K \subseteq \bigcap_{i=1}^{n-1-d} \{ D^{\beta_i} F = 0 \} .
\end{equation*}
Using the implicit function theorem it follows that $\Sigma^d_{\nu}(u) \cap K$ is contained in a $d$-dimensional submanifold in some open neighborhood of $x_0$.  Hence by a standard covering argument, $\Sigma_{\nu}^d$ is contained in a countable union of $d$-dimensional $C^1$-submanifolds of $B'_1(0)$.
\end{proof}

We conclude  by noting that  $\mathcal{N}_u(x_0) < \infty$ for all $x_0 \in \Sigma(u)$ in some special cases.

\begin{corollary}
Let $2 \leq p < \infty$ and $\nu$ be a positive integer.  Let $a^{ij} \in C^{0,1}(B^+_1(0) \cup B'_1(0))$ such that $a^{ij} = a^{ji}$ on $B^+_1(0)$, satisfying \eqref{ellipticity hyp} holds true for some constants $0 < \lambda \leq \Lambda < \infty$, and such that \eqref{regularity assumption} holds true on $B'_1(0)$.  Let $k_+,k_- \in C^{0,1}(B'_1(0))$ with $k_+,k_- \geq 0$ and $h : B'_1(0) \rightarrow \mathbb{R}$ be a function.  Let $u \in W^{1,2}(B^+_1(0))$ be a solution to \eqref{main ptop} with $h = 0$ on $B'_1(0)$.  If either
\begin{enumerate}
	\item[(i)]  $h = 0$ on $B'_1(0)$, or
	\item[(ii)]  $a^{ij}$ and $h$ are both locally real-analytic at each point of $B'_1(0)$,
\end{enumerate}
then $\mathcal{N}_u(x_0) < \infty$ for all $x_0 \in \Sigma(u)$ and thus $\Sigma(u)$ is contained in a countable union of $(n-2)$-dimensional $C^1$-submanifolds of $B'_1(0)$.
\end{corollary}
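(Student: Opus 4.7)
The plan is to argue by contradiction: suppose $\mathcal{N}_u(x_0) = +\infty$ for some $x_0 \in \Sigma(u)$ and derive a contradiction in each of the two settings, then combine Theorem~\ref{rectifiability thm} with Theorem~\ref{FBreg} to conclude rectifiability. After translating $x_0$ to the origin and applying a linear change of variables that normalizes $a^{ij}(0) = \delta_{ij}$ and \eqref{regularity assumption}, Definition~\ref{Almgren freq defn} together with Lemma~\ref{frequency consistency lemma} shows that $\mathcal{N}_u(0) = +\infty$ is equivalent to $\Phi_{v_\kappa}(0^+) = \kappa$ for every positive integer $\kappa$. Fixing $\kappa$, if $K_{v_\kappa}(\tau) > \tau^{2\kappa}$ for all small $\tau > 0$, then Lemma~\ref{L2 growth lemma}(b) yields $\sigma^{-n}\int_{B^+_\sigma} v_\kappa^2 \leq C_\kappa \,\sigma^{2\kappa}$ for all sufficiently small $\sigma$; otherwise, along any sequence $\tau_j\downarrow 0$ with $K_{v_\kappa}(\tau_j) \leq \tau_j^{2\kappa}$, I would use \eqref{Alm mono eqn1} to get $H_{v_\kappa}(\tau_j) \leq C\tau_j^{2\kappa}$ and then Lemma~\ref{v L2 lemma} to obtain the same $L^2$-bound along $\tau_j$. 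In either case $v_\kappa$ vanishes at the origin to order $\kappa$ in the $L^2$-averaged sense, so varying $\kappa$ shows $v_\kappa$ vanishes to infinite order.

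For Case (i) with $h \equiv 0$, the recurrence \eqref{h taylor recurrence} forces $\overline{h}_\kappa \equiv 0$ (starting from $h_0 \equiv 0$ and $h_1 \equiv 0$), so $\widetilde{h}_\kappa \equiv 0$ and $f \equiv 0$ for every $\kappa$, and $v_\kappa = u$. Hence $u$ itself vanishes at the origin to infinite order in $L^2$. Rewriting the boundary condition as $a^{nn}D_n u = c(x')\,u$ on $B'_1$ with
\begin{equation*}
c(x') = k_+(x')\,|u(x')|^{p-2}\mathbf{1}_{\{u>0\}} + k_-(x')\,|u(x')|^{p-2}\mathbf{1}_{\{u<0\}} \in L^\infty(B'_1)
\end{equation*}
(using $p\geq 2$ and the sup-bound from Lemma~\ref{regularity lemma2}), $u$ solves a linear divergence-form elliptic problem with a bounded Robin boundary coefficient on the flat portion $B'_1$. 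Strong unique continuation at the boundary for such problems---which can be obtained by an even reflection argument combined with the Almgren-Garofalo-Lin monotonicity developed in Section~\ref{sec:almgren sec}, or alternatively by quoting boundary unique continuation results of Adolfsson--Escauriaza or Kukavica--Nystr\"om type---then forces $u \equiv 0$ in a half-ball neighborhood of $0$. Consequently $0$ lies in the $\Gamma$-interior of $\Lambda(u)$, contradicting $0 \in \Sigma(u) = \partial_\Gamma \Lambda(u)$.

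For Case (ii) with $a^{ij}$ and $h$ real-analytic, Step~1 shows that the Taylor polynomial of $u$ at the origin matches that of $\widetilde{h}_\kappa$ up to order $\kappa$ for every $\kappa$. Passing to the formal limit $\kappa \to \infty$ in \eqref{h taylor recurrence} produces a formal power series $\widetilde{h}_\infty$; the analyticity of $a^{ij}$ and $h$ yields Cauchy-type estimates that make $\widetilde{h}_\infty$ converge to a real-analytic function in a neighborhood of $0$ solving $D_i(a^{ij}D_j \widetilde{h}_\infty) = 0$ in $B^+_\rho$ and $D_n \widetilde{h}_\infty = 0$ on $B'_\rho$ with boundary trace $h$. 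Applying analytic elliptic regularity (Morrey-Nirenberg) to the linear problem solved by $u$ in the relatively open set $\{u \neq h\} \cap (B^+_\rho \cup B'_\rho)$, where the boundary nonlinearity becomes analytic, and using $u \equiv h$ on $\{u = h\} \cap B'_\rho$, one obtains that $u$ is real-analytic up to $B'_1$ near $0$; since $u$ and $\widetilde{h}_\infty$ are both real-analytic with identical Taylor series at $0$, $u \equiv \widetilde{h}_\infty$ near $0$, hence $u \equiv h$ on $B'_1$ near $0$, again contradicting $0 \in \Sigma(u)$. Having ruled out $\mathcal{N}_u(x_0) = +\infty$, the decomposition $\Sigma(u) = \Sigma_1(u) \cup \bigcup_{\nu \geq 2}\Sigma_\nu(u)$ together with Theorem~\ref{FBreg} (for $\Sigma_1(u) \subseteq \mathcal{R}(u)$) and Theorem~\ref{rectifiability thm} (applied for each $\nu \geq 2$ with $\kappa$ chosen sufficiently large) exhibits $\Sigma(u)$ as a countable union of $(n-2)$-dimensional $C^1$-submanifolds. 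The main obstacle is the strong unique continuation at the boundary in Case~(i) and the up-to-boundary real-analyticity in Case~(ii); both are complicated by the semi-linear nature of the oblique boundary condition, which must be linearized in Case~(i) and handled piecewise on $\{u = h\}$ versus $\{u \neq h\}$ in Case~(ii).
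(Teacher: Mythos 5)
Your proposal takes a genuinely different route from the paper, and the case~(ii) portion contains a gap.

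For case~(i), the paper's argument is considerably more direct than yours. When $h = 0$ one may take $v = u$ and $f = 0$, so the truncation of $K(\rho)$ by $\rho^{2\kappa}$ is unnecessary and the frequency $\Phi(\rho) = I(\rho)/H(\rho)$ is defined whenever $H(\rho) > 0$. By Lemma~\ref{v L2 lemma} with $f = 0$, if $H(\rho_1) = 0$ for some $\rho_1$ then $u \equiv 0$ on $B^+_{\rho_1}$, whence $u$ vanishes identically and $\Sigma(u) = \emptyset$. Otherwise $H(\rho) > 0$ for all small $\rho$, and the almost-monotonicity $e^{C\sigma}\Phi(\sigma) + Ce^C\sigma \leq e^{C\rho}\Phi(\rho) + Ce^C\rho$ immediately yields $\Phi(0^+) \leq e^{C\rho_0}\Phi(\rho_0) + Ce^C\rho_0 < \infty$. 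No contradiction argument or separate strong-unique-continuation theorem is needed: the finiteness of the frequency at a fixed positive scale propagates to the origin via the monotonicity already proved in Theorem~\ref{Almgren monotonicity thm}. Your contradiction argument, assuming $\mathcal{N}_u(0) = \infty$ and deriving infinite-order $L^2$-vanishing before invoking boundary unique continuation for a linearized Robin-type problem, is workable in principle but outsources the heart of the matter to results that would themselves require careful checking (the even reflection trick does not directly apply because the boundary condition is not $D_n u = 0$, and the cited Adolfsson--Escauriaza/Kukavica--Nystr\"om theorems are stated for Dirichlet or Neumann, not Robin, conditions).

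For case~(ii), the step claiming that $u$ is real-analytic up to $B'_1$ near the free boundary point $0$ does not hold. You apply analytic elliptic regularity only on the relatively open set $\{u \neq h\}$, but $0 \in \partial_\Gamma\{u = h\}$ lies outside that set, and the oblique boundary condition $a^{nn}D_n u = k_+((u-h)^+)^{p-1} - k_-((u-h)^-)^{p-1}$ has a corner at $u = h$ in general; indeed Theorem~\ref{irregularity thm} shows $u$ typically fails to be even $C^{p-1,1}$ at such points, so it certainly need not be real-analytic there. Consequently the conclusion ``$u$ and $\widetilde{h}_\infty$ are both real-analytic with identical Taylor series at $0$, hence $u \equiv \widetilde{h}_\infty$'' does not follow. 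The correct move, which the paper makes, is to set $v = u - \overline{h}$ where $\overline{h}$ is the real-analytic Cauchy--Kowalevski solution of $D_i(a^{ij}D_j\overline{h}) = 0$ in $B^+_1$ with $\overline{h}(x',0) = h(x')$ and $D_n\overline{h}(x',0) = 0$ on $B'_1$; then $v$ satisfies the zero-obstacle problem and case~(i) applies to $v$ directly. Your formal power series $\widetilde{h}_\infty$ is exactly this $\overline{h}$, so the fix is to replace the analyticity claim for $u$ with the observation that $v = u - \widetilde{h}_\infty$ itself solves the $h=0$ problem, then invoke case~(i).
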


\begin{proof}
First we show that case (ii) reduces to case (i).  Suppose that $a^{ij}$ and $h$ are both locally real-analytic at each point of $B'_1(0)$.  After rescaling, assume that the radii of convergence of $a^{ij}$ and $h$ at the origin are greater than one.  Then by the Cauchy-Kowalevski theorem there exists a locally real-analytic function $\overline{h} : B^+_1(0) \cup B'_1(0) \rightarrow \mathbb{R}$ such that
\begin{gather}
	D_i (a^{ij} D_j \overline{h}) = 0 \text{ on } B^+_1(0), \nonumber \\
	\label{h taylor analytic} \overline{h}(x',0) = h(x'), \quad D_n \overline{h}(x',0) = 0 \text{ on } B'_1(0) .
\end{gather}
Thus we can set $v = u - \overline{h}$ so that
\begin{gather*}
	D_i (a^{ij} D_j v) = 0 \text{ on } B^+_1(0), \nonumber \\
	a^{nn} D_n v = k_+ (v^+)^{p-1} - k_- (v^-)^{p-1} \text{ on } B'_1(0) .
\end{gather*}
In other words, $v$ solves the boundary value obstacle problem with obstacle zero.

Now let's consider the case (i) where $h = 0$ on $B'_1(0)$.  Let $x_0 \in \Sigma(u)$.  After translating $x_0$ to the origin and applying a linear change of variables, assume that $x_0 = 0$ and $a^{ij}(x_0) = \delta_{ij}$.  Since $h = 0$ on $B'_1(0)$, we can omit the truncation step in Section~\ref{sec:almgren setup sec} and simply set $v = u$.  Then we simply define Almgren frequency function $\Phi(\rho)$ by
\begin{equation*}
	\Phi(\rho) = \frac{I(\rho)}{H(\rho)}
\end{equation*}
for all $0 < \rho < 1$ with $H(\rho) > 0$.  Theorem~\ref{Almgren monotonicity thm}, $e^{C\rho} \Phi(\rho) + Ce^C\rho$ is monotone nondecreasing for all $0 < \rho \leq \rho_0$ with $H(\rho) > 0$, where $\rho_0 \in (0,1/2)$ and $C \in (0,\infty)$ are constants as in Theorem~\ref{Almgren monotonicity thm}.  By Lemma~\ref{L2 growth lemma}, either $u = 0$ in $B^+_1(0) \cup B'_1(0)$ or $H(\rho) > 0$ for all $0 < \rho \leq \rho_0$.  Hence if $u$ is not identically zero on $B^+_1(0) \cup B'_1(0)$, $e^{C\rho} \Phi(\rho) + Ce^C\rho$ is monotone nondecreasing for all $0 < \rho \leq \rho_0$.  In particular,
\begin{equation*}
	\Phi(\rho) \leq e^{C/2} \Phi(1/2) + Ce^C/2 < \infty
\end{equation*}
for all $0 < \rho \leq \rho_0$ and we can define the Almgren's frequency of $u$ at the origin to be equal to $\Phi(0^+) < \infty$.  The rest of the argument proceeds exactly like before with obvious modifications.
\end{proof}


\begin{thebibliography}{111111}
	\bibitem[ALP15]{ALP15}  Allen, M.; Lindgren, E.; Petrosyan, A.  \textit{The two-phase fractional obstacle problem.}
		SIAM J.~Math.~Anal.~47, no. 3, 1879--1905, 2015.
\bibitem[AC04]{AC1} Athanasopoulos, I. ;  Caffarelli, L. A.  \emph{Optimal regularity of lower dimensional obstacle problems.}
 Zap. Nauchn. Sem. S.-Peterburg. Otdel. Mat. Inst. Steklov. (POMI)  310,  Kraev. Zadachi Mat. Fiz. i Smezh. Vopr. Teor. Funkts. 35 [34], 49--66, 226, 2004; translation in  J. Math. Sci. (N. Y.)  132,  no. 3, 274--284, 2004.
 \bibitem[ACS08]{ACS} Athanasopoulos, I. ;  Caffarelli, L. A. ;  Salsa, S.  \emph{The structure of the free boundary for lower dimensional obstacle
 problems.} Amer. J. Math.  130,  no. 2, 485--498, 2008.
 \bibitem[CSS08]{CSS}  Caffarelli, L. A. ; Salsa, S. ; Silvestre, L. \emph{Regularity estimates for the solution and the free boundary of the obstacle problem for the fractional Laplacian. }Invent. Math. 171, no. 2, 425--461, 2008.
	%\bibitem[AKS82]{AKS}
		%N. Aronszajn, A. Krzywicki, and J Szarski.  \textit{A unique continuation theorem for exterior differential forms on Riemannian manifolds.}
		%Arkiv f\"{u}r Mat. 4 (1962), 417--453. % Revise.
	%\bibitem[CS07]{CS07}  Caffarelli, L.; Silvestre, L.  \textit{An extension problem related to the fractional Laplacian.}
	%	Comm. Partial Differential Equations 32 (2007), no. 7--9, 1245--1260.
	\bibitem[DJ]{DJ}  Danielli, D.; Jain, R.  \textit{Regularity results for a penalized boundary obstacle problem.}  Preprint. arXiv:2003.00995.
	%\bibitem[DGPT17]{DGP17}  Donatella, D.; Garofalo, N.; Petrosyan, A.; To, T.  \textit{Optimal Regularity and the Free Boundary in the Parabolic Signorini
	%	Problem.}  Memoirs Amer. Math. Soc. Vol. 249, no. 1181, 2017.
 \bibitem[DS18]{DS} Danielli D. ; Salsa, S.  \emph{Obstacle problems involving the fractional Laplacian.}   Recent developments in nonlocal theory, 81–164, De Gruyter, Berlin, 2018.
     \bibitem[DSS16]{DSS} De Silva, D. ; Savin, O.   \emph{Boundary Harnack estimates in slit domains and applications to thin free boundary problems.} Rev. Mat. Iberoam. 32, no. 3, 891--912, 2016.
\bibitem[DL76]{DL} Duvaut, G. ;  Lions, J.-L.  \emph{Inequalities in mechanics and physics.} Translated from the French by C. W. John.
Grundlehren der Mathematischen Wissenschaften, 219.
Springer-Verlag, Berlin-New York, xvi+397 pp. ISBN: 3-540-07327-2, 1976.
\bibitem[GL86]{GL} Garofalo, N.; Lin, F.-H. \emph{Monotonicity properties of variational integrals, $A_p$ weights and unique continuation}. Indiana Univ. Math. J. 35, no. 2, 245–268, 1986.
	\bibitem[GP09]{GP09}  Garofalo, N. ; Petrosyan, A.  \textit{Some new monotonicity formulas and the singular set in the lower dimensional
		obstacle problem.}  Invent. Math. 177, no. 2, 415--461, 2009.
\bibitem[GS14]{GSVG14}  Garofalo, N.;  Smit Vega Garcia, M.  \textit{New monotonicity formulas and the optimal regularity in the Signorini problem with
		variable coefficients}. Adv. Math. 262, 682--750, 2014.
\bibitem[GPS18]{GPS} Garofalo, N.; Petrosyan, A.; Smit Vega Garcia, M. \textit{The singular free boundary in the Signorini problem for variable coefficients}. Indiana Univ. Math. J. 67, no. 5, 1893–1934, 2018.
	\bibitem[GT01]{GT}  Gilbarg, D;  Trudinger, N. S. \textit{Elliptic partial differential equations of second order.} Reprint of the 1998 edition. Classics in Mathematics. Springer-Verlag, Berlin,  xiv+517 pp. ISBN: 3-540-41160-7 35-02 (35Jxx), 2001.
\bibitem[KPS15]{KPS}    Koch, H. ; Petrosyan, A. ; Shi, W. \emph{Higher regularity of the free boundary in the elliptic Signorini problem.} Nonlinear Anal. 126, 3--44, 2015.
			\bibitem[L13]{L13}  Lieberman, G. M. \textit{Oblique derivative problems for elliptic equations.}
		World Scientic Publishing Co. Pte. Ltd., Hackensack, NJ,   xvi+509 pp. ISBN: 978-981-4452-32-8, 2013.
	\bibitem[S96]{SimonHarmonicMaps}  Simon, L. \textit{Theorems on regularity and singularity of energy minimizing maps.}
		Springer Science \& Business Media,  vii+152 pp.  ISBN: 978-3-7643-5397-1, 1996.
	%\bibitem[Sil05]{Sil05}  Silvestre, L.  \textit{Regularity of the obstacle problem for a fractional power of the Laplace operator.}  Dissertation.
\end{thebibliography}
\end{document}